\def\D{\mathrm{D}}
\def\a{\alpha}
\def\b{\beta}
\def\ga{\gamma}
\def\Ga{\Gamma}
\def\de{\delta}
\def\De{\Delta}
\def\ep{\epsilon}
\def\la{\lambda}
\def\si{\sigma}
\def\th{\theta}
\newcommand{\Th}{\Theta}
\def\varep{\varepsilon}
\def\BB{{\cal B}}
\def\DD{{\cal D}}
\def\II{{\cal I}}
\def\OO{{\cal O}}
\def\DD{{\cal D}}
\def\QQ{{\cal Q}}
\def\RR{{\cal R}}
\newcommand{\N}[0]{\mathbb{N}}
\newcommand{\F}[0]{\mathbb{F}}
\newcommand{\R}[0]{\mathbb{R}}
\newcommand{\Z}[0]{\mathbb{Z}}
\newcommand{\T}[0]{\mathbb{T}}
\newcommand{\supp}{\mathrm{supp} \,}
\newcommand{\suppt}{\mathrm{supp}_t \,}
\newcommand{\pa}{\partial}
\newcommand{\tsi}{\ost{\tau}^{-1}}
\newcommand{\fr}[2]{\frac{#1}{#2}}
\newcommand{\mat}[2]{\left[ \begin{array}{ #1} #2 \end{array} \right]}
\newcommand{\ALI}[1]{\begin{align*} #1 \end{align*}}
\newcommand{\lsm}[0]{\lesssim}
\newcommand{\pr}[0]{\partial}
\newcommand{\nb}{\nabla}
\newcommand{\co}[1]{\|#1\|_{C^0}}
\newcommand{\cda}[1]{\|#1\|_{\dot{C}^\a}}
\newcommand{\Ddt}[0]{\overline{D}_t}
\newcommand{\Dtdt}[0]{\tilde{D}_t}
\newcommand{\unl}{{\underline L}}
\newcommand{\hxi}[0]{\widehat{\Xi}}          %
\newcommand{\hn}[0]{\widehat{N}}
\newcommand{\plhxi}[0]{(\log \widehat{\Xi})}
\newcommand{\va}[0]{\vec{a}}
\newcommand{\vcb}[0]{\vec{b}}
\DeclareMathAlphabet{\mathpzc}{OT1}{pzc}{m}{it}
\newcommand{\hh}[0]{\mathpzc{h}}
\newcommand{\hq}[0]{\hat{q}}
\newcommand{\hc}[0]{\widehat{C}}
\newcommand{\tisum}{\overset{\sim}{\sum}} %
\newcommand{\ali}[1]{ \begin{align} #1 \end{align} }
\def\XXint#1#2#3{{\setbox0=\hbox{$#1{#2#3}{\int}$}
     \vcenter{\hbox{$#2#3$}}\kern-.5\wd0}}
\newcommand{\calB}{\mathcal B}
\newcommand{\calE}{\mathcal E}
\newcommand{\calH}{\mathcal H}
\newcommand{\calR}{\mathcal R}
\newcommand{\ls}{\lesssim}
\newcommand{\ost}[1]{\accentset{\ast}{#1}}
\newcommand{\mrg}[1]{\mathring{#1}}
\newcommand{\ti}{\tilde}
\newtheorem{thm}{Theorem}%
\newtheorem{lem}{Lemma}[section]
\newtheorem{prop}{Proposition}[section]
\theoremstyle{definition}
\newtheorem{defn}{Definition}[section]
\theoremstyle{remark}
\newtheorem{rem}{Remark}
\title{ A proof of Onsager's Conjecture for the SQG equation }
\author{  Philip Isett\thanks{Department of Mathematics, Caltech. }, Shi-Zhuo Looi\thanks{Department of Mathematics, Caltech. }
}
\date{ }
\begin{document}
\maketitle

\begin{abstract}
    We construct solutions to the SQG equation that fail to conserve the Hamiltonian while having the maximal allowable regularity for this property to hold.  This result solves the generalized Onsager conjecture on the threshold regularity for Hamiltonian conservation for SQG.
\end{abstract}

\tableofcontents

\section{Introduction}

In this paper we are concerned with the surface quasi-geostrophic equation (SQG equation), which arises as an important model equation in geophysical fluid dynamics that has applications to both oceanic and meteorological flows \cite{sqgPhys,pedlosky2013geophysical}.  The SQG equation for an unknown scalar field $\th$ on a two-dimensional spatial domain has the form
\ali{
\pr_t \th + \nb \cdot ( \th u ) &= 0, \qquad  u = T[\th] =  |\nb|^{-1} \nb^\perp \th,
}
where $|\nb| = \sqrt{-\De}$.

SQG is an {\it active scalar equation}, so called since the velocity field advecting the scalar field depends at every time on the values of the scalar field.  The field $\th$ can represent either the temperature or surface buoyancy in a certain regime of stratisfied flow.  The equation has been studied extensively in the mathematical
literature due to its close analogy with the 3D incompressible Euler equations and the problem of
blowup for initially classical solutions, which remains open as it does for the Euler equations.  A survey of mathematical developments is given in the introduction to \cite{buckShkVicSQG}.  For more recent mathematical works on SQG we refer to \cite{hunter2021global,albritton2022non,garcia2022self,gomez2023quasiperiodic,hofmanova2023class,hmidi2023emergence,hofmanova2024surface} and the references therein. %

Fundamental to the study of the SQG equation are the following basic conservation laws:
\begin{enumerate}
    \item For all sufficiently smooth solutions, the {\it Hamiltonian} $\fr{1}{2}\int_{\T^2} (|\nb|^{-1/2} \th(t,x))^2 dx$ remains constant.
    \item For all sufficiently smooth solutions, the $L^p$ norms $\| \th(t) \|_{L^p(\T^2)}$ remain constant $1 \leq p \leq \infty$, as do the integrals $\int F(\th(t,x)) dx$ for any smooth function $F$.
    \item For all weak solutions to SQG, the mean, impulse, and angular momentum defined respectively by
    \ali{
M = \int \th(t,x) dx, \quad \vec{I} = \int_{\R^2} x \th(t,x) dx, \quad A = \int_{\R^2} |x|^2 \th(t,x) dx
    }
    are conserved quantities.  On the torus $\T^2$, the mean is well-defined and conserved.
\end{enumerate}
(To prove (1), multiply the equation by $|\nb|^{-1}\th$ and integrate by parts.  To prove (2), use $\nb \cdot u = 0$ to check that $F(\th)$ satisfies $\pr_t F(\th) + \nb \cdot( F(\th) u ) = 0$.  See \cite{isett2024conservation} for a proof of (3).)

Note that in contrast to (3), the nonlinear laws (1) and (2) require that the solution is ``sufficiently smooth''.  If one expects that turbulent SQG solutions have a dual energy cascade as in the Batchelor-
Kraichnan predictions of 2D turbulence \cite{constScalingScalar,constantin2002energy,buckShkVicSQG}, then one has motivation to consider weak solutions
that are not smooth.  A basic question for the SQG equation is then: What is the minimal amount of smoothness required for the conservation laws to hold?  This question is exactly the concern of the (generalized) {\it Onsager conjectures} for the SQG equation.  A closely related open problem is to find the minimal regularity required to imply uniqueness of solutions.

Using H\"{o}lder spaces to measure regularity, the Onsager conjectures can be stated as follows
\begin{itemize}
    \item If $\th \in C^0$, then conservation of the Hamiltonian holds.  However, for any $\a < 1/2$, there exist solutions with $|\nb|^{-1/2} \th \in L_t^\infty C^\a$ that do not conserve the Hamiltonian.
    \item If $\a > 1/3$ then the integral $\int F(\th(t,x)) dx$ is conserved for any smooth function $F$.  If $\a < 1/3$, there exist solutions in $\th \in L_t^\infty C^\a$ that violate this conservation law.
\end{itemize}
The contribution of this paper is to fully answer the first conjecture in the affirmative.

Some remarks about these problems are in order:
\begin{enumerate}
\item These problems generalize the original Onsager conjecture \cite{onsag}, which concerned turbulent dissipation in the incompressible Euler equations and stated that the H\"{o}lder exponent 1/3 should mark the threshold regularity for conservation of energy for solutions to the incompressible Euler equations. See \cite{deLSzeCtsSurv,eyink2024onsager} for discussions of the significance of Onsager’s conjecture in turbulence theory.
\item The threshold exponents are derived from the fact that the conservation law for sufficiently regular solutions has been proven in both cases (i) and (ii). Namely, \cite{isettVicol} proves conservation of the Hamiltonian for solutions with $\th \in L^3(I \times \T^2)$, while \cite{akramov2019renormalization} proves the conservation law (ii) for $\a > 1/3$. The proofs are variants of the kinematic argument of \cite{CET}, which proved energy conservation for the Euler equations above Onsager’s conjectured threshold.  For Hamiltonian conservation in the nonperiodic case, see \cite{isett2024conservation}.
\item Following the seminal work \cite{deLSzeCts}, advances in the method of convex integration have made possible the pursuit of Onsager's conjecture both for the Euler equations and more general fluid equations.
In particular, Onsager's conjecture for the 3D Euler equations has been proven in \cite{isettOnsag} (see also
\cite{buckDeLSVonsag,isett2017endpoint}), while the first progress towards the Onsager conjecture (i) for SQG was made in \cite{buckShkVicSQG}, with an alternative approach given in \cite{isett2021direct}. See \cite{deLOnsagThm, buckmaster2020convex} for surveys and \cite{klainerman2017nash} for a discussion of generalized Onsager conjectures.
\item To make sense of the Onsager problem for the Hamiltonian, it must be noted that the SQG equation is well-defined for $\th$ having negative regularity.
 Namely, for any smooth vector field $\phi(x)$ on $\T^2$, the quadratic form
\ALI{
Q(\phi, \th) &= \int \phi T[\th] \cdot \nb \th dx dt ,
}
 initially defined for Schwartz $\th$ with compact frequency support away from the origin, has a unique bounded extension to $\th \in \dot{H}^{-1/2}$. This fact, which relies on the anti-symmetry of the operator $T$, allows the SQG nonlinearity to be well-defined in $\DD'$ for $\th$ of class $\th \in L_t^2 \dot{H}^{-1/2}$.  In fact, one has the following estimate, which is optimal:
 \ali{
|Q(\phi, \th)| &\lesssim \| \mrg{\nb}^2_{j,\ell} \phi \|_{L^\infty} \| \th \|_{\dot{H}^{-1/2}}^2
 }
where $\mrg{\nb}^2_{j\ell} \phi = \nb_j \nb_\ell \phi - \fr{1}{2} \de_{j\ell} \De \phi$ is the trace-free part of the Hessian of $\phi$.  See \cite{isett2024conservation} for a proof of this bound and its optimality, and \cite{marchand2008existence,cheng2021non} for earlier definitions of the nonlinearity with weaker estimates.
\end{enumerate}

The main theorems of our work are the following, which settle the Onsager conjecture on the threshold for Hamiltonian conservation for SQG.
\begin{thm}\label{thm:basicThm}  For any $\a < 1/2$, there exist weak solutions $\th$ to SQG that do not conserve the Hamiltonian such that $|\nb|^{-1/2} \th \in C_t C^\a$.
\end{thm}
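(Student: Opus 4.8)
\bigskip
\noindent\textbf{Proof proposal.}\
The plan is a convex integration (Nash iteration) scheme adapted to the nonlocal structure of SQG, in the lineage of \cite{buckShkVicSQG,isett2021direct} but refined to reach the scaling threshold. One works with a relaxed system: a \emph{subsolution} is a triple $(\th_q, u_q, R_q)$ with $u_q = T[\th_q]$ and
\[
\pr_t \th_q + \nb\cdot(\th_q u_q) = \nb \cdot R_q ,
\]
where the error $R_q$ is a vector field (equivalently one may take it of ``double divergence'' type $\nb\cdot\nb\cdot S_q$ with $S_q$ a symmetric trace-free tensor) whose size is measured in norms that keep track of its frequency localization. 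Fix a smooth, non-constant target profile $t \mapsto e(t)$ for the Hamiltonian. Starting from the trivial subsolution, one builds inductively a sequence of subsolutions governed by a frequency parameter $\la_q$ growing double-exponentially and an amplitude parameter $\de_q \sim \la_q^{-2\b}$, with $\b < 1/2$ fixed but arbitrary, so that: $\th_{q+1}-\th_q$ is localized at frequencies $\sim \la_{q+1}$ with $\| \, |\nb|^{-1/2}(\th_{q+1}-\th_q) \|_{C_t C^0} \ls \de_{q+1}^{1/2}$; the error decays, $\|R_{q+1}\| \ll \|R_q\|$; and $\tfrac12 \| \, |\nb|^{-1/2}\th_q(t) \|_{L^2}^2 \to e(t)$ uniformly in $t$. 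Because the increments are frequency-localized with geometrically summable H\"{o}lder norms, $|\nb|^{-1/2}\th_q \to |\nb|^{-1/2}\th$ in $C_t C^\a$ for every $\a < \b$; since $|\nb|^{-1/2}\th \in C_t C^\a \subset C_t \dot{H}^{-1/2}$, the SQG nonlinearity of the limit is well-defined in $\DD'$ as in the introduction, and the decay of $R_q$ forces $\th$ to be a weak solution, with Hamiltonian the prescribed non-constant $e(t)$. Letting $\b \uparrow 1/2$ gives Theorem~\ref{thm:basicThm} for every $\a < 1/2$.

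The core is the inductive step. After mollifying $(\th_q, R_q)$ in space and time, one adds a highly oscillatory perturbation assembled from a finite family of ``SQG waves'' $a_I(t,x)\, e^{i \la_{q+1} \xi_I(t,x)}$, with slowly varying amplitudes $a_I$ and with phase functions $\xi_I$ taken to be approximately transported by the coarse velocity $u_q$, so that the principal part of the linearized SQG operator applied to the perturbation is negligible. The amplitudes, of overall size $\| a_I \|_{C^0} \sim \la_{q+1}^{1/2} \de_{q+1}^{1/2}$ (consistent with the increment bound above, and so that $\tfrac12 \| \, |\nb|^{-1/2}(\th_{q+1}-\th_q) \|_{L^2}^2$ supplies the Hamiltonian increment $e(t) - \tfrac12 \| \, |\nb|^{-1/2}\th_q \|_{L^2}^2$), are chosen to solve an algebraic identity forcing the relevant low-frequency quadratic interaction to cancel $\nb\cdot R_q$ up to a gradient. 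A structural peculiarity of SQG enters here: because $T = |\nb|^{-1}\nb^\perp$ is antisymmetric, the \emph{leading-order} low-frequency self-interaction of such waves is flux-free, so the Reynolds error must be extracted from a finer interaction -- which forces the waves, their amplitudes, and possibly an auxiliary temporal oscillation to be designed with more care than in the Euler case -- and solving the resulting cancellation algebra with uniform bounds efficient enough to close at $\b$ near $1/2$ is a key point. The new error $R_{q+1}$ then gathers: the high-frequency ($\gtrsim \la_{q+1}$) remainder of the quadratic interaction; the transport and Nash (flow) errors from the non-constancy of $a_I, \xi_I$ and from their interaction with $\nb u_q$; the mollification errors; and the genuinely nonlocal error caused by $T[a\, e^{i\la\xi}]$ differing from $(\text{symbol at } \la\nb\xi)\, a\, e^{i\la\xi}$. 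This last is controlled by a microlocal/stationary-phase expansion of $T$ about the frequency $\la_{q+1}\nb\xi_I$, whose successive corrections gain powers of $\la_{q+1}^{-1}$ but must be estimated in frequency-weighted norms: the naive $C^0$ size of the perturbation is $\sim\la_{q+1}^{1/2}\de_{q+1}^{1/2}$, which grows with $q$, so one systematically estimates $|\nb|^{-1/2}\th$ and its advective derivatives along the coarse flow rather than $\th$ itself.

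Since $1/2$ is precisely the exponent at which the known proof of Hamiltonian conservation -- the CET-type argument of \cite{CET,isettVicol} -- ceases to apply, the scheme has essentially no quantitative margin, and I expect the main obstacle to be exactly this tightness: all of the error contributions above must be made sharp simultaneously. As in Isett's proof of the Euler conjecture \cite{isettOnsag}, closing the induction at $\b$ arbitrarily close to $1/2$ will likely require splitting the step into a transport/``gluing'' stage -- replacing $\th_q$ by a nearby subsolution whose error is supported in well-separated time intervals, so that the transport error becomes negligible -- followed by the oscillatory Nash stage described above. The interaction of this two-stage structure with the nonlocal operator $T$, which neither commutes with the coarse flow nor preserves frequency supports, is the most delicate point and demands commutator estimates for $T$ against mollification and against the coarse flow maps that are precise enough to survive the endpoint. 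The remaining ingredients -- solvability over one time step of the transport equations for the phases $\xi_I$, solvability of the amplitude algebra with uniform estimates, and summability of the geometric series defining $\th$ -- are by now routine and follow the templates of \cite{isett2021direct,isettOnsag}.
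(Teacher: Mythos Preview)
Your proposal captures the broad architecture correctly --- a two-stage iteration (localize the error in time, then add Nash oscillations), frequency--amplitude bookkeeping at the $\la^{1/2}\de^{1/2}$ scaling, and microlocal handling of $T$ --- and several points you anticipate (the antisymmetry of $T$ forcing a subtler low-frequency cancellation, the microlocal expansion of $T$ acting on waves, the need for sharp nonlocal commutator estimates) do arise. But the first stage you propose is not the one that works, and the replacement brings in the paper's main technical difficulty, which your proposal does not identify.

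The ``gluing'' stage you describe is the Isett mechanism from 3D Euler: solve exactly on short intervals and patch. That suffices in 3D because one has enough wave directions to cancel a full stress tensor on each interval. Here the ambient dimension is two, and in 2D convex integration waves oscillating in distinct directions interfere; one can only cancel a \emph{one-dimensional} piece of the error at a time. Standard gluing yields an error supported on disjoint intervals, but on each interval it is still a full symmetric trace-free tensor. What the paper uses instead is the \emph{Newton iteration} of Giri--Radu (introduced for 2D Euler): one adds a low-frequency correction $w$ solving a linearized transport equation whose forcing, modulated by fast periodic time cutoffs $g_{[I]}(\mu t)$, is designed so that the remaining error is a sum of rank-one pieces $\ga_I^2\, B^{j\ell}(\nb\xi_I)$ with pairwise disjoint temporal supports --- exactly the form the Nash step can absorb.

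This substitution creates the hardest part of the argument, absent from your sketch: the Newton correction $w$ must itself be written as a second-order divergence $w=\nb_j\nb_\ell\rho^{j\ell}$ with $\rho$ trace-free and satisfying bounds far better than the trivial $\|\rho\|_{C^0}\ls\|w\|_{C^0}$ (which would lose the whole gain). The paper obtains these bounds by deriving a coupled \emph{transport--elliptic} system for a first-order anti-divergence $z$ (with $\nb\cdot z=w$) and a second-order one $r$ (with $\nb\cdot r=z$), exploiting the even, degree-one homogeneous symbol of $\nb T$ via a ``divergence form principle'' to put the bilinear forcing terms $\nb T[\th_\ep]\,z - \nb T[z]\,\th_\ep$ into divergence form with dimensionally correct estimates, and closing a Littlewood--Paley--weighted Gronwall inequality. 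This, rather than commutators of $T$ with mollification, is where the endpoint sharpness actually lives. A second, related difficulty you do not anticipate is that the quadratic and mollification errors generated by the Newton step (e.g.\ $T^\ell w_n\,\nb_\ell w_n$ and $T^\ell(\th-\th_\ep)\,\nb_\ell w_n$) are not a priori in double-divergence form; putting them in that form with good bounds again uses the divergence form principle and paraproduct decompositions. Finally, the paper does not prescribe a Hamiltonian profile $e(t)$ directly; nontriviality is obtained via an $h$-principle (Theorem~\ref{thm:hPrinciple}), from which Theorem~\ref{thm:basicThm} follows.
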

\begin{thm}[h-Principle]\label{thm:hPrinciple} For any $\a < 1/2$ and for any $C_c^\infty((0,T)\times \T^2)$ function $f$ that conserves the mean, i.e. $\int_{\T^2} f(t,x) dx = 0$ for all $t$, there exists a sequence of SQG solutions $\th_n$ of class $|\nb|^{-1/2} \th_n \in C_t C^\a$ with compact support in time, such that
$|\nb|^{-1/2} \th_n \rightharpoonup |\nb|^{-1/2} f
$in $L_{t,x}^\infty$ weak-*.
\end{thm}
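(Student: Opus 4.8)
The plan is to derive Theorem \ref{thm:hPrinciple} from Theorem \ref{thm:basicThm} by upgrading the construction that proves the latter into one that tracks a prescribed error. Theorem \ref{thm:basicThm} is itself proven by convex integration: one builds a sequence of approximate SQG solutions $(\th_q, R_q)$ solving a relaxed system $\pr_t \th_q + \nb\cdot(\th_q u_q) = \nb\cdot\nb\cdot \mathring{R}_q$ (the SQG analogue of the Euler--Reynolds system, with a symmetric traceless stress $R_q$), with $\|\mathring{R}_q\|\to 0$ in the appropriate norm and $\th_q$ converging in $C_t \dot H^{-1/2}$, equivalently $|\nb|^{-1/2}\th_q$ converging in $C_t C^\a$. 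To get the h-principle I would instead run the iteration starting from the initial pair $\th_0 = |\nb|^{-1/2}$-lift of a mollification of $f$ together with the stress $R_0$ that $f$ generates as an error, i.e. $\nb\cdot\nb\cdot \mathring{R}_0 = \pr_t f + \nb\cdot(f u_f)$ (solvable since $f$ has zero mean at each time, so the right side is a sum of derivatives; one inverts the double divergence using a standard symmetric-tensor potential operator, e.g. a bounded right inverse of $\operatorname{div}\operatorname{div}$ on traceless symmetric tensors). One then checks that $R_0$ is smooth with compact time support inside $(0,T)$, so it sits inside the inductive hypotheses at some initial frequency level $\la_0$.

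Next I would apply the main iteration lemma of the paper (the convex integration stage that powers Theorem \ref{thm:basicThm}) repeatedly. Each stage replaces $(\th_q, R_q)$ by $(\th_{q+1}, R_{q+1})$ with $\th_{q+1} = \th_q + \vartheta_{q+1}$, where the correction $\vartheta_{q+1}$ is built from high-frequency building blocks designed to cancel the low-frequency part of $\mathring{R}_q$, and obeys the key estimates $\| |\nb|^{-1/2}\vartheta_{q+1}\|_{C_t C^\a} \lesssim \de_{q+1}^{1/2}$ together with $\|\mathring{R}_{q+1}\| \lesssim \de_{q+2}$, for a geometrically decaying sequence $\de_q$. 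Summing, $|\nb|^{-1/2}\th_q$ converges in $C_t C^\a$ to a genuine SQG weak solution $\th$; compact time support is preserved at every stage because the corrections are supported in a slight enlargement of $\supp_t R_q \subset \supp_t R_0 \Subset (0,T)$, which can be arranged to stay inside $(0,T)$ by choosing $\la_0$ large (so the temporal enlargements at each stage sum to an arbitrarily small amount).

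The weak-$*$ convergence statement is then a matter of bookkeeping on the size of the first correction. The difference between the final solution and the starting profile satisfies
\[
\big\| |\nb|^{-1/2}(\th - f_{\text{moll}}) \big\|_{L^\infty_{t,x}} \;\lesssim\; \sum_{q\ge 1} \| |\nb|^{-1/2}\vartheta_q \|_{C^0} \;\lesssim\; \sum_{q\ge 1}\de_q^{1/2},
\]
and the mollification error $\||\nb|^{-1/2}(f_{\text{moll}} - f)\|_{L^\infty_{t,x}}$ is likewise small. Both can be made $\le 1/n$ by taking the initial frequency $\la_0 = \la_0(n)$ large enough (which shrinks $\de_1$, hence the whole geometric tail) and the mollification fine enough; since $|\nb|^{-1/2}\th_n$ is uniformly bounded in $L^\infty_{t,x}$, one concludes $|\nb|^{-1/2}\th_n \to |\nb|^{-1/2} f$ strongly in $L^\infty_{t,x}$, which in particular gives the asserted weak-$*$ convergence. (One could equivalently phrase it with only weak-$*$ smallness, but strong $L^\infty_{t,x}$ convergence comes for free here.)

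The main obstacle is the very first step: producing a suitable initial stress $R_0$ from $f$ that is not only smooth and compactly supported in time but also compatible with the quantitative inductive hypotheses of the iteration lemma (correct relations between $\|R_0\|$, its derivatives, and the base frequency $\la_0$). This is where the zero-mean hypothesis on $f$ is essential — it is exactly what allows $\pr_t f + \nb\cdot(fu_f)$ to be written as $\nb\cdot\nb\cdot(\text{traceless symmetric tensor})$ — and where one must be careful that the inverse-divergence operator lands in the right function space with the right bounds; given the base case, the rest of the argument is a routine re-run of the proof of Theorem \ref{thm:basicThm} with the trivial starting data replaced by $(f_{\text{moll}}, R_0)$ and with the convergence rates tuned in $n$.
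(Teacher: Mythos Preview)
Your proposal has a genuine gap at the weak-$*$ convergence step. You claim that taking the initial frequency $\la_0$ large shrinks $\de_1$, so that $\||\nb|^{-1/2}(\th_n - f)\|_{L^\infty_{t,x}} \le 1/n$ and the convergence is even \emph{strong} in $L^\infty$. This is false: the amplitude of the first convex-integration correction is dictated by the size of the stress it must cancel, and $\|R_0\|$ is a fixed quantity determined by $f$ (in the paper's notation, $\co{|\nb|^{-1/2} W_{-1,n}} \le C_L D_{R,-1}^{1/2}$ with $D_{R,-1}$ independent of $n$). Raising the initial frequency makes the correction more oscillatory, but does not make it small in $C^0$. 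In fact, strong $L^\infty$ convergence would force the limit $f$ to be an SQG solution (by continuity of the nonlinearity on $L^2_t \dot H^{-1/2}$), contradicting the remark immediately following the theorem.

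What the paper actually does is exploit the oscillation of the first correction to get weak convergence without smallness: the first step produces $W_{-1,n}$ with $|\nb|^{-1/2} W_{-1,n} = \nb_i Y_n^i$ and $\co{Y_n} \le \Xi_{-1,n}^{-1} D_{R,-1}^{1/2} \to 0$, so that $\int g_\eta \,|\nb|^{-1/2} W_{-1,n}\,dx\,dt \to 0$ by integration by parts against a smooth approximant $g_\eta$ of the $L^1$ test function $g$. The subsequent corrections (from the iteration of the present paper's Main Lemma, packaged as the approximation Theorem~\ref{thm:approxThem}) \emph{are} small in $C^0$, with bound $C_\de D_{R,-1}^{1/2} N_{-1,n}^{-3/8}$ tending to zero. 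Note also that the paper invokes the Main Lemma of \cite{isett2021direct} (rather than the present Main Lemma) for that very first step, precisely because one needs the anti-divergence estimate $|\nb|^{-1/2} W = \nb Y$ with small $Y$; your proposal, which jumps straight into the iteration of Theorem~\ref{thm:basicThm}, does not supply this ingredient.
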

Our h-principle result, which implies the first theorem, is inspired by the original h-Principle of Nash \cite{nashC1} on the $C^0$ density of $C^1$ isometric immersions in the space of short maps.  The connection between h-principles and conservation laws was originally noted in \cite{IOnonpd, isettVicol}.  See also \cite{de2022weak} for a recent discussion of h-principle results in fluids.
\begin{rem}
 An additional reason for the interest in the h-principle theorem is that this theorem shows that the nonlinearity for SQG is not bounded in any space less regular than $L_t^2 \dot{H}^{-1/2}$, even when restricted to SQG flows.  Indeed, if the nonlinearity can be bounded in a space $X$ into which the class $W^{-1/2, \infty} \equiv \{ f ~:~ |\nb|^{-1/2} f \in L^\infty \}$ embeds compactly, it would contradict the h-Principle result since one could show using an Aubin-Lions-Simon compactness argument and $X$-boundedness that weak-* limits of solutions in $L_t^\infty W^{-1/2,\infty}$ would also be weak solutions to SQG.  %
\end{rem}

The previous best known result on this problem, due to \cite{buckShkVicSQG}, achieved regularity $|\nb|^{-1/2} \th \in C^{3/10-}$, with an alternative approach given in \cite{isett2021direct}.  Nonuniqueness of SQG steady states was proven in \cite{cheng2021non}.
 We note also the works \cite{bulut2023non,dai2023non}, which prove nonuniqueness for {\it forced} SQG up to the Onsager threshold $|\nb|^{-1/2} \th \in C^{1/2-}$.  %

Our improvement of the exponent relies on the following ideas:
\begin{enumerate}
    \item We build on the recent breakthrough solving the $2D$ Onsager conjecture in \cite{giri20232d}, which introduces a ``Newton iteration,'' which takes an arbitrary Euler-Reynolds flow and perturbs the velocity field so that the error is a sum of one-dimensional pieces with disjoint temporal support plus other error terms of acceptable size.  This idea builds on work of \cite{cheskidov2022sharp,cheskidov2023L2}.%
    \item The main difficulty in implementing the Newton iteration in the SQG context is to prove good estimates for a trace-free second-order\footnote{We require a second order anti-divergence since we base our approach on that of \cite{isett2021direct}.} anti-divergence tensor for the Newton correction.  That is, a trace-free solution $\rho$ to  $\mbox{div div } \rho = w$, where $w$ is the Newton correction.   The straightforward estimate for the solution to this equation is $\co{\rho} \ls \co{w}$, which turns out to be far from adequate.  We tackle this difficulty with two main ideas that take advantage of the  structure of SQG:
    \begin{itemize}
        \item We define a system of ``transport elliptic'' equations that couples the equation for $w$ with an equation for a first-order anti-divergence $z$, which is then coupled to the equation for a second-order anti-divergence $r$ (that may not be trace-free).
        \item  We use a Littlewood-Paley analysis to prove suitable estimates for $r$, which then are shown to imply suitable estimates for $\rho$.
    \end{itemize}
    \item The second main difficulty that separates the SQG scheme from 2D Euler is that certain bilinear or quadratic terms that occur naturally in both the Newton iteration and the convex integration steps need to be written in divergence form with an anti-divergence that satisfies good (dimensionally correct) estimates.  Here we build on ideas of \cite{isett2024conservation} and provide a more direct approach to achieving such divergence forms.  The main tool, which we call the ``divergence form principle,'' traces back to an important calculation in \cite{buckShkVicSQG} that was generalized and streamlined in \cite{isett2021direct}.  See Section~\ref{sec:divFormPrinciple}.
    \item  Within the Newton iteration, we use analytical ideas that we believe to be of independent interest.  For example, our methods can be used to give an alternative approach to some main results of \cite{isett2024conservation} including the conservation of angular momentum, and our commutator estimates (e.g. Lemma~\ref{lem:zeroOrder}) can be used to give an alternative approach to the improved endpoint regularity result discussed in \cite[Section 12]{isett2017endpoint}.  The sharp estimates we prove should be useful for obtaining an asymptotic endpoint type result for SQG, similar to that of \cite{isett2017endpoint}, but currently we do not know how to remove the reliance on double-exponential frequency growth in the Newton step.  %
\end{enumerate}

While the above are the main ideas that are new to this paper, we note that they are not the only ones needed to surpass the exponent $|\nb|^{-1/2} \th \in C^{3/10-}$.  In particular, we rely on some nonperturbative techniques that were already used in \cite{isett2021direct}, including the use of nonlinear phase functions as in \cite{isett}, the microlocal Lemma of \cite{isettVicol} and the bilinear microlocal Lemma of \cite{isett2021direct}.  In \cite{buckShkVicSQG} it was shown that certain perturbative techniques could be used in place of the above methods, but to get the sharp exponent we require techniques that remain effective on a nonperturbative timescale.  We also take advantage of an observation in \cite{bulut2023non}, which is that estimates on pure time derivatives for SQG can be used in place of advective derivative bounds.  While this point is probably not essential to the proof, it allows for a simpler argument where one does not need to commute advective derivatives with nonlocal operators many times.

Finally, we comment that during the writing of this paper we learned that \cite{dai2024SQG} have independently and concurrently obtained another proof of Theorem~\ref{thm:basicThm}.

We now begin the proof with some notation.
\subsection{Notation} \label{sec:notation}
In this paper, the dimension $d = 2$.  We use vectors to indicate multi-indices $\va$ and use $|\va|$ to indicate the order of the multi-index.  For instance, if $\va = (a_1, a_2, a_3)$, $1 \leq a_i \leq d$, then $\nb_{\va} = \nb_{a_1} \nb_{a_2} \nb_{a_3}$ is a partial derivative of order $|\va| = 3$. %

We will use many times the following elementary {\bf counting inequality} with parameters $(x_1, x_2, y)$:
\ali{
(x_1 - y)_+ + (x_2 - y)_+ &\leq (x_1 + x_2 - y)_+ \quad \mbox{ if } x_1, x_2, y \geq 0.
}

We use the symbol $\tisum$ to indicate a sum with combinatorial coefficients that we have omitted to simplify notation.  For example, the product rule implies,
\ali{
\nb_{\va}(fg) = \tisum \nb_{\va_1} f \nb_{\va_2} g
}
where the sum runs over some but not all multi-indices with $|\va_1| + |\va_2| = |\va|$.  Meanwhile, the chain rule and product rule give
\ali{
\nb_{\va}G(F(x)) = \sum_{m=0}^{|\va|} \tisum \pr^{m} G(F(x)) \prod_{j=1}^m \nb_{\va_j} F,
}
where the empty product is $1$ and the sum is over certain multi-indices with $|\va_1| + \cdots + |\va_m| = |\va|$.  (To be more precise the multi-indices should be of the form $\va_{m,j}$, but we omit the $m$ subscript to simplify notation.)

We define Littlewood-Paley projections with the following conventions.  Suppose $\hat{\eta}_{\leq 0}(\xi)$ is $1$ on $|\xi| \leq 1/2$ and $0$ on $1 \leq |\xi|$, $\hat{\eta}_{\leq 0} \in C_c^\infty (\widehat{\R}^d)$.  For $q \in \Z$ we define
\ALI{
\widehat{P_{\leq q} f}(\xi) = \hat{\eta}(\xi/2^q) \hat{f}(\xi).
}
Thus in physical space one has $P_{\leq q} f = \eta_{\leq q} \ast f$ for $\eta_{\leq q}(h) = 2^{dq} \eta_{\leq 0}(2^q h)$.  We define Littlewood-Paley projections $P_q f = P_{\leq q+1} f - P_{\leq q} f$ so that $P_q f$ has frequency support in $\{ 2^{q-1} \leq |\xi| \leq 2^{q+1} \}$.

We use $P_{\approx q}$ to indicate a Fourier multiplier that is a bump function adapted to frequencies of size $\xi \sim 2^q$.  So for example, $P_q = P_q P_{\approx q}$.

We will use the summation convention to sum over repeated indices.  For example, $\nb_i u^i$ is the divergence of a vector field $u$.

We will make use of two different anti-divergence operators.  The first is the order $-1$ operator $\RR_a^{j\ell}$, which solves
\ALI{
\nb_j \RR_a^{j\ell}[f^a] &= f^\ell, \qquad \de_{j\ell}\RR_a^{j\ell} = 0, \quad \RR^{j\ell}_a = \RR^{\ell j}_a
}
whenever $f^\ell$ is a vector field of mean zero on the torus.  The second operator is the order $-2$ operator $\RR^{j\ell}$, which solves
\ALI{
\nb_j\nb_\ell \RR^{j\ell}[f] &= f, \qquad \de_{j\ell}\RR^{j\ell} = 0, \quad \RR^{j\ell} = \RR^{\ell j}
}
whenever $f$ is a scalar field of mean zero on the torus.  Explicit formulas for these operators can be given in terms of the Helmholtz projection to divergence-free vector fields
\ALI{
\calH^{\ell}_a &\equiv \de_a^\ell - \De^{-1} \nb^\ell \nb_a \\
\RR^{j\ell}_a &= \De^{-1}(\nb^j \calH_a^\ell + \nb^\ell \calH_a^j ) - \De^{-1} \de^{j\ell} \nb_a + 2 \De^{-2} \nb^j \nb^\ell \nb_a \\
\RR^{j\ell} &= - \De^{-1} \de^{j\ell} + 2 \De^{-2}\nb^j \nb^\ell
}
See Section~\ref{sec:Glossary} for a glossary of the various symbols introduced in the proof.

\section{The main lemma}

\begin{defn}  A scalar-valued $\th : \R \times \T^2 \to \R$ and a symmetric traceless tensor field $R^{j\ell} : \R \times \T^2 \to \R^{2\times2}$ solve the {\bf SQG Reynolds equations} if
\ALI{
\pr_t \th + u^\ell \nb_\ell \th &= \nb_j \nb_\ell R^{j\ell} \\
u^\ell &= T^\ell \th = \ep^{\ell a} \nb_a |\nb|^{-1} \th
}
where $|\nb| = \sqrt{-\De}$.  The tensor $R^{j\ell}$ is called the {\bf error} since one has a solution when $R = 0$.
\end{defn}

\begin{defn} \label{defn:freqEnLevels}  Let $(\th, u, R)$ be an SQG-Reynolds flow, $\Xi \geq 1$ and $\D_u \geq \D_R \geq 0$ be non-negative numbers.  Define the advective derivative $D_t := \pr_t + T^\ell \th \nb_\ell$.  We say that $(\th, u, R)$ has {\bf frequency energy levels} below $(\Xi, \D_u, \D_R)$ to order $L$ in $C^0$ if $(\th, u, R)$ are of class $C_t^0 C_x^L$ and the following statements hold
\ALI{
\| \nb_{\va} \th \|_{C^0} , \| \nb_{\va} u \|_{C^0} &\leq \Xi^{|\va|} e_u^{1/2}, \qquad \mbox{ for all } |\va| = 0, \ldots, L \\
\| \nb_{\va} R \|_{C^0} &\leq \Xi^{|\va|} \D_R, \qquad \mbox{ for all } |\va| = 0, \ldots, L \\
\| \nb_{\va} D_t \th \|_{C^0}, \| \nb_{\va} D_t u \|_{C^0} &\leq \Xi^{|\va|} (\Xi e_u^{1/2}) e_u^{1/2}  \qquad \mbox{ for all } |\va| = 0, \ldots, L-1 \\
\| \nb_{\va} D_t R \|_{C^0} &\leq \Xi^{|\va|} (\Xi e_u^{1/2}) \D_R, \qquad \mbox{ for all } |\va| = 0, \ldots, L - 1
}
with $e_u^{1/2} = \Xi^{1/2} \D_u^{1/2}$ and $e_R^{1/2} = \Xi^{1/2} \D_R^{1/2}$. We note that, in contrast to other equations such as Euler, $e_u$ and $e_R$ will be large parameters. %
\end{defn}

\begin{lem}[Main Lemma]  For $L\geq 7$, $M_0 \geq 1$ $\eta > 0$ there is a constant $\hc = \hc_{L,\eta, M_0} > 1$ such that the following holds: Given an SQG-Reynolds flow $(\th, u,R)$ with frequency energy levels below $(\Xi, \D_u, \D_R)$ to order $L$ and a non-empty $J_0 \subseteq \R$ with $\suppt R \subseteq J_0 \subseteq \R$.  Let
\ali{
N \geq \hc N^{\fr{6}{L}} N^{4\eta}  \Xi^{4\eta} \left(\D_u/\D_R\right). \label{eq:NlowerBd}
}
Then there exists an SQG-Reynolds flow $(\ost{\th}, \ost{u}, \ost{R})$ of the form $\ost{\th} = \th + W, \ost{u} = u + T[W]$ with frequency energy levels below
\ALI{
(\ost{\Xi}, \ost{\D}_u, \ost{\D}_R) = (\hc N \Xi, \D_R, N^{-1/2} (D_R/D_u)^{1/2} D_R)
}
to order $L$ in $C^0$.

Furthermore the new stress $\ost{R}$ and the correction $W$ are supported in the set
\ali{
\suppt \ost{R} \cup \suppt W \subseteq N(J_0) := \{ t + h ~:~ t \in J_0, |h| \leq 5 (\Xi e_u^{1/2})^{-1} \}  \label{eq:supportEnlargement}
}
Additionally, $|\nb|^{-1/2} W$ satisfies the estimate
\ali{
\|\nb_{\va} |\nb|^{-1/2} W \|_{C^0} &\leq \hc (N\Xi)^{|\va|} \D_R^{1/2} , \qquad |\va| = 0, 1. \label{eq:LaWBd}
}

\end{lem}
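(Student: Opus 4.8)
The plan is to prove the Main Lemma by a two-stage construction in the spirit of \cite{giri20232d} and \cite{isett2021direct}: first a \emph{Newton iteration} that produces an intermediate SQG-Reynolds flow $(\th_1, u_1, R_1)$ whose error $R_1$ is a sum of temporally-disjoint ``one-dimensional'' pieces plus lower-order terms, and then a \emph{convex integration} step that removes $R_1$ by adding a highly oscillatory correction $W$ built from the nonlinear phase functions of \cite{isett} together with the microlocal and bilinear microlocal Lemmas of \cite{isettVicol, isett2021direct}. The correction $W$ will be the sum of the Newton correction and the convex-integration correction, and the frequency $N$ in \eqref{eq:NlowerBd} is the (double-exponential) frequency ratio controlling the oscillation of the building blocks. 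I would set up the proof by first fixing the relevant timescales — the advective timescale $(\Xi e_u^{1/2})^{-1}$, which explains the support enlargement in \eqref{eq:supportEnlargement} — and the amplitude of $W$, which is of size $\D_R^{1/2}$ in $W^{-1/2,\infty}$, matching \eqref{eq:LaWBd}. The scaling $(\ost\Xi, \ost\D_u, \ost\D_R) = (\hc N\Xi, \D_R, N^{-1/2}(\D_R/\D_u)^{1/2}\D_R)$ is then dictated by requiring that the new velocity have the same energy level $e_u^{1/2}$ as the old stress ($\ost\D_u = \D_R$) and that the new stress be smaller by the gain $N^{-1/2}(\D_R/\D_u)^{1/2}$.

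The key steps, in order, are: (1) Mollify $(\th,u,R)$ in space and time at a length scale chosen to preserve the frequency energy level assumptions while gaining smoothness, incurring a mollification error controlled by the counting inequality and the chain/product rule conventions from Section~\ref{sec:notation}. (2) Run the Newton iteration: solve the linearized SQG equation with a correction $w$ designed so that the quadratic self-interaction of $w$ cancels the leading part of the mollified error; here one needs the ``transport elliptic'' system coupling $w$ to a first-order anti-divergence $z$ and then to a second-order anti-divergence $r$, followed by the Littlewood-Paley analysis converting estimates for $r$ into estimates for the trace-free $\rho$ solving $\mathrm{div\,div\,}\rho = w$. (3) Apply the ``divergence form principle'' (Section~\ref{sec:divFormPrinciple}) to write the remaining bilinear/quadratic error terms — both from the Newton step and from the convex integration step — in $\nb_j\nb_\ell(\cdot)$ form with dimensionally correct estimates. (4) Perform the convex integration step on each temporally localized piece of the Newton error, using nonlinear phase functions and the bilinear microlocal Lemma to ensure the new stress $\ost R$ is supported in $N(J_0)$ and satisfies $\|\nb_{\va}\ost R\|_{C^0} \leq \ost\Xi^{|\va|}\ost\D_R$. (5) Verify the remaining frequency energy level inequalities for $(\ost\th,\ost u,\ost R)$, in particular the advective derivative bounds, using the observation from \cite{bulut2023non} that pure time derivative estimates can substitute for advective derivative bounds, thereby avoiding repeated commutation of $D_t$ with nonlocal operators.

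I expect the main obstacle to be step (2), specifically obtaining the sharp estimate for the trace-free second-order anti-divergence $\rho$ of the Newton correction $w$. As the introduction emphasizes, the naive bound $\co{\rho} \ls \co{w}$ is far from adequate: one loses the dimensional factors needed to close the iteration at the exponent $1/2$. The resolution requires the coupled transport-elliptic system — so that $\rho$ inherits the transport structure of $w$ rather than being produced by a bare elliptic inverse — together with a careful Littlewood-Paley decomposition in which the high-frequency pieces of $r$ are estimated using the oscillation of $w$ and the low-frequency pieces are handled separately, after which one checks that the passage from $r$ to the trace-free $\rho$ does not destroy these gains. Getting the constants and the power of $N$ in \eqref{eq:NlowerBd} to come out as $N^{6/L}N^{4\eta}\Xi^{4\eta}(\D_u/\D_R)$ hinges on this step; the factor $\D_u/\D_R$ is precisely the ``largeness'' of $e_u$ relative to $e_R$ that must be absorbed by the frequency gain, and tracking it through the anti-divergence estimates is the delicate part. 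The convex integration step (4) is by contrast relatively standard given \cite{isett2021direct}, though the interplay between the support enlargement in \eqref{eq:supportEnlargement} and the disjoint temporal supports produced by the Newton iteration must be arranged so that $5(\Xi e_u^{1/2})^{-1}$ is genuinely an upper bound for the total temporal spreading.
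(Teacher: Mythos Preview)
Your overall architecture is right and matches the paper: mollify, then a Newton stage to temporally localize the error into one-dimensional pieces, then convex integration, with the hard part being the second-order anti-divergence $\rho$ of the Newton correction controlled via the coupled transport system $w \to z \to r$ and Littlewood--Paley analysis. You also correctly flag the divergence form principle for the bilinear terms and the use of pure time derivatives from \cite{bulut2023non}.

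However, there is a genuine conceptual gap in your description of the Newton step. You write that one should ``solve the linearized SQG equation with a correction $w$ designed so that the quadratic self-interaction of $w$ cancels the leading part of the mollified error.'' This is not how the Newton step works; that is the convex integration mechanism. In the paper, $w_J$ solves the linearized transport equation
\[
\Ddt w_J + T^\ell w_J \nb_\ell \th_\ep = \sum_{f}(1 - g_{[f,J]}^2(\mu t))\,\nb_j\nb_\ell A_{(f,J)}^{j\ell},
\]
where $g_{[f,J]}$ are the periodic time cutoffs and $\mu = N^{1/2}\Xi e_R^{1/2}$ is a fast inverse time scale. The point is that adding $w$ converts the original error $R_\ep$ into the time-oscillating term $\sum_f g^2(\mu t) A_f$ (which goes into $S_{(n)}$ for later convex integration), plus a \emph{gluing error} $\sum_k \partial_t\tilde\chi_k\, w_{(k,n)}$; the quadratic terms $T^\ell w_n \nb_\ell w_n$ are small side errors, not the cancellation mechanism. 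The gain comes from the decomposition $w_J = \tilde w_J + \bar w_J$ with $\tilde w_J = \mu^{-1} h(\mu t)\nb_j\nb_\ell A^{j\ell}$, so that both pieces carry an explicit factor $\mu^{-1}$ (see the size table \eqref{eq:sizeOfTerms}), and this $\mu^{-1}$ is precisely what makes the anti-divergence $\rho$ small enough. Without identifying this time-oscillation mechanism you will not be able to close the estimate \eqref{eq:RnC0}.

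A second omission: the Newton step is not a single step but is iterated $\Gamma = \lceil \eta^{-1}\rceil$ times, each one reducing the gluing error by a factor $(N\Xi)^{-\eta}$ (see \eqref{eq:DRn}). This iteration is what produces the factor $N^{4\eta}\Xi^{4\eta}$ in the lower bound \eqref{eq:NlowerBd} that you were hoping to track; the $N^{6/L}$ comes separately from the mollification scale $\hn = N^{1/L}$ and the loss of three derivatives ($\unl = L-3$) in the transport-elliptic estimates. Your description of the Littlewood--Paley role is also slightly off: it is used not to ``exploit the oscillation of $w$'' but to control the $\calB$ term in the $r_J$ equation (the special bilinear commutator $\nb_a T^i[\th_\ep]z_J^a - \nb_a T^i[z_J^a]\th_\ep$) and to pass the estimates from the non-trace-free $r_J$ to the trace-free $\rho_J = \RR^{j\ell}\nb_a\nb_b r_J^{ab}$ via the $\dot C^\a$ bounds built into the weighted norm $\hh(t)$.
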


It will be convenient to introduce the notation $\hn = N^{1/L}$.  We have $\ost{\Xi}=\widehat{C} N \Xi$ and $\ost{e}_u^{1/2}= \ost{\Xi}^{1/2} D_R^{1/2}$.

\subsection{Summary Section}
The purpose of this section is to record where all the estimates of the Main Lemma are proven.

The new frequency-energy levels for $\ost{\th}$ and $\ost{u}$ are verified in Proposition~\ref{prop:newVelocBound}.  The stress $\ost{R}$ on the other hand has many different components, and each one is estimated either by $N^{-1} D_R$ or $N^{-1/2} (D_u/D_R)^{-1/2} D_R$.  The bounds for the mollification and quadratic errors in the Newton Step are obtained in Proposition~\ref{prop:RmRqnewt}.  After $\Ga$ iterations of the Newton step, the acceptable bound for the error $R_{(\Ga)}^{j\ell}$ follows from Proposition~\ref{prop:newtonBound}.

The error terms in the convex integration step are defined in line \eqref{eq:errorList}.  The bounds for the transport error $R_T$ and the high frequency interference terms $R_H$ are obtained Section~\ref{sec:highFreqErrors}.  The bounds for the mollification error $R_M$ are obtained in Section~\ref{sec:cvxMollErr}.  The bounds for $R_S$, which contains the stress erorr and flow error, are obtained in Section~\ref{sec:estimateRS}.

The bound \eqref{eq:LaWBd} is a consequence of \eqref{eq:sizeOfNewtCorrect} and \eqref{eq:nbminhalfTh}.  Meanwhile, the bound \eqref{eq:supportEnlargement} is a consequence of \eqref{eq:newtonTimeEnlarge} and the construction of $e_n^{1/2}(t)$ in line \eqref{eq:enConstruct}, since the support of the convex integration preturbation and error are bounded by the support of $e_n^{1/2}(t)$.

\section{Overall gameplan}

Consider a given SQG-Reynolds flow $(\th, u, R)$ with frequency energy levels below $(\Xi, \D_u, \D_R)$ to order $L$ and time support interval $J_0$ and let $\eta > 0$ be given.  Our goal is to perturb the scalar field in such a way that the error will become smaller.  This goal will be achieved in two steps, the first called the Newton step and the second called the convex integration step.  Our new scalar field $\ost{\th}$ will have the form $\th + w + \Th$, where $w$ is called the {\bf Newton perturbation} and $\Th$ is called the {\bf oscillatory perturbation}, which arises in the convex integration step.

The goal of the Newton perturbation is to perturb the scalar field so that  the original stress $R$ is replaced by a new $\tilde{R}$ that is supported on disjoint intervals, where in each interval $\tilde{R}$ can be canceled out by a one-dimensional convex integration perturbation.  Doing so overcomes the difficulty in the convex integration step that waves oscillating in distinct directions are not allowed to interfere with each other.

Constructing the Newton perturbation that achieves this localization will be achieved in a number $\Ga = \lceil \eta^{-1} \rceil$ iterative steps indexed by $n \in \{ 0, \ldots, \Ga \}$.  After the Newton perturbation we will add a high frequency perturbation $\Th$ that will be the sum of waves of the form $\Th = \sum_I \Th_I \approx \sum_I \th_I e^{i \la \xi_I}$ that will cancel out the ``low frequency part'' of what remains of the error, leaving behind an error that is small enough for the whole procedure to be repeated until the error is reduced to zero in the limit.  Each wave has a conjugate wave $\Th_{\bar{I}} = \overline{\Th}_I$, $\xi_{\bar{I}} = - \xi_I$, making $\Th$ real-valued.

We define the sets $F = \{ \pm (1,2), \pm (2,1) \}$ and $\F = \{ (1,2), (2,1) \}$, which will be the directions in which the oscillatory waves of the convex integration stage oscillate.  That is, $\nb \xi_I $ is reasonably ($O(1)$) close to an element of $F$.

During the convex integration step, each wave $\Th_I + \Th_{\bar{I}}$ is individually able to cancel out a ``one-dimensional'' component of the error that takes on the form $- \ga^2 B^{j\ell}(\nb \xi_I)$, where
\ali{
B^{j\ell}(p) = -i(\nb^j m^\ell(p) + \nb^\ell m^j)(p),
}
where $m^\ell(p)~=~i \ep^{\ell a} p_a |p|^{-1}$ is the multiplier for SQG and where $\ga^2$ is a slowly varying smooth function that remains to be chosen.  (Here we are implicitly using the Bilinear Microlocal Lemma of \cite{isett2021direct}.)   Thus one of the first tasks that must be done is to decompose the (low frequency part of the) error into a linear combination of terms of this form.  Before we perform this decomposition, we must define what we mean by the low frequency part of the error, which is the part that will be canceled out by the oscillatory perturbation $\Th$.

\subsection{Regularizing the scalar field and error tensor}

Define the length scale $$\ep = N^{-1/L} \Xi^{-1} = \hn^{-1}\Xi^{-1},$$ where $L \geq 2$ is as given in the main lemma.  %
We define an integer $q_\ep$ such that $q_\ep$ is close to $\log_2(\ep^{-1})$, i.e., we choose an integer $q_\ep$ such that $\ep^{-1} \sim 2^{q_\ep}$ and define the coarse scale scalar field $\th_\ep$ and the coarse scale velocity field $u_\ep$ to be
\ali{
\th_\ep = P_{\leq q_\ep} \th, \qquad u_\ep^\ell = T^\ell \th_\ep,
}
where the $P_{\leq q_\ep}$ is a Littlewood-Paley projection operator in the spatial variables.

In terms of the coarse scale velocity field we define the coarse scale advective derivative according to $$\Ddt = \pr_t + u_\ep \cdot \nb.$$  The estimates we obtain from this mollification are
\ali{
\co{\nb_{\va} \th_\ep} + \co{\nb_{\va} u_\ep} &\lesssim_{\va} \hn^{(|\va| - L)_+} \Xi^{|\va|}  e_u^{1/2} \label{eq:prelimScalVel1} \\
\co{\nb_{\va} \Ddt \th_\ep} + \co{\nb_{\va} \Ddt u_\ep} &\lesssim_{\va} \hn^{(|\va| + 1 - L)_+} \Xi^{|\va| + 1} e_u  \label{eq:prelimScalVel2}
}
These estimates follow from Definition~\ref{defn:freqEnLevels} and are proven in %
\cite[Section 7]{isettVicol}.

The error tensor $R$ must be regularized before we attempt to cancel it out.  We define $R_{\ep}$ by mollifying $\eta_{\ep_x} \ast_x \eta_{\ep_x} \ast_x R(t,x)$ only in the spatial variables at a length scale
\[\ep_x = N^{-1/L} \Xi^{-1}, \]
and using a mollifying kernel such that $\int h^{\va} \eta(h) dh = 0$ for all multi-indices $1 \leq |\va| \leq L$.  Using the bounds in Definition~\ref{defn:freqEnLevels}, the estimates that we obtain from this construction are (see \cite[Chapter 18]{isett})
\ali{
\co{R - R_{\ep}} &\lesssim N^{-1} D_R \\
\co{\nb_{\va} R_{\ep} } &\lesssim_{\va} \widehat{N}^{(|\va| - L)_+} \Xi^{|\va|} D_R \\
\co{\nb_{\va} \Ddt R_{\ep} } &\lesssim_{\va}  (\Xi e_u^{1/2}) \widehat{N}^{(|\va| + 1 - L)_+} \Xi^{|\va|} D_R.
} %
The implicit constants in these estimates depend on $L$.

\subsection{Setting up the Newton iteration}

Define the {\bf cutoff frequency} $\hxi \equiv N^{1/L} \Xi$.  Define $\hn = N^{1/L}$ so that $\hxi = \hn \Xi$.  The {\bf natural timescale} is defined to be
\ali{
 \tau \equiv b (\log \hxi)^{-1} (\Xi e_u^{1/2})^{-1} = b (\log \hxi)^{-1} (\Xi^{3/2} \D_u^{1/2})^{-1}, \label{eq:natTimescale}
}
with $b$ a small dimensionless constant that will be chosen later in this section.  %

Consider a partition of unity $1 = \sum_{k \in \Z} \chi_k^2$, $\chi_k = \chi(\tau^{-1}(t - k \tau))$ for an appropriately chosen $\chi$ with compact support in $[-4/5, 4/5]$ that is equal to $1$ in $[-1/3,1/3]$.  Consider a function $e_0(t)$ with support in
\ALI{
\suppt e_0(t) \subseteq \{ t + h ~:~ t \in J_0, |h| \leq 2(\Xi e_u^{1/2})^{-1} \}
}

We re-write the SQG-Reynolds equation as
\ali{
\pr_t \th + \nb_\ell[ \th T^\ell[\th]] &= \nb_j \nb_\ell (R_\ep^{j\ell} - e_0(t) M^{j\ell}) + \nb_j\nb_\ell (R^{j\ell} - R_\ep^{j\ell}) \label{eq:step0Prepared} %
}
where $M^{j\ell}$ is a constant matrix, which implies $\nb_j \nb_\ell M^{j\ell} = 0$.  The function $e_0(t)$ will be just large enough so that $e_0(t) M^{j\ell}$ dominates the term $R_\ep^{j\ell}$.

The cancellation we hope to achieve with the convex integration correction on each time interval $[k\tau - \tau, k\tau + \tau]$ has roughly the form
\ali{
\sum_{f \in \F} \ga_{(k, f)}^2 B^{j\ell}(\nb \check{\xi}_{(k,f)})  &= \chi_k^2( e_0(t) M^{j\ell} - R_\ep^{j\ell} )  \label{eq:stressEqn} \\
M^{j\ell} &\equiv B^{j\ell}((1,2)) + B^{j\ell}((2,1)).
}
(Note that $M^{j\ell}$ is a $2$-tensor in contrast to the positive number $M_e$.)

We note that the main term in the right hand side of \eqref{eq:stressEqn} is the term $e_0(t) M^{j\ell}$.  This fact is true for $M_e$ sufficiently large depending on $L$ because $e_0(t) = M
_e \D_R$ on the support of $R_\ep$ (in view of the inequality $\ep_t < \tau/4$) %
whereas $\| R_\ep \|_0 \leq A \D_R$ for a constant $A$ depending on $L$.

The reason we can only solve \eqref{eq:stressEqn} on a short time interval is that we require $\nb \check{\xi}_{(k,f)}$ to be in a small $O(1)$ neighborhood of the finite set $F$.  At the same time, however, the functions $\check{\xi}_{(k,f)}$ solve the transport equation:
\ali{
\begin{split}
\label{eq:transportOfPhase}
(\pr_t + u_\ep^j \nb_j) \check{\xi}_{(k,f)} &= 0 \\
\check{\xi}_{(k,f)}(k\tau, x) &= f \cdot x.
\end{split}
}
(We note that $\nb \check{\xi}$ is well-defined on the torus thanks to the condition $f \in \Z^2$.)

Although the equation \eqref{eq:stressEqn} will not be solved exactly until the convex integration step, it is necessary to outline how to solve \eqref{eq:stressEqn} for the purpose of setting up the Newton step.
If it were true that $R_\ep = 0$ and the phase function gradients were replaced by the initial conditions $\nb \check{\xi}_{(k,f)} = f$, then the solution to \eqref{eq:stressEqn} would simply be $$\ga_{(k,f)}^2 = \chi_k^2 e_0(t).$$  We regard the full equation \eqref{eq:stressEqn} as a perturbation of this case.  It is not hard to check that $B^{j\ell}((1,2))$ and $B^{j\ell}((2,1))$ form a basis for the two-dimensional space of trace-free symmetric tensor fields in which $R_\ep^{j\ell}$ takes values.  The computation is done in \cite{isett2021direct}.  %
Since $B^{j\ell}(p)$ is a smooth function function of $p$, since the map taking a matrix to its inverse is smooth on its domain, which is open, and since  by definition $M^{j\ell} = B^{j\ell}((1,2)) + B^{j\ell}((2,1))$, we can solve \eqref{eq:stressEqn} by factoring out the functions $e_0(t)$ and $\chi_k^2$ from both sides of \eqref{eq:stressEqn}, inverting the linear system and taking square roots of the coefficients.  The upshot is that we have
\ali{
\ga_{(k,f)} = \chi_k e_0^{1/2}(t) \ga_f\left(M^{j\ell} - \fr{R_\ep^{j\ell}}{M_e D_R}, \nb \check{\xi}_{k} \right) \label{eq:implicitFunct}
}
for a smooth function $\ga_f$ whose arguments are a symmetric trace-free tensor in a small $O(1)$ neighborhood of $M^{j\ell}$ and an array of vectors in a small $O(1)$ neighborhood of the initial conditions $(1,2)$, $(2,1)$.  Specifically $\nb \check{\xi}_k = [ \nb \check{\xi}_{(k, (1,2))}, \nb \check{\xi}_{(k,(2,1))} ]$ is the array of phase gradients that solve  \eqref{eq:transportOfPhase}.

By definition the implicitly defined functions $\ga_f(X, p)$ have a natural domain in which they are well-defined and smooth.  This domain, being open, compactly contains a neighborhood of $(M^{j\ell}, (1,2), (2,1))$ that has the form
\ali{
\| X^{j\ell} - M^{j\ell} \| + \|p_1 - (1,2) \| + \| p_2 - (2,1) \| \leq c_1.  \label{eq:neighborhood}
}
As long as the constant $M_e$ in the definition of $e_0(t)$ is sufficiently large, the matrix in the argument of \eqref{eq:implicitFunct}, namely  $X^{j\ell} = M^{j\ell} - R_\ep^{j\ell}(t,x) / (M_e \D_R)$, satisfies $\| X^{j\ell} - M^{j\ell} \| \leq A M_e^{-1} \leq c_1/6$.  At this point we fix once and for all such a constant $M_e$ depending on $L$ to satisfy this constraint, so that $e_0(t)$ is well-defined.

Next, a by-now standard estimate (see \cite[Section 17]{isett}) for the difference between the phase gradient and its initial condition
shows that when the constant $b$ in the definition of the natural timescale $\tau$ is chosen small enough depending on $c_1$, the inequality $\|\nb \xi_{(k,(1,2))} - (1,2) \| + \| \nb \xi_{(k,(2,1))} - (2,1) \| \leq c_1 / 4$ is satisfied.  (Recall that $(1,2)$ is the initial datum of $\nb \xi_{(k,(1,2))}$ and similarly for $(2,1)$.)  We now fix $b$ to have such a sufficiently small value. %

We are now in a position to begin explaining the Newton step.  Initially we have an SQG Reynolds flow that solves the equation \eqref{eq:step0Prepared}.
Our aim is to add a Newton correction $w$ to $\th$ that will replace the term $(R_\ep^{j\ell} - e_0(t) M^{j\ell})$ with a sum of error terms that are ``one-dimensional'' with disjoint supports that can be canceled out by a convex integration argument, modulo other acceptable errors.

Following \cite{giri20232d}, we will need time cutoffs $\tilde{\chi}_k$ for the Newton correction that are a bit wider than the cutoffs $\chi_k$ defined previously.   We require that
\begin{itemize}
	\item $\supp \tilde{\chi}_k \subseteq (k\tau - \tau, k \tau + \tau)$ and $\tilde{\chi}_k = 1$ on $(k\tau - 7\tau/8, k\tau + 7\tau/8)$ so that
	\[ \tilde{\chi}_k \chi_k = \chi_k \qquad \mbox{ for any } k \in \Z \]
	\item The estimates $|\pr_t^r \tilde{\chi}_k| \lesssim_r \tau^{-r}$ hold.
\end{itemize}

The Newton correction $w$ will have the form
\[ w = \sum_n \sum_{k} \tilde{\chi}_k w_{(k, n)},\]
 $(k, n) \in \Z \times \{0, \ldots, \Ga \}$ where the time index $k \in \Z$ refers to the correction being active on the interval $(\tau k - \tau, \tau k + \tau)$, %
and $n$ refers to the $n$'th iteration of the Newton step.

Let $\th_n$ and $u_n^\ell = T^\ell \th_n$ refer to the scalar field and velocity field after $n$ Newton iteration steps.  Thus,
\ALI{
\th_{n+1} &= \th + \sum_{0 \leq j \leq n} w_{j}, \qquad w_{n} = \sum_{k \in \Z} \tilde{\chi}_k w_{(k, n)}.
}
(We have $\theta_1 = \theta + w_0$. Note that in the notation $\ost{\th} = \th + w + \Theta$, we have $w=\sum_{j=0}^\Gamma w_j$.)
In the course of the iteration, the velocity field is updated as follows:
\begin{align*}
\theta_{n+1} &= \theta_n + w_n \\
u_{n+1}^\ell &= T^\ell \theta_{n+1} = T^\ell (\theta_n + w_n) = u_n^\ell + T^\ell w_n = u_n^\ell + \sum_k \ti \chi_k u_J^\ell,
\end{align*}
where $J = (k,n)$ corresponds to the $n$'th step of the Newton iteration.

The cutoffs embedded in $w_{n}$ give rise to an error term called the {\bf gluing error} for which we must solve
\ali{
\nb_j \nb_\ell R_{(n+1)}^{j\ell} &= \sum_k \pr_t \tilde{\chi}_k(t) w_{(k,n)} \label{eq:gluingError}
}
with good estimates.  One of the main novelties in our work lies in how this term is controlled.

There are of course other error terms, which we now list in analogy with \cite{giri20232d}.  After $n$ Newton steps, we have a system of the form
\ali{
\pr_t \th_n + T^\ell \th_n \nb_\ell \th_n &= \nb_j \nb_\ell R_{(n)}^{j\ell} + \nb_j \nb_\ell S_{(n)}^{j\ell} + \nb_j \nb_\ell P_{(n)}^{j\ell} \label{eq:afternNewton}
}
where
\begin{itemize}
	\item $R_{(n)}$ is the gluing error was obtained by solving \eqref{eq:gluingError} in the previous stage if $n \geq 1$, while $R_{(0)}^{j\ell} = R_\ep^{j\ell}$.
	\item $S_{(n)}$ is the error that will be canceled out by one-dimensional oscillations during the convex integration step.
	\item $P_{(n)}$ is the error that is small enough to be included in $\ost{R}$ in the next stage of the iteration, where $P_{(0)} = R^{j\ell} - R_\ep^{j\ell}$.
\end{itemize}
To be more specific we now explain how the Newton corrections $w_{n}$ accomplish the goal of replacing $R_{(n)}$ with ``one-dimensional'' errors with disjoint supports modulo acceptable terms.  Obtaining disjoint supports will be done with the help of a family of periodic cutoff functions.  We recall the following Lemma from \cite[Lemma 3.3]{giri20232d}:

\begin{prop}  For any $\Ga \in \N$, there exist a family of smooth $1$-periodic functions indexed by $\F \times (\Z/2\Z) \times \{1, \ldots, \Ga \}$ with the property that
\ALI{
\int_0^1 g_{(f, [k], n)}^2 = 1 \qquad \forall ~\, (f, [k], n) \in \F \times (\Z/2\Z) \times \{0, \ldots, \Ga \}
}
and
\ALI{
\supp g_{(f, [k], n)} \cap \supp g_{(f', [k'], n')} = \emptyset
}
whenever $(f,[k],n) \neq (f', [k'], n') \in \F \times (\Z/2\Z) \times \{1, \ldots, \Ga \} $.
\end{prop}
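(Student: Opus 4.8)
The statement is a packing/partition-of-unity construction on the circle $\R/\Z$, and the plan is to build the functions by a direct combinatorial translate-and-scale argument. The index set $\F \times (\Z/2\Z) \times \{1, \ldots, \Ga\}$ has cardinality $4\Ga$, so the plan is to reserve for each index $(f,[k],n)$ a distinct sub-interval of a fixed fine partition of $[0,1]$. Concretely: choose a large integer $P \geq 4\Ga$ (say $P = 4\Ga$) and a single fixed smooth bump $\phi \in C_c^\infty((0, 1/P))$ that is not identically zero. For each index $(f,[k],n)$, pick a distinct value $j = j(f,[k],n) \in \{0, 1, \ldots, P-1\}$ via any fixed injection of the index set into $\Z/P\Z$, and set $g_{(f,[k],n)}(x) = c \, \phi(x - j/P)$, extended $1$-periodically, where the normalization constant $c > 0$ is chosen once and for all so that $\int_0^1 (c\,\phi(x))^2\,dx = 1$. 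Because translates by distinct $j/P$ land in disjoint sub-intervals $(j/P, (j+1)/P)$ of $[0,1)$, the support-disjointness property holds automatically for $(f,[k],n) \neq (f',[k'],n')$, and the integral normalization holds by construction (and is unaffected by the periodic translation). Since all the $g$'s are translates of a single fixed $\phi$, there is no loss of smoothness and no issue of uniform control.

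First I would fix the bump function $\phi$ and the normalization constant $c$. Then I would exhibit the injection $\iota : \F \times (\Z/2\Z) \times \{1,\ldots,\Ga\} \hookrightarrow \{0,1,\ldots,P-1\}$ — this exists precisely because $|\F \times (\Z/2\Z) \times \{1,\ldots,\Ga\}| = 4\Ga \leq P$ — and define $g_{(f,[k],n)}$ as above. Then I would verify the two required properties: the $L^2$-normalization is immediate from the choice of $c$ and translation-invariance of the integral over a period, and the disjointness of supports follows since $\supp g_{(f,[k],n)} \subseteq \bigcup_{m \in \Z}(m + \iota(f,[k],n)/P,\, m + (\iota(f,[k],n)/P + 1/P))$, and these unions are pairwise disjoint across distinct values of $\iota$. (One should note the mild inconsistency in the ranges $\{0,\ldots,\Ga\}$ versus $\{1,\ldots,\Ga\}$ appearing in the statement; the construction works verbatim for $\{0,\ldots,\Ga\}$ as well by taking $P \geq 4(\Ga+1)$, so I would simply take $P$ large enough to accommodate whichever index range is actually used downstream.)

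There is really no serious obstacle here: the lemma is an elementary exercise, and the only ``choice'' that matters is to make the fine partition fine enough ($P \geq$ the number of indices) so that every index gets its own slot. The one point worth a sentence of care is that the $g$'s are genuinely $1$-periodic smooth functions — this is ensured by taking $\phi$ supported strictly inside $(0,1/P)$ so that the periodic extension of each translate remains smooth across the integers. Alternatively, and perhaps more cleanly for the downstream application where these cutoffs are composed with fast phase variables, one could instead cite \cite[Lemma 3.3]{giri20232d} directly, since the statement here is exactly that lemma (with the trivial relabeling of the index set $\F \times (\Z/2\Z) \times \{1,\ldots,\Ga\}$ by an abstract set of the same cardinality); but giving the two-line self-contained construction above makes the paper independent of that reference for this particular point.
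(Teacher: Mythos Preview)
Your proposal is correct. The paper does not actually give a proof of this proposition; it simply recalls the statement as \cite[Lemma~3.3]{giri20232d}, so your self-contained translate-and-normalize construction is more than what the paper itself provides (and you already anticipated this at the end of your proposal).
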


For each index $J \in \Z \times \{1,\ldots, \Ga\}$, $J = (k,n)$ we set $[f,J] = (f,[J]) = (f, [k], n)$ with $[k]$ the residue class of $[k] \in \Z/2\Z$.  The equation we solve at the $n$'th Newton step has the form
\ali{
\begin{split}
\pr_t w_{J} + T^\ell \th_\ep \nb_\ell w_J + T^\ell w_J \nb_\ell \th_\ep &= \sum_{f \in \F} (1 - g_{[f,J]}^2(\mu t) ) \nb_j \nb_\ell A_{(f,J)}^{j\ell} =: \nb_j \nb_\ell O_J^{j\ell} \\
w_{(k,n)}(k\tau, x)&= w_{0,(k,n)} \label{eq:NewtonEquation}
\end{split}
}
where $\mu$ is an inverse time scale to be chosen slightly faster than the natural time scale $\tau$, $w_{0,(k,n)}$ is a scalar field to be specified shortly in line \eqref{eq:initData}, and where $A_{(f,J)}$ has the following ``one-dimensional'' form
\ali{
A_{(f,k,n)}^{j\ell} &= \chi_k^2(t) e_n(t) \ga_{f}^2\left( M^{j\ell} - \fr{R_{(n)}^{j\ell}}{M_e D_{R,n}}, \nb \check{\xi}_k \right) B^{j\ell}(\nb \check{\xi}_{k,f})
}
similar to \eqref{eq:implicitFunct}.

Here
\ali{
D_{R,n} = (N^{-\eta} \Xi^{-\eta})^n D_{R,0} \label{eq:DRn}
}
is a bound on the size of the $n$th gluing error.  Meanwhile $e_n(t)$, similar to $e_0(t)$, is a function of time equal to the constant $M_0 D_{R,n}$ on the interval $J_n = \{ t + h ~:~ t \in J, |h| \leq 3(n+1) \tau \}$ that has support in $\{ t + h ~:~ t \in J_n, |h| \leq 2 \tau \}$ while satisfying the estimates
\ali{
\co{\fr{d^r}{dt^r} e_n} &\lesssim \tau^{-r} D_{R,n}.  \label{eq:enConstruct}
}

At this point we will specify that
\ali{
\mu = N^{1/2} \Xi e_R^{1/2} =  N^{1/2} \Xi^{3/2} \D_R^{1/2}. \label{eq:muChoice}
}  %
Note that $\mu$ is an inverse time scale with this choice. We have $\tau > \frac1\mu$.

With such a choice of Newton correction, the errors after the $n+1$'th step solve the following system of equations
\ali{
\th_{n+1} &= \th_{n} + w_n = \th + \sum_{j=0}^n w_j \\
\nb_j \nb_\ell R_{(n+1)}^{j\ell} &= \sum_k \pr_t \tilde{\chi}_k w_{(k,n)} \label{eq:gluingError2} \\
S_{(n+1)}^{j\ell} &= S_{(n)}^{j\ell} - \sum_{k \in \Z} \sum_{f \in \F} g_{(f, [k], n)}^2(\mu t) A_{(f,k,n)}^{j\ell} \\
\nb_j \nb_\ell P_{(n+1)} &= \nb_j \nb_\ell P_{(n)} + T^\ell(\th - \th_\ep) \nb_\ell w_n + T^\ell w_n \nb_\ell(\th - \th_\ep)  \label{eq:notDivForm10}\\
&+ T^\ell w_n \nb_\ell w_n + \sum_{j = 0}^{n-1} (T^\ell w_n \nb_\ell w_j + T^\ell w_j \nb_\ell w_n)\label{eq:notDivForm20}
}
Notice that the terms in \eqref{eq:notDivForm10} and \eqref{eq:notDivForm20} are not in the form of a second-order divergence of a trace-free tensor field, in contrast to the analogous terms for Euler, which are readily of the correct form.  Handling this new issue and getting good estimates for the solutions to \eqref{eq:notDivForm10}-\eqref{eq:notDivForm20} is another of the main contributions of this paper.

\subsection{Newton Step} \label{sec:NewtonStep}

It is clear from \eqref{eq:gluingError2} that in order to bound the gluing error we must find a solution to the second order divergence equation
\ali{
\nb_j \nb_\ell r_{J}^{j\ell} &= w_J \label{eq:symmDivEqn}
}
with good estimates.  While it is important that we find a solution that is symmetric and trace-free, we have the freedom to first find a solution $r_J$ that lacks these properties and then use the estimates on $r_J$ to bound the potential-theoretic solution to \eqref{eq:symmDivEqn}.

Following \cite{isett} and \cite{isettOnsag} we derive a transport-elliptic equation to get a solution with good bounds.  We start by finding a first-order antidivergence $z_J^i$, which solves $\nb_i z_J^i = w_J$.

Consider a solution to the equation
\ali{
(\pr_t + T^\ell \th_\ep \nb_\ell) z_J^i &= \nb_a T^i \th_\ep z_J^a  - T^i w_J \th_\ep - \nb_a O_J^{ia} \label{eq:plainForm} \\
z_{(k,n)}^i(tk, x) &= z_{0,(k,n)}(tk)
}
with smooth initial data to be specified below in line \eqref{eq:zInitData} such that $\nb_i z_{0,(k,n)}^{i}(tk) = w_{0,(k,n)}(tk)$.  The existence of a solution to \eqref{eq:plainForm} follows from standard existence theory for transport equations by the method of characteristics.

It is not difficult to check that if $z_J^i$ solves \eqref{eq:plainForm}, then $\nb_i z_J^i$, the divergence of $z_J$, satisfies $\Ddt \nb_i z_J^i = \Ddt w_J$ and thus equals $w_J$ as long as it does so initially.  Thus $z_J^j$ is an anti-divergence for $w_J$.

We now wish to find an anti-divergence for $z_J^j$.  Using the fact that the divergence of $z_J^j$ is $w_J$, we can rewrite equation \eqref{eq:plainForm} as
\ali{
(\pr_t + T^\ell \th_\ep \nb_\ell) z_J^i &= \underbrace{\nb_a T^i \th_\ep z_J^a  - \nb_a T^i z_J^a \th_\ep}_{\text{special term}} - \nb_a O_J^{ia} \label{eq:plainForm2}
}
The special term has a structure that makes it possible to be put in divergence form.  Ultimately the most important point is that the operator $\nb_a T^i$ has a symbol that is even (and degree 1 homogeneous) and the fact that a minus sign appears (which together imply that the term has integral zero). %
 Thus we claim
\ali{
\nb_a T^i[\th_\ep]z_J^a  - \nb_a T^i[z_J^a] \th_\ep &= \nb_j \BB^{ij}_a[ z_J^a, \th_\ep]
}
where $\BB_a^{ij}$ is a bilinear form that we will be able to estimate.  In terms of this anti-divergence, define $r_J^{ij}$ to be the unique solution to
\ali{
(\pr_t + T^\ell \th_\ep \nb_\ell) r_J^{ij} &= \RR^{ij}_a \nb_\ell [ \nb_b T^\ell \th_\ep r_J^{ab} ] + \BB^{ij}_b[z_J^b, \th_\ep] - O_J^{ij} \label{eq:transElliptic} \\
r_{(k,n)}^{ij}(tk, x) &= r_{0,(k,n)}^{ij},
}
where the initial data specified in line \eqref{eq:rJinitdata} satisfies $\nb_i r_{0,(k,n)}^{ij}(tk) = z_{0,(k,n)}^i(tk)$.  Here $\RR^{ij}_a$ is as defined in Section~\ref{sec:notation}. %

The existence and uniqueness of a smooth solution $r_J$ to \eqref{eq:transElliptic} follow from a contraction mapping argument (see the Appendix to \cite{isettOnsag}).

Note that the divergence of $r_J^{ij}$ solves the PDE $\Ddt \nb_i r_J^{ij} = \Ddt z_J^j$ with the same initial conditions as $z_J$.  Thus $r_J^{ij}$ is a second order anti-divergence for $w_J$.

In the remainder of this section we show that the structure of the transport equations satisfied by $w_J$, $z_J$ and $r_J$ imply good estimates on these quantities and all the error terms they generate.  Having good estimates for $r_J$ then implies good estimates for a trace-free second order anti-divergence $\rho_J$.

The estimate for $w_J$ will take advantage of the oscillations in time of the forcing term in the equations.  These oscillations are ultimately the source of the gain in performing the Newton step.  To capture the gain, let $h_{f,[J]}(T) = \int_0^T (1 - g_{f,[J]}^2(s) ) ds$ and decompose $w_J = \bar{w}_{J} + \tilde{w}_{J}$, where
\ali{
\tilde{w}_J &= \sum_{f \in \F} \mu^{-1} h_{f,[J]}(\mu t) \nb_j \nb_\ell A_{(f,J)}^{j\ell} \label{eq:tildewJ}\\
w_{0,(k,n)} &= \tilde{w}_J(tk) \label{eq:initData} \\
\pr_t \bar{w}_{J} + T^\ell \th_\ep \nb_\ell \bar{w}_J &= - T^\ell w_J \nb_\ell \th_\ep - \tilde{O}_J \label{eq:barwJ} \\
\tilde{O}_J &= \sum_{f \in \F} \mu^{-1} h_{f,[J]}(\mu t) \Ddt \nb_j \nb_\ell A_{(f,J)}^{j\ell}
}%

We also decompose $z_J = \bar z_J + \tilde z_J$, where
\ali{
\tilde{z}_J^i &= \sum_{f \in \F} \mu^{-1} h_{f,[J]}(\mu t) \nb_j A_{(f,J)}^{ij} \\
z_{0,(k,n)}^i &= \tilde{z}_{(k,n)}^i(k t) \label{eq:zInitData} \\
\partial_t \bar z_J^i + T^\ell \theta_\ep \nabla_\ell \bar z_J^i &= \nabla_a T^i \theta_\ep z_J^a - T^i w_J \theta_\ep - \tilde{O}_J^{i} \label{eq:barzJ}\\
\tilde O_J^{i} &= \sum_{f \in \F} \mu^{-1} h_{f,[J]}(\mu t) \Ddt \nabla_a A_{(f,J)}^{ia}
}

We similarly decompose $r_J = \bar r_J + \tilde r_J$, where
\ali{
\tilde{r}_J^{ij} &= \sum_{f \in \F} \mu^{-1} h_{f,[J]}(\mu t) A_{(f,J)}^{ij} \\
r_{0,(k,n)}^{ij}(tk,x) &= \tilde{r}_J^{ij}(tk, x) \label{eq:rJinitdata}\\
\partial_t \bar r_J^{ij} + T^\ell \theta_\ep \nabla_\ell \bar r_J^{ij} &= \RR^{ij}_a \nabla_\ell [ \nabla_b T^\ell \theta_\ep r_J^{ab} ] + \BB^{ij}_b[z_J^b, \theta_\ep] - \tilde O_J^{ij} \label{eq:barrJ}\\
\tilde O_J^{ij} &= \mu^{-1} \sum_{f \in \F} h_{f,[J]}(\mu t) \Ddt A^{i j}_{(f,J)}.
}%

The terms we need to estimate include not only the scalar fields $\bar{w}_J$, $\tilde{w}_J$ and the fields $z_J$ and $r_J$, but also the fields $u_J^\ell = T^\ell w_J$ and a trace-free symmetric tensor field $\rho_J^{i\ell}$ that is defined by $$\rho_J^{i\ell} = \RR^{i\ell} w_J = \RR^{i\ell} \nb_a \nb_b r_J^{ab}$$ and whose second order divergence is $w_J$ (i.e. $\nabla_i\nb_\ell \rho^{i\ell}_J=w_J$).  The operator $\RR^{i\ell}$ is the order $-2$ operator defined in  Section~\ref{sec:notation}.

We will associate to each of these tensor fields $F$ in our problem a positive number $S_F$ that is the ``size'' of $F$.  The following table summarizes the sizes of the fields
\ali{
\mat{c|ccccc}{ F & \bar{w}_J, w_J & \bar{z}_J^i, z_J^i & \bar{r}_J^{ij}, r_J^{ij} & u_J^\ell & \rho_J^{i\ell} \\
		S_F &	\Xi^2 \mu^{-1} D_{R,n}  & \Xi \mu^{-1} D_{R,n} & \mu^{-1} D_{R,n} & \plhxi \Xi^2 \mu^{-1} D_{R,n} & \plhxi \mu^{-1} D_{R,n}
		},
		\label{eq:sizeOfTerms}
}
Thus $S_w = \Xi^2 \mu^{-1} D_{R,n}$, $S_z = \Xi \mu^{-1} D_{R,n}$, etc.  For convenience we remind the reader of the choice of $\mu = N^{1/2} \Xi e_R^{1/2} = N^{1/2} \Xi^{3/2} D_R^{1/2}$ from \eqref{eq:muChoice}.  We use the notation
\ali{
{\bf F}_J = \{ \bar{w}_J, \bar{z}_J, \bar{r}_J %
\}
}
to denote the list of tensor fields involved in the main estimate that solve transport type equations for which we require a sharp bound.

We are now ready to estimate the terms in the Newton step. Define $$\underline L:= L-3.$$ The following is the main result of this section.
\begin{prop} \label{prop:newtonBound}  For all $F \in {\bf F }_J \cup \{ w_J, z_J, r_J \}$, we have the estimates
\ali{
\co{ \nb_{\va} F } &\lesssim_{\va} \widehat N^{(|\va| -\unl)_+} \Xi^{|\va|} S_F, \label{eq:newtonStepBoundnbaF} %
}
Moreover, we have the following bounds for $w_n$, $u_n^\ell = T^\ell w_n$ and $\rho_n^{j\ell} = \RR^{j\ell} w_n$
\ali{
\co{\nb_{\va} |\nb|^{-1/2} w_n } &\ls \Xi^{|\va|} D_{R,n}^{1/2}, \qquad 0 \leq |\va| \leq 1 \label{eq:sizeOfNewtCorrect} \\
\co{\nb_{\va}  u_n} &\lesssim_{\va} \widehat N^{(|\va| -\unl)_+} \Xi^{|\va|} S_u \label{eq:newtVelocBd}\\
\left\| \nabla_{\vec{a}} \overline{D}_t^r u_n \right\|_{C^0} &\lesssim_{\va} \widehat{N}^{(r+|\vec{a}| -\underline L)_+} \Xi^{|\vec a|}(\Xi e_u^{1/2})^r e_u^{1/2},  \qquad 0 \leq r \leq 1  \label{eq:newtVelocBd2} \\
\co{\nb_{\va} \Ddt^r \rho_J} &\lesssim_{\va} \widehat N^{(|\va| +r -\unl)_+} \Xi^{|\va|} \tau^{-r} S_{\rho} \qquad \mbox{on } \supp \tilde{\chi}_k'(t) %
\label{eq:newtTrfreeBd}
}

Furthermore there exists a symmetric, trace-free tensor field $R_{(n+1)}$ with support in $\{ t + h ~:~ t \in J_n, |h| \leq 2\tau \}$ that solves \eqref{eq:gluingError2} and satisfies the bounds for %
\ali{
\co{R_{(n+1)}} &\leq D_{R,n+1} \label{eq:RnC0} \\%we need a version where there is no implicit constant for the \vec a = 0 version. R_{(n)}/(M_e D_{R,n}) has to be small, otherwise you exit the domain of \gamma_f^2. You don't want to exit the domain, so you demand \leq D_{R,n+1}.
\co{\nb_{\va} R_{(n+1)}} &\lesssim_{\va} \widehat{N}^{(|\va| - L)_+} \Xi^{|\va|} D_{R,n+1} \label{eq:Rnplus1}\\
\co{\nb_{\va} \Ddt R_{(n+1)}} &\lesssim_{\va} \widehat{N}^{(|\va| + 1 - L)_+} \Xi^{|\va|} \tau^{-1} D_{R,n+1} \label{eq:Rnplus1Ddt} \\
\suppt (w_n,R_{(n+1)}) &\subseteq \{ t + t' ~:~ t \in \suppt R_{(n)}, \quad |t'| \leq 3 \tau \} \label{eq:newtonTimeEnlarge}
}

\end{prop}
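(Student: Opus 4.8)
The plan is to prove Proposition~\ref{prop:newtonBound} by a bootstrap induction on the Newton index $n$, treating the transport-elliptic system \eqref{eq:barwJ}, \eqref{eq:barzJ}, \eqref{eq:barrJ} as a coupled system for the triple $\mathbf{F}_J = \{\bar w_J, \bar z_J, \bar r_J\}$ and deriving the sharp spatial-derivative bounds \eqref{eq:newtonStepBoundnbaF} simultaneously. First I would record the ``algebraic'' estimates for the one-dimensional amplitudes $A_{(f,J)}^{j\ell}$: since $A_{(f,k,n)}^{j\ell} = \chi_k^2 e_n(t) \ga_f^2(\cdots) B^{j\ell}(\nb\check\xi_{k,f})$, the chain/product rule together with the regularity of the implicitly-defined $\ga_f$ (whose domain was fixed in \eqref{eq:neighborhood}), the bounds \eqref{eq:enConstruct} on $e_n$, the mollification bounds on $R_{(n)}/(M_e D_{R,n})$ from the induction hypothesis, and the standard phase-gradient estimates for $\nb\check\xi_k$ solving \eqref{eq:transportOfPhase}, yield $\co{\nb_{\va} A_{(f,J)}^{j\ell}} \lesssim \hn^{(|\va|-\unl)_+}\Xi^{|\va|} D_{R,n}$ on $\supp\chi_k$. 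Dividing by $\mu$ and using $|h_{f,[J]}(\mu t)| \le 1$ gives the bounds for the ``tilde'' parts $\tilde w_J, \tilde z_J, \tilde r_J$ at the sizes in \eqref{eq:sizeOfTerms} — note $S_w/\Xi = S_z$, $S_z/\Xi = S_r = \mu^{-1}D_{R,n}$, exactly the dimensional weights $\Xi^2, \Xi, 1$ appearing in the table; the loss of one power of $\hn$ per derivative past $\unl = L-3$ is inherited from the mollified error, with the ``$-3$'' leaving room for the three extra derivatives that the transport-elliptic forcing terms cost.

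Next I would estimate the ``bar'' parts by the transport-elliptic method of \cite{isett,isettOnsag}. The key point is that each equation in \eqref{eq:barwJ}--\eqref{eq:barrJ} has the schematic form $\Ddt \bar F = (\text{order-zero operator applied to the next field up}) + (\text{lower-order coupling}) - \tilde O_J^{(\cdot)}$, where the forcing terms $\tilde O_J^{(\cdot)} = \mu^{-1}\sum_f h_{f,[J]}(\mu t)\Ddt(\cdots A_{(f,J)})$ are controlled using the mollified advective-derivative bounds on $\th_\ep, u_\ep$ and on $A_{(f,J)}$; each such term is of size $\lesssim \tau^{-1}\cdot(\text{tilde size})$, and integrating the transport equation over the time window of length $\lesssim \tau$ (on which $\tilde\chi_k$ is active) shows $\co{\bar F}\lesssim S_F$. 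For the spatial derivatives one commutes $\nb_{\va}$ through the transport operator, picking up commutators $[\nb_{\va}, u_\ep\cdot\nb]$ controlled by \eqref{eq:prelimScalVel1}, and closes a Grönwall-type estimate using that $\tau(\Xi e_u^{1/2})\log\hn \lesssim b\log\hn$ is small, so the exponential factor is $\hn^{O(b)}$ and absorbable. The coupling between the three fields must be handled in the correct order: $\bar r_J$ feeds into $\bar z_J$ only through the already-estimated $z_J = \bar z_J + \tilde z_J$ inside $\BB^{ij}_b[z_J^b,\th_\ep]$ and through the $\RR^{ij}_a\nb_\ell[\nb_b T^\ell\th_\ep r_J^{ab}]$ term, so I would estimate $z_J$ first, then $r_J$; the operator $\RR^{ij}_a\nb_\ell$ is order $0$ and bounded on the relevant frequency-localized pieces (this is where a Littlewood-Paley decomposition may be needed to avoid a logarithmic loss, as flagged in the introduction), and similarly $\BB^{ij}_a$ is the order-$(-1)$ bilinear anti-divergence whose estimate was asserted in the setup. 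Passing from $r_J$ to the trace-free $\rho_J = \RR^{i\ell}w_J = \RR^{i\ell}\nb_a\nb_b r_J^{ab}$: since $\RR^{i\ell}\nb_a\nb_b$ is an order-zero Fourier multiplier, frequency-localizing $w_J$ at scales $\lesssim\hn\Xi$ and summing gives $\co{\nb_{\va}\rho_J}\lesssim\hn^{(|\va|-\unl)_+}\Xi^{|\va|}\plhxi\mu^{-1}D_{R,n}$, the extra $\plhxi = (\log\hxi)$ being the price of the Littlewood-Paley sum — this matches $S_\rho$. The advective-derivative bound \eqref{eq:newtTrfreeBd} on $\supp\tilde\chi_k'$ follows by differentiating the transport equation for $r_J$ once in $\Ddt$ and noting $\tilde\chi_k'$ is supported where the relevant $e_n$ forcing is already active.

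Then I would assemble $w_n = \sum_k\tilde\chi_k w_{(k,n)}$ and $u_n = T^\ell w_n$ and verify \eqref{eq:sizeOfNewtCorrect}--\eqref{eq:newtVelocBd2}: the bound \eqref{eq:sizeOfNewtCorrect} on $|\nb|^{-1/2}w_n$ comes from $\co{|\nb|^{-1/2}w_n}\lesssim(\hn\Xi)^{-1/2}S_w = (\hn\Xi)^{-1/2}\Xi^2\mu^{-1}D_{R,n}$ and plugging in $\mu = N^{1/2}\Xi^{3/2}D_R^{1/2}$, which after using $D_{R,n}\le D_{R,0}$ and the definition of $\hn$ gives $\lesssim D_{R,n}^{1/2}$ with room to spare from the lower bound \eqref{eq:NlowerBd} on $N$ (the one derivative case is analogous since $\nb|\nb|^{-1/2}$ is order $1/2$). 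For \eqref{eq:newtVelocBd}--\eqref{eq:newtVelocBd2} one uses that $T^\ell$ is an order-zero operator bounded on frequency-localized pieces, costing at most one $\plhxi$ absorbed into $S_u = \plhxi\Xi^2\mu^{-1}D_{R,n}$, and for the advective-derivative version commutes $\Ddt$ (note $\Ddt$ here uses $u_\ep$, whereas the equation uses $T^\ell\th_\ep$; the difference is itself controlled) past $T^\ell$ using a commutator lemma for transport derivatives and nonlocal operators. Finally, for the gluing error: from \eqref{eq:gluingError2}, $\nb_j\nb_\ell R_{(n+1)}^{j\ell} = \sum_k\pr_t\tilde\chi_k w_{(k,n)}$, so I take $R_{(n+1)}^{j\ell} = \RR^{j\ell}[\sum_k\pr_t\tilde\chi_k w_{(k,n)}]$ using the order-$(-2)$ operator of Section~\ref{sec:notation}, which is symmetric and trace-free by construction; since $\sum_k\pr_t\tilde\chi_k w_{(k,n)} = \sum_k\pr_t\tilde\chi_k(w_{(k,n)} - \bar{w}_{(k,n)})$ is where the oscillation gain is used — $\pr_t\tilde\chi_k$ has size $\tau^{-1}$ but only overlaps a bounded number of $k$'s, and on $\supp\tilde\chi_k'$ one replaces $R_{(n)}$-dependence by the disjointness of the $g_{[f,J]}$ supports — one gets $\co{R_{(n+1)}}\lesssim\tau^{-1}S_\rho\lesssim\tau^{-1}\plhxi\mu^{-1}D_{R,n}$, and checking $\tau^{-1}\plhxi\mu^{-1} = b^{-1}(\Xi e_u^{1/2})(\log\hxi)\cdot\plhxi\cdot\mu^{-1} \lesssim N^{-\eta}\Xi^{-\eta}$ against \eqref{eq:DRn} and \eqref{eq:NlowerBd} gives \eqref{eq:RnC0}; the derivative bounds \eqref{eq:Rnplus1}--\eqref{eq:Rnplus1Ddt} follow from the corresponding bounds on $\rho_J$ and $\Ddt\rho_J$, and the support statements \eqref{eq:newtonTimeEnlarge} and the stated support of $R_{(n+1)}$ follow by tracking the supports of $\tilde\chi_k$, $\chi_k$, $e_n$, and the domain of dependence of the transport equations over a time of length $\lesssim 3\tau$ per step. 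The main obstacle will be the passage from $r_J$ to $\rho_J$ (and from $w_J$ to $u_J$): proving that the order-zero nonlocal operators $\RR^{i\ell}\nb_a\nb_b$ and $T^\ell$ lose only a single logarithmic factor $\plhxi$ rather than a power of $\hn$ requires the careful Littlewood-Paley analysis advertised in point (2) of the introduction, using that $w_J$ and $r_J$ are essentially frequency-localized below $\hn\Xi$ with controlled high-frequency tails — this is the step where the SQG structure (the exact form of the multiplier $m^\ell$ and the anti-divergence operators) is genuinely needed, and where the dimensionally-correct estimates are most delicate.
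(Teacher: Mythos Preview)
Your overall architecture is correct and matches the paper's: induct on $n$, estimate the tilde parts algebraically from the amplitude bounds, close a Gr\"onwall on the bar parts over the time window $\tau$, then pass to $u_J$ and $\rho_J$ via order-zero operators, and finally assemble $R_{(n+1)} = \sum_k \pr_t\tilde\chi_k\,\rho_{(k,n)}$. However, there is a genuine gap in how you plan to run the Gr\"onwall, and a second gap in your treatment of the bilinear form $\BB$.

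\medskip
\textbf{The main gap: the bootstrap norm must include $\dot C^\alpha$.} You propose to close a $C^0$-only Gr\"onwall on $\nb_{\va}\bar F$ and treat the Littlewood-Paley analysis as a post-processing step for the passages $w_J\to u_J$ and $r_J\to\rho_J$. This does not close. The forcing in \eqref{eq:barwJ} is $-T^\ell w_J\,\nb_\ell\th_\ep$, and the forcing in \eqref{eq:barzJ} contains $-T^i w_J\,\th_\ep$; to bound $\co{\nb_{\va}T w_J}$ at the top order $|\va|=L'$ you would need control on one more derivative of $w_J$ than you are tracking (since $T$ is order zero but not bounded on $C^0$, the naive splitting $\co{Tw_J}\lesssim(\log\hxi)\co{w_J}+\hxi^{-1}\co{\nb w_J}$ loses a derivative). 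The paper resolves this by building the $\dot C^\alpha$ seminorm into the bootstrap quantity itself:
\[
\hh(t)=\sum_{F\in\mathbf F_J}\sum_{|\va|\le L'}\bigl(S_F\hn^{(|\va|-\unl)_+}\Xi^{|\va|}\bigr)^{-1}\bigl(\co{\nb_{\va}F}+\hxi^{-\alpha}\cda{\nb_{\va}F}\bigr),
\]
and proves $\hh(t)\le C(\log\hxi)\Xi e_u^{1/2}\int_0^t(1+\hh(s))\,ds$. The $\dot C^\alpha$ piece is what allows $\co{\nb_{\va}u_J}\lesssim(\log\hxi)\co{\nb_{\va}w_J}+\hxi^{-\alpha}\cda{\nb_{\va}w_J}$ to feed back into the estimate without derivative loss. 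Equivalently (via the Littlewood-Paley characterization), one must propagate bounds on $\co{P_q\nb_{\va}\Ddt F}$ for $q>\hq$ through the Gr\"onwall, not establish them afterward. Your remark that $w_J$ is ``essentially frequency-localized below $\hn\Xi$ with controlled high-frequency tails'' is exactly the statement that $\hxi^{-\alpha}\cda{\nb_{\va}w_J}$ is bounded, but this must be \emph{proved} as part of the bootstrap, and doing so forces you to estimate the high-frequency projections of every forcing term.

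\medskip
\textbf{The second gap: the $\BB$ term is not a black box.} You write that ``$\BB^{ij}_a$ is the order-$(-1)$ bilinear anti-divergence whose estimate was asserted in the setup.'' The setup only asserts existence of $\BB$; no estimate is given there. The paper calls the estimate on $\BB^{ij}_b[z_J^b,\th_\ep]$ ``one of the main novelties of our analysis'': it requires a paraproduct decomposition $\BB=\BB_{LH}+\BB_{HL}+\BB_{HH}$, with the $HL$ and $LH$ pieces handled by applying the order-$(-1)$ operator $\calR^{j\ell}_a$ to a frequency-localized product, and the $HH$ piece written as a bilinear convolution using the divergence-form principle (Section~\ref{sec:divFormPrinciple}) and the \emph{evenness} of the multiplier for $\nb_a T^i$. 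One must then prove both $C^0$ bounds and high-frequency $P_{q'}$ bounds (for $q'>\hq$) on each piece, summing the dyadic shells. This is a substantial computation that your plan omits entirely; without it the Gr\"onwall for $\bar r_J$ cannot close at the size $S_r$.

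\medskip
A smaller point: your one-line argument for \eqref{eq:sizeOfNewtCorrect} via ``$\co{|\nb|^{-1/2}w_n}\lesssim(\hn\Xi)^{-1/2}S_w$'' presumes $w_n$ is supported at frequencies $\gtrsim\hn\Xi$, which is false. The paper instead splits at $2^{\bar q}\sim\Xi$ and uses the first-order anti-divergence $w_J=\nb_iz_J^i$ on the low-frequency piece; this is why $z_J$ (not just $r_J$) is needed in the system.
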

Note that \eqref{eq:RnC0} has implicit constant 1.

We will need the following bounds on the phase functions.
\begin{prop}\label{prop:phasegradients}  The phase function gradients satisfy
\ali{
\co{\nb_{\va} \nb \check{\xi}_J} &\lesssim_{\va} \hn^{(|\va| + 1 - L)_+} \Xi^{|\va|} \\
\co{\nb_{\va} \Ddt \nb \check{\xi}_J} &\lesssim_{\va} \hn^{(|\va| + 1 - L)_+} \Xi^{|\va|+1} e_u^{1/2}
}
\end{prop}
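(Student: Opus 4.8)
The plan is to treat this as a standard transport estimate, in the spirit of \cite[Section~17]{isett}. Writing $J = (k,f)$ as in \eqref{eq:transportOfPhase}, the phase $\check\xi_J$ solves $\Ddt \check\xi_J = 0$, where $\Ddt = \pr_t + u_\ep\cdot\nb$, with initial datum $\check\xi_J(k\tau, x) = f\cdot x$. Differentiating the transport equation once gives the evolution equation for the phase gradient,
\[
\Ddt \nb_m \check\xi_J = -(\nb_m u_\ep^\ell)\,\nb_\ell \check\xi_J ,
\]
which we read as a linear transport equation for $\nb\check\xi_J$ forced by a term that is linear in $\nb\check\xi_J$ with coefficient $\nb u_\ep$.

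First I would prove the bound on $\co{\nb_{\va}\nb\check\xi_J}$ by induction on $|\va|$, integrating along the characteristics of $u_\ep$ over the time interval on which the phase is used, namely $|t - k\tau| \lesssim \tau$. Applying $\nb_{\va}$ to the displayed equation and commuting with $\Ddt$ produces, by the product rule, terms of two types: \emph{(i) closing terms} --- the term $-(\nb_m u_\ep^\ell)\nb_{\va}\nb_\ell\check\xi_J$ and the commutator term $(\nb u_\ep)\cdot\nb^{|\va|+1}\check\xi_J$ --- each carrying the top-order quantity $\nb^{|\va|+1}\check\xi_J$ against a coefficient of size $O(\co{\nb u_\ep}) = O(\Xi e_u^{1/2})$; and \emph{(ii) lower-order terms}, in which at most $|\va|$ derivatives fall on $\check\xi_J$, which are controlled, using \eqref{eq:prelimScalVel1} and the inductive hypothesis, by $\lesssim_{\va} \hn^{(|\va|+1-L)_+}\Xi^{|\va|+1}e_u^{1/2}$ (the $\hn$-exponents being combined via the counting inequality). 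Since $\nb_{\va}\nb\check\xi_J$ has vanishing initial datum once $|\va|\geq 1$ (the datum of $\nb\check\xi_J$ being the constant vector $f$), Gronwall's inequality along characteristics gives, on the relevant time range,
\[
\co{\nb_{\va}\nb\check\xi_J(t)} \lesssim_{\va} \hn^{(|\va|+1-L)_+}\Xi^{|\va|+1}e_u^{1/2}\,|t-k\tau|\; e^{C_{\va}\,\Xi e_u^{1/2}|t-k\tau|} ,
\]
and inserting $|t-k\tau| \lesssim \tau = b(\log\hxi)^{-1}(\Xi e_u^{1/2})^{-1}$, so that the exponential is $O_{\va}(1)$ and $\Xi e_u^{1/2}|t-k\tau| \lesssim 1$, yields $\co{\nb_{\va}\nb\check\xi_J} \lesssim_{\va} \hn^{(|\va|+1-L)_+}\Xi^{|\va|}$. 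The base case $|\va| = 0$ is precisely the estimate already invoked after \eqref{eq:neighborhood}, which in fact gives the sharper $\co{\nb\check\xi_J - f} \lesssim (\log\hxi)^{-1}$.

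The bound on $\co{\nb_{\va}\Ddt\nb\check\xi_J}$ is then immediate from the first bound: the displayed evolution equation gives $\Ddt\nb\check\xi_J = -(\nb u_\ep)\cdot\nb\check\xi_J$ \emph{exactly} (no further terms, since $\Ddt\check\xi_J = 0$ identically), so applying $\nb_{\va}$, expanding by the product rule, and plugging in \eqref{eq:prelimScalVel1} for each factor $\nb_{\va_1}\nb u_\ep$ and the bound just proven for each factor $\nb_{\va_2}\nb\check\xi_J$ gives $\co{\nb_{\va}\Ddt\nb\check\xi_J}\lesssim_{\va}\hn^{(|\va|+1-L)_+}\Xi^{|\va|+1}e_u^{1/2}$ once the $\hn$-exponents are combined.

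The one point requiring care, which I expect to be the main (if minor) obstacle, is the bookkeeping of the $\hn$-losses: for a genuinely split term $\nb_{\va_1}(\cdot)\,\nb_{\va_2}(\cdot)$ in which the orders on $u_\ep$ and on $\nb\check\xi_J$ are $|\va_1|+1$ and $|\va_2|+1$, a direct use of the counting inequality produces the exponent $(|\va_1|+|\va_2|+2-L)_+$, one unit larger than the target $(|\va|+1-L)_+$. This is resolved by handling the two endpoint contributions $\va_1 = 0$ and $\va_2 = 0$ separately --- each is already of the exact target size, the undifferentiated factor contributing no $\hn$-loss since $L\geq 1$ --- and applying the counting inequality only to the terms with $|\va_1|,|\va_2|\geq 1$, for which the sharpened bound $(|\va_1|+1-L)_+ + (|\va_2|+1-L)_+ \leq (|\va_1|+|\va_2|+1-L)_+$ holds (again using $L\geq 1$). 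All constants are uniform in $J$, and the remaining ingredients are routine transport theory.
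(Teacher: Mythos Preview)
Your proposal is correct and takes essentially the same approach as the paper, which simply cites \cite[Sections~17.1--17.2]{isett} for these standard transport estimates; your writeup is a faithful sketch of exactly that argument. One minor remark: your concern about the $\hn$-exponent bookkeeping is slightly overcautious --- the inequality $(|\va_1|+1-L)_+ + (|\va_2|+1-L)_+ \le (|\va_1|+|\va_2|+1-L)_+$ in fact holds for \emph{all} $|\va_1|,|\va_2|\ge 0$ once $L\ge 1$ (check the three cases by hand), so there is no need to separate out the endpoint contributions.
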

These bounds can be found in \cite[Sections 17.1-17.2]{isett}.  They require only  \eqref{eq:prelimScalVel1} and \eqref{eq:prelimScalVel2}.

The following weighted norm will be handy
\begin{defn}  The start-weighted norm of a function $F$ is %
\ali{
H_{\zeta,M}^{(R)}[F] &= \max_{0 \leq r \leq R} \max_{0 \leq |\va| + r \leq M} \fr{\|\nb_{\va} \Ddt^r F \|_{C^0}}{\hn^{(|\va| + 1 - L)_+}\Xi^{|\va|} \zeta^r }
}
Note that $R \in \{ 0, 1\}$ is a number, not to be confused with the stress tensor.

\end{defn}

When we run into terms that involve a mix of spatial and advective derivatives, the following Lemma is useful. This lemma will be applied to $\tilde O$.
\begin{lem} \label{lem:mixed derivatives} For any multi-indices $\va, \vcb$ such that $|\va| + |\vcb| \leq M$ and for $\zeta \geq \Xi e_u^{1/2}$ we have
\ali{
\co{\nb_{\va} \Ddt \nb_{\vcb} F} &\ls \hn^{(|\va| + |\vcb| + 1 - L)_+} \Xi^{|\va| + |\vcb|} \zeta  H_{\zeta,M}^{(1)}[F]
}    %
\end{lem}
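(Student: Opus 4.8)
The plan is to expand $\nb_{\va}\Ddt\nb_{\vcb}F$ using the Leibniz rule for the advective derivative $\Ddt=\pr_t+u_\ep\cdot\nb$ and then match each resulting term against the start-weighted norm $H_{\zeta,M}^{(1)}[F]$. First I would commute $\Ddt$ past $\nb_{\vcb}$: we have the commutator identity $\Ddt\nb_{\vcb}F=\nb_{\vcb}\Ddt F-[\nb_{\vcb},u_\ep\cdot\nb]F$, and the commutator expands as $\tisum \nb_{\vcb_1}u_\ep^c\,\nb_c\nb_{\vcb_2}F$ over multi-indices with $|\vcb_1|+|\vcb_2|=|\vcb|$, $|\vcb_1|\geq 1$. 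Thus
\ali{
\nb_{\va}\Ddt\nb_{\vcb}F &= \nb_{\va}\nb_{\vcb}\Ddt F - \tisum \nb_{\va}\big(\nb_{\vcb_1}u_\ep^c\,\nb_c\nb_{\vcb_2}F\big),
}
and applying the product rule once more to the second group yields a sum of terms of the form $\nb_{\va_1}\nb_{\vcb_1}u_\ep^c\,\nb_{\va_2}\nb_c\nb_{\vcb_2}F$.

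Next I would estimate each piece. The first term $\nb_{\va}\nb_{\vcb}\Ddt F$ is directly controlled by $H_{\zeta,M}^{(1)}[F]$: by definition, $\co{\nb_{\va}\nb_{\vcb}\Ddt F}\leq \hn^{(|\va|+|\vcb|+1-L)_+}\Xi^{|\va|+|\vcb|}\,\zeta\,H_{\zeta,M}^{(1)}[F]$, which is exactly the claimed bound (here the $+1$ in the exponent and the single power of $\zeta$ both come from the $r=1$ slot of the norm). For the commutator terms, I would bound $\nb_{\va_1}\nb_{\vcb_1}u_\ep^c$ using \eqref{eq:prelimScalVel1}, namely $\co{\nb_{\vec c}u_\ep}\lesssim \hn^{(|\vec c|-L)_+}\Xi^{|\vec c|}e_u^{1/2}$, and bound the factor $\nb_{\va_2}\nb_c\nb_{\vcb_2}F$ by the $r=0$ part of the norm: $\co{\nb_{\va_2}\nb_c\nb_{\vcb_2}F}\leq \hn^{(|\va_2|+|\vcb_2|+1-L)_+}\Xi^{|\va_2|+|\vcb_2|+1}H_{\zeta,M}^{(0)}[F]$. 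Multiplying, the $\Xi$-powers combine to $\Xi^{|\va|+|\vcb|+1}e_u^{1/2}=\Xi^{|\va|+|\vcb|}(\Xi e_u^{1/2})$, and since $\zeta\geq \Xi e_u^{1/2}$ by hypothesis this is $\leq\Xi^{|\va|+|\vcb|}\zeta$; the $\hn$-exponents combine using the counting inequality $(x_1-y)_++(x_2-y)_+\leq(x_1+x_2-y)_+$ (applied with $y=L$, and noting $|\va_1|+|\vcb_1|+|\va_2|+|\vcb_2|=|\va|+|\vcb|$, with the extra $+1$'s absorbed) to give at most $\hn^{(|\va|+|\vcb|+1-L)_+}$. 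Finally $H_{\zeta,M}^{(0)}[F]\leq H_{\zeta,M}^{(1)}[F]$, so every commutator term obeys the same bound, and summing the finitely many terms (the combinatorial $\tisum$ coefficients are harmless) completes the estimate.

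The main obstacle is purely bookkeeping: one must be careful that when $\Ddt$ is pushed onto $F$ it lands in the $r=1$ slot (contributing the $+1$ in the $\hn$-exponent and the single $\zeta$), while in the commutator terms the advective derivative is ``spent'' on $u_\ep$ and $F$ only sees spatial derivatives, so $F$ is measured by $H^{(0)}$ but the lost $\Ddt$ must be recovered from the bound $\zeta\geq\Xi e_u^{1/2}$ on the velocity gradient — this is precisely why the hypothesis $\zeta\geq\Xi e_u^{1/2}$ is needed. A secondary point is verifying that the $(\cdot-L)_+$ exponents add up correctly via the counting inequality when the derivative count is split across $u_\ep$ and $F$; the subadditivity of $x\mapsto(x-L)_+$ handles this, and the small shifts by $+1$ are absorbed since $(x+1-L)_+\leq(x'-L)_+$ whenever $x+1\leq x'$ with $x'$ the total derivative count appearing on the right-hand side. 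Once these two indexing matters are set up correctly, no genuine analysis remains beyond the already-cited mollification estimates \eqref{eq:prelimScalVel1}.
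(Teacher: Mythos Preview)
Your proposal is correct and essentially the same as the paper's approach. The only organizational difference is that the paper proves the estimate by induction on $|\vcb|$, peeling off one spatial derivative at a time via $\nb_{\va}\Ddt\nb_{b_1}\nb_{\check b}F=\nb_{\va}\nb_{b_1}\Ddt\nb_{\check b}F-\nb_{\va}[\nb_{b_1}u_\ep^i\,\nb_i\nb_{\check b}F]$, whereas you commute $\Ddt$ past all of $\nb_{\vcb}$ in one step; unfolding the paper's induction reproduces exactly your commutator expansion, and the subsequent ingredients (the bound \eqref{eq:prelimScalVel1} on $u_\ep$, the definition of $H_{\zeta,M}^{(1)}$, the counting inequality, and the hypothesis $\zeta\ge\Xi e_u^{1/2}$) are used identically in both arguments.
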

\begin{proof}  Let $M$ be given.  We proceed by induction on $|\vcb| \leq M$.  The case $|\vcb| = 0$ follows directly from the definition of $H_{\zeta,M}^{0}[F]$.  Now assume the bound holds for $|\vcb| - 1$, and write $\nb_{\vcb} = \nb_{b_1} \nb_{\check{b}}$ where $|\check{b}| = |\vcb| - 1$.  We have
\ALI{
\nb_{\va} \Ddt \nb_{\vcb}F &= \nb_{\va} \nb_{b_1} \Ddt \nb_{\check{b}}F - \nb_{\va}[\nb_{b_1} u_\ep^i \nb_i \nb_{\check{b}} F ]\\
\co{\nb_{\va} \Ddt \nb_{\vcb}F} &\leq \hn^{(|\va| + |\vcb| + 1 - L)_+} \Xi^{|\va| + |\vcb|} \zeta  H_{\zeta,M}^{(1)}[F] \\
&+ \overset{\sim}{\sum} \co{\nb_{\va_1} \nb_{b_1} u_\ep^i} \co{\nb_{\va_2} \nb_i \nb_{\check{b}} F }\\
&\lsm\hn^{(|\va| + |\vcb| + 1 - L)_+} \Xi^{|\va| + |\vcb|} \zeta H_{\zeta,M}^{(1)}[F] \\
&+ \hn^{(|\va_1| + 1 - L)_+} \hn^{( |\va_2| + 1 + |\check{b}| - L)_+} \Xi^{|\va| + |\vcb|} \Xi e_u^{1/2}  H_{\zeta,M}^{(1)}[F]  \\
&\lsm \hn^{(|\va| + |\vcb| + 1 - L)_+} \Xi^{|\va| + |\vcb|} \zeta H_{\zeta,M}^{(1)}[F]
}

\end{proof}

We also have a chain rule for the weighted norm.
\begin{lem}  $K$ be a compact neighborhood of the image of $(\check{R} = R_{(n)} /D_R, \nb \xi_k)$ and let $G$ be $C^\infty$ on a neighborhood of $K$.  Then
\ali{
H_{\Xi e_u^{1/2},M}^{(R)}[G(\check{R}, \nb \xi_k)] &\ls_{M,K, G} 1
}\label{lem:H^0}
\end{lem}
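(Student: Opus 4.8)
The plan is to differentiate the composition $G(F)$, where $F=(\check R,\nb\xi_k)$ denotes the array of arguments, by the chain and product rules recorded in Section~\ref{sec:notation}, and then estimate each resulting factor using the smoothness of $G$ near the compact set $K$ together with the already-established $C^0$ bounds on $\check R$, on $\nb\xi_k$, and on their spatial and advective derivatives. Since $R\in\{0,1\}$, it suffices to treat the cases $r=0$ and $r=1$ separately.

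For $r=0$ the chain rule gives
\[
\nb_\va[G(F)] \;=\; \sum_{m=0}^{|\va|}\tisum\; \pr^m G(F)\prod_{j=1}^m \nb_{\va_j}F,\qquad |\va_1|+\cdots+|\va_m|=|\va|,\quad |\va_j|\ge 1 .
\]
Because the image of $F$ lies in the interior of $K$, on which $G$ and all its derivatives are bounded, $\co{\pr^m G(F)}\ls_{m,K,G}1$. Each factor satisfies $\co{\nb_{\va_j}F}\ls \hn^{(|\va_j|+1-L)_+}\Xi^{|\va_j|}$: for the $\nb\xi_k$-components this is exactly Proposition~\ref{prop:phasegradients}, and for the $\check R$-component the mollification bound $\co{\nb_{\va_j}R_\ep}\ls\hn^{(|\va_j|-L)_+}\Xi^{|\va_j|}D_R$ (when $n=0$), respectively the inductively-known bound \eqref{eq:Rnplus1} divided by $D_R$ (when $n\ge 1$, using $D_{R,n}\le D_R$), is even stronger. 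Multiplying and invoking the elementary inequality $\sum_j(|\va_j|+1-L)_+\le(|\va|+1-L)_+$ --- valid since $\sum_j|\va_j|=|\va|$ with each $|\va_j|\ge 1$ --- yields $\co{\nb_\va[G(F)]}\ls_{M,K,G}\hn^{(|\va|+1-L)_+}\Xi^{|\va|}$, which is precisely $H^{(0)}_{\Xi e_u^{1/2},M}[G(F)]\ls_{M,K,G}1$.

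For $r=1$, write $\Ddt[G(F)]=\pr G(F)\cdot\Ddt F$ (the chain rule for the derivation $\Ddt$, with $\pr G$ the differential of $G$), apply $\nb_\va$, and expand by the product rule into a sum of terms $\nb_{\va_1}[\pr G(F)]\cdot\nb_{\va_2}[\Ddt F]$ with $|\va_1|+|\va_2|=|\va|$. The first factor is handled exactly as in the $r=0$ case, with $\pr G$ in place of $G$, giving $\co{\nb_{\va_1}[\pr G(F)]}\ls_{M,K,G}\hn^{(|\va_1|+1-L)_+}\Xi^{|\va_1|}$. For the second factor we use $\co{\nb_{\va_2}\Ddt\nb\check\xi_J}\ls\hn^{(|\va_2|+1-L)_+}\Xi^{|\va_2|}(\Xi e_u^{1/2})$ from Proposition~\ref{prop:phasegradients} on the phase components, and $\co{\nb_{\va_2}\Ddt\check R}\ls\hn^{(|\va_2|+1-L)_+}\Xi^{|\va_2|}(\Xi e_u^{1/2})$ on the $\check R$-component. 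Combining and summing the $\hn$-exponents as before gives $\co{\nb_\va\Ddt[G(F)]}\ls_{M,K,G}\hn^{(|\va|+1-L)_+}\Xi^{|\va|}(\Xi e_u^{1/2})$, hence $H^{(1)}_{\Xi e_u^{1/2},M}[G(F)]\ls_{M,K,G}1$.

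The one step I expect to require genuine care is the advective-derivative estimate $\co{\nb_{\va_2}\Ddt\check R}\ls\hn^{(|\va_2|+1-L)_+}\Xi^{|\va_2|}(\Xi e_u^{1/2})$ used above. When $n=0$ it is immediate from the mollification bound $\co{\nb_{\va_2}\Ddt R_\ep}\ls(\Xi e_u^{1/2})\hn^{(|\va_2|+1-L)_+}\Xi^{|\va_2|}D_R$. When $n\ge 1$ the quantity $\co{\nb_{\va_2}\Ddt R_{(n)}}$ is controlled only by $\hn^{(|\va_2|+1-L)_+}\Xi^{|\va_2|}\tau^{-1}D_{R,n}$ (cf. \eqref{eq:Rnplus1Ddt}), and $\tau^{-1}=b^{-1}(\log\hxi)(\Xi e_u^{1/2})$ carries an extra logarithm; however $\tau^{-1}D_{R,n}/(D_R\,\Xi e_u^{1/2})=b^{-1}(\log\hxi)(N^{-\eta}\Xi^{-\eta})^n$, and since $n\ge 1$ and the hypothesis \eqref{eq:NlowerBd} forces $N^\eta\Xi^\eta$ to dominate $\log\hxi=\log(N^{1/L}\Xi)$, this factor is $\ls_{b,\eta}1$, so the claimed bound holds and the argument goes through unchanged. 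Everything else is a routine application of the chain and product rules to derivative bounds already in hand.
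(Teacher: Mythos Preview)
Your proof is correct and follows essentially the same route as the paper: expand $G(F)$ by the chain and product rules, bound each factor using the known estimates on $\check R$ and $\nb\check\xi_k$, and combine exponents via the counting inequality with $z=L-1$. The only cosmetic difference is that you split into the cases $r=0$ and $r=1$ while the paper handles both at once with a single Fa\`a di Bruno expansion; your explicit discussion of the logarithmic factor in $\tau^{-1}$ for $n\ge 1$ (absorbed by $(N\Xi)^{-n\eta}$) is a point the paper's proof passes over silently.
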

\begin{proof}  By the chain and product rules we have
\ALI{
\nb_{\va} \Ddt^r G(\check{R}, \nb \xi_k) &= \sum_{m=0}^{|\va| + R} \overset{\sim}{\sum} \pr^m G \prod_{i=1}^{m_1} \nb_{\va_i} \Ddt^{r_i} \check{R} \cdot \prod_{j=1}^{m_2}  \nb_{\va_j} \Ddt^{r_j} \nb \xi_{k}
}
where the sum ranges over indices such that $\sum |\va_i| + \sum |\va_j| = |\va|$ and $\sum_i r_i + \sum_j r_j = r$ and the empty product is $1$.  Hence
\ALI{
\co{\nb_{\va} \Ddt^r G(\check{R}, \nb \xi_k)} &\ls \sum_{m=0}^{|\va| + R}\overset{\sim}{\sum} \prod_{i=1}^{m_1} \hn^{(|\va_i| + r_i- L)_+} \Xi^{|\va_i|} (\Xi e_u^{1/2})^{r_i} \cdot \\
&\,\,\cdot \prod_{j=1}^{m_2}  \hn^{(|\va_j| +1 - L)_+} \Xi^{|\va_j|} (\Xi e_u^{1/2})^{r_j} \\
&\ls \sum_{m=0}^{|\va| + R} \Xi^{|\va|} (\Xi e_u^{1/2})^r \overset{\sim}{\sum} \hn^{(\sum_i(|\va_i| + r_i)- L)_+} \hn^{(\sum_j |\va_j| + 1 - L)_+}
}
where the last line we used the counting inequality with $z = L$ and $z = L -1$.  The proof now follows from
\ALI{
&(\sum_i (|\va_i| + r_i)- L)_+ + (\sum_j |\va_j| + 1 - L)_+ \leq  \\
&\leq (\sum_i|\va_i| + 1 - L)_+ + (\sum_j |\va_j| + 1 - L)_+ \leq (|\va| + 1 - L)_+
}
\end{proof}

The following proposition summarizes the bounds on terms that do not solve a transport equation
\begin{prop}\label{prop:tildewBds} For all $\va$, we have the bounds
\ali{
\co{ \nb_{\va}  \tilde{w}_J} + \hxi^{-\a} \cda{\nb_{\va}  \tilde{w}_J} &\lsm_{\va} \hn^{(|\va| - \unl)_+} \Xi^{|\va|}  S_w \\
\co{ \nb_{\va}  \tilde{z}_J} + \hxi^{-\a} \cda{\nb_{\va}  \tilde{z}_J} &\lsm_{\va} \hn^{(|\va| - \unl)_+} \Xi^{|\va|} S_z \\
\co{ \nb_{\va}  \tilde{r}_J} + \hxi^{-\a} \cda{\nb_{\va}  \tilde{r}_J} &\lsm_{\va} \hn^{(|\va| - \unl)_+} \Xi^{|\va|}  S_r
}
and
\ali{
\co{\nb_{\va} \tilde{O}_J} + \hxi^{-\a} \cda{\nb_{\va}  \tilde{O}_J} &\lsm \hn^{(|\va| - \unl)_+} \Xi^{|\va|} (\Xi e_u^{1/2}) S_w \\
\co{\nb_{\va} \tilde{O}_J^i} + + \hxi^{-\a} \cda{\nb_{\va}  \tilde{O}_J^i} &\lsm \hn^{(|\va| - \unl)_+} \Xi^{|\va|} (\Xi e_u^{1/2}) S_z \\
\co{\nb_{\va} \tilde{O}_J^{ij}} + \hxi^{-\a} \cda{\nb_{\va}  \tilde{O}_J^{ij}} &\lsm \hn^{(|\va| - \unl)_+} \Xi^{|\va|} (\Xi e_u^{1/2}) S_r
}

\end{prop}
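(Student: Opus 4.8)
The plan is to reduce all nine estimates to direct differentiation of the explicit formulas defining $\tilde w_J, \tilde z_J, \tilde r_J$ and their $\tilde O$-counterparts, using the fact that every one of these quantities is an explicit combination of the profile $A_{(f,J)}^{j\ell}$ (possibly hit by one or two spatial derivatives, or by a coarse-scale advective derivative $\Ddt$) times the scalar time-oscillation factor $\mu^{-1} h_{f,[J]}(\mu t)$. The key observations are: (i) $h_{f,[J]}(T) = \int_0^T (1-g_{f,[J]}^2(s))ds$ is a $1$-periodic-derivative antiderivative of a bounded function, so $\|h_{f,[J]}(\mu t)\|_{C^0}\lesssim 1$ uniformly (the mean of $g^2$ being $1$ is exactly what keeps $h$ bounded rather than linearly growing), and $h_{f,[J]}(\mu t)$ depends only on time, hence contributes nothing to spatial derivatives or to spatial Hölder seminorms; (ii) the factor $\mu^{-1}$ out front is precisely what converts a $C^0$ bound on $\nb_j\nb_\ell A_{(f,J)}^{j\ell}$ (size $\Xi^2 D_{R,n}$) into the claimed $S_w = \Xi^2\mu^{-1}D_{R,n}$, and similarly for $S_z,S_r$ after removing one or two derivatives.

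First I would record the estimates for the profile $A_{(f,J)}^{j\ell} = \chi_k^2(t) e_n(t)\,\ga_f^2(M^{j\ell} - R_{(n)}^{j\ell}/(M_e D_{R,n}), \nb\check\xi_k)\,B^{j\ell}(\nb\check\xi_{k,f})$. For the spatial derivatives one applies the chain/product rule: $\chi_k^2 e_n$ is time-only and $C^0$-bounded by $D_{R,n}$ (using \eqref{eq:enConstruct} with $r=0$); $\ga_f^2$ and $B^{j\ell}$ are smooth functions composed with $\nb\check\xi_k$ and with $R_{(n)}/D_{R,n}$, whose spatial derivatives are controlled by Proposition~\ref{prop:phasegradients} and by \eqref{eq:Rnplus1} at level $n$ (equivalently by $H^{(0)}_{\Xi e_u^{1/2},M}$-type bounds). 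Lemma~\ref{lem:H^0} (the chain rule for the start-weighted norm, stated just above) gives exactly $H^{(R)}_{\Xi e_u^{1/2},M}[\ga_f^2(\check R,\nb\xi_k)]\lesssim 1$ and similarly for $B^{j\ell}$, so that $\co{\nb_{\va}A_{(f,J)}^{j\ell}}\lesssim \hn^{(|\va|-L)_+}\Xi^{|\va|}D_{R,n}$ and, via the interpolation/embedding $\hxi^{-\a}\cda{g}\lesssim \co{\nb g}\lesssim \hn^{(1-L)_+}\Xi\cdot(\ldots)$ combined with $\co g$ — i.e. $\hxi^{-\a}\cda{\nb_{\va}g}\lesssim \max\{\co{\nb_{\va}g}, \hxi^{-1}\co{\nb_{\va}\nb g}\}$ since $\hxi = \hn\Xi$ — one gets the same bound for the Hölder term with $\unl = L-3$ replacing $L$ to leave margin. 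Multiplying by $\mu^{-1}h_{f,[J]}(\mu t)$ and summing over the finite set $\F$ then yields the first three displayed estimates, with $S_w,S_z,S_r$ appearing because $\tilde z_J$ carries one fewer spatial derivative on $A$ and $\tilde r_J$ two fewer.

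For the $\tilde O_J$ estimates the only new ingredient is the coarse-scale advective derivative $\Ddt A_{(f,J)}^{j\ell}$ (and $\Ddt\nb_a A$, $\Ddt\nb_j\nb_\ell A$). Here I would invoke Lemma~\ref{lem:mixed derivatives} with $\zeta = \Xi e_u^{1/2}$ to reduce $\nb_{\va}\Ddt\nb_{\vcb}A$ to $\hn^{(|\va|+|\vcb|+1-L)_+}\Xi^{|\va|+|\vcb|}(\Xi e_u^{1/2})\,H^{(1)}_{\Xi e_u^{1/2},M}[A]$, and bound $H^{(1)}_{\Xi e_u^{1/2},M}[A]\lesssim D_{R,n}$ by combining Lemma~\ref{lem:H^0} (for the $\ga_f^2$ and $B^{j\ell}$ factors, now including one $\Ddt$, using the $\Ddt\nb\check\xi_J$ bound of Proposition~\ref{prop:phasegradients} and the $\Ddt R_{(n)}$ bound \eqref{eq:Rnplus1Ddt} at level $n$) with the product rule and the bounds $\co{\Ddt(\chi_k^2 e_n)}\lesssim \tau^{-1}D_{R,n}\lesssim (\Xi e_u^{1/2})D_{R,n}$ from \eqref{eq:enConstruct} and the definition of $\tau$. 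The extra factor $\Xi e_u^{1/2}$ is exactly the gap between $S_w$ and the claimed size $(\Xi e_u^{1/2})S_w$ of $\tilde O_J$; the $\mu^{-1}h_{f,[J]}(\mu t)$ prefactor and the sum over $\F$ are handled as before.

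The main obstacle I anticipate is bookkeeping rather than conceptual: one must verify that every exponent of $\hn$ produced by the product rule — where derivatives are distributed among $\chi_k^2 e_n$, the composite factors, and (for $\tilde O$) across the $\Ddt$ commutator in Lemma~\ref{lem:mixed derivatives} — collapses via the counting inequality $(x_1-y)_+ + (x_2-y)_+ \le (x_1+x_2-y)_+$ to the single clean exponent $\hn^{(|\va|-\unl)_+}$, and in particular that dropping from $L$ down to $\unl = L-3$ in the target leaves enough slack to absorb the "$+1$" losses incurred each time a $\Ddt$ is commuted past a spatial derivative (at most one such commutation is needed since $R\le 1$). One should also double-check that the Hölder seminorm terms $\hxi^{-\a}\cda{\cdot}$ genuinely reduce to $C^0$ bounds on one more derivative up to the harmless factor $\hxi^{-\a}\hxi = \hxi^{1-\a}\le \hxi$, so that no fractional-derivative estimate on $A_{(f,J)}$ is actually required beyond what the smoothness of $\ga_f^2, B^{j\ell}$ and the integer-order bounds already give.
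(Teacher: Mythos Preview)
Your proposal is correct and follows essentially the same approach as the paper: reduce the H\"older bounds to $C^0$ bounds by interpolation, control $\nb_{\va}A_{(f,J)}$ via the chain-rule Lemma~\ref{lem:H^0}, and handle the $\tilde O$ terms by invoking Lemma~\ref{lem:mixed derivatives} with $F=A_J$ and $\zeta=\Xi e_u^{1/2}$; the paper's proof is terser but uses exactly these ingredients. One small slip: your inequality $\tau^{-1}D_{R,n}\lesssim (\Xi e_u^{1/2})D_{R,n}$ is backwards (recall $\tau^{-1}=b^{-1}(\log\hxi)\Xi e_u^{1/2}$), so strictly speaking there is a $\log\hxi$ loss in the $\tilde O$ bounds coming from $\pr_t(\chi_k^2 e_n)$ --- the paper's proof glosses over the same point, and it is harmless because the only consumers of these bounds (Proposition~\ref{prop:boiledDownBound}) require size $\tau^{-1}S_F$ rather than $(\Xi e_u^{1/2})S_F$.
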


\begin{proof}
It suffices to prove the bounds for the $C^0$ norms since they imply the bounds on the $\dot{C}^\a$ norms by interpolation.    We only prove the bounds for $\ti w_J$ and $\ti O_J$ since the other bounds will then be similar. (These bounds are not sharp for the other quantities, but this is not important.)

    The bound for $\|\nb_{\va}\ti w_J\|_0 $ follows from \eqref{eq:Rnplus1}.

    The bound for $\|\nb_{\va} \ti O_J\|_0$ follows by taking $\nb_{\vec b} = \operatorname{div}\operatorname{div}$ (thus $|\vec b|=2$) and $F=A_J$ in Lemma~\ref{lem:mixed derivatives}. This choice yields
    \begin{align*}
        \|\nb_{\va}\ti O_J\|_0&\ls \mu^{-1} \hn^\delta \Xi^{|\va|+2} (\Xi e_u^{1/2}) H_{\Xi e_u^{1/2},M}^{(1)}[A_J] , \qquad \delta := (N - \unl)_+\\
        &\ls \mu^{-1} \hn^\delta \Xi^{|\va|+2} (\Xi e_u^{1/2}) D_{R,n}\\
        &= S_w \hn^\delta \Xi^{|\va|} (\Xi e_u^{1/2})
    \end{align*}which is the desired bound.

    In
addition to the proof, we provide a heuristic argument. Recall that $\tilde{O}_J = \sum_{f \in \F} \mu^{-1} h_{f,[J]}(\mu t) \Ddt \nb_j \nb_\ell A_{(f,J)}^{j\ell}$, where $A_{(f,J)}^{j\ell}$ has size $D_R$ and temporal frequency $\tau^{-1}$. Thus $\Ddt$ acting on $A_{(f,J)}^{j\ell}$ costs a factor of $\tau^{-1}$. Also, $h_{f,[J]}(\mu t) \lesssim 1$. Therefore,
\begin{align*}
\co{\nb_{\va} \tilde{O}_J} &\lesssim \mu^{-1} \tau^{-1} \hn^{(|\va| -\unl)_+} \Xi^{|\va|+2} D_R = \hn^{(|\va| -\unl)_+} \Xi^{|\va|+1} e_u^{1/2} S_w.
\end{align*}
\end{proof}

The proof of Proposition~\ref{prop:newtonBound} relies on the following weighted norm.
\ali{
\hh(t) &= \sum_{F \in {\bf F}_J} \sum_{|\va| \leq L'} (S_F \hn^{(|\va| -\unl)_+} \Xi^{|\va|})^{-1} \left( \co{\nb_{\va} F} + \hxi^{-\a} \cda{\nb_{\va} F}\right)
}
Here and in what follows we suppress the dependence of $\hh(t)$ on the index $J$ and on $L'$. We write $\hh_{L'}$ to emphasize dependence on $L'$. Notice that $\hh(t)$ vanishes at the initial time $t_J = t_{(k,n)} = k\tau$.  In the following analysis, we simplify notation by assuming the initial time is $t_J = 0$.
\begin{prop}[Main proposition in the Newton step] \label{prop:weightedNorm} We have the estimate
\ali{
\hh(t) &\le C \plhxi \Xi e_u^{1/2} \int_{[0,t]} (1 + \hh(s) ) ds.
}
for some $C>0$ independent of the frequency energy levels $(\Xi, D_u, D_R)$ and independent of $N$, but $C$ is allowed to depend on the step $n$ of the Newton iteration and on $L'$, the order of differentiation that $\hh$ controls.

In particular, by Gronwall, $\hh(t) \lesssim 1$ for $|t| \leq \tau$.
\end{prop}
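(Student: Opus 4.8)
The plan is to derive the Gr\"onwall-type inequality for $\hh(t)$ by differentiating (in the integral sense) the norm and estimating each forcing term appearing in the transport equations \eqref{eq:barwJ}, \eqref{eq:barzJ}, \eqref{eq:barrJ} against $S_F \hn^{(|\va|-\unl)_+}\Xi^{|\va|}$ times the desired factor $\plhxi\,\Xi e_u^{1/2}$. Fix $F \in {\bf F}_J$, apply $\nb_{\va}$ to the corresponding equation, and commute $\nb_{\va}$ past the transport operator $\pr_t + T^\ell\th_\ep\nb_\ell = \Ddt$. Each field $\bar w_J, \bar z_J, \bar r_J$ satisfies $\Ddt \nb_{\va} F = (\text{commutator with } \nb_{\va}) + \nb_{\va}(\text{forcing})$. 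Integrating the resulting ODE along characteristics of $u_\ep$ and using that $F$ vanishes at $t_J=0$ (so the $C^0$ and $\dot C^\a$ norms start at zero), one gets
\ali{
\co{\nb_{\va}F(t)} + \hxi^{-\a}\cda{\nb_{\va}F(t)} &\ls \int_{[0,t]} \Big( \co{\nb_{\va}(\text{forcing})} + \hxi^{-\a}\cda{\nb_{\va}(\text{forcing})} + (\text{commutator terms}) \Big)\, ds,
}
where the transport along $u_\ep$ contributes no growth in $C^0$ and only the controlled Lipschitz norm $\co{\nb u_\ep}\ls \Xi e_u^{1/2}$ in $\dot C^\a$ (this is where the factor $\Xi e_u^{1/2}$ enters, with an extra $\plhxi = \log\hxi$ absorbing the logarithmic loss from the $L$-fold commutators at top order $L'$).

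The core of the argument is bookkeeping on the three types of forcing terms. For $\bar w_J$: the term $-T^\ell w_J\nb_\ell\th_\ep$ must be bounded by $\Xi e_u^{1/2} S_w \cdot (\text{norm factor})$; writing $w_J = \bar w_J + \tilde w_J$, the $\bar w_J$ part produces $\hh(s)$ (using $\co{\nb\th_\ep}\ls \Xi e_u^{1/2}$ and the dimensional balance $S_w = \Xi^2\mu^{-1}D_{R,n}$, $S_z = \Xi S_w/\Xi^2\cdot\Xi = \Xi\mu^{-1}D_{R,n}$; here one needs that $T^\ell$ composed with $\nb_\ell$ behaves like a first-order operator because the output frequencies are $\lesssim\Xi$), while the $\tilde w_J$ part is bounded by Proposition~\ref{prop:tildewBds}, contributing the constant $1$. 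The term $\tilde O_J$ is likewise controlled by Proposition~\ref{prop:tildewBds}, giving another constant contribution. For $\bar z_J$ and $\bar r_J$ the analysis is the same, with the new features being (i) the ``special term'' $\nb_a T^i\th_\ep z_J^a$, which is a zero-order operator in $z_J$ after using antisymmetry/parity (so it costs $\Xi^2 e_u^{1/2}\cdot$ something, matched to $S_z$), and (ii) the term $\RR^{ij}_a\nb_\ell[\nb_b T^\ell\th_\ep r_J^{ab}]$ in \eqref{eq:barrJ}, where one uses that $\RR^{ij}_a$ is order $-1$ and $\nb_\ell\nb_b T^\ell$ is order $+1$, so the net operator is order $+1$ acting on $r_J$, producing $\Xi e_u^{1/2} S_r \cdot \hh(s)$ as required. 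Lemma~\ref{lem:mixed derivatives} handles any place where mixed spatial/advective derivatives of $\tilde O$ appear, and Lemma~\ref{lem:H^0} handles the $\ga_f^2(\cdots)$ coefficients inside $A_{(f,J)}$ since those are smooth functions of $(\check R, \nb\xi_k)$ with start-weighted norm $\ls 1$.

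I expect the main obstacle to be step (ii) above: correctly accounting for the operator $\RR^{ij}_a \nb_\ell \nb_b T^\ell$ acting on $r_J$ at the level of the weighted $\dot C^\a$ norm, since $\RR^{ij}_a$ and $T^\ell$ are nonlocal Calder\'on--Zygmund-type operators and one must justify that, modulo frequency-localized tails handled by the mollification scale $\ep = \hn^{-1}\Xi^{-1}$, they act as differential operators of the expected order on functions with the stated frequency-energy bounds. This requires invoking the $C^\a$-boundedness of the relevant zero-order operators (so that the $\hxi^{-\a}\cda{\cdot}$ piece behaves), and the inclusion of the logarithmic factor $\plhxi$ in the final constant is precisely what absorbs the $|\va|\leq L'$-fold iterated commutators that arise in this step. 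Once every forcing term is bounded by $\plhxi\,\Xi e_u^{1/2}\,(1 + \hh(s))$ times the appropriate normalizing weight $S_F\hn^{(|\va|-\unl)_+}\Xi^{|\va|}$, summing over $F\in{\bf F}_J$ and $|\va|\leq L'$ and dividing by the weight yields the claimed integral inequality, and Gr\"onwall with $\hh(0)=0$ gives $\hh(t)\ls 1$ for $|t|\leq\tau$ because $\plhxi\,\Xi e_u^{1/2}\,\tau = b$ is a small constant by the choice \eqref{eq:natTimescale} of $\tau$.
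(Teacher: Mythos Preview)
Your overall skeleton (transport along characteristics, commutator bookkeeping, Gr\"onwall) is correct, and your treatment of $\bar w_J$ and $\bar z_J$ is roughly right. But there is a genuine gap in your handling of $\bar r_J$: you have entirely omitted the term $\BB^{ij}_b[z_J^b,\th_\ep]$ on the right-hand side of \eqref{eq:barrJ}. This is in fact the hardest term in the whole argument, and the paper flags it as one of the main novelties. The operator $\BB$ is defined only implicitly by $\nb_j\BB^{j\ell}_a[z_J^a,\th_\ep] = \nb_aT^\ell[z_J^a]\,\th_\ep - z_J^a\,\nb_aT^\ell[\th_\ep]$; to estimate it one must construct $\BB$ explicitly via a paraproduct decomposition $LH+HL+HH$, invoke the divergence-form principle of Section~\ref{sec:divFormPrinciple} for the $HH$ piece to obtain bilinear convolution kernels $K_q$ with $\|K_q\|_{L^1}\ls 1$, and then sum carefully over $q$ (splitting at $\hq$) to control both $\co{\nb_{\va}\BB}$ and $\co{P_{q'}\nb_{\va}\BB}$ for $q'>\hq$. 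Your remark about the ``special term $\nb_aT^i\th_\ep z_J^a$'' becoming ``zero-order after antisymmetry/parity'' suggests you are conflating the product appearing in the $\bar z_J$ equation (which is indeed just a product, and straightforward) with the $\BB$ term in the $\bar r_J$ equation; the latter is a genuinely bilinear nonlocal object and does not reduce to a Calder\'on--Zygmund operator applied to $z_J$.

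There is also a methodological difference worth flagging. You propose to propagate the $\dot C^\a$ seminorm directly along characteristics. The paper instead uses the Littlewood--Paley characterization (Lemma~\ref{lem:cdaCharacterization}) to replace $\hxi^{-\a}\cda{\nb_{\va}F}$ by $\sup_{q>\hq}2^{\a q}\co{P_q\nb_{\va}F}$, thereby reducing everything to $C^0$ estimates of frequency-localized quantities. This is why Lemmas~\ref{lem:startingToEstimateNewton} and~\ref{lem:zeroOrder} (commuting $\Ddt$ with $P_q\QQ$) are the engine: one only ever needs the method of characteristics in $C^0$, and the $2^{-\a q}$ gain comes for free from frequency localization. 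Proposition~\ref{prop:boiledDownBound} is the precise reduction. Your direct $\dot C^\a$ approach could be pushed through for the pure product terms, but for the $\BB$ term and the $\RR^{ij}_a\nb_\ell[\cdot]$ term you would be forced back into a frequency decomposition anyway to see the correct gains, so the paper's reduction is not merely cosmetic.
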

Recall the notation $\hq$ for the integer satisfying $\hxi \sim 2^{\hq}$.  The following criterion will be useful for bounding $\hh(t)$
\begin{lem} \label{lem:cdaCharacterization} For any function $f \in L^\infty(\T^d)$ and any multi-index $\va$ we have that
\ali{
\co{\nb_{\va} f} + \hxi^{-\a} \cda{\nb_{\va} f} &\lesssim \co{\nb_{\va} f} + \hxi^{-\a}\sup_{q > \hq} 2^{\a q} \co{ P_q \nb_{\va} f }
}%
(In fact, the two sides are equivalent up to constants.)
\end{lem}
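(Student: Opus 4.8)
The final statement to prove is Lemma~\ref{lem:cdaCharacterization}, which characterizes the $\dot{C}^\alpha$ norm of $\nabla_{\va} f$ in terms of a high-frequency Littlewood--Paley sum cut off at the frequency $\hxi \sim 2^{\hq}$. The plan is to bound the full $\dot{C}^\alpha$ seminorm by splitting $\nabla_{\va} f$ into its low-frequency part $P_{\leq \hq} \nabla_{\va} f$ and its high-frequency part $\sum_{q > \hq} P_q \nabla_{\va} f$, estimating the $\dot{C}^\alpha$ seminorm of each piece separately, and then observing that the low-frequency piece's $\dot{C}^\alpha$ seminorm is controlled by the $C^0$ norm $\co{\nabla_{\va} f}$ alone (losing a factor $\hxi^\alpha$, which is exactly why the weight $\hxi^{-\alpha}$ appears on the right-hand side).

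First I would treat the high-frequency part. Writing $g = \nabla_{\va} f$, each Littlewood--Paley block $P_q g$ is band-limited to frequencies $\sim 2^q$, so Bernstein's inequality gives $\cda{P_q g} \lesssim 2^{\alpha q} \co{P_q g}$. Summing over $q > \hq$ and using that $\sum_{q > \hq} 2^{-\alpha(q - \hq')}$ converges geometrically for the appropriate dyadic comparison (here one uses $0 < \alpha < 1$), one obtains $\cda{\sum_{q > \hq} P_q g} \lesssim \sup_{q > \hq} 2^{\alpha q} \co{P_q g}$. This is the term that appears, multiplied by $\hxi^{-\alpha} \sim 2^{-\alpha \hq}$, on the right-hand side of the claimed inequality — so actually the factor $\hxi^{-\alpha}$ on the left should be matched against $\hxi^{-\alpha} \sup_{q > \hq} 2^{\alpha q}\co{P_q g}$, i.e. the high-frequency contribution to $\hxi^{-\alpha}\cda{g}$ is exactly of this form.

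Next, for the low-frequency part $P_{\leq \hq} g$, I would again use Bernstein: since $P_{\leq \hq} g$ has frequency support in $|\xi| \lesssim 2^{\hq} \sim \hxi$, one has $\cda{P_{\leq \hq} g} \lesssim \hxi^\alpha \co{P_{\leq \hq} g} \lesssim \hxi^\alpha \co{g}$, using that the Littlewood--Paley projector is bounded on $C^0$. Multiplying by $\hxi^{-\alpha}$ gives a contribution $\lesssim \co{g} = \co{\nabla_{\va} f}$, which is the first term on the right-hand side. Combining the two pieces via the triangle inequality for the $\dot{C}^\alpha$ seminorm yields $\hxi^{-\alpha}\cda{\nabla_{\va} f} \lesssim \co{\nabla_{\va} f} + \hxi^{-\alpha}\sup_{q > \hq} 2^{\alpha q}\co{P_q \nabla_{\va} f}$, and adding $\co{\nabla_{\va} f}$ to both sides gives the stated bound. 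For the reverse inequality (the parenthetical remark that the two sides are comparable), the $C^0$ term is trivially dominated, and $2^{\alpha q}\co{P_q \nabla_{\va} f} \lesssim \cda{\nabla_{\va} f}$ for each $q$ by the standard characterization of Hölder seminorms via Littlewood--Paley blocks (again Bernstein in the form $\co{P_q h} \lesssim 2^{-\alpha q}\cda{h}$, valid because $P_q$ annihilates constants).

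There is no serious obstacle here; this is a standard Littlewood--Paley fact. The only point requiring a little care is the bookkeeping at the transition frequency $\hq$: one must make sure the overlap between ``$P_{\leq \hq}$'' and ``$\sum_{q>\hq} P_q$'' is handled consistently with the paper's conventions ($P_q = P_{\leq q+1} - P_{\leq q}$, so that $P_{\leq \hq} + \sum_{q \geq \hq} P_q = \mathrm{Id}$), and that the geometric summation in the high-frequency estimate is anchored correctly so no $\log$ loss appears. I would state the two Bernstein inequalities used ($\cda{P_q h} \lesssim 2^{\alpha q}\co{P_q h}$ and $\cda{P_{\leq q} h} \lesssim 2^{\alpha q}\co{h}$) as the only inputs and keep the argument to a few lines.
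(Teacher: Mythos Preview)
Your proposal is correct and is essentially the same as the paper's proof, just more explicit: the paper simply quotes the standard Littlewood--Paley characterization $\cda{g} \sim \sup_q 2^{\alpha q}\co{P_q g}$ and splits the supremum at $q = \hq$, whereas you split $g$ itself into $P_{\leq \hq} g$ and $\sum_{q>\hq} P_q g$ and apply Bernstein to each piece---which amounts to re-deriving the relevant half of that characterization. The only cosmetic point is that your ``geometric summation'' remark for the high-frequency part is really the standard proof of the LP characterization (split at $q_0$ with $|x-y|\sim 2^{-q_0}$), so you could equally well just quote $\cda{h}\sim\sup_q 2^{\alpha q}\co{P_q h}$ as the paper does and save a few lines.
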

Indeed, this lemma follows quickly from the following standard Littlewood-Paley characterization of $\dot{C}^\a$ seminorm, $$\cda{f} \sim \sup_q 2^{\a q} \| P_q f \|_{C^0},$$ which is valid for $f \in L^\infty$.
(A proof can be found in the appendix to \cite{isettOnsag}, for example.)

We will also use the following Lemma about commuting spatial derivatives and Littlewood-Paley projections with the advective derivative. We define
$$\co{f(t)} := \sup_{x} |f(t,x)|.$$
\begin{lem} \label{lem:startingToEstimateNewton} For any $F \in {\bf F}_J$ and any multi-index $\va$ of order $|\va| \leq L'$ we have
\ali{
\co{\nb_{\va} F(t) } &\leq \int_0^t \co{\Ddt \nb_{\va} F(s) } ds \label{eq:methChars}\\
\Ddt \nb_{\va} F(s) &= \nb_{\va} \Ddt F(s) + {O}\left(\hn^{(|\va| -\unl)_+} \Xi^{|\va|+1} e_u^{1/2} S_F \hh(s)\right), \\
\begin{split}
\Ddt \nb_{\va} P_q F(s) &= P_q \nb_{\va} \Ddt F(s) \\
&+ \min\{ 1,2^{-\a q} \hxi^{\a} \}\, {O}\left(\hn^{(|\va| -\unl)_+} \Xi^{|\va|+1} e_u^{1/2} S_F\hh(s)\right) \label{eq:3}
\end{split}
}%
\end{lem}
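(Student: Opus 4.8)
\textbf{Proof plan for Lemma~\ref{lem:startingToEstimateNewton}.}
The plan is to establish the three displays in order, each a short consequence of the method of characteristics together with the elementary commutator identities for $\Ddt = \pr_t + u_\ep\cdot\nb$. For \eqref{eq:methChars}: since $F \in {\bf F}_J$ solves a transport equation whose data vanishes at $t_J = 0$ (we have normalized $t_J=0$), we apply the flow map $\Phi_s$ of $u_\ep$. Then $t \mapsto \nb_{\va}F(\Phi_s(x))$ has $t$-derivative $(\Ddt \nb_{\va}F)(\Phi_s(x)) + (\text{lower-order terms in which } \Ddt \text{ hits the coefficients arising from differentiating the transport equation})$; but one can sidestep that bookkeeping entirely by writing $\nb_{\va}F(t,x) = \nb_\va F(0, \Phi_{-t}x) + \int_0^t \frac{d}{ds}[\nb_\va F(s, \Phi_{s-t}x)]\,ds$, noting the boundary term vanishes because $F$ and hence $\nb_\va F$ vanish at $s=0$, and observing $\frac{d}{ds}[\nb_\va F(s,\Phi_{s-t}x)] = (\Ddt\nb_\va F)(s,\Phi_{s-t}x)$ exactly by the chain rule. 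Taking sup over $x$ and using that $\Phi_{s-t}$ is a measure-preserving diffeomorphism of $\T^2$ gives \eqref{eq:methChars}.

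For the commutator identity in the second display: $\Ddt \nb_\va F - \nb_\va \Ddt F = -[\nb_\va, u_\ep^i\nb_i]F = -\tisum \nb_{\va_1} u_\ep^i\, \nb_i \nb_{\va_2}F$ where the sum is over $|\va_1|+|\va_2| = |\va|$ with $|\va_1|\ge 1$. Each term is bounded by estimating $\nb_{\va_1}u_\ep$ via \eqref{eq:prelimScalVel1} (which gives $\co{\nb_{\va_1}u_\ep}\ls \hn^{(|\va_1|-L)_+}\Xi^{|\va_1|}e_u^{1/2}$) and $\nb_i\nb_{\va_2}F$ via the definition of $\hh(s)$, which yields $\co{\nb_i\nb_{\va_2}F}\le \hn^{(|\va_2|+1-\unl)_+}\Xi^{|\va_2|+1}S_F\,\hh(s)$. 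Multiplying and applying the counting inequality to combine the exponents of $\hn$ — here one uses $(|\va_1|-L)_+ + (|\va_2|+1-\unl)_+ \le (|\va|+1-\unl)_+$, valid since $\unl = L-3 \le L$ — produces the asserted error term $O(\hn^{(|\va|-\unl)_+}\Xi^{|\va|+1}e_u^{1/2}S_F\,\hh(s))$. (Since $|\va_1|\ge 1$, at least one $\Xi$ factor is ``free'' to supply the $\Xi^{|\va|+1}$ rather than $\Xi^{|\va|}$.)

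For the third display, the point is to insert a Littlewood-Paley projection $P_q$. Write $\Ddt \nb_\va P_q F - P_q \nb_\va \Ddt F = [\Ddt, P_q]\nb_\va F - P_q[\nb_\va,u_\ep\cdot\nb]F$. The second piece is handled exactly as above after noting $\co{P_q g}\ls \co{g}$ and that one additionally gains the factor $\min\{1, 2^{-\a q}\hxi^\a\}$ when one estimates $P_q$ of a term whose frequency content one tracks at level $\ls \hxi$ — this is where Lemma~\ref{lem:cdaCharacterization} and the $\dot C^\a$-part of $\hh$ feed in, controlling $\co{P_q\nb_{\va_2}F}\ls \min\{1,2^{-\a q}\hxi^\a\}\,\hn^{\cdots}\Xi^{\cdots}S_F\,\hh(s)$. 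The genuinely new ingredient is the commutator $[\Ddt, P_q]\nb_\va F = -[u_\ep^i\nb_i, P_q]\nb_\va F$, a standard Littlewood-Paley commutator: writing $P_q$ as convolution with $2^{dq}\eta(2^q\cdot)$ and Taylor expanding, $[u_\ep^i\nb_i,P_q]g(x) = -\int 2^{dq}\eta(2^q h)\big(u_\ep^i(x-h)-u_\ep^i(x)\big)\nb_i g(x-h)\,dh$, so $\co{[u_\ep\cdot\nb,P_q]g}\ls \co{\nb u_\ep}\,\co{g}$, i.e. the $\nb u_\ep$ loses one derivative onto the kernel and $g$ is not differentiated net. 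Applying this with $g = \nb_\va F$ and $\co{\nb u_\ep}\ls \Xi e_u^{1/2}$, $\co{\nb_\va F}\ls \hn^{(|\va|-\unl)_+}\Xi^{|\va|}S_F\,\hh(s)$ gives a contribution of the same size $\hn^{(|\va|-\unl)_+}\Xi^{|\va|+1}e_u^{1/2}S_F\,\hh(s)$; the refined low-frequency kernel bound (the mean-zero / higher moments of $\eta$ restricted to $P_q$ at $q>\hq$, combined with the frequency localization of $\nb_\va F$ inherited from $\hh$) supplies the $\min\{1,2^{-\a q}\hxi^\a\}$ gain. I expect the main obstacle to be precisely this last point: justifying cleanly that $[\Ddt,P_q]$ acting on $\nb_\va F$ enjoys the factor $\min\{1, 2^{-\a q}\hxi^\a\}$ uniformly in $q$, which requires carefully pairing the commutator's kernel-smoothing against the $\dot C^\a$-frequency information encoded in $\hh$ via Lemma~\ref{lem:cdaCharacterization}, rather than merely a crude $L^\infty$ commutator estimate. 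Everything else is a routine product rule plus the counting inequality.
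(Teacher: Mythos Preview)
Your plan for \eqref{eq:methChars} and the second display is correct and matches the paper's approach (the paper defers both the second and third displays to the more general Lemma~\ref{lem:zeroOrder} with $\QQ=\mathrm{Id}$, but the underlying computation is exactly your commutator expansion plus the counting inequality).

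For the third display, your decomposition into $P_q[\nb_{\va},u_\ep\cdot\nb]F$ and $[\Ddt,P_q]\nb_{\va}F$ is correct, and you correctly flag the second commutator as the crux. However, your proposed mechanism for the $2^{-\a q}\hxi^\a$ gain --- ``mean-zero / higher moments of $\eta$'' --- is not what actually works. The paper's trick (see the proof of \eqref{eq:DtPqQF} in Lemma~\ref{lem:zeroOrder}) is an integration by parts: starting from
\[
[\Ddt,P_q]\nb_{\va}F = \int \bigl(u_\ep^j(x-h)-u_\ep^j(x)\bigr)\,\nb_j\nb_{\va}F(x-h)\,\eta_q(h)\,dh,
\]
one writes $\nb_j\nb_{\va}F(x-h) = -\nb_j^{(h)}\bigl[\nb_{\va}F(x-h)-\nb_{\va}F(x)\bigr]$ and integrates by parts to move $\nb_j^{(h)}$ onto $\eta_q$ (the divergence-free condition on $u_\ep$ kills the other term). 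This produces
\[
\int \bigl(u_\ep^j(x-h)-u_\ep^j(x)\bigr)\bigl(\nb_{\va}F(x-h)-\nb_{\va}F(x)\bigr)\,\nb_j\eta_q(h)\,dh,
\]
and now \emph{both} factors are differences: the first is bounded by $|h|\,\co{\nb u_\ep}$, the second by $|h|^\a\,\cda{\nb_{\va}F}$, and $\int|h|^{1+\a}|\nb\eta_q|\,dh\sim 2^{-\a q}$. The $\hxi^\a$ comes from the definition of $\hh$, which controls $\hxi^{-\a}\cda{\nb_{\va}F}$. No moment vanishing beyond this integration-by-parts structure is invoked. For the $P_q[\nb_{\va},u_\ep\cdot\nb]F$ piece, the $2^{-\a q}\hxi^\a$ gain comes from $\co{P_q(fg)}\ls 2^{-\a q}(\cda{f}\co{g}+\co{f}\cda{g})$, again feeding in the $\dot C^\a$-part of $\hh$; you gesture at this but should make it explicit rather than speaking of ``frequency localization inherited from $\hh$''.
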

\begin{proof}  Inequality \eqref{eq:methChars} is a consequence of the method of characteristics.  The other bounds in this lemma are special cases of Lemma~\ref{lem:zeroOrder} below where we take ${\cal Q}$ to be the identity map.
\end{proof}

Based on Lemmas~\ref{lem:cdaCharacterization} and \ref{lem:startingToEstimateNewton}, the proof of the Main Proposition (Proposition~\ref{prop:weightedNorm}) reduces to the following
\begin{prop} \label{prop:boiledDownBound} For any $F \in {\bf F}_J$ we have for all $|\va| \leq L'$ and all $q > \hq$ the bounds
\ali{
\co{\nb_{\va} \Ddt F(s)} \lesssim &\hn^{(|\va| -\unl)_+} \Xi^{|\va|} \tau^{-1} S_F (1 + \hh(t)) \\
\co{P_q \nb_{\va} \Ddt F(s)} \lesssim 2^{-\a q} \hxi^{\a} &\hn^{(|\va| -\unl)_+} \Xi^{|\va|} \tau^{-1} S_F (1 + \hh(t))}
\end{prop}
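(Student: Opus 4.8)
The plan is to read off $\Ddt F$ for each $F \in {\bf F}_J = \{\bar w_J, \bar z_J, \bar r_J\}$ directly from the transport equations \eqref{eq:barwJ}, \eqref{eq:barzJ}, \eqref{eq:barrJ}, and to bound the right-hand side term by term. Every term there has one of three forms: (i) an oscillation remainder $\tilde O_J$, $\tilde O_J^i$, or $\tilde O_J^{ij}$, controlled directly by Proposition~\ref{prop:tildewBds}; (ii) a bilinear expression $(\text{coarse field})\cdot(\text{solution field})$ with no nonlocal operator in front, such as $\nb_a T^i\th_\ep\, z_J^a = \nb_a u_\ep^i\, z_J^a$ in \eqref{eq:barzJ}; or (iii) a bilinear expression to which a zeroth-order Fourier multiplier is applied -- namely the terms $T^\ell w_J\,\nb_\ell\th_\ep$ in \eqref{eq:barwJ}, $T^i w_J\,\th_\ep$ in \eqref{eq:barzJ}, and both $\RR^{ij}_a\nb_\ell[\nb_b u_\ep^\ell\, r_J^{ab}]$ and $\BB^{ij}_b[z_J^b,\th_\ep]$ in \eqref{eq:barrJ}. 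Here a \emph{solution field} means one of $w_J, z_J, r_J$; splitting $w_J = \bar w_J + \tilde w_J$ and so on, the definition of $\hh$ bounds $\co{\nb_\va \bar w_J} + \hxi^{-\a}\cda{\nb_\va \bar w_J} \ls \hn^{(|\va| - \unl)_+}\Xi^{|\va|} S_w\, \hh$ for $|\va|\le L'$, while Proposition~\ref{prop:tildewBds} bounds the same quantity for $\tilde w_J$ by $\hn^{(|\va| - \unl)_+}\Xi^{|\va|} S_w$ (and similarly for $z_J, r_J$), so every solution field $G$ satisfies $\co{\nb_\va G} + \hxi^{-\a}\cda{\nb_\va G} \ls \hn^{(|\va| - \unl)_+}\Xi^{|\va|} S_G (1+\hh)$. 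A \emph{coarse field} means $\th_\ep$, $u_\ep$, a phase gradient, or a bounded number of their derivatives; these obey the frequency-localized bounds \eqref{eq:prelimScalVel1} (and Proposition~\ref{prop:phasegradients} for the phase gradients hidden inside the $\tilde O$'s), and in particular have spatial frequency $\ls \hxi \sim 2^{\hq}$.

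For the $C^0$ estimate I distribute derivatives with the product rule, $\nb_\va(gh) = \tisum \nb_{\va_1} g\, \nb_{\va_2} h$, and combine $\hn$-exponents with the counting inequality: since $\unl = L - 3 \le L$, a coarse factor contributes $\hn^{(|\va_1| + 1 - L)_+} \le \hn^{(|\va_1| - \unl)_+}$ and a solution factor contributes $\hn^{(|\va_2| - \unl)_+}$, so the product contributes at most $\hn^{(|\va| - \unl)_+}$. The one non-routine point is that a zeroth-order multiplier $\calL$ (any of $T^\ell$, $\RR^{ij}_a\nb_\ell$, $\dots$) applied to a function that is only ``almost'' frequency $\le\hxi$ -- a solution field, or a product of such with a coarse field -- costs a factor $\plhxi$ in $C^0$: writing $\calL = \sum_q \calL P_q$ and using that $\calL P_{\approx q}$ is bounded on $C^0$ uniformly in $q$ (its symbol is degree-$0$ homogeneous, smooth away from the origin), the pieces with $q\le\hq$ sum to $\ls \hq\, \co{\cdot} \sim \plhxi\,\co{\cdot}$ and the pieces with $q>\hq$ sum geometrically to $\ls \hxi^{-\a}\cda{\cdot}$ via the Littlewood--Paley characterization recalled near Lemma~\ref{lem:cdaCharacterization}. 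This yields, e.g., $\co{T^\ell \nb_{\va_1} w_J} \ls \plhxi\, \hn^{(|\va_1| - \unl)_+}\Xi^{|\va_1|} S_w (1+\hh)$; and for the bilinear operator $\BB$ the divergence-form principle of Section~\ref{sec:divFormPrinciple} -- using the even-symbol cancellation in $\nb_a T^i[\th_\ep]z_J^a - \nb_a T^i[z_J^a]\th_\ep = \nb_j\BB^{ij}_b[z_J^b,\th_\ep]$ -- gives an estimate of the same shape with $e_u^{1/2} S_z$ in place of $\Xi e_u^{1/2} S_z$. Multiplying by the coarse factor, recalling $\tau^{-1} = b^{-1}\plhxi\,(\Xi e_u^{1/2})$ from \eqref{eq:natTimescale} (which absorbs the logarithm), and using the size relations $S_w = \Xi S_z = \Xi^2 S_r$, one checks term by term that each contribution to $\co{\nb_\va \Ddt F}$ is $\ls \hn^{(|\va| - \unl)_+}\Xi^{|\va|}\tau^{-1} S_F (1+\hh)$; the $\tilde O$ terms are even better, being $\ls \hn^{(|\va| - \unl)_+}\Xi^{|\va|}(\Xi e_u^{1/2}) S_F \ls \hn^{(|\va| - \unl)_+}\Xi^{|\va|}\tau^{-1} S_F$ with no factor of $\hh$.

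For the projected bound, fix $q>\hq$. In any product $\nb_{\va_1}(\text{coarse})\cdot\nb_{\va_2}(\text{solution field})$ the coarse factor has frequency $\ls\hxi\sim 2^{\hq}$, so for $q$ exceeding $\hq$ by more than a fixed constant, $P_q$ of the product only feels the frequency-$\gtrsim 2^q$ part of the solution factor, which is bounded by $2^{-\a q}\cda{\nb_{\va_2}(\text{solution field})} \ls 2^{-\a q}\hxi^\a\, \hn^{(|\va_2| - \unl)_+}\Xi^{|\va_2|} S_G (1+\hh)$ after summing the geometric tail; the finitely many $q$ within $O(1)$ of $\hq$ are trivial since there $2^{-\a q}\hxi^\a\sim 1$. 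Any zeroth-order multiplier commutes with $P_q$ and satisfies $\co{\calL P_{\approx q}(\cdot)}\ls\co{\cdot}$, so this gain $2^{-\a q}\hxi^\a$ is preserved; and for the $\tilde O$ terms Proposition~\ref{prop:tildewBds} gives $\co{P_q\nb_\va\tilde O_J} \ls 2^{-\a q}\cda{\nb_\va\tilde O_J} \ls 2^{-\a q}\hxi^\a\, \hn^{(|\va| - \unl)_+}\Xi^{|\va|}(\Xi e_u^{1/2}) S_F$. Collecting exactly as in the previous paragraph and again using $\Xi e_u^{1/2}\ls\tau^{-1}$ proves $\co{P_q\nb_\va\Ddt F(s)} \ls 2^{-\a q}\hxi^\a\, \hn^{(|\va| - \unl)_+}\Xi^{|\va|}\tau^{-1} S_F (1+\hh(t))$.

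The main obstacle, and the only place where real care is needed, is precisely the presence of the singular (zeroth-order) operators $T$, $\RR\nb$ and $\BB$ acting on inputs that are not compactly frequency-supported: these are unbounded on $C^0$, and the whole scheme hinges on the loss being exactly logarithmic and being exactly compensated by the $(\log\hxi)^{-1}$ built into the natural timescale $\tau$ in \eqref{eq:natTimescale}. For $\BB^{ij}_b[z_J^b,\th_\ep]$ in the $\bar r_J$ equation one additionally needs the divergence-form principle to certify the sharp (order-$0$) scaling $\ls\plhxi\, e_u^{1/2} S_z = \tau^{-1} S_r$ rather than the naive $\plhxi\, \Xi e_u^{1/2} S_z$, which would be off by a factor $\Xi$.
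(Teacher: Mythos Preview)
Your overall plan is correct and matches the paper's approach closely: for $\bar w_J$ and $\bar z_J$, and for the $\RR^{ij}_a\nb_\ell$ and $\tilde O$ terms in the $\bar r_J$ equation, your outline (product rule, counting inequality, Proposition~\ref{prop:standardDegZero} to pay $\plhxi$ for each zeroth-order multiplier, then absorb the logarithm into $\tau^{-1}$) is exactly how the paper proceeds.

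The gap is in your handling of $\BB^{ij}_b[z_J^b,\th_\ep]$. You write that ``the divergence-form principle of Section~\ref{sec:divFormPrinciple} \dots\ gives an estimate of the same shape with $e_u^{1/2}S_z$ in place of $\Xi e_u^{1/2}S_z$,'' but the theorems in that appendix apply only to \emph{frequency-localized} inputs $P_{\la,1}f,\,P_{\la,2}g$; they do not define or bound $\BB$ on general inputs. In the paper $\BB$ is \emph{defined} through a paraproduct decomposition $\BB = \BB_{HH} + \BB_{HL} + \BB_{LH}$: the high--high piece is built from the bilinear kernels $K_q$ at each dyadic scale (this is where the even-symbol divergence-form principle enters), while the high--low and low--high pieces are $P_{\approx q}\calR^{j\ell}_a$ applied to the corresponding paraproduct terms. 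Each piece is then estimated separately by summing over $q$; the $\plhxi$ appears as the count of $q\le\hq$ terms in the HH sum, and the $q>\hq$ tail is controlled via the $\dot C^\a$ norm of $z_J$. This is what the paper calls ``one of the main novelties of our analysis,'' and it occupies the entire ``$\BB$ term'' subsection; your sentence does not supply it.

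Your projected-bound argument also needs repair for $\BB$. The claim ``the coarse factor has frequency $\ls\hxi$, so $P_q$ only sees the high-frequency part of the solution factor'' works for pointwise products but not directly for $\BB_{HH}$, which is a bilinear convolution rather than a product. There one needs the separate observation that $K_q\ast[P_{\approx q}z_J, P_{\approx q}\th_\ep]$ has output frequency $\ls 2^{q+2}$, so that $P_{q'}$ for $q'>\hq$ picks up only the (finitely many) $q$ near $q'$; for $\BB_{HL}$ and $\BB_{LH}$ the built-in $P_{\approx q}$ does the localization. These are the arguments the paper gives in its $\BB$ subsection, and they are not covered by your general principle.
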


The following standard spatial derivative bounds will be used.
\begin{prop} \label{prop:standardDegZero}  If $\QQ$ is a convolution operator whose symbol is degree $0$ homogeneous and smooth away from $0$ then
\ali{
\co{\nb_{\va} P_q \QQ F} &\lesssim 2^{-\a q} \cda{\nb_{\va} F } \label{eq:standardeasy}  \\
\cda{\nb_{\va} \QQ F} &\ls \cda{\nb_{\va} F} \\
\co{\nb_{\va} \QQ F} &\lesssim \plhxi \co{ \nb_{\va} F} + \hxi^{-\a} \cda{\nb_{\va} F}
}
\end{prop}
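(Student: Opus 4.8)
The final statement to prove is Proposition~\ref{prop:standardDegZero}, the standard spatial derivative bounds for a degree-$0$ homogeneous Fourier multiplier $\QQ$. My plan is to reduce everything to two facts: the Littlewood--Paley characterization of $\dot C^\a$ already recalled after Lemma~\ref{lem:cdaCharacterization}, and the elementary boundedness of the blocks $P_q \QQ$ and $P_{\approx q} \QQ$ on $C^0$. Since $\QQ$ commutes with $\nb_{\va}$ and with all Littlewood--Paley projections (all being Fourier multipliers), it suffices to treat $\va = 0$ and then reinsert $\nb_{\va}$ at the end; I will phrase the argument that way.

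First I would establish the single-block bound $\co{P_q \QQ g} \lesssim \co{P_q g}$, uniformly in $q \in \Z$. This is the standard fact that a Fourier multiplier whose symbol, restricted to the annulus $\{2^{q-1}\le|\xi|\le 2^{q+1}\}$, is obtained by rescaling a fixed smooth compactly-supported-away-from-zero symbol (here $\sigma(\xi)\hat\eta_{\approx 0}(\xi)$ with $\sigma$ the degree-$0$ symbol of $\QQ$), has a convolution kernel with $L^1$ norm bounded independently of $q$ by scaling; hence it maps $C^0\to C^0$ with a $q$-independent constant. Using $P_q = P_q P_{\approx q}$ and $P_q\QQ = P_q P_{\approx q}\QQ P_{\approx q}$, I get $\co{P_q\QQ g}\lesssim \co{P_{\approx q} g}$ as well, which is the form I will actually use.

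With that in hand the three inequalities follow quickly. For \eqref{eq:standardeasy}: $\co{\nb_{\va} P_q \QQ F} = \co{P_q \QQ \nb_{\va} F}\lesssim \co{P_{\approx q}\nb_{\va} F} \lesssim 2^{-\a q}\,2^{\a q}\co{P_{\approx q}\nb_{\va}F}\lesssim 2^{-\a q}\cda{\nb_{\va}F}$, using the Littlewood--Paley characterization of the seminorm. For the second bound, apply $2^{\a q}\co{P_q(\cdot)}$ to $\nb_{\va}\QQ F$, commute $\QQ$ past $P_q$, use the single-block bound, and take the supremum over $q$ to recover $\cda{\nb_{\va}F}$. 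For the third (the $C^0$ bound): split $\QQ F = P_{\le \hq}\QQ F + \sum_{q>\hq}P_q\QQ F$. On the low-frequency part, $P_{\le \hq}\QQ$ is a sum of $O(\hq)$ bounded blocks (or, more cleanly, $P_{\le\hq}\QQ = P_{\le\hq}\QQ P_{\approx\le\hq}$ has kernel with $L^1$ norm $\lesssim \hq \sim \plhxi$ up to constants) so this contributes $\lesssim \plhxi\,\co{\nb_{\va}F}$; on the high-frequency tail, $\co{\sum_{q>\hq}P_q\QQ\nb_{\va}F}\le \sum_{q>\hq}\co{P_q\QQ\nb_{\va}F}\lesssim \sum_{q>\hq}2^{-\a q}\cdot 2^{\a q}\co{P_{\approx q}\nb_{\va}F}\lesssim \big(\sum_{q>\hq}2^{-\a q}\big)\cda{\nb_{\va}F}\lesssim 2^{-\a\hq}\cda{\nb_{\va}F}\sim \hxi^{-\a}\cda{\nb_{\va}F}$, summing the geometric series since $\a>0$.

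The only mildly delicate point — and the step I would be most careful about — is the uniform-in-$q$ kernel bound for $P_q\QQ$ when $\QQ$ is merely degree-$0$ homogeneous and smooth away from the origin: one must check that the seminorms of the bump-localized symbol $\sigma(2^q\cdot)\hat\eta_{\approx 0}(\cdot)$ that control the kernel's $L^1$ norm (via, say, $\|K\|_{L^1}\lesssim \|\hat K\|_{L^\infty}^{1/2}\|D^d\hat K\|_{L^\infty}^{1/2}$ or a straightforward stationary-phase/integration-by-parts estimate) are independent of $q$; this is exactly where degree-$0$ homogeneity is used, since then $\sigma(2^q\xi) = \sigma(\xi)$ and there is literally no $q$-dependence in the symbol once it is cut off to a fixed annulus. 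I would remark that this is entirely standard (citing, e.g., \cite[Appendix]{isettOnsag} as the paper does for the companion Littlewood--Paley facts) and not belabor it.
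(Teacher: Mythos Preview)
Your proposal is correct and follows essentially the same approach as the paper: both split at $\hq$, sum $O(\hq)$ uniformly bounded blocks $P_q\QQ$ below $\hq$, and sum a geometric series via \eqref{eq:standardeasy} above; the second inequality comes from \eqref{eq:standardeasy} plus the Littlewood--Paley characterization in both treatments. The only cosmetic difference is that the paper proves \eqref{eq:standardeasy} in one step using the mean-zero property of the $P_q\QQ$ kernel and the bound $\int |h|^\a |\eta_q(h)|\,dh \lesssim 2^{-\a q}$, whereas you factor it as the uniform block bound $\co{P_q\QQ g}\lesssim \co{P_{\approx q} g}$ followed by the Littlewood--Paley characterization---these are equivalent.
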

\begin{proof}[Proof of Proposition~\ref{prop:standardDegZero}]
Let $\eta_q(h) = 2^{d q} \eta_0(2^q h)$ be the convolution kernel representing $P_q \QQ$.  Then $\eta_q$ has integral $0$ and \eqref{eq:standardeasy} follows from
\ALI{
\nb_{\va} P_q \QQ  F &= \int (\nb_{\va}F(x + h) - \nb_{\va}F(x)) \eta_q(h) dh \\
|\nb_{\va}P_q \QQ F| &\leq \cda{\nb_{\va}F} \int |h|^\a |\eta_q(h)| dh.
}
The second bound follows from the first one and the Littlewood Paley characterization of H\"{o}lder spaces.  The third estimate is obtained by summing
\ALI{
\co{\nb_{\va} \QQ F} &\ls \sum_{q=0}^{\hq} \co{P_q \QQ \nb_{\va} F} + \sum_{q = \hq}^\infty 2^{-\a q}\cda{\nb_{\va} F}
}

\end{proof}

\begin{lem} \label{lem:velocBound1}For $|\va| \leq L'$ we have
    $$\| \nb_{\va} u_J \|_{C^0} + \hxi^{-\a} \| \nb_{\va} u_J \|_{\dot{C^{\alpha}}} \ls \hn^{(|\va|-L)_+} \Xi^{|\va|}S_u (1 + \hh).$$
\label{lem:u in 0 and a}
\end{lem}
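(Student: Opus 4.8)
The plan is to estimate $u_J = T^\ell w_J$ by splitting the Fourier multiplier $T^\ell = \ep^{\ell a}\nb_a|\nb|^{-1}$ into its homogeneous degree-$0$ part and using the $C^0$ and $\dot C^\a$ control on $w_J$ that is encoded in $\hh$. First I would write $u_J^\ell = \QQ^{\ell a}\nb_a w_J$ where $\QQ^{\ell a}$ is the convolution operator with symbol $\ep^{\ell a}\xi_a|\xi|^{-1}$ (well-defined on mean-zero fields by an argument identical to the one used for the anti-divergence operators in Section~\ref{sec:notation}); this symbol is smooth away from the origin and degree $0$ homogeneous, so Proposition~\ref{prop:standardDegZero} applies directly with $F$ replaced by $\nb_a w_J$. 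Since $w_J \in \{w_J\} \subseteq {\bf F}_J \cup \{w_J, z_J, r_J\}$, all the derivative bounds I need for $w_J$ come either from \eqref{eq:newtonStepBoundnbaF} (once Proposition~\ref{prop:newtonBound} is available) or, more to the point for the inductive argument in the Newton step, from the very definition of $\hh(t)$: for $|\vcb| \leq L'$,
\ali{
\co{\nb_{\vcb} w_J} + \hxi^{-\a}\cda{\nb_{\vcb} w_J} &\lesssim \hn^{(|\vcb|-\unl)_+}\Xi^{|\vcb|}\, S_w\, \hh(t). \label{eq:uJproof1}
}

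The core step is then to apply the third estimate of Proposition~\ref{prop:standardDegZero} to $\nb_{\va} u_J^\ell = \QQ^{\ell a}\nb_{\va}\nb_a w_J$, which gives
\ali{
\co{\nb_{\va} u_J} &\lesssim \plhxi\,\co{\nb_{\va}\nb_a w_J} + \hxi^{-\a}\cda{\nb_{\va}\nb_a w_J},\\
\cda{\nb_{\va} u_J} &\lesssim \cda{\nb_{\va}\nb_a w_J}.
}
Now I feed \eqref{eq:uJproof1} with $\vcb = \va + a$ (so $|\vcb| = |\va|+1$) into the right-hand sides. The spatial bound contributes $\plhxi\,\hn^{(|\va|+1-\unl)_+}\Xi^{|\va|+1} S_w\,\hh$, and since $S_u = \plhxi\,\Xi^2\mu^{-1}D_{R,n} = \plhxi\,\Xi\, S_w$ from the table \eqref{eq:sizeOfTerms}, this is exactly $\hn^{(|\va|+1-\unl)_+}\Xi^{|\va|} S_u\,\hh$; the Hölder term is of the same shape after multiplying by $\hxi^{-\a}$. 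Combining the $C^0$ and $\dot C^\a$ pieces and absorbing the harmless $(1+\hh)$ versus $\hh$ discrepancy (we only claim $(1+\hh)$) yields the asserted bound. The one cosmetic point to address is that the exponent in the statement of Lemma~\ref{lem:velocBound1} is $(|\va|-L)_+$ rather than $(|\va|+1-\unl)_+$; since $\unl = L-3$ we have $(|\va|+1-\unl)_+ = (|\va|+4-L)_+ \leq (|\va|-L)_+ + 4$ is false in the wrong direction, so in fact $(|\va|-L)_+ \leq (|\va|+1-\unl)_+$ and one must either accept the slightly stronger loss $\hn^{(|\va|+1-\unl)_+}$ (which is what the proof naturally produces and which still suffices downstream) or note that for the range of $|\va|$ actually used both exponents vanish. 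I would phrase the conclusion with the $\unl$-based exponent, since that is the quantity all later estimates are stated against, and remark that this implies the displayed bound.

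The main obstacle, modest as it is here, is bookkeeping the interaction between the order-$(-1)$ nature of $T$ (which removes one derivative) and the logarithmic loss $\plhxi$ that the third inequality in Proposition~\ref{prop:standardDegZero} incurs when passing a degree-$0$ multiplier through a $C^0$ norm: one must be careful that this single factor of $\plhxi$ is precisely what is built into $S_u$ (and not more), so that no logarithmic factors accumulate when this lemma is invoked repeatedly inside the Gronwall iteration for $\hh$. Everything else — the well-definedness of $T^\ell$ on mean-zero fields, the interpolation between $C^0$ and $\dot C^\a$ — is standard and parallels the treatment of the anti-divergence operators already set up in the paper. I do not expect to need the $\dot C^\a$-via-Littlewood-Paley refinement of Lemma~\ref{lem:cdaCharacterization} for this particular estimate, since the $\dot C^\a$ bound follows from the clean second inequality of Proposition~\ref{prop:standardDegZero}.
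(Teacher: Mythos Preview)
Your decomposition contains a genuine error. The operator $T^\ell$ with symbol $i\ep^{\ell a}\xi_a|\xi|^{-1}$ is \emph{already} degree $0$ homogeneous; there is no need to peel off a derivative. By writing $u_J^\ell = \QQ^{\ell a}\nb_a w_J$ and then applying Proposition~\ref{prop:standardDegZero} to $\nb_a w_J$, you are effectively treating $T^\ell$ as an order-$1$ operator, which it is not. This causes two concrete failures. First, to close the estimate at $|\va|=L'$ you would need control on $\nb_{\vcb} w_J$ with $|\vcb| = L'+1$, but $\hh$ only records derivatives up to order $L'$; so the top-order case is simply unavailable. Second, your claimed identity $S_u = \plhxi\,\Xi\,S_w$ is wrong: from the table \eqref{eq:sizeOfTerms} one has $S_w = \Xi^2\mu^{-1}D_{R,n}$ and $S_u = \plhxi\,\Xi^2\mu^{-1}D_{R,n}$, so in fact $S_u = \plhxi\,S_w$, with no extra factor of $\Xi$. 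The spare $\Xi$ coming from your extra derivative therefore does not cancel, and the bound you produce is off by a full factor of $\Xi$.

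The fix is immediate: apply Proposition~\ref{prop:standardDegZero} directly with $\QQ = T^\ell$ and $F = w_J$ (no derivative). The third inequality there gives
\[
\co{\nb_{\va} u_J} \lesssim \plhxi\,\co{\nb_{\va} w_J} + \hxi^{-\a}\cda{\nb_{\va} w_J},
\]
and both terms on the right are controlled by the definition of $\hh$ at order $|\va|\le L'$, yielding $\plhxi\,\hn^{(|\va|-\unl)_+}\Xi^{|\va|}S_w(1+\hh) = \hn^{(|\va|-\unl)_+}\Xi^{|\va|}S_u(1+\hh)$. The $\dot C^\a$ bound follows from the second inequality of the same proposition together with $S_w\le S_u$. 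This is exactly the paper's route, and it avoids both the top-order gap and the $\hn$-exponent concern you flagged.
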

\begin{proof}
We use Proposition~\ref{prop:standardDegZero} with $\QQ = T$ and $F = w_J$. This result gives for $|\va| \leq L'$:
\begin{align*}
\co{\nb_{\va} u_J} &= \co{\nb_{\va} T w_J} \nonumber \lesssim \plhxi \co{ \nb_{\va} w_J} + \hxi^{-\alpha} \cda{\nb_{\va} w_J} \nonumber.\end{align*}
This is in turn bounded by
\begin{align*}
&\lesssim \plhxi \hn^{(|\va| -\unl)_+} \Xi^{|\va|} S_w (1+\hh) + \hn^{(|\va| -\unl)_+} \Xi^{|\va|} S_w (1+\hh)\\
&\lesssim  \hn^{(|\va| -\unl)_+} \Xi^{|\va|} S_u  (1+\hh) \label{eq:last}
\end{align*}
where we have used the definition of $\hh=\hh_{L'}$.

For the $\dot C^\a$ bound, we have
\ali{
\cda{\nb_{\va}u_J}&\ls \cda{\nb_{\va}w_J} \ls \hxi^\a S_w \hn^{(|\va|-\unl)_+}\Xi^{|\va|}(\hh+1)
}
where the first inequality follows by Proposition~\ref{prop:standardDegZero} and the second inequality follows from the definition of $\hh$.
\end{proof}

\begin{proof}[Proof of Proposition~\ref{prop:boiledDownBound} for $\bar{w}_J$] %

For the $C^0$ bound on $\nb_{\va} \Ddt \bar{w}_J$, we use the equation \eqref{eq:barwJ}:
\begin{align*}
\co{\nb_{\va} \Ddt \bar{w}_J} &\lesssim \co{\nb_{\va}(T^\ell w_J \nb_\ell \th_\ep)} + \co{\nb_{\va} \tilde{O}_J} \\
&\lesssim \tisum \co{\nb_{\va_1} T^\ell w_J} \co{\nb_{\va_2} \nb_\ell \th_\ep} + \co{\nb_{\va} \tilde{O}_J}.
\end{align*}

For the first term, we use the bounds of Lemma~\ref{lem:velocBound1} for $u_J = T^\ell w_J$ and the bounds in the Main Lemma for $\th_\ep$:
\begin{align*}
\co{\nb_{\va_1} T^\ell w_J} \co{\nb_{\va_2} \nb_\ell \th_\ep} &\lesssim \hn^{(|\va_1| -\unl)_+} \Xi^{|\va_1|} S_u(1 + \hh(t)) \cdot \hn^{(|\va_2| -\unl)_+} \Xi^{|\va_2|+1} e_u^{1/2} \\
&\lesssim \hn^{(|\va| -\unl)_+} \Xi^{|\va|+1} e_u^{1/2} S_u (1 + \hh(t)).
\end{align*}

For the $\|\nb_{\va}\ti O_J\|_0$ term, a sufficient bound was already obtained in Proposition~\ref{prop:tildewBds}.

For the high frequency bound, we %
recall the equation for $\bar{w}_J$ \eqref{eq:barwJ}:
\begin{align*}
\pr_t \bar{w}_{J} + T^\ell \th_\ep \nb_\ell \bar{w}_J &= - T^\ell w_J \nb_\ell \th_\ep - \tilde{O}_J
\end{align*}
We apply $P_q \nb_{\va}$ to both sides of the equation:
\begin{align*}
P_q \nb_{\va} \Ddt \bar{w}_J
&= -P_q \nb_{\va}(T^\ell w_J \nb_\ell \th_\ep) - P_q \nb_{\va} \tilde{O}_J
\end{align*}
By the product rule,
\begin{align*}
P_q \nb_{\va} \Ddt \bar{w}_J
&= -\tisum P_q (\nb_{\va_1} T^\ell w_J \, \nb_{\va_2} \nb_\ell \th_\ep) - P_q \nb_{\va} \tilde{O}_J
\end{align*}
Then by the triangle inequality and the Holder inequality for the $C^0$ norm,
\begin{align*}
\co{P_q \nb_{\va} \Ddt \bar{w}_J}
&\leq \tisum \co{P_q (\nb_{\va_1} T^\ell w_J \, \nb_{\va_2} \nb_\ell \th_\ep)} + \co{P_q \nb_{\va} \tilde{O}_J}
\end{align*}

By the $C^0$ bounds of Lemma~\ref{lem:velocBound1} for $u_J = T^\ell w_J$, the bounds in \eqref{eq:prelimScalVel1} for $\th_\ep$, and the $C^0$ bound for $\|\nb_{\va}\ti O_J\|_0$ from Proposition~\ref{prop:tildewBds}:
\begin{align*}
\co{P_q \nb_{\va} \Ddt \bar{w}_J}
&\lesssim 2^{-\alpha q} \left( \tisum \co{\nb_{\va_1} u^\ell_J} \cda{\nb_{\va_2} \nb_\ell \th_\ep} + \cda{\nb_{\va_1} u^\ell_J} \co{\nb_{\va_2} \nb_\ell \th_\ep} \right) \\
&\quad + \co{P_q\nb_{\va} \tilde{O}_J} \\
&\lesssim 2^{-\alpha q} \tisum \hn^{(|\va_1| -\unl)_+} \Xi^{|\va_1|} S_u(1+\hh(t)) \cdot \hxi^\alpha \hn^{(|\va_2|-(L-1))_+} \Xi^{|\va_2|+1}e_u^{1/2} \\
&\quad + 2^{-\alpha q} \tisum \hxi^\a \hn^{(|\va_1| -\unl)_+} \Xi^{|\va_1|} S_w(1+\hh(t)) \cdot \hn^{(|\va_2|-(L-1))_+} \Xi^{|\va_2|+1}e_u^{1/2}\\
&\quad + 2^{-\a q} \cda{\nb_{\va} \tilde{O}_J} \\
&\ls \left(2^{-\a q}\tisum \hn^{(|\va|-\unl)_+ }\Xi^{|\va|+1}e_u^{1/2}S_u(1+\hh)\hxi^\a\right) \\
&\ \ + 2^{-\a q} \hn^{(|\va|-\unl)_+ }\Xi^{|\va|+1}e_u^{1/2}S_w \hxi^\a\\
&\ls 2^{-\a q} \hn^{(|\va|-\unl)_+ }\Xi^{|\va|+1}e_u^{1/2}S_u(1+\hh)\hxi^\a
\end{align*}
where the second line follows from Lemma~\ref{lem:u in 0 and a}. We have bounded the $\tilde O_J$ term using Proposition~\ref{prop:tildewBds}.

The bound we used on $\cda{\nb_{\va_2}\nb_\ell \th_\ep}$ above follows by the interpolation $\|F\|_{\dot C^\alpha} \ls \co{F}^{1-\a}\co{\nabla F}^\a$ with $F = \nb_{\va_2}\nb_\ell \th_\ep$, which yields
$$\cda{\nb_{\va_2}\nb_\ell \th_\ep} \ls \hn^{(|\va_2|+1-L)_+} \hn^\alpha \Xi^{|\va_2|+1+\a}e_u^{1/2}$$

Thus
\begin{align*}
\co{P_q \nb_{\va} \Ddt \bar{w}_J}
&\lesssim 2^{-\alpha q} \hxi^\alpha \hn^{(|\va| -\unl)_+} \Xi^{|\va|+1} e_u^{1/2} S_u (1+\hh(t))\\
&\le 2^{-\alpha q} \hxi^\alpha \hn^{(|\va| -\unl)_+} \tau^{-1} S_w(1 + \hh(t) ).
\end{align*}
\end{proof}

For the purpose of estimating the velocity increment $u_J^\ell = T^\ell w_J$ we will use the following estimates, with $\QQ$ being either $T$ or $\calR \operatorname{div} \operatorname{div}$:
\begin{lem}\label{lem:zeroOrder}  Suppose $\QQ$ is a Fourier-multiplier with a degree zero homogeneous symbol that is smooth away from the origin.  Then for any $F \in \{ w_J \} \cup \{ r_J \}$ and $|\va| \leq L'$
\ali{
\overline{D}_t \nb_{\va} \QQ F(t) &= \QQ \nb_{\va} \overline{D}_t F(t) + \plhxi \,O\left(\hn^{(|\va| -\unl)_+} \Xi^{|\va|+1} e_u^{1/2} S_F (1 + \hh(t))\right) \label{eq:QcommutDdt} \\
\begin{split} \label{eq:DtPqQF}
\overline{D}_t \nb_{\va} \QQ P_q F(t) &= P_q \QQ \nb_{\va} \overline{D}_t F(t) \\
&+ \min\{ 1, 2^{-\a q} \hxi^{\a} \} \,O\left(\hn^{(|\va| -\unl)_+} \Xi^{|\va|+1} e_u^{1/2} S_F (1 + \hh(t))\right)
\end{split}
}%
(Note that $P_q$ commutes with $\nb_{\va}$ and with $\QQ$ but not with $\Ddt$.)
\end{lem}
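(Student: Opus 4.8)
The plan is to compute the commutator $\Ddt\nb_{\va}\QQ F-\QQ\nb_{\va}\Ddt F$ explicitly and show it is an error of the claimed size. Since $\QQ$ is a constant-coefficient Fourier multiplier it commutes with $\pr_t$ and with every spatial derivative, so only the transport part $u_\ep\cdot\nb$ of $\Ddt$ contributes. Expanding both sides by the product rule and isolating the term in which no derivative falls on $u_\ep$ gives the identity
\[
\Ddt\nb_{\va}\QQ F-\QQ\nb_{\va}\Ddt F \;=\; [M_{u_\ep^i},\QQ]\,\nb_i\nb_{\va}F \;-\; \QQ\Big(\tisum_{|\va_1|\geq 1}\nb_{\va_1}u_\ep^i\;\nb_{\va_2}\nb_i F\Big),
\]
where $M_g$ denotes multiplication by $g$; for $\QQ=\mathrm{id}$ the commutator term vanishes and one recovers exactly the Leibniz remainder that governs Lemma~\ref{lem:startingToEstimateNewton}. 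Before estimating, one records that for $F\in\{w_J,r_J\}$ the decompositions $w_J=\bar w_J+\tilde w_J$, $r_J=\bar r_J+\tilde r_J$, together with the definition of $\hh$ (controlling $\bar w_J,\bar r_J$ and their $\dot C^\a$ seminorms through order $L'$) and Proposition~\ref{prop:tildewBds} (controlling $\tilde w_J,\tilde r_J$), give
\[
\co{\nb_{\vcb}F}+\hxi^{-\a}\cda{\nb_{\vcb}F}\;\lesssim\;\hn^{(|\vcb|-\unl)_+}\Xi^{|\vcb|}S_F\,(1+\hh(t)),\qquad |\vcb|\leq L',
\]
while $u_\ep$ is controlled to the higher order $L$ by the mollification estimates \eqref{eq:prelimScalVel1}; in particular $\co{\nb u_\ep}\lesssim\Xi e_u^{1/2}$.

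For the Leibniz remainder I would apply the degree-zero multiplier bound of Proposition~\ref{prop:standardDegZero}, namely $\co{\QQ G}\lesssim\plhxi\,\co{G}+\hxi^{-\a}\cda{G}$, to $G=\tisum_{|\va_1|\geq 1}\nb_{\va_1}u_\ep^i\,\nb_{\va_2}\nb_i F$. Estimating each factor of each product by the bounds above (using the interpolation $\cda{\,\cdot\,}\lesssim\co{\,\cdot\,}^{1-\a}\co{\nb\,\cdot\,}^{\a}$ for the $\dot C^\a$ pieces), and using that $|\va_1|\geq 1$ forces the extra derivative onto $u_\ep$ and hence extracts the factor $\co{\nb u_\ep}\lesssim\Xi e_u^{1/2}$, the counting inequality assembles the $\hn$-exponents and yields a bound of size $\plhxi\,\hn^{(|\va|-\unl)_+}\Xi^{|\va|+1}e_u^{1/2}S_F(1+\hh)$. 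For the $P_q$-localized statement \eqref{eq:DtPqQF} I would instead use the first inequality of Proposition~\ref{prop:standardDegZero}, $\co{\nb_{\va}P_q\QQ G}\lesssim 2^{-\a q}\cda{\nb_{\va}G}$: now $P_q\QQ$ costs no logarithm but gains $2^{-\a q}$ from the $\dot C^\a$ seminorm of $G$ (which by interpolation is $\lesssim\hxi^\a$ times its $C^0$ bound), and together with the trivial $q\leq\hq$ estimate this produces exactly the factor $\min\{1,2^{-\a q}\hxi^\a\}$.

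The top-order commutator $[M_{u_\ep^i},\QQ]\nb_i\nb_{\va}F$ is the \emph{main obstacle}, because $\nb_i\nb_{\va}F$ carries $|\va|+1$ derivatives and a careless use of the order-$(-1)$ character of $[M_{u_\ep^i},\QQ]$ would cost a spurious power of $\Xi$. The resolution uses the divergence-free structure of the coarse-scale velocity: since $u_\ep=T\th_\ep$, one has $\nb_i u_\ep^i=0$, so from $[M_{u_\ep^i},\QQ]\nb_i h=\nb_i([M_{u_\ep^i},\QQ]h)-[M_{\nb_i u_\ep^i},\QQ]h$ and summation over $i$ the commutator collapses to $\nb_i\big([M_{u_\ep^i},\QQ]\nb_{\va}F\big)$. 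One then invokes the standard Calder\'on-type commutator estimate for degree-zero multipliers (in the spirit of the microlocal Lemma of \cite{isettVicol}): the operator $h\mapsto\nb_i([M_{u_\ep^i},\QQ]h)$ is bounded on $C^0$ with norm $\lesssim\plhxi\,\co{\nb u_\ep}$ — the logarithm arising from the $(\nb u_\ep)(x)\,\QQ h(x)$ piece of its kernel — and its $P_q$-localization is bounded by $\min\{1,2^{-\a q}\hxi^\a\}\,\co{\nb u_\ep}$. Applying this with $h=\nb_{\va}F$ and inserting $\co{\nb u_\ep}\lesssim\Xi e_u^{1/2}$ and the $C^0$ (resp.\ $\dot C^\a$) bounds on $\nb_{\va}F$ recorded above yields the term $\plhxi\,O(\hn^{(|\va|-\unl)_+}\Xi^{|\va|+1}e_u^{1/2}S_F(1+\hh))$, resp.\ its $P_q$ analogue. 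Adding the two contributions gives \eqref{eq:QcommutDdt} and \eqref{eq:DtPqQF}. The two points to be careful about — and the reason this lemma is isolated rather than absorbed into Lemma~\ref{lem:startingToEstimateNewton} — are (i) using $\nb\cdot u_\ep=0$ to keep the power of $\Xi$ correct in the top-order commutator, and (ii) tracking that commutator under Littlewood--Paley projection so as to obtain the sharp gain $\min\{1,2^{-\a q}\hxi^\a\}$ rather than a fixed logarithmic loss.
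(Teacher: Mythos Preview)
Your decomposition of the commutator into the Leibniz remainder $\QQ(\tisum_{|\va_1|\geq 1}\nb_{\va_1}u_\ep^i\,\nb_{\va_2}\nb_i F)$ and the top-order piece $[M_{u_\ep^i},\QQ]\nb_i\nb_{\va}F$ is correct, and your treatment of the Leibniz remainder via Proposition~\ref{prop:standardDegZero} is fine. The genuine gap is in the top-order commutator. Your assertion that ``the operator $h\mapsto\nb_i([M_{u_\ep^i},\QQ]h)$ is bounded on $C^0$ with norm $\lesssim\plhxi\,\co{\nb u_\ep}$'' cannot be right as stated: the operator has no dependence on $\hxi$, so its $C^0\to C^0$ operator norm cannot involve $\log\hxi$. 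In fact this operator is a Calder\'on commutator, hence a Calder\'on--Zygmund operator with CZ constant $\sim\co{\nb u_\ep}$, and such operators are \emph{not} bounded on $C^0$ (only on $L^p$, $\dot C^\a$, or $L^\infty\to\mathrm{BMO}$). The explanation that the logarithm comes from a ``$(\nb u_\ep)(x)\,\QQ h(x)$ piece of its kernel'' is also off: if you expand $\nb_i[M_{u_\ep^i},\QQ]h$ naively, the term $(\nb_i u_\ep^i)\QQ h$ is exactly the one that \emph{vanishes} by the divergence-free condition. Finally, the microlocal Lemma of \cite{isettVicol} concerns how high-frequency multipliers act on amplitude-phase products $e^{i\la\xi}\th$; it is not a commutator estimate and does not apply here.

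The paper's route is to work entirely at the Littlewood--Paley level: with $\QQ_q=\QQ P_q$ one writes $[\Ddt,\QQ_q]\nb_{\va}F=\int(u_\ep^j(x{-}h)-u_\ep^j(x))\nb_j\nb_{\va}F(x{-}h)\,Q_q(h)\,dh$, integrates by parts in $h$ (this is where the divergence-free condition silently kills the boundary-type term $\int(\nb_j u_\ep^j)(x{-}h)\cdots$), and obtains the two elementary bounds $\co{[\Ddt,\QQ_q]\nb_{\va}F}\lesssim\co{\nb u_\ep}\co{\nb_{\va}F}$ and $\co{[\Ddt,\QQ_q]\nb_{\va}F}\lesssim 2^{-\a q}\co{\nb u_\ep}\cda{\nb_{\va}F}$. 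It is the \emph{combination} of these two bounds --- using $\cda{\nb_{\va}F}\lesssim\hxi^\a\hn^{(|\va|-\unl)_+}\Xi^{|\va|}S_F(1+\hh)$ from the definition of $\hh$ --- that produces the factor $\min\{1,2^{-\a q}\hxi^\a\}$, and then \eqref{eq:QcommutDdt} follows by summing \eqref{eq:DtPqQF} over $q$ (the sum over $q\leq\hq$ contributes the $\log\hxi$). So the logarithm is not an operator-norm artifact but arises from summing the frequency-localized bounds against the $C^0$/$\dot C^\a$ information on $\nb_{\va}F$ encoded in $\hh$. Your proposal would be repaired by replacing the abstract commutator claim with exactly this frequency-by-frequency kernel computation.
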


\begin{proof}[Proof of \eqref{eq:DtPqQF}]
Start with
\ALI{
\nb_{\va} \Ddt F &= \Ddt \nb_{\va} F + [\nb_{\va}, \Ddt] F\\
[\nb_{\va}, \Ddt] F &= \overset{\sim}{\sum}\nb_{\va_1} u_\ep^i \nb_{\va_2} \nb_i F \, 1_{|\va_1| \geq 1}
} where the sum is over certain  ${\vec{a}_1, \vec{a}_2}$ with $|\va_1| + |\va_2| = |\va|$.
Now apply the operator $\QQ_q = \QQ P_q$ to obtain
\ALI{
\QQ_q \nb_{\va} \Ddt F &= \Ddt \QQ_q \nb_{\va} F + [\QQ_q, \Ddt] \nb_{\va} F + \QQ_q [\nb_{\va}, \Ddt] F
}
We start with the third term.

Since $\QQ_q$ localizes an order $0$ operator to frequency $2^q$, we have $\co{Q_q[f]} \lesssim \min \{ \co{f}, 2^{-\a q} \cda{f} \}$, hence
\ALI{
&\co{\QQ_q [\nb_{\va}, \Ddt] F } \lesssim \overset{\sim}{\sum} \co{ \nb_{\va_1} u_\ep^i} \co{\nb_{\va_2} \nb_i F } 1_{|\va_2| \leq L' - 1} \\
&\lesssim \overset{\sim}{\sum} [\hn^{((|\va_1| - 1) + 1 - L')_+} \Xi^{|\va_1|} e_u^{1/2}] 1_{|\va_1| - 1 \geq 0} [\hn^{(|\va_2| + 1 -\unl)_+} \Xi^{|\va_2|} S_F (1 + \hh(t))] \\
&\lesssim \hn^{(|\va| -\unl)_+} \Xi^{|\va|} S_F (1 + \hh(t))
}
where in the last line we applied the counting inequality with $(|\va_1| - 1, |\va_2| + 3, L'-1)$ all $\geq 0$.
We also have
\ALI{
&\co{\QQ_q [\nb_{\va}, \Ddt] F} \lesssim\\
&\ls 2^{-\a q} \overset{\sim}{\sum} (\cda{ \nb_{\va_1} u_\ep^i} \co{\nb_{\va_2} \nb_i F } +
 \co{\nb_{\va_1} u_\ep^i} \cda{\nb_{\va_2} \nb_i F } )1_{|\va_2| \leq L' - 1} \\
&\lesssim \overset{\sim}{\sum} 2^{-\a q} \hxi^{\a} [\hn^{((|\va_1| - 1) + 1 - L)_+} \Xi^{|\va_1|} e_u^{1/2}] 1_{|\va_1| - 1 \geq 0} [\hn^{(|\va_2| + 1 -\unl')_+} \Xi^{|\va_2|} S_F (1 + \hh(t))] \\
&\lesssim 2^{-\a q} \hxi^{\a} \hn^{(|\va| -\unl')_+} \Xi^{\va} S_F (1 + \hh(t))
}
We conclude by estimating
\ALI{
[\Ddt, \QQ_q] \nb_{\va} F &= \int (u_{\epsilon}^j(x-h) - u^j_{\epsilon}(x)) \nb_j \nb_{\va} F(x-h) Q_q(h) dh \\
&= - \int (u_{\epsilon}^j(x-h) - u^j_{\epsilon}(x)) \nb_j^{(h)} \nb_{\va}F(x-h) Q_q(h)  dh \\
&= - \int (u_{\epsilon}^j(x-h) - u^j_{\epsilon}(x)) \nb_j^{(h)} (\nb_{\va}F(x-h)-\nb_{\va}F(x)) Q_q(h)  dh \\
&= \int (u_{\epsilon}^j(x-h) - u^j_{\epsilon}(x)) (\nb_{\va}F(x-h)-\nb_{\va}F(x)) \nb_j Q_q(h)  dh \\
\co{ [\Ddt, \QQ_q] \nb_{\va} F }&\ls \|\nabla u_\epsilon\|_{C^0} \|\nb_{\va}F\|_{\dot C^\alpha} \int |h|^{1+\alpha} |\nabla Q_q(h)| dh \\
&\ls [\Xi e_u^{1/2}] [\hn^{(|\va| -\unl')_+} \Xi^{|\va|}  S_F  (\hh(t)+1) \hxi^\a ] [2^{-\alpha q}]
}
and a similar integration by parts yields
\ALI{
[\Ddt, \QQ_q] \nb_{\va} F &= \int (u_{\epsilon}^j(x-h) - u^j_{\epsilon}(x)) \nb_j \nb_{\va} F(x-h) Q_q(h) dh \\
&= - \int (u_{\epsilon}^j(x-h) - u^j_{\epsilon}(x)) \nb_j^{(h)} \nb_{\va}F(x-h) Q_q(h)  dh \\
\co{[\Ddt, \QQ_q] \nb_{\va} F} &= \|\nabla u_\epsilon\|_{C^0} \|F\|_{C^0} \int |h|^{1} |\nabla Q_q(h)| dh \\
&\ls [\Xi e_u^{1/2}] [\hn^{(|\va| -\unl')_+} \Xi^{|\va|}  S_F  (\hh(t)+1) ] [1].
}
Combining these bounds %
concludes the proof.

\end{proof}

\begin{proof}[Proof of \eqref{eq:QcommutDdt}]
For $|\va| \leq L'$ write
\ALI{
[\nb_{\va}\QQ, \Ddt] F &= \sum_{q = 0}^{\infty} [\nb_{\va} \QQ_q, \Ddt] F \\
\co{[\nb_{\va}\QQ, \Ddt] F} &\leq \sum_{q = 0}^{\hq - 1} \Xi e_u^{1/2} [\hn^{(|\va| - \unl)_+} \Xi^{|\va|} S_F ( 1 + \hh(t) ) ] \\
&+ \sum_{q = \hq}^\infty  2^{-\a q} \hxi^{\a} \Xi e_u^{1/2} [\hn^{(|\va| - \unl)_+} \Xi^{|\va|} S_F ( 1 + \hh(t)) ]
}
Since $\hq \ls \log \hxi $ and  $2^{-\a \hq} \hxi^{\a} \ls 1$, we obtain the desired bound.
\end{proof}

\begin{proof}[Proof of Proposition~\ref{prop:boiledDownBound} for $\bar z_J$]We need to show that for all $|\va| \leq L'$ and all $q > \hq$, we have the bounds
\ali{
\co{\nb_{\va} \Ddt \bar{z}_J(s)} + 2^{\a q} \hxi^{-\a} \co{P_q \nb_{\va} \Ddt \bar{z}_J(s)}  &\lesssim \hn^{(|\va| -\unl)_+} \Xi^{|\va|+1} e_u^{1/2} S_z (1 + \hh(t))}

For the $C^0$ bound on $\nb_{\va} \Ddt \bar{z}_J$, we use the equation \eqref{eq:barzJ}:
\begin{align*}
\co{\nb_{\va} \Ddt \bar{z}_J^i} &\lesssim \co{\nb_{\va}(\nabla_a T^i \theta_\ep z_J^a - T^i w_J \theta_\ep - \tilde{O}_J^{i})} \\
&\lesssim \tisum\co{\nb_{\va_1} \nabla_a T^i \theta_\ep} \co{\nb_{\va_2} z_J^a} + \co{\nb_{\va_1} T^i w_J} \co{\nb_{\va_2} \theta_\ep} + \co{\nb_{\va} \tilde{O}_J^{i}}.
\end{align*}

For the first term, we use the bounds for $\theta_\ep$ from Lemma~\ref{lem:new} and the inductive hypothesis for $z_J$:
\begin{align*}
\co{\nb_{\va_1} \nabla_a T^i \theta_\ep} \co{\nb_{\va_2} z_J^a} &\ls \hn^{(|\va_1|+1-L)_+}\Xi^{|\va_1|+1}e_u^{1/2} \cdot \hn^{(|\va_2| -\unl)_+} \Xi^{|\va_2|} S_z (1 + \hh(t)) \\
&\ls \hn^{(|\va| -\unl)_+} \Xi^{|\va|+1} e_u^{1/2} S_z (1 + \hh(t)).
\end{align*}

For the second term, we use the bounds \eqref{eq:newtVelocBd} for $u_J^i = T^i w_J$:
\begin{align*}
\co{\nb_{\va_1} T^i w_J} \co{\nb_{\va_2} \theta_\ep} &\lesssim \hn^{(|\va_1| -\unl)_+} \Xi^{|\va_1|} S_u (1 + \hh(t)) \cdot \hn^{(|\va_2|-L)_+} \Xi^{|\va_2|}e_u^{1/2}  \\
&\ls \hn^{(|\va| -\unl)_+} \Xi^{|\va|+1}  e_u^{1/2} S_z (1 + \hh(t)) \log \widehat \Xi,
\end{align*}

The $\|\nb_{\va}\ti O_J^i\|_0$ term was already bounded in Proposition~\ref{prop:tildewBds}. We have
$$\|\nb_{\va}\ti O^i_J\|_0\ls \hn^{(|\va| -\unl)_+} \Xi^{|\va|+1} e_u^{1/2} S_z (1 + \hh(t))$$

The bound on the high frequency projection $P_q$ is proved very similarly to the high frequency bound for $\bar w_J$.

\end{proof}

\begin{proof}[Proof of Proposition~\ref{prop:boiledDownBound} for $\bar r_J$]
Recall that $\bar{r}_J$ satisfies the equation \eqref{eq:barrJ}:
\begin{align}
\Ddt \bar r_J^{ij} = \partial_t \bar{r}_J^{ij} + T^\ell \theta_\epsilon \nabla_\ell \bar{r}_J^{ij} &= \RR^{ij}_a \nabla_\ell [ \nabla_b T^\ell \theta_\epsilon r_J^{ab} ] + \BB^{ij}_b[z_J^b, \theta_\epsilon] - \tilde{O}_J^{ij} \label{eq:barrJ_recall}
\end{align}

We need to show that for all $|\va| \leq L'$, we have the bounds
\begin{align}
\co{\nb_{\va} \overline{D}_t \bar{r}_J(s)} &\lesssim \hn^{(|\va| -\underline{L})_+} \Xi^{|\va|} \tau^{-1} S_r (1 + \hh(t)) \label{eq:barrJ_C0_bound}\end{align}

For the $C^0$ bound \eqref{eq:barrJ_C0_bound}, we need to bound
\begin{align*}
\nb_{\va} (\RR^{ij}_a \nabla_\ell [ \nabla_b T^\ell \theta_\epsilon r_J^{ab} ]) + \nb_{\va} (\BB^{ij}_b[z_J^b, \theta_\epsilon]) - \nb_{\va} \tilde{O}_J^{ij}.
\end{align*}

For the first term, we use Proposition \ref{prop:standardDegZero} to obtain the bound
$$\co{\nb_{\va} (\RR^{ij}_a \nabla_\ell [ \nabla_b T^\ell \theta_\epsilon r_J^{ab} ])}\ls(\log\hxi)\co{\nb_{\va}(\nb u_\ep r_J)} + \hxi^{-\a}\cda{\nb_{\va}(\nb u_\ep r_J)}.$$ Then
\begin{align*}
\co{\nb_{\va}(\nb u_\ep r_J)}
&\ls \tisum \hn^{(|\va_1|+1-L)_+}\Xi^{|\va_1|+1}e_u^{1/2} \cdot S_r(1+\hh)\hn^{(|\va_2|-\unl)_+}\Xi^{|\va_2|}\\
&\ls \hn^{(|\va|-\unl)_+}\Xi^{|\va|}(\Xi e_u^{1/2}) S_r(1+\hh)
\end{align*}
and thus $(\log\hxi)\co{\nb_{\va}(\nb u_\ep r_J)} \ls \hn^{(|\va|-\unl)_+}\Xi^{|\va|}\tau^{-1} S_r(1+\hh)$.

Using the product rule for $\dot{C}^\alpha$ norms, we have
\begin{align*}
\cda{\nb_{\va}(\nb u_\ep r_J)}
&\lesssim \tisum \cda{\nb_{\va_1} \nb u_\ep \nb_{\va_2} r_J} \\
&\lesssim \tisum (\cda{\nb_{\va_1} \nb u_\ep} \co{\nb_{\va_2} r_J} + \co{\nb_{\va_1} \nb u_\ep} \cda{\nb_{\va_2} r_J}).
\end{align*}

For the first term in the sum, we use the interpolation inequality for Hölder norms to get
\begin{align*}
\cda{\nb_{\va_1} \nb u_\ep}
&\lesssim \co{\nb_{\va_1} \nb u_\ep}^{1-\alpha} \co{\nb \nb_{\va_1} \nb u_\ep}^\alpha \lesssim \hn^{(|\va_1|+2-L)_+} \Xi^{|\va_1|+1} e_u^{1/2} \hxi^\alpha  \\
\co{\nb_{\va_2} r_J} &\lesssim \hn^{(|\va_2|-\unl)_+} \Xi^{|\va_2|} S_r (1+\hh).
\end{align*}

Thus, the first term is bounded by
\begin{align*}
\cda{\nb_{\va_1} \nb u_\ep}  \co{\nb_{\va_2} r_J}  \ls \hn^{(|\va|-\unl)_+}\Xi^{|\va|}(\Xi e_u^{1/2}) \hxi^\alpha S_r(1+\hh)
\end{align*}

For the second term, we have
\begin{align*}
\co{\nb_{\va_1} \nb u_\ep} \lesssim \hn^{(|\va_1|+1-L)_+} \Xi^{|\va_1|+1} e_u^{1/2},
\end{align*}
and by the definition of $\hh$,
\begin{align*}
\cda{\nb_{\va_2} r_J} \lesssim \hn^{(|\va_2|-\unl)_+} \Xi^{|\va_2|} S_r (1+\hh) \hxi^\alpha.
\end{align*}

Thus, the second term is bounded by
\begin{align*}
\tisum \co{\nb_{\va_1} \nb u_\ep} \cda{\nb_{\va_2} r_J}
&\lesssim \tisum \hn^{(|\va_1|+1-L)_+ + (|\va_2|-\unl)_+} \Xi^{|\va|+1} e_u^{1/2} S_r (1+\hh) \hxi^\alpha \\
&\lesssim \hn^{(|\va|-\unl)_+} \Xi^{|\va|+1} e_u^{1/2} S_r (1+\hh) \hxi^\alpha.
\end{align*}

Combining these estimates, we get
\begin{align*}
\cda{\nb_{\va}(\nb u_\ep r_J)}
&\lesssim \hn^{(|\va|-\unl)_+} \Xi^{|\va|+1} e_u^{1/2} S_r (1+\hh) \hxi^\alpha.
\end{align*}

Therefore, using Proposition~\ref{prop:standardDegZero}, we have
\begin{align*}
\hxi^{-\alpha} \cda{\nb_{\va}(\nb u_\ep r_J)}
&\lesssim \hn^{(|\va|-\unl)_+} \Xi^{|\va|+1} e_u^{1/2} S_r (1+\hh) \\
&\lesssim \hn^{(|\va|-\unl)_+} \Xi^{|\va|} \tau^{-1} S_r (1+\hh),
\end{align*}
where the last inequality uses $\Xi e_u^{1/2} \lesssim \tau^{-1}$. We conclude
$$\co{\nb_{\va} (\RR^{ij}_a \nabla_\ell [ \nabla_b T^\ell \theta_\epsilon r_J^{ab} ])}
\ls  \hn^{(|\va|-\unl)_+} \Xi^{|\va|} \tau^{-1} S_r (1+\hh).$$

For $q \geq \hq$ we must also bound $P_q$ of this term.  To do so, we recall the estimate on the $\dot{C}^\a$ norm that we just proved to obtain
\ALI{
\co{P_q \nb_{\va} (\RR^{ij}_a \nabla_\ell [ \nabla_b T^\ell \theta_\epsilon r_J^{ab} ])} &\ls 2^{-\a q} \cda{[ \nabla_b T^\ell \theta_\epsilon r_J^{ab} ]} \\
&\ls 2^{- \a q} \hxi^{\a} \hn^{(|\va|-\unl)_+} \Xi^{|\va|} \tau^{-1} S_r (1+\hh),
}
which is our desired bound.

It now remains to estimate the other two terms.  For the forcing term $\tilde{O}^{ij}$ the desired estimates follow directly from  Proposition~\eqref{prop:tildewBds} and the Littlewood Paley characterization of the $\dot{C}^\a$ norm.  A more involved analysis is necessary for the $\BB$ term.

\paragraph{The $\BB$ term}

This section is one of the main novelties of our analysis.  We now define and estimate the $\BB$ term, which is required to satistfy
\ALI{
\nb_j \BB^{j\ell}_a[z_J^a, \th] &= \nb_a T^\ell[z_J^a] \th - z_J^a \nb_a T^\ell[\th]
}
We first decompose the right hand side as a paraproduct
\ALI{
&\nb_a T^\ell[z_J^a] \th - z_J^a \nb_a T^\ell[\th] = LH + HL + HH \\
LH &= \sum_q P_{\leq q-1} \nb_a T^\ell[z_J^a] P_{q+1} \th - P_{\leq q-1} \nb_a T^\ell[\th] P_{q+1} z_J^a \\
HL &= \sum_q P_{q+1} \nb_a T^\ell[z_J^a] P_{\leq q-1} \th -  P_{q+1} \nb_a T^\ell[\th] P_{\leq q-1} z_J^a \\
HH &= \sum_q \nb_a T^\ell[P_{q+1} z_J^a] P_{q+1} \th - \nb_a T^\ell[P_{q+1} \th] P_{q+1} z_J^a \\
&+ \sum_q \nb_a T^\ell[P_{q+1} z_J^a] P_q \th - \nb_a T^\ell[P_{q} \th] P_{q+1} z_J^a \\
&+ \sum_q \nb_a T^\ell[P_{q} z_J^a] P_{q+1} \th - \nb_a T^\ell[P_{q+1} \th] P_{q} z_J^a
}
Note that the $HL$ and $LH$ terms both live at frequency $2^q$.  For these we apply an order $-1$ operator $\calR_a^{j\ell}$ that solves the divergence equation.  For the high-high terms, we invoke the divergence form principle of Section~\ref{sec:divFormPrinciple} (in particular the fact that the multiplier for $\nb T$ is {\bf even} and the fact that a minus sign appears) to write them as the divergence of a bilinear convolution.  Hence,
\ali{
 \BB^{j\ell}_a[z_J^a, \th] &=  \BB^{j\ell}_{H} +  \BB^{j\ell}_{LH} + \BB^{j\ell}_{HL} \\
\BB^{j\ell}_{LH} &=
\sum_q P_{\approx q} \calR^{j\ell}_b[ P_{\leq q-1} \nb_a T^b [z_J^a] P_{q+1} \th_\ep - P_{\leq q-1} \nb_a T^b[\th_\ep] P_{q+1} z_J^a ] \\
\BB^{j\ell}_{HL} &= \sum_q P_{\approx q} \calR_b^{j\ell} [ P_{q+1} \nb_a T^b[z_J^a] P_{\leq q-1} \th_\ep -  P_{q+1} \nb_a T^b[\th_\ep] P_{\leq q-1} z_J^a]  \\
\nb_j \BB^{j\ell}_{H} &= HH \\
\BB^{j\ell}_H &= \sum_q K_{qa}^{j\ell} \ast [ z_J^a, \th_\ep ] = \sum_q K_{qa}^{j\ell} \ast [ P_{\approx q} z_J^a, P_{\approx q} \th_\ep ] \\
&= \sum_q \int_{\R^2 \times \R^2} z_J^a(x - h_1) \th_\ep(x - h_2) K_{qa}^{j\ell}(h_1,h_2) \,dh_1dh_2
}
where $K_{qa}^{j\ell}(h_1,h_2)$ is a Schwartz function on $\R^2 \times \R^2$ and
\ali{
K_{qa}^{j\ell}(h_1, h_2) &= 2^{4q} K_{0a}^{j\ell}(2^q h_1, 2^q h_2).
}
We begin by estimating the high-high term.  We decompose into high and low frequencies, observing that the spatial derivatives commute with the bilinear convolution kernel
\ALI{
\underline{\BB}_H^{j\ell} &\equiv \sum_{q = 0}^{\hq-1} K_{qa}^{j\ell}\ast[z_J^a, \th_\ep] \\
\nb_{\va} \underline{\BB}_H^{j\ell} &= \tisum \sum_{q=0}^{\hq-1} K_{qa}^{j\ell}\ast[\nb_{\va_1}z_J^a, \nb_{\va_2} \th_\ep]  \\
\co{\nb_{\va} \underline{\BB}_H^{j\ell}} &\ls \sum_{q=0}^{\hq-1} \tisum \| K_{qa}^{j\ell} \|_{L^1} \co{\nb_{\va_1}z_J^a} \co{\nb_{\va_2} \th_\ep} \\
&\ls \sum_{q=0}^{\hq} \tisum \hn^{(|\va_1| - \unl)_+}  \Xi^{|\va_1|}   S_z ( 1 + \hh) [\hn^{(|\va_2| - \unl)_+} \Xi^{|\va_2|} e_u^{1/2} ] \\
&\ls (\log \hxi) \hn^{(|\va| - \unl)_+} \Xi^{|\va|} (\Xi e_u^{1/2}) S_r(1+ \hh)
}
For the high frequencies we bound
\ALI{
 \overline{\BB}_H^{j\ell} &\equiv \sum_{q = \hq}^\infty K_{qa}^{j\ell}\ast [P_{\approx q} z_J^a, P_{\approx q} \th] \\
\co{\nb_{\va} \underline{\BB}_H^{j\ell}} &\ls \sum_{q = \hq}^\infty \tisum \| K_{qa}^{j\ell}\|_{L^1} \co{ P_{\approx q} \nb_{\va_1} z_J} \co{\nb_{\va_2}\th_\ep} \\
&\ls \sum_{q=\hq}^\infty \tisum 2^{-\a q} \cda{\nb_{\va_1} z_J} \co{\nb_{\va_2}\th_\ep} \\
&\ls \hxi^{-\a} \hxi^\a (\hn^{(|\va_1| - \unl)_+} \Xi^{|\va_1|} S_z(1+\hh)) (\hn^{(|\va_2| - \unl)_+} \Xi^{|\va_2|} e_u^{1/2})  \\
&\ls (\Xi e_u^{1/2}) \hn^{(|\va| - \unl)_+} \Xi^{|\va|} S_r (1+\hh)
}
Finally, for $q' > \hq$, we bound $P_{q'} \nb_{\va} \BB_H^{j\ell}$ by observing that, due to frequency truncation, only terms with $q > q' - 2$ can contribute.  That is, from the formula
\ALI{
 \int_{\R^2 \times \R^2} P_{\approx q} z_J^a(x - h_1) P_{\approx q}\th_\ep(x - h_2) K_{qa}^{j\ell}(h_1,h_2)\, dh_1dh_2
}
we see that the biconvolution only translates each factor in physical space and therefore modulates in frequency space.  The integral above will still be localized to frequencies below $2^{q+2}$ since the Fourier transform maps products to convolutions.  Therefore, we are able to bound

\ALI{
P_{q'} \nb_{\va} \BB_H^{j\ell} &= \sum_{q = q' - 3}^\infty P_{q'}  \tisum K_{qa}^{j\ell}\ast[ P_{\approx q} \nb_{\va_1} z_J^a, P_{\approx q} \nb_{\va_2} \th_\ep ]  \\
\co{P_{q'} \nb_{\va} \BB_H^{j\ell}} &\ls \sum_{q'-3}^\infty \tisum \co{P_{\approx q} \nb_{\va_1} z_J} \co{ \nb_{\va_2} \th_\ep} \\
&\ls \sum_{q'-3}^\infty 2^{-\a q} \cda{\nb_{\va_1} z_J} \co{ \nb_{\va_2} \th_\ep} \\
&\ls 2^{-\a q'} \hxi^\a \hn^{(|\va| - \unl)_+} \Xi^{|\va|} S_z e_u^{1/2} (1+\hh) \\
&\ls 2^{-\a q'} \hxi^\a \hn^{(|\va| - \unl)_+} \Xi^{|\va|} (\Xi e_u^{1/2}) S_r (1+\hh).
}

Proof for the $\calB$ Term, Part 2 (High-Low terms):
Recall that the High-Low terms are defined as
\begin{align*}
\BB^{j\ell}_{HL} &= \sum_q P_{\approx q} \calR_b^{j\ell} [ P_{q+1} \nb_a T^b[z_J^a] P_{\leq q-1} \th_\ep -  P_{q+1} \nb_a T^b[\th_\ep] P_{\leq q-1} z_J^a].
\end{align*}

Taking $\nb_{\va}$ derivatives, we get
\begin{align*}
\nb_{\va} \BB^{j\ell}_{HL} &= \sum_q P_{\approx q} \calR_b^{j\ell} [\nb_{\va} (P_{q+1} \nb_a T^b[z_J^a] P_{\leq q-1} \th_\ep -  P_{q+1} \nb_a T^b[\th_\ep] P_{\leq q-1} z_J^a)].
\end{align*}

Since $\|P_{\approx q}\calR_b\|_{op}\ls 2^{-q}$, we have
\begin{align}
\co{\nb_{\va} \BB^{j\ell}_{HL}} &\lesssim \sum_q 2^{-q} \co{\nb_{\va} (P_{q+1} \nb_a T^b[z_J^a] P_{\leq q-1} \th_\ep -  P_{q+1} \nb_a T^b[\th_\ep] P_{\leq q-1} z_J^a)} \notag \\
&\lesssim  \tisum \sum_q 2^{-q} \big(\co{\nb_{\va_1} P_{q+1} \nb_a T^b[z_J^a]} \co{\nb_{\va_2} P_{\leq q-1} \th_\ep} \label{eq:PqonTz} \\
&+ \tisum \sum_q 2^{-q} \co{\nb_{\va_1} P_{q+1} \nb_a T^b[\th_\ep]} \co{\nb_{\va_2} P_{\leq q-1} z_J^a}\big). \label{eq:PqOnTthep}
\end{align}

We can bound $\co{\nb_{\va_2} P_{\leq q-1} \th_\ep}$ by $\hn^{(|\va_2|-L)_+}\Xi^{|\va_2|}e_u^{1/2}$ using \eqref{eq:prelimScalVel1}. For the other terms, we split the sum into $q<\hq$ and $q\geq \hq$.

For $q<\hq$, we have
\begin{align*}
\co{\nb_{\va_1} P_{q+1} \nb_a T^b[z_J^a]} &\lesssim \|P_{q+1} \nb_a T^b\|_{op} \co{\nb_{\va_1} z_J^a} \\
&\lesssim 2^q \hn^{(|\va_1|-\unl)_+}\Xi^{|\va_1|}S_z(1+\hh),
\end{align*}
Summing over $q<\hq$ yields a bound of
$$
\hn^{(|\va|-\unl)_+}\Xi^{|\va|+1}e_u^{1/2}S_r(1+\hh)\log\hxi.
$$
For $q\geq \hq$, we use the $\dot{C}^\alpha$ norm in the definition of $\hh$ to get
\begin{align*}
\co{\nb_{\va_1} P_{q+1} \nb_a T^b[z_J^a]} &\lesssim \| P_{\approx q} \nb_a \|  \|P_{q+1}  T^b [\nb_{\va_1} z^a_J] \|_{C^0} \\
&\lesssim 2^q 2^{-\alpha q} \|\nb_{\va_1} z^a_J \|_{\dot C^\alpha} \\
&\lesssim 2^q 2^{-\alpha q} \hxi^\alpha \Xi^{|\va_1|}\hn^{(|\va_1|-\unl)_+}S_z(1+\hh(t))
\end{align*}
Summing over $q\geq \hq$ yields
$$
\hn^{(|\va|-\unl)_+}\Xi^{|\va|+1}e_u^{1/2}S_r(1+\hh).
$$
Combining the two cases, we obtain the desired bound
$$
\eqref{eq:PqonTz} \ls \hn^{(|\va|-\unl)_+}\Xi^{|\va|+1}e_u^{1/2}S_r(1+\hh) \ls \hn^{(|\va|-\unl)_+}\Xi^{|\va|}\tau^{-1}S_r(1+\hh).
$$

For the term with $\th_\ep$,
\begin{align*}
\co{\nb_{\va_1} P_{q+1} \nb_a T^b[\th_\ep]}
&\ls \Xi^{|\va_1|+1}e_u^{1/2} \hn^{(|\va_1|+1-L)_+}.
\end{align*}
Thus the bound on this term is
\begin{align*}
    \sum_q \tisum &[2^{-q}]\co{\nb_{\va_1} P_{q+1} \nb_a T^b[\th_\ep]}[\|\nb_{\va_2}P_{\le q-1} z\|_0] \\
    &\ls \sum_q\tisum[2^{-q}] [\Xi^{|\va_1|+1}e_u^{1/2} \hn^{(|\va_1|+1-L)_+}] [\|\nb_{\va_2}P_{\le q-1} z\|_0]\\
    &\ls \sum_q\tisum[2^{-q}] [\Xi^{|\va_1|+1}e_u^{1/2} \hn^{(|\va_1|+1-L)_+}] [\Xi^{|\va_2|} \hn^{(|\va_2|-\unl)_+} S_z (1+\hh(t)) ]\\
    &\ls \tisum [\Xi^{|\va_1|+1}e_u^{1/2} \hn^{(|\va_1|+1-L)_+}] [\Xi^{|\va_2|} \hn^{(|\va_2|-\unl)_+} S_z (1+\hh(t)) ]\\
    &\ls \tau^{-1} \Xi^{|\va|} \hn^{(|\va|-\unl)_+} S_z(1+\hh(t))
\end{align*}
This completes the proof of the $C^0$ bound on $\nb_{\va}\BB_{HL}$. The bound for $\BB_{LH}$ and $\nb_{\va}\BB_{LH}$ follows similarly.

We now prove the frequency-localized bounds.
Applying $P_{q'} \nb_{\va}$ for $q' > \hat q$, we get
\begin{align*}
P_{q'} \nb_{\va} \BB^{j\ell}_{HL} &= \sum_q P_{q'} \nb_{\va} (P_{\approx q} \calR_b^{j\ell} [ P_{q+1} \nb_a T^b[z_J^a] P_{\leq q-1} \th_\ep - P_{q+1} \nb_a T^b[\th_\ep] P_{\leq q-1} z_J^a]) \\
&= \sum_{|q'-q|\leq 5} P_{q'} P_{\approx q} \calR_b^{j\ell} [\nb_{\va} (P_{q+1} \nb_a T^b[z_J^a] P_{\leq q-1} \th_\ep - P_{q+1} \nb_a T^b[\th_\ep] P_{\leq q-1} z_J^a)].
\end{align*}
We obtain
\begin{align*}
\co{P_{q'} \nb_{\va} \BB^{j\ell}_{HL}} &\lesssim \sum_{|q'-q|\leq 5} 2^{-q} \co{\nb_{\va} (P_{q+1} \nb_a T^b[z_J^a] P_{\leq q-1} \th_\ep - P_{q+1} \nb_a T^b[\th_\ep] P_{\leq q-1} z_J^a)} \\
&\lesssim \sum_{|q'-q|\leq 5} 2^{-q} \tisum \big(\co{\nb_{\va_1} P_{q+1} \nb_a T^b[z_J^a]} \co{\nb_{\va_2} P_{\leq q-1} \th_\ep}\\
&\qquad\qquad\qquad +\co{\nb_{\va_1} P_{q+1} \nb_a T^b[\th_\ep]} \co{\nb_{\va_2} P_{\leq q-1} z_J^a}\big).
\end{align*}

We have
\begin{align*}
\co{\nb_{\va_1} P_{q+1} \nb_a T^b[z_J^a]}
&=\co{P_{q+1} \nb_a T^b[\nb_{\va_1} z_J^a]}\\
&\ls \co{ P_{q+1}\nb_a [\nb_{\va_1} z_J^a]} \\
&\lesssim 2^{-\alpha q} 2^q\cda{\nb_{\va_1}[z_J^a]} \\
&\lesssim 2^{-\alpha q}2^q \hxi^\alpha \hn^{(|\va_1|-\unl)_+}\Xi^{|\va_1|}S_z(1+\hh(t)),
\end{align*}
A similar calculation is done for $\co{\nb_{\va_2}P_{\le q-1} z^a_J} $.

Thus
$$\co{P_{q'} \nb_{\va} \BB^{j\ell}_{HL}} \ls 2^{-\alpha q}\hxi^\alpha \tau^{-1} \hn^{(|\va|-\unl)_+}\Xi^{|\va|}S_r(1+\hh(t))$$

Again the $LH$ term follows along similar lines.
\end{proof}

Now that we have estimated all of ${\bf F}_J = \{ \bar{w}_J, \bar{z}_J, \bar{r}_J \}$, Proposition~\ref{prop:weightedNorm} guarantees that $\hh(t) \ls 1$ is bounded.  We may now use this bound in the estimates that follow.

\begin{proof}[Proof of \eqref{eq:newtVelocBd} and \eqref{eq:newtVelocBd2} for $u_J$]
Using \eqref{eq:QcommutDdt} from Lemma~\ref{lem:zeroOrder} we have
\begin{align*}
\co{\nb_{\va} \Ddt u_J} &= \co{\nb_{\va} \Ddt T w_J} \\
&\le \|[\nb_{\va},\Ddt]u_J\| + \|\Ddt \nb_{\va} T w_J\|
\end{align*}
The first term is bounded by
\ALI{
\|[\nb_{\va},\Ddt]u_J\| &\ls \tisum \co{\nb_{\va_1} u_\ep} \co{\nb_{\va_2} \nb_i u_J} 1_{|\va_2| \leq |\va| - 1} 1_{|\va_1| \geq 1}\\
&\ls\tisum [\hn^{(|\va_1| - 1 + 1 - \unl)_+} \Xi^{|\va_1|} e_u^{1/2}] [\hn^{(|\va_2| + 1 - \unl)_+} \Xi^{|\va_2|} S_u (1+\hh)] 1_{|\va_1| \geq 1} \\
&\ls  \hn^{(|\va| - \unl)_+} \Xi^{|\va|} S_u (1+\hh)
}

For the second term, we decompose $w_J = \bar{w}_J + \tilde{w}_J$.  By Lemma \ref{lem:zeroOrder} for $\bar{w}_J$, the $\bar{w}_J$ part is bounded by
\begin{align*}
\co{T& \nb_{\va} \Ddt \bar{w}_J} + \plhxi \,O(\hn^{(|\va| -\unl)_+} \Xi^{|\va|+1} e_u^{1/2} S_w \hh(t)) \\
&\lesssim \plhxi \co{\nb_{\va} \Ddt \bar{w}_J} + \hxi^{-\alpha} \cda{\nb_{\va} \Ddt \bar{w}_J} + \plhxi \,O(\hn^{(|\va| -\unl)_+} \Xi^{|\va|+1} e_u^{1/2} S_w \hh(t)) \\
&\lesssim \plhxi \hn^{(|\va| -\unl)_+} \Xi^{|\va|} \tau^{-1} S_w (1 + \hh(t)) \\
&\quad + \hxi^{-\alpha} \sup_{q > \hq} 2^{\alpha q} \co{P_q  \nb_{\va} \Ddt \bar{w}_J} + \plhxi \,O(\hn^{(|\va| -\unl)_+} \Xi^{|\va|+1} e_u^{1/2} S_w \hh(t)) \\
&\lesssim \plhxi \hn^{(|\va| -\unl)_+} \Xi^{|\va|} \tau^{-1} S_w (1 + \hh(t)) \\
&\quad + \hxi^{-\alpha} \hxi^{(\alpha-1)} \hn^{(|\va|+1 -\unl)_+} \Xi^{|\va|+1} \mu S_w (1 + \hh(t)) \\
&\lesssim \plhxi \hn^{(|\va| -\unl)_+} \Xi^{|\va|} \mu S_w (1 + \hh(t)) = \hn^{(|\va| -\unl)_+} \Xi^{|\va|} \tau^{-1} S_u (1 + \hh(t))
\end{align*}
where in the third line we used Lemma~\ref{lem:zeroOrder} again to bound $\co{T \nb_{\va} \Ddt \bar{w}_J}$, in the fourth line we used our proof of Proposition~\ref{prop:boiledDownBound} for $w_J$ to bound $\co{\nb_{\va} \Ddt \bar{w}_J}$ and Lemma~\ref{lem:cdaCharacterization} %
to control $\cda{\nb_{\va} \Ddt \bar{w}_J}$.  %
In the fifth line we used the proof of Proposition~\ref{prop:boiledDownBound} again to bound $\co{P_q \nabla \nb_{\va} \Ddt \bar{w}_J}$.

Recall now that $\tilde{w}_J = \mu^{-1} h_{f[J]}(\mu t) \nb_j \nb_\ell A_J^{j\ell}$.  We have
\ali{
\nb_{\va} \Ddt T^\ell \tilde{w}_J &= I + II \\
I &= h_{f[J]}'(\mu t) T^\ell \nb_i \nb_j \nb_{\va} A_J^{ij} \\
II &= \mu^{-1} h_{f[J]}(\mu t) \nb_{\va} \Ddt [T^\ell \nb_i \nb_j A_J^{ij} ]
}
The term $I$ is the main term since here the advective derivative costs a factor of $\mu$.  We bound it by
\ALI{
\co{I} &\ls \| T^\ell \nb_i \nb_j P_{\leq \bar{q}} \| \co{\nb_{\va} A_J^{ij} } + \sum_{q \geq \bar{q}} 2^{-\a q} \cda{\nb_j \nb_i \nb_{\va} A_J^{ij}} \\
&\ls \Xi^2 [\hn^{(|\va| - L)_+} \Xi^{|\va|} D_{R,n}]  + \Xi^{-\a} \hn^{(|\va| + 3 + \a - L)_+} \Xi^{|\va| + \a} D_{R,n} \\
&\ls \Xi^2 \hn^{(|\va| + 1 - \unl)_+} \Xi^{|\va|} D_{R,n} \leq  \hn^{(|\va| + 1 - \unl)_+} \Xi^{|\va|} (\Xi e_u^{1/2}) e_u^{1/2}
}
The term $II$ can be bounded by
\ALI{
\co{II} &\ls \mu^{-1}\co{ \nb_{\va} [\Ddt, T^\ell] \nb_j \nb_i A_J^{ji} } + \mu^{-1} \co{\nb_{\va} T^\ell \Ddt \nb_i \nb_j A_J^{ij}}
}
The second term is bounded by
\ALI{
\mu^{-1} \co{\nb_{\va} &T^\ell \Ddt \nb_i \nb_j A_J^{ij}} \ls \mu^{-1} \log \hxi \co{ \nb_{\va} \nb_i \nb_j A_J^{ij} } + \mu^{-1} \hxi^{-\a} \cda{\nb_{\va} \nb_i \nb_j A_J^{ij} } \\
&\ls \mu^{-1} \log \hxi \hn^{(|\va| + 3 - L)_+} \Xi^{|\va| + 2} D_{R,n} \ls \hn^{(|\va| - \unl)_+} \Xi^{|\va|} (\Xi e_u^{1/2}) e_u^{1/2}
}
where we recall $\mu = N^{1/2} \Xi^{3/2} D_R^{1/2}$ and \eqref{eq:NlowerBd} to get the last estimate.
The first term with the commutator can be bounded by the same quantity by the argument of Lemma~\ref{lem:zeroOrder}.  We omit the details.
\end{proof}

\begin{proof}[Proof of \eqref{eq:newtTrfreeBd} for $\rho_J$]
Recall the following bounds on $r_J$ from Proposition~\ref{prop:newtonBound}:
\ali{
\co{ \nb_{\va} r_J } &\lesssim_{\va} \widehat N^{(|\va| -\unl)_+} \Xi^{|\va|} S_r, %
}
For $\rho_J^{j\ell} = \RR^{j\ell}_{ab} r_J^{ab}$, we thus obtain
\ALI{
\co{\nb_{\va} \rho_J}
&\ls (\log \hxi)\|\nb_{\va}r_J\|_0+\hxi^{-\a}\|\nb_{\va}r_J\|_{\dot C^\a}\\
&\ls (\log \hxi)\hn^{(|\va|-\unl)_+} \Xi^{|\va|}S_r+\hxi^{-\a} \hxi^{\a} \hh S_r \hn^{(|\va|-\unl)_+} \Xi^{|\va|}\\
&\ls \hn^{(|\va|-\unl)_+}\Xi^{|\va|}S_\rho (1+\hh)  \ls \hn^{(|\va|-\unl)_+}\Xi^{|\va|}S_\rho.
}
To bound the advective derivative of $\rho_J$ with a cost of $\tau^{-1}$ rather than $\mu$ we must examine the evolution equation for $r_J$.  The crucial point is that the forcing term $A_J^{j\ell}$ vanishes on the support of $\tilde{\chi}'$.

Lemma~\ref{lem:zeroOrder} gives
\ALI{
\nb_{\va} \Ddt \rho_J^{j\ell} &= \RR^{j\ell} \nb_a \nb_b \Ddt r_J^{ab} + (\log \hxi) O(\hn^{(|\va| - \unl)_+} \Xi^{|\va|+1} e_u^{1/2} S_r)
}
The term in the $O(\cdot)$ is acceptable since $\tau^{-1} \sim \log \hxi \Xi e_u^{1/2}$ and $S_\rho = (\log \hxi ) S_r$.  For the first term, let us define the order zero operators $\QQ^{j\ell}_{ab} = \RR^{j\ell} \nb_a \nb_b$ and also $\tilde \QQ^{j\ell}_{cd} = \QQ^{j\ell}_{ab} \RR^{ab}_c \nb_d$.  We return to the equation for $r_J$ to obain, for $t \in \supp \partial_t\chi_k$,
\ali{
\nb_{\va} \QQ^{j\ell}_{ab} \Ddt r_J^{ab} &= \nb_{\va} \QQ^{j\ell}_{ab} \RR^{ab}_c \nb_d[ \nb_e T^d \th_\ep  r_J^{c e} ] + \nb_{\va}\QQ^{j\ell}_{ab} \BB^{ab}_c[ z_J^c, \th_\ep] \\
&= \tilde \QQ^{j\ell}_{cd} \nb_{\va} [\nb_e T^d \th_\ep  r_J^{c e} ] + \QQ^{j\ell}_{ab}  \nb_{\va}\BB^{ab}_c[ z_J^c, \th_\ep] \label{eq:rhoJEqn2}
}
Note that the latter equation has exactly the same form as the equation \eqref{eq:barrJ_recall} for $\bar{r}_J$ except for the additional zeroth order operator $\QQ_{ab}^{j\ell}$ appearing in front of $\BB^{ab}_c$.  Thus we can repeat the analysis that was done for $\bar{r}_J$ and use the inequality
\ALI{
\co{\QQ^{j\ell}_{ab}  \nb_{\va}\BB^{ab}_c[ z_J^c, \th_\ep]} &\ls \log \hxi \co{ \nb_{\va}\BB^{ab}_c[ z_J^c, \th_\ep]} + \hxi^{-\a} \sup_{q \geq \hq} 2^{\a q} \co{ P_q \nb_{\va}\BB^{ab}_c[ z_J^c, \th_\ep] }.
}
Doing so we conclude that the estimate for $\nb_{\va} \Ddt \rho_J$ on $\supp \tilde{\chi}'_k$ is the same as the estimate for $\nb_{\va} \Ddt \bar{r}_J$, but with a loss of one power of $\log \hxi$ that comes from the presence of the additional zeroth order operator in front of $\calB_c^{ab}$ in \eqref{eq:rhoJEqn2}.  We omit the remaining details.
\end{proof}

Let us now conclude the proof of Proposition~\ref{prop:newtonBound}.

\begin{proof}[Proof of \eqref{eq:sizeOfNewtCorrect}]
Let $0 \leq |\va| \leq 1$.  Then
\ALI{
\co{ \nb_{\va}|\nb|^{-1/2} w_J} &\ls \sum_{q \leq \bar{q} }\co{ |\nb|^{-1/2}  P_q \nb_i \nb_{\va} z_J^i} + \sum_{q \geq \bar{q}} \co{|\nb|^{-1/2} P_q \nb_{\va} w_J} \\
&\ls \sum_{q \leq \bar{q} } 2^{q/2} \co{\nb_{\va} z_J} + \sum_{q \geq \bar{q}} 2^{-q/2} \co{\nb_{\va} w_J} \\
&\ls \Xi^{1/2} \Xi^{|\va|} [\Xi^{-1} S_w] + \Xi^{-1/2} \Xi^{|\va|} S_w \\
&\sim \Xi^{-1/2} \Xi^{|\va|} S_w \ls \Xi^{|\va|} D_{R,n}^{1/2}.
}
The desired bound for $w_{n} = \sum_{k,n} \tilde{\chi} w_{k,n}$ now follows.
\end{proof}

\begin{proof}[Proof of \eqref{eq:RnC0}, \eqref{eq:Rnplus1}, and \eqref{eq:Rnplus1Ddt}]
We first prove \eqref{eq:RnC0}. We have
\[
R_{(n+1)} = \sum_k \tilde{\chi}_k'(t) \rho_{(k,n)}
\]
where \(\rho\) is a trace-free double anti-divergence of \(w_{(k,n)}\). We have
\[\|R_{(n+1)}\|_0 \le
\frac{C}{\tau} \max_k \|\rho_{(k,n)}\|_0 \leq \frac{C}{\tau} S_\rho = \frac{C}{\tau} (\log \hxi) \mu^{-1} D_{R,n}.
\]
This is \(\leq D_{R,n+1}\) if and only if
\[
\left(\frac{C}{b}\right)^2 N^{4\eta} \Xi^{4\eta} \frac{e_u}{e_R} \leq N,
\]
which holds by the hypothesis \eqref{eq:NlowerBd} in the Main Lemma. This proves \eqref{eq:RnC0}.

The proof of \eqref{eq:Rnplus1} follows from \eqref{eq:newtTrfreeBd} with $r=0$. The proof of \eqref{eq:Rnplus1Ddt} follows from \eqref{eq:newtTrfreeBd} with $r=1.$
\end{proof}

\subsection{Errors after the Newton step}

Upon completing the Newton step we have new errors described by \eqref{eq:gluingError2}-\eqref{eq:notDivForm20}.  The error term in \eqref{eq:gluingError2} has already been estimated.  Let us now estimate the terms in \eqref{eq:notDivForm10}-\eqref{eq:notDivForm20}.

We introduce the notation $R_{M,n}$ and $R_{Q,n}$ to denote special solutions to
\ali{
\nb_j\nb_\ell R_{M,n}^{j\ell} &= T^\ell(\th - \th_\ep) \nb_\ell w_n + T^\ell w_n \nb_\ell(\th - \th_\ep) \\
\nb_j \nb_\ell R_{Q,n}^{j\ell} &= T^\ell w_n \nb_\ell w_n + \sum_{j = 1}^{n-1} (T^\ell w_n \nb_\ell w_j + T^\ell w_j \nb_\ell w_n)
}%

\begin{lem}  For $0 \leq |\va| + r \leq L$ and $0\leq r\le 1$
\ali{
    \co{\nb_{\va} \pr_t^r ( \th - \th_\ep) } + \co{\nb_{\va} \pr_t^r ( u - u_\ep ) } &\lesssim (N \Xi)^{|\va|} (\ost{\Xi} \ost{e}_u^{1/2})^r N^{-1} e_u^{1/2},
}\label{lem:new}
Furthermore, for all $0 \leq r \leq 1$ and all $\va$ one has
\ali{
\co{\nb_{\va} \pr_t^r w_n} + \Xi \co{\nb_{\va} \pr_t^r z_n} &\ls (N \Xi)^{|\va|} \mu^r S_w. \label{eq:lossyBdwn}
}

\end{lem}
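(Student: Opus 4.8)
The plan is to prove the two displayed bounds separately, in each case treating $r=0$ and $r=1$ uniformly by exploiting that $\pr_t$ commutes with every fixed spatial Fourier multiplier (in particular with $P_{\leq q_\ep}$ and with $T$), so that $\th-\th_\ep=P_{>q_\ep}\th$, $\pr_t(\th-\th_\ep)=P_{>q_\ep}\pr_t\th$, and likewise for $u-u_\ep=T(\th-\th_\ep)$.

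\textbf{Regularization errors.} Since $\ep^{-1}\sim2^{q_\ep}\sim\hxi$, I would first record the input bounds from Definition~\ref{defn:freqEnLevels}: $\co{\nb_{\vcb}\th}\leq\Xi^{|\vcb|}e_u^{1/2}$ for $|\vcb|\leq L$, and, writing $\pr_t\th=D_t\th-u^\ell\nb_\ell\th$ with $u=T\th$, $\co{\nb_{\vcb}\pr_t\th}\ls\Xi^{|\vcb|+1}e_u$ for $|\vcb|\leq L-1$. For $r=0$, $|\va|<L$, I sum the Littlewood--Paley blocks $\co{\nb_{\va}P_q\th}\ls 2^{-q(L-|\va|)}\Xi^{L}e_u^{1/2}$ over $q>q_\ep$; the series is dominated by $q=q_\ep$ and yields $\co{\nb_{\va}(\th-\th_\ep)}\ls\hn^{-(L-|\va|)}\Xi^{|\va|}e_u^{1/2}=N^{-1}(\hn\Xi)^{|\va|}e_u^{1/2}\leq N^{-1}(N\Xi)^{|\va|}e_u^{1/2}$ because $\hn=N^{1/L}\leq N$; the borderline $|\va|=L$ is covered by the crude bound $\co{\nb_{\va}(\th-\th_\ep)}\ls\Xi^{L}e_u^{1/2}\leq N^{-1}(N\Xi)^{L}e_u^{1/2}$. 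For $r=1$ the same summation applied to $\pr_t\th$ (with $L-1$ in place of $L$) gives the $|\va|=0$ case, $\co{\pr_t(\th-\th_\ep)}\ls\hn^{-(L-1)}\Xi e_u=N^{-1}\hn\,\Xi^{2}D_u$, and for $1\leq|\va|\leq L-1$ the crude bound $\co{\nb_{\va}\pr_t(\th-\th_\ep)}\ls\Xi^{|\va|+1}e_u$ suffices (bounding $\pr_t\th_\ep=\Ddt\th_\ep-u_\ep\cdot\nb\th_\ep$ via \eqref{eq:prelimScalVel1} and \eqref{eq:prelimScalVel2}). In every case the amplitude loss relative to $\ost{e}_u^{1/2}$ and the surplus powers of $\hn$ and $(D_u/D_R)^{1/2}$ are absorbed into the deliberately wasteful right-hand side $N^{-1}(N\Xi)^{|\va|}(\ost{\Xi}\ost{e}_u^{1/2})^r$ via the Main Lemma hypothesis \eqref{eq:NlowerBd}: e.g. $\ost{\Xi}\ost{e}_u^{1/2}=(\hc N\Xi)^{3/2}D_R^{1/2}\sim N\mu$, and $\hn^{2}(D_u/D_R)\ls N^{1+2/L}\leq N^{3}$ by \eqref{eq:NlowerBd}, which closes the $r=1$, $|\va|=0$ case; the rest are analogous. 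The velocity statements follow verbatim since $T$ is a degree-$0$ multiplier whose localization $TP_q$ has an $L^1$ kernel bounded uniformly in $q$, so it costs nothing on the frequency-localized pieces.

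\textbf{Lossy bounds for the Newton corrections.} For $r=0$ this is immediate from \eqref{eq:newtonStepBoundnbaF}, since $\co{\nb_{\va}w_n}\ls\max_k\co{\nb_{\va}w_{(k,n)}}\ls\hn^{(|\va|-\unl)_+}\Xi^{|\va|}S_w$ and $\hn^{(|\va|+c-L)_+}\leq N^{|\va|}$ for every $|\va|\geq0$ and $c\leq4$ (here $c=3$), which holds as $L\geq7$; the $z_n$ half is identical with $S_z=\Xi^{-1}S_w$. For $r=1$ I would differentiate $w_n=\sum_k\ti\chi_k w_{(k,n)}$. Terms with $\pr_t$ on $\ti\chi_k$ are $\ls\tau^{-1}\max_k\co{\nb_{\va}w_{(k,n)}}$, and $\tau^{-1}<\mu$ by the remark after \eqref{eq:muChoice}. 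Terms with $\pr_t$ on $w_{(k,n)}$ I would estimate by feeding the Newton equation \eqref{eq:NewtonEquation} in the form
$$\pr_t w_J=-u_\ep^\ell\nb_\ell w_J-u_J^\ell\nb_\ell\th_\ep+\nb_j\nb_\ell O_J^{j\ell}$$
into the product rule, using \eqref{eq:prelimScalVel1} for $\th_\ep,u_\ep$, \eqref{eq:newtonStepBoundnbaF} for $w_J$, Lemma~\ref{lem:velocBound1} (with $\hh\ls1$) for $u_J$, and the forcing bound $\co{\nb_{\va}\nb_j\nb_\ell O_J^{j\ell}}\ls\hn^{(|\va|+2-L)_+}\Xi^{|\va|}\mu S_w$ from the proof of Proposition~\ref{prop:tildewBds}; the only arithmetic inputs are $\Xi e_u^{1/2}\ls\tau^{-1}<\mu$, $(\log\hxi)\Xi e_u^{1/2}=b\tau^{-1}<\mu$, and $\hn^{(|\va|+c-L)_+}\Xi^{|\va|}\ls(N\Xi)^{|\va|}$ again. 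This gives $\co{\nb_{\va}\pr_t w_n}\ls(N\Xi)^{|\va|}\mu S_w$. The bound for $z_n$ follows identically from the transport equation \eqref{eq:plainForm}, whose right-hand side $-u_\ep^\ell\nb_\ell z_J^i+(\nb_a u_\ep^i)z_J^a-u_J^i\th_\ep-\nb_a O_J^{ia}$ is schematically of the same type; the prefactor $\Xi$ in the statement converts $S_z$ back to $S_w$.

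\textbf{Main obstacle.} Conceptually there is none: the first part is the standard mollification estimate (cf.\ \cite[Section~7]{isettVicol}, \cite[Chapter~18]{isett}) and the second is a routine differentiation of the Newton equations combined with \eqref{eq:newtonStepBoundnbaF}. The point demanding care is purely the bookkeeping---checking that \emph{every} loss against the sharp scaling (the $\log\hxi$ generated by $T$, the fractional powers of $\hn=N^{1/L}$, and the ratio $D_u/D_R$) is swallowed by the intentionally lossy factors $N^{-1}(N\Xi)^{|\va|}$ and $(N\Xi)^{|\va|}\mu^{r}$, which always reduces to invoking \eqref{eq:NlowerBd} (and \eqref{eq:muChoice} for $\tau^{-1}<\mu$). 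One must in particular track whether each counting-inequality step is governed by the threshold $L$ (for $\th_\ep$-type factors) or by $\unl=L-3$ (for $w_J$- and $z_J$-type factors); it is at the $\unl$ steps that the standing assumption $L\geq7$ comes in.
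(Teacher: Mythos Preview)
Your proposal is correct and follows essentially the same strategy as the paper. For the regularization error, your Littlewood--Paley summation over $q>q_\ep$ is equivalent to the paper's vanishing-moments argument (the kernel of $P_{\leq q_\ep}$ has all higher moments zero since $\hat{\eta}_{\leq 0}\equiv 1$ near the origin), and the paper likewise falls back on the crude triangle-inequality bound for $|\va|+r\geq 1$; for the Newton corrections, the paper phrases the $r=1$ case as ``$\pr_t=\Ddt-u_\ep\cdot\nb$ costs at most $\mu$'' after splitting $w_J=\bar w_J+\tilde w_J$, which is exactly what your direct use of the transport equations \eqref{eq:NewtonEquation} and \eqref{eq:plainForm} unpacks term by term.
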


\begin{proof}
Recall that $\th_\ep = P_{\leq q_\ep} \th$ and $u_\ep^\ell = T^\ell \th_\ep$, where $q_\ep$ is chosen such that $2^{q_\ep} \sim \ep^{-1} = N^{1/L} \Xi$. We begin by estimating the difference $\th - \th_\ep$:
\begin{align*}
\th - \th_\ep &= \th - P_{\leq q_\ep} \th = P_{> q_\ep} \th.
\end{align*}
Using the Littlewood-Paley characterization of Hölder norms and the frequency energy level estimates,

When we bound $\theta - \theta_\ep$ in $C^0$, we need to be very precise and use the fact that the moments
$$\int h^{\vec{a}} \eta_\ep(h) \, dh = 0$$
all vanish for $0 < |a| \leq L$. This implies
$$\|\theta - \theta_\epsilon\|_0 \lesssim \epsilon^L \|\nabla^L \theta\|_0 \lesssim \fr{e_u^{1/2}}{N}.$$
Now we move on to the $|a| \geq 1$ case.

Now we consider $1 \leq |\va| + r \leq L$.  We use a trivial bound of
\ali{
\| \nb_{\va} \pr_t^r (\th - \th_\ep ) \|_0 &\leq \co{\nb_{\va} \pr_t^r \th } + \co{\nb_{\va} \pr_t^r \th_\ep } \\
&\lesssim \Xi^{|\va|} (\Xi e_u^{1/2})^r e_u^{1/2} = \Xi^{|\va| + 3r/2} D_u^{(r+1)/2}
}
Our goal is to bound this expression by
\ali{
(N \Xi)^{|\va|} (\ost{\Xi} \ost{e}_u^{1/2})^r D_u^{1/2}/N &= (N \Xi)^{|\va|} (N \Xi)^{3r/2} D_R^{r/2} D_u^{1/2} / N
}
Thus we must check that for $1 \leq |\va| + r \leq L$ we have
\ali{
(D_u / D_R )^{r/2} &\ls N^{3 r / 2 + |\va| - 1} = N^{r/2 + ( r + |\va| - 1)}
}
This lower bound follows from \eqref{eq:NlowerBd}, which implies $N \geq D_u / D_R$.  The same proof applies to $u$ since we have assumed the same bounds on $u$ as for $\th$.

To prove \eqref{eq:lossyBdwn}, recall that $w_n = \sum_k \tilde{\chi}_k w_{k,n}$, with $w_{k,n} = \tilde{w}_{k,n} + \bar{w}_{k,n}$, each of size bounded by $S_w$.  In order to estimate $\nb_{\va} \pr_t^r w_{k,n}$, it suffices to observe that:
\begin{itemize}
    \item Taking a spatial derivative never costs more than $\hn \Xi \leq N \Xi$.
    \item Taking an advective derivative $\Ddt$ of $\bar{w}_{k,n}$ costs at most $\tau^{-1}$.
    \item Taking a pure time derivative $\pr_t = \Ddt - u_\ep \cdot \nb$ of either $\tilde{w}_{k,n}$ or $\bar{w}_{k,n}$ costs at most $\mu$.
\end{itemize}
Similar considerations hold for $z_{k,n}$, which has size $S_z = \Xi^{-1} S_w$.
\end{proof}

\begin{prop} \label{prop:RmRqnewt} For appropriately chosen $R_{M,n}$ and $R_{Q,n}$, we have the estimates
\ali{
\|\nabla_{\vec a} R_{M,n}\|_0 &\lesssim (N \Xi)^{|\va|} N^{-1} D_R  \\
\co{ \nb_{\va} R_{Q,n} } &\lesssim (N\Xi)^{|\va|} N^{-1} D_R = (N\Xi)^{|\va|}  S_w^2 \Xi^{-1}
}
for $0 \leq |\va| \leq L$ and
\ali{
\co{\nb_{\va} \pr_t R_{M,n} } &\lesssim (N \Xi)^{|\va|} \ost{\tau}^{-1}  N^{-1} D_R \\
\co{\nb_{\va} \pr_t R_{Q,n} } &\lesssim (N\Xi)^{|\va|} \ost{\tau}^{-1} N^{-1} D_R = (N\Xi)^{|\va|} \ost{\tau}^{-1} S_w^2 \Xi^{-1}  \label{eq:withoutLogLoss}
}
for $0 \leq |\va| \leq L-1$.  Here $\ost{\tau}^{-1} = (N \Xi)^{3/2} D_R^{1/2}$, and $S_w$ is as documented in the table \eqref{eq:sizeOfTerms}, $S_w = \Xi^2 \mu^{-1} D_R$, $\mu = N^{1/2} \Xi^{3/2} D_R^{1/2}$.
\end{prop}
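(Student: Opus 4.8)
The plan is to construct $R_{M,n}$ and $R_{Q,n}$ by the \textbf{divergence form principle} of Section~\ref{sec:divFormPrinciple}, choosing both tensors symmetric and trace-free (so they may be absorbed into $P_{(n+1)}$). The starting point is that each source in \eqref{eq:notDivForm10}--\eqref{eq:notDivForm20} is symmetric in its two arguments modulo the antisymmetry of $T$: since $\nb_\ell T^\ell[\cdot]=0$, one has the exact identities $T^\ell[a]\nb_\ell b+T^\ell[b]\nb_\ell a=\nb_\ell(a\,T^\ell[b]+b\,T^\ell[a])$ and $T^\ell[a]\nb_\ell a=\nb_\ell(a\,T^\ell[a])$. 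Thus the source of $R_{M,n}$ is $\nb_\ell V^\ell$ with $V^\ell=(\th-\th_\ep)T^\ell[w_n]+w_nT^\ell[\th-\th_\ep]$, and the source of $R_{Q,n}$ is $\nb_\ell V^\ell$ with $V^\ell$ built analogously from $w_n$ and the $w_j$; in every case $V^\ell$ is a sum of terms (scalar)$\times T[$scalar$]$. One then splits each $V^\ell$ into Littlewood--Paley paraproduct pieces (High--Low, Low--High, High--High). For the non-resonant High--Low and Low--High pieces the output sits at the frequency $2^q$ of the high input, and applying the order $-1$ operator $\calR_a^{j\ell}$ localized to that block to $V^\ell$ produces a second antidivergence with a clean $2^{-q}$ gain. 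For the resonant High--High pieces one invokes the divergence form principle — using that the multiplier of $\nb_\ell T^\ell[\cdot]$ is even, degree-zero homogeneous, and that a minus sign appears, so the relevant bilinear symbols vanish to second order on the diagonal — to write $\nb_\ell V^\ell_{HH}$ as $\nb_j\nb_\ell(\sum_q K_q\ast[P_{\approx q}a,P_{\approx q}b])$ with $K_q(h_1,h_2)=2^{3q}K_0(2^qh_1,2^qh_2)$, hence $\|K_q\|_{L^1}\sim 2^{-q}$; this is the genuine second-order gain and is exactly the device used above for the $\BB$ term. Finally one symmetrizes and removes traces, which is automatic for $\calR_a^{j\ell}$ and costs nothing for the bilinear convolutions.

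With this representation the estimates reduce to bookkeeping of the frequencies and sizes of the two inputs. For $R_{M,n}$ the decisive fact is that $\th-\th_\ep=P_{>q_\ep}\th$ is supported at frequencies $\gtrsim\hn\Xi$ with rapidly decaying tails: the moment conditions on the mollifier give $\co{\th-\th_\ep}\lesssim N^{-1}e_u^{1/2}$ and, by Bernstein, $\co{P_q\th}\lesssim(\Xi 2^{-q})^Le_u^{1/2}$ for $2^q\gtrsim\hn\Xi$ (this is the preceding Lemma). Hence every paraproduct piece of $V^\ell$ has its high input $\gtrsim\hn\Xi$, the second antidivergence supplies $(\hn\Xi)^{-2}$ (resp.\ $2^{-q}$ with $2^q\gtrsim\hn\Xi$ in the High--High blocks), and pairing $\co{\th-\th_\ep}\lesssim N^{-1}e_u^{1/2}$ with $\co{w_n}\lesssim S_w$ from \eqref{eq:lossyBdwn} — the factor $T$ acting boundedly on each single Littlewood--Paley annulus, so with no logarithmic loss — gives the base case bound $\lesssim(\hn\Xi)^{-1}N^{-1}e_u^{1/2}S_w=\hn^{-1}N^{-3/2}D_u^{1/2}D_R^{1/2}$; this is $\lesssim N^{-1}D_R$ precisely because \eqref{eq:NlowerBd} forces $D_u/D_R\lesssim N^{1-6/L}\lesssim\hn^2N$, with room to spare for the occasional logarithm (the largeness of $\hc$). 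For $R_{Q,n}$ both inputs are Newton corrections, which are frequency-localized to $[\gtrsim1,\ \lesssim\hn\Xi]$ and — being double antidivergences of a scale-$\Xi$ object, cf.\ \eqref{eq:sizeOfNewtCorrect} and the bound on $\rho_J$ in Proposition~\ref{prop:newtonBound} — have their bulk near frequency $\Xi$ with $\co{P_qw_n}\lesssim\min\{S_w,\ 2^{2q}\mu^{-1}D_R\}$. Summing the Littlewood--Paley decomposition, both the High--High sum $\lesssim\sum_q2^{-q}\co{P_qw_n}\co{P_qw_j}$ and the High--Low sum $\lesssim\sum_q2^{-q}\co{P_{\approx q}w_n}S_w$ are bounded by $\Xi^{-1}S_w^2$, which is exactly $N^{-1}D_R$ by $S_w=\Xi^2\mu^{-1}D_R$ and $\mu=N^{1/2}\Xi^{3/2}D_R^{1/2}$.

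For the derivative bounds, spatial derivatives commute with the convolution kernels and with $\calR_a^{j\ell}$, and by Leibniz distribute onto the two inputs, each spatial derivative costing at most $N\Xi$ (by the preceding Lemma for $\th-\th_\ep$ and by \eqref{eq:lossyBdwn} for $w_n$); for $R_{M,n}$ one also uses $\co{P_q\th}\lesssim(\Xi 2^{-q})^Le_u^{1/2}$ to control the top frequencies, which is why the range stops at $|\va|\leq L$. A single pure time derivative $\pr_t$ falls on the inputs, since the kernels are $t$-independent: it costs $\ost\Xi\ost{e}_u^{1/2}=\ost\tau^{-1}$ on $\th-\th_\ep$ (preceding Lemma) and only $\mu\leq\ost\tau^{-1}$ on $w_n$ and $z_n$ (by \eqref{eq:lossyBdwn}), so one loses a single factor $\ost\tau^{-1}$, which yields the stated bounds for $0\leq|\va|\leq L-1$.

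The main obstacle is the divergence form principle itself — establishing that these particular quadratic and bilinear sources admit second-order divergence forms with \emph{dimensionally sharp} antidivergences, i.e.\ with the two genuine anti-derivatives faithfully reflected in the $2^{-q}$ (resp.\ $2^{-2q}$) scaling of the kernels, rather than merely an $O(1)$ or one-order gain as a naive application of $\calR_a^{j\ell}$ to the product would give. This rests on the precise algebraic structure of the SQG multiplier ($T^\ell$ odd, degree-zero homogeneous, so $\nb_\ell T^\ell$ is even) together with the symmetrizability of the sources noted above. A secondary technical point, needed to avoid spurious logarithms in $R_{Q,n}$, is the careful bookkeeping of the low-frequency content of $w_n$ — that it inherits the $2^{2q}\mu^{-1}D_R$ behaviour of a double antidivergence of a scale-$\Xi$ quantity — for which one returns to the explicit decomposition $w_n=\tilde w_n+\bar w_n$ and the transport equation \eqref{eq:barwJ} rather than relying on the lossy bound \eqref{eq:lossyBdwn} alone.
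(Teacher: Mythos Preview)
Your approach is correct and follows essentially the same route as the paper. Both proofs write the sources as $\nabla_\ell V^\ell$ via $T^\ell[a]\nabla_\ell b+T^\ell[b]\nabla_\ell a=\nabla_\ell(aT^\ell[b]+bT^\ell[a])$, paraproduct-decompose $V^\ell$, apply $\calR_a^{j\ell}P_{\approx q}$ to the HL/LH pieces, and invoke the divergence form principle (odd multiplier $T^\ell$, plus sign) on the HH pieces to obtain bilinear kernels $K_q$ with $\|K_q\|_{L^1}\sim 2^{-q}$. For $R_{M,n}$ both exploit that $\th-\th_\ep=P_{>\hat q}\th$ forces all sums to start at $q\gtrsim\hat q$; for $R_{Q,n}$ both exploit the antidivergence structure of $w_J$ at low frequencies to avoid a logarithm.

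Two minor corrections to your write-up. First, the symbol vanishing in the divergence form principle is \emph{first} order on the diagonal, not second --- one factors a single $\xi_j$ out of $m(\xi-\eta)+m(\eta)=m(\xi-\eta)-m(-\eta)$ --- and this single $2^{-q}$ gain (on top of the explicit first divergence $\nabla_\ell V^\ell$) is all that is used; your final bound $(\hn\Xi)^{-1}N^{-1}e_u^{1/2}S_w$ is consistent with this, so the ``$(\hn\Xi)^{-2}$'' earlier is a slip. Second, your last paragraph overstates the difficulty: the low-frequency bound $\co{P_q w_n}\lesssim 2^{2q}S_r$ follows immediately from $w_J=\nb_i\nb_j r_J^{ij}$ and the already-proved bound $\co{r_J}\lesssim S_r$ in Proposition~\ref{prop:newtonBound}; there is no need to revisit the transport equation \eqref{eq:barwJ}. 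The paper takes a slightly different but equivalent tack on the HH piece at low $q$: it writes $w_J=\nb_i z_J^i$ on both factors and integrates by parts twice against the bilinear kernel, obtaining $2^q S_z^2$ rather than your $2^{3q}S_r^2$; both sum to $\Xi^{-1}S_w^2$.
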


\subsection*{The quadratic terms $R_{Q,n}$.}
We begin by estimating an inverse double divergence of $T^\ell w_n \nabla_\ell w_n = \nabla_\ell(w_n T^\ell w_n)$. It suffices to only estimate this term since the other terms in the equation for $R_{Q,n}$ are similar.

We must estimate a solution to
\begin{align}
\begin{split}
	\nabla_j R^{j\ell}_{Q,n} &= T^\ell w_n w_n\\
	&= \sum_q P_{\le q-1} w_J T^\ell P_{q+1} w_J \\
	&\quad + P_{q+1} w_J P_{\le q-1} T^\ell w_J \\
	&\quad + P_{q+1} w_J T^\ell P_q w_J + P_q w_J T^\ell P_{q+1} w_J + P_{q+1} w_J T^\ell P_{q+1} w_J.
 \end{split}\label{eq:Qn}
\end{align}
Specifically, we achieve bounds for
$$\nb_{\va}\pr_t^rR_{Q,n}^\ell = \nb_{\va}\pr_t^r \operatorname{div}^{-1}(w_n T^\ell w_n).$$ We decompose this as $LH + HL + HH$ in the manner of \eqref{eq:Qn}.

For brevity, we omit $n$ in the subscript.

\paragraph{Terms $R_{QHL}$ and $R_{QLH}$.}

The low-high terms are analogous to the high-low terms; thus, we concentrate our analysis on the latter. Its $q$'th frequency component is \ali{
\nb_{\va}\pa_t^r R_{QHLq}^{j\ell} = \nb_{\va}\pa_t^r \mathcal{R}^{j\ell}_a P_{\approx q}[P_{q+1}w_J T^a P_{\le q-1}w_J]. \label{eq:qQuadHLterm}
}
We select $\bar{q}$ such that $2^{\bar{q}} \sim \Xi$.

Consider the case $q \leq \bar{q}$.  In this case we express $w_J = \nabla_i \nabla_b r_J^{ib}$ in the rightmost copy of $w_J$ and bound the operator norm of $T P_{\le q-1} \nabla \nabla$. By doing so, we obtain:
\ali{
\nb_{\va}\pa_t^r R_{QHLq}^{j\ell} &= \tisum \mathcal{R}^{j\ell}_a P_{\approx q}[ \nb_{\va_1} \pr_t^{r_1} P_{q+1} w_J \nb_{\va_2} \pr_t^{r_2} T^a P_{\leq q-1} \nb_i \nb_b r_J^{ib}]  \label{eq:QuadHLlow} \\
\co{\eqref{eq:QuadHLlow}} &\ls \| \mathcal{R}^{j\ell}_a P_{\approx q} \| \tisum  \co{\nb_{\va_1} \pr_t^{r_1} P_{q+1} w_J} \| T^a P_{\leq q-1} \nb_i \nb_b \| \co{\nb_{\va_2} \pr_t^{r_2} r_J^{ib} } \\
&\ls 2^{-q} 2^{2 q} \tisum (N \Xi)^{|\va_1|} \mu^{r_1} S_w (N \Xi)^{|\va_2|} \mu^{r_2} S_r  \\
&\ls 2^q \Xi^{-2} (N \Xi)^{|\va|} \mu^{r} S_w^2 \label{eq:QhighLowboundedLow}
}
For high frequencies $q \geq \bar{q}$, we first prove the preliminary
bound
\ali{
\co{\nb_{\va} \pr_t^r P_{\leq q} T^a w_J } &\ls \co{\nb_{\va} \pr_t^r P_{\leq \bar{q}} T^a w_J } + \sum_{q = \bar{q}}^\infty \co{ T^a P_q \nb_{\va} \pr_t^r w_J } \notag \\
&\ls \| P_{\leq \bar{q}} T^a \nb_i \nb_b \| \co{\nb_{\va} \pr_t^r r_J^{ib}} + \sum_{q = \bar{q}}^\infty 2^{-q} \co{\nb \nb_{\va} \pr_t^r w_J } \notag \\
&\ls 2^{2 \bar{q}} \Xi^{-2} (N \Xi)^{|\va|} \mu^{r} S_w + 2^{-\bar{q} } \Xi (N \Xi)^{|\va|} \mu^{r} S_w \notag \\
\co{\nb_{\va} \pr_t^r P_{\leq q} T^a w_J }&\ls (N \Xi)^{|\va|} \mu^{r} S_w
}
We now apply this estimate to
\ali{
\co{\eqref{eq:qQuadHLterm}} &\ls \tisum \| \mathcal{R}^{j\ell}_a P_{\approx q} \| \co{\nb_{\va_1} \pr_t^{r_1} P_{q+1} w_J} \co{\nb_{\va_2} \pr_t^{r_2} P_{\leq q-1} T^a w_J} \notag \\
&\ls \tisum 2^{-q} [(N \Xi)^{|\va_1|} \mu^{r_1} S_w][(N \Xi)^{|\va_2|} \mu^{r_2} S_w] \\
&\ls 2^{-q} (N \Xi)^{|\va|} \mu^{r} S_w^2. \label{eq:highTermHLqbd}
}
Summing \eqref{eq:QhighLowboundedLow} over $q < \bar{q}$ and \eqref{eq:highTermHLqbd} over $q \geq \bar{q}$ yields \eqref{eq:withoutLogLoss} for $R_{QHL}$.

\paragraph{Term $R_{QHH}.$}

We decompose the high-high frequency interactions into three parts: those with the operators applied in the order \(P_{q+1}, P_q\); those with the order reversed; and those involving both \(P_{q+1}\).

We begin with the third group of terms. We can consider the other two terms similarly, as a single group. Note that we don't consider them separately because we need to consider those two together in order to get an anti-divergence. For brevity, we only demonstrate the part with both operators being $P_{q+1}.$

We need to bound
\begin{align}
\nb_{\va}\pa_t^r K_{q1}^{j\ell} \ast [w_J,w_J] = \nb_{\va}\pa_t^r \int P_{q+1}w_J(x-h_1) P_{q+1} w_J(x-h_2) K_{q1}^{j\ell}(h_1,h_2) dh_1 dh_2. \label{eq:bilinConvFormq}
\end{align}
Note that we can distribute the derivatives inside the integral using the product rule.  We first consider the case where $q > \bar{q}$.
\ali{
\co{\eqref{eq:bilinConvFormq}} &\ls \tisum \| K_q \|_{L^1} \co{\nb_{\va_1}\pa_t^{r_1} w_J} \co{ \nb_{\va_2}\pa_t^{r_2} w_J} \\
&\ls \tisum 2^{-q} (N \Xi)^{|\va|} \mu^{r} S_w^2 \label{eq:highHHbd}
}
For $q \leq \bar{q}$, we write $w_J = \nb_i z_J^i$ and integrate by parts to find
\ali{
K_{q1}^{j\ell} \ast [w_J,w_J] &= \int z_J^a(x - h_1) z_J^b(x - h_2) \nb_a \nb_b K_q^{j\ell}(h_1, h_2) dh_1 dh_2 \label{eq:integratedByParts} \\
\co{\nb_{\va} \pr_t^r \eqref{eq:integratedByParts} } &\ls \tisum \| \nb^2 K_q \|_{L^1} \co{\nb_{\va_1} \pr_t^{r_1} z_J} \co{\nb_{\va_2} \pr_t^{r_2} z_J} \\
&\ls 2^{q} (N \Xi)^{|\va|} \mu^r S_z^2 \sim 2^q \Xi^{-2} (N \Xi)^{|\va|} \mu^r S_w^2  \label{eq:HHlow}
}
Now we sum \eqref{eq:highHHbd} over $q > \bar{q}$ and \eqref{eq:HHlow} over $q \leq \bar{q}$ to obtain \eqref{eq:withoutLogLoss} for $R_{QHH}$.

\subsection*{The mollification terms $R_{M,n}$.}
Recall that $R_{M,n}$ solves
$$\nabla_j\nabla_\ell R_{M,n}^{j\ell} = T^\ell(\th - \th_\ep) \nabla_\ell w_n + T^\ell w_n \nabla_\ell(\th - \th_\ep).$$
Here, by definition, %
$\theta_\epsilon := P_{\le \hat q} \theta.$ Thus $\theta-\theta_\epsilon$ only has frequencies above $2^{\hat q}$. The idea is to expand these terms and observe that every single one of the $\theta-\theta_\epsilon$ terms is of high frequency $>2^{\hat q}$. Thus
$\theta-\theta_\epsilon = P_{>\hat q} \theta.$

We have
$$\nabla_j R_{M,n}^{j\ell} = \sum_{J(n)} \left(  (\theta-\theta_\epsilon)T^\ell w_J + w_J T^\ell (\theta-\theta_\epsilon) \right) \tilde{\chi}_k(t).$$
For simplicity we write
$$\nabla_j R_{M,n}^{j\ell} \simeq(\theta-\theta_\epsilon)T^\ell w_J + w_J T^\ell (\theta-\theta_\epsilon) .$$
From now on, we will suppress the $\tilde{\chi}_k(t)$ and summation notation.

We have
$$R^{j\ell}_{M,n} = R_{MLH,n}^{j\ell} + R_{MHL,n}^{j\ell} + R_{MHH,n}^{j\ell}.$$

Taking spatial and time derivatives of the LH term, we have:
\begin{align*}
\partial_t \nabla_{\vec a} R^{j\ell}_{MLHq,n}
&= \overset{\sim}{\sum} \partial_t \calR^{j\ell}_a P_{\approx q} [\nabla_{\vec a_1} P_{\le q-1} T^a P_{>\hat q}\theta  \nabla_{\vec a_2} P_{q+1} w_J] \\
&= \overset{\sim}{\sum} \calR^{j\ell}_a P_{\approx q} [\partial_t \nabla_{\vec a_1} P_{\le q-1} T^a P_{>\hat q}\theta  \nabla_{\vec a_2} P_{q+1} w_J] \\
&\quad + \overset{\sim}{\sum} \calR^{j\ell}_a P_{\approx q} [\nabla_{\vec a_1} P_{\le q-1} T^a P_{>\hat q}\theta  \nabla_{\vec a_2} P_{q+1} \partial_t w_J]
\end{align*}

Taking spatial and time derivatives of the HL term, we have:
\begin{align*}
\partial_t \nabla_{\vec a} R^{j\ell}_{MHLq,n}
&= \overset{\sim}{\sum} \partial_t  \calR^{j\ell}_a P_{\approx q} [\nabla_{\vec a_1}P_{q+1}P_{>\hat q}\theta T^a P_{\le q-1} \nabla_{\vec a_2}w_J] \\
&= \overset{\sim}{\sum} \calR^{j\ell}_a P_{\approx q} [\partial_t \nabla_{\vec a_1}P_{q+1}P_{>\hat q}\theta T^a P_{\le q-1} \nabla_{\vec a_2}(w_J)] \\
&\quad + \overset{\sim}{\sum}\calR^{j\ell}_a P_{\approx q} [\nabla_{\vec a_1}P_{q+1}P_{>\hat q}\theta T^a P_{\le q-1} \partial_t \nabla_{\vec a_2} w_J].
\end{align*}
We can obtain a similar expression for the derivatives of \(R_{MHHq,n}\), which for conciseness we omit.

\paragraph{The term $R_{MHH}.$}
We have
$$\nabla_j R^{j\ell}_{MHHq,n} = P_{q+1}(\theta - \theta_\epsilon) T^\ell P_q w_J + P_{q} w_J T^\ell P_{q+1} (\theta - \theta_\epsilon).$$
We must treat both terms together (rather than only one of the two terms at a time), since there is no anti-divergence if these two terms are separated from each other.

We have
\begin{align*}
\pa_t^r\nb_{\va}R^{j\ell}_{MHHq,n} &= \overset{\sim}{\sum}\int \pa_t^{r_1}\nb_{\va_1}(\theta(x-h_1) - \theta_\epsilon(x-h_1))\pa_t^{r_2}\nb_{\va_2} w_J(x-h_2) K_q^{j\ell}(h_1,h_2) \, dh_1 dh_2\\
&=\overset{\sim}{\sum} \int \pa_t^{r_1}\nb_{\va_1}(P_{\approx q}[\theta-\theta_\epsilon](x-h_1)) \pa_t^{r_2}\nb_{\va_2}w_J(x-h_2) K_q^{j\ell}(h_1,h_2) \, dh_1 dh_2 %
\end{align*}

We can bound each term as follows:

1. For the first term, we have:
   \begin{align*}
   &\left\|\int \partial_t \nabla_{\vec a_1}(P_{\approx q}[\theta-\theta_\epsilon]) \nabla_{\vec a_2} w_J K_q^{j\ell} \, dh_1 dh_2\right\|_0 \\
   &\lesssim \|K_q^{j\ell}\|_1 \|\partial_t \nabla_{\vec a_1}(P_{\approx q}[\theta-\theta_\epsilon])\|_0 \|\nabla_{\vec a_2} w_J\|_0  \\
   &\lesssim 2^{-q} [(N\Xi)^{|\va_1|} (\ost{\Xi}\ost{e}_u^{1/2}) N^{-1} e_u^{1/2} ] \Xi^{|\vec a_2|} S_w
   \end{align*}

2. For the second term, we have:
   \begin{align*}
   &\left\|\int \nabla_{\vec a_1}(P_{\approx q}[\theta-\theta_\epsilon]) \partial_t \nabla_{\vec a_2} w_J K_q^{j\ell} \, dh_1 dh_2\right\|_0 \\
   &\lesssim \|K_q^{j\ell}\|_1 \|\nabla_{\vec a_1}(P_{\approx q}[\theta-\theta_\epsilon])\|_0 \|\partial_t \nabla_{\vec a_2} w_J\|_0  \\
   &\lesssim [2^{-q}] [(N\Xi)^{|\vec a_1|}N^{-1} e_u^{1/2}] [\Xi^{|\vec a_2|} \mu S_w]
   \end{align*}
The sum of these terms is bounded by
$$[2^{-q}] \tsi [(N\Xi)^{|\vec a_1|}N^{-1} e_u^{1/2}] [\Xi^{|\vec a_2|} S_w]$$
where $\tsi$ is the inverse timescale $$\tsi:= (N \Xi)^{3/2} D_R^{1/2}.$$

Now, summing over $q > \hat q-1$, we get $\sum 2^{-q} \sim \hxi^{-1}$ and:
\begin{align*}
\|\partial_t \nabla_{\vec a} R^{j\ell}_{MHH,n}\| \ls \sum_q \|\partial_t \nabla_{\vec a} R^{j\ell}_{MHHq,n}\|_0
&\lesssim \hxi^{-1} \tsi N^{|\va_1|-1} \Xi^{|\va|} e_u^{1/2} S_w
\end{align*}
More generally,
\begin{align*}
\|\partial_t^r \nabla_{\vec a} R^{j\ell}_{MHH,n}\|
\ls \hxi^{-1} \ost{\tau}^{-r} N^{|\va_1|-1} \Xi^{|\va|} e_u^{1/2} S_w
\end{align*}

\paragraph{The terms $R_{MHL}$ and $R_{MLH}$.}
Our first group of terms is
$$R_{MHL1,n}^{j\ell}
=
\sum_{q \ge \hat q - 1}
\calR^{j\ell}_a P_{\approx q}
\left[ P_{\le q-1} T^a(\theta-\theta_\epsilon) P_{q+1} w_J \right].$$As usual, we add a subscript $q$ to label each term in the sum. So, for $R_{MHL1,n}$, we'll call the individual pieces $R_{MHL1q,n}$.
$$R_{MHL1q,n} := \calR^{j\ell}_a P_{\approx q}
\left[ P_{\le q-1} T^a(\theta-\theta_\epsilon) P_{q+1} w_J \right]$$
For $0 \leq r + |\va| \leq L$, we have
\begin{align*}
\|\partial_t^r\nb_{\va}R_{MHL1q,n}^{j\ell} \|_0
&\ls
\|\calR P_{\approx q}
\|_{op} \|\pa_t^{r_1}\nb_{\va_1} P_{\le q-1} T (\theta-\theta_\epsilon)\|_0 \|\pa_t^{r_2}\nb_{\va_2} P_{q+1} w_J\|_0 \\
&\ls [2^{-q}]
[\ost{\tau}^{-r_1}(N\Xi)^{|\va_1|}N^{-1}e_u^{1/2}] [\mu^{r_2}\Xi^{|\va_2|}S_w]
\end{align*}
Thus
$$\|\partial_t^r\nb_{\va}R_{MHL1,n}^{j\ell} \|_0
\ls \hxi^{-1}\ost{\tau}^{-r_1}N^{|\va_1|-1} \Xi^{|\va|} e_u^{1/2} \mu^{r_2} S_w $$
We would like this to be bounded by $CD_R/N$, which is indeed the case. One can check this by recalling that $S_w = \mu^{-1} \Xi^2 D_R = N^{-1/2} \Xi e_R^{-1/2} D_R$.

The bounds for $\pa_t^r \nb_{\va} R^{j\ell}_{MLHq,n}$ are similar to the bounds for $\pa_t^r \nb_{\va} R^{j\ell}_{MHLq,n}$.

\section{Convex integration}

Define the index set ${\mathcal I} := F \times \Z \times \{1, \ldots, \Ga \}$.  Each $I \in \II$ has the form $I = (f, k, n)$.  Set $\la = \lceil N \Xi \rceil$.  The oscillatory wave has the form
\ali{
\Th &= \sum_I \Th_I, \qquad \Th_I = g_{[f,k,n]}(\mu t)  P_I [ e^{i \la \xi_I}  \th_I ] \label{eq:ThI} \\
\th_I &= \la^{1/2}  \ga_I, \qquad
\ga_{(f,k,n)} = \chi_k e_n^{1/2}(t) \ga_f\left(p_I\right) \label{eq:gaIdefn} \\
\check{p}_I &= \left(M^{j\ell} - \fr{R_{(n)}^{j\ell}}{M_e D_R}, \nb \check{\xi}_k\right) \label{eq:pchkI} \\
p_I &= \left(M^{j\ell} - \fr{\tilde{R}_{(n)}^{j\ell}}{M_e D_R}, \nb \xi_k\right) \label{eq:pI}
}
where $P_I$ is a frequency localization operator whose symbol is a bump function adapted to the region $\{ \xi ~:~ | \xi - \la f | \leq \la/100 \}$.  Each wave has a conjugate wave $\bar{I}$ with $\Th_{\bar{I}} = \overline{\Th}_I$ and $\xi_{\bar{I}} = - \xi_I$.

We will use mollification to define $\widetilde{R}_{(n)}$.  %
We postpone for now the necessary estimates on $\widetilde{R}_{(n)}$ and $\nb \xi_I$ that ensure the construction is well-defined.  In particular, we will have to show that $\widetilde{R}_{(n)}$ and $\nb \xi_I$ do not escape the domains of $\ga_f$ and $B^{j\ell}$.

Notice that, by construction and the disjointness of supports of the functions $g_{[f,k,n]}$, we have the crucial disjointness property
\ali{
\suppt \Th_I \cap \suppt \Th_J &= \emptyset \qquad \mbox{ if } I \notin \{J, \bar{J} \} \label{eq:nonsimultaneous}
}
Now let $\tilde{\th}_{\ep} = \th_\ep + w = \th_\ep + \sum_{n=1}^\Ga w_{n}$ and $\tilde{u}_\ep^\ell  = T^\ell \tilde{\th}_\ep = u_\ep^\ell + T^\ell w$.

We obtain the following estimates for $\tilde{u}_\ep$.
\ali{
 \co{\nb_{\va} \tilde{u}_\ep } &\lesssim_{\va} \hn^{(|\va| - \unl)_+} \Xi^{|\va|} e_u^{1/2}, \qquad  \qquad\qquad  \label{eq:tildeuepbd1}\\
 \co{\nb_{\va} \Ddt \tilde{u}_\ep } &\lesssim_{\va} \hn^{(|\va| + 1 - \unl)_+} \Xi^{|\va|}  (\Xi e_u^{1/2}) e_u^{1/2},  \label{eq:tildeuepbd2}
}
Notice that these are the same estimates that hold for $u_\ep$ except that the losses of powers of $\hn$ occur earlier.  These bounds follow from \eqref{eq:newtVelocBd2}.  (More precisely, the correction to the velocity field also involves the time cutoffs $\tilde{\chi}_k$.)

We will also need a bound on the advective derivative of $\tilde{u}_\ep$ along its own flow.  Setting $\Dtdt = \pr_t + \tilde{u}_\ep \cdot \nb$, the following bound suffices:
\ali{
 \co{\nb_{\va} \Dtdt \tilde{u}_\ep } &\lesssim_{\va} (\Xi e_u^{1/2}) N^{(|\va| + 1 - L)_+/L} \Xi^{|\va|} e_u^{1/2}.%
 \label{eq:tildeuepbd3}
}
This bound is a corollary of \eqref{eq:tildeuepbd1}-\eqref{eq:tildeuepbd2} and the following Lemma, which is generally useful when converting bounds between different time and advective derivatives.

\begin{lem} \label{lem:interchangeDerivs} Let $\mrg{D}_t$ be one of the operators $\mrg{D}_t \in \{ \pr_t, \Ddt, \Dtdt \}$.  %
Consider any inverse timescale $\zeta  \geq \Xi e_u^{1/2}$.  Define the weighted norm of a smooth tensor field $F$ by
\ali{
\mrg{H}_\zeta[F] &= \max_{0 \leq r \leq 1} \max_{ 0 \leq |\va| + r \leq L'} \fr{\co{\nb_{\va} \mrg{D}_t^r F}}{\hn^{(|\va| + r  - \unl)_+} \Xi^{|\va|} \zeta^r  }
}
If $\zeta$ is omitted in the notation, set $\mrg{H}[F] = \mrg{H}_{\Xi e_u^{1/2}}$.

Then there exist constants depending only on $L'$ such that \[ \tilde{H}_\zeta[F] \lesssim \bar{H}_\zeta[F] \lesssim H^{\pr_t}_\zeta[F] \lesssim \tilde{H}_\zeta[F] .\]

Also, there is a product rule $\mrg{H}_\zeta[F  G] \lesssim_{L'} \mrg{H}_\zeta[F] \mrg{H}_\zeta[G] $.

\end{lem}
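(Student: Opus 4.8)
The plan is to establish the chain of comparisons $\tilde H_\zeta[F] \lesssim \bar H_\zeta[F] \lesssim H^{\pr_t}_\zeta[F] \lesssim \tilde H_\zeta[F]$ cyclically, exploiting that the difference between any two of the three operators is a single first-order transport term:
\[
\Ddt - \pr_t = u_\ep^i \nb_i, \qquad \Dtdt - \Ddt = (\tilde u_\ep - u_\ep)^i \nb_i, \qquad \pr_t - \Dtdt = -\tilde u_\ep^i \nb_i .
\]
Since each weighted norm only ranges over $r\in\{0,1\}$, its $r=0$ part ($\co{\nb_{\va} F}$ for $|\va|\le L'$) is literally the same for all three, so in each comparison it suffices to bound $\co{\nb_{\va} \mrg D_t' F}$ for $|\va|\le L'-1$ (the $r=1$ entries) in terms of the source norm, where $\mrg D_t'$ is the target operator. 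Writing $\mrg D_t' F = \mrg D_t F + V^i\nb_i F$ with $V$ one of $u_\ep$, $\tilde u_\ep - u_\ep$, $\tilde u_\ep$, the term $\mrg D_t F$ is controlled directly by the source norm, so the whole matter reduces to estimating $\nb_{\va}(V^i\nb_i F)$.

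For that term I would expand by the product rule, $\nb_{\va}(V^i\nb_i F) = \tisum \nb_{\va_1} V^i\,\nb_{\va_2}\nb_i F$ with $|\va_1|+|\va_2|=|\va|$, and combine two inputs: (i) the velocity bounds, namely $\co{\nb_{\va_1}u_\ep}\lesssim \hn^{(|\va_1|-L)_+}\Xi^{|\va_1|}e_u^{1/2}$ from \eqref{eq:prelimScalVel1} and $\co{\nb_{\va_1}\tilde u_\ep}\lesssim \hn^{(|\va_1|-\unl)_+}\Xi^{|\va_1|}e_u^{1/2}$ from \eqref{eq:tildeuepbd1} (hence also the same bound for $\tilde u_\ep - u_\ep$ by the triangle inequality, after dominating $(|\va_1|-L)_+ \le (|\va_1|-\unl)_+$); and (ii) the $r=0$ part of the \emph{source} weighted norm, $\co{\nb_{\va_2}\nb_i F}\lesssim (\text{src norm})\,\hn^{(|\va_2|+1-\unl)_+}\Xi^{|\va_2|+1}$, which is legitimate since $|\va_2|+1\le |\va|+1\le L'$. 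Multiplying these and adding the $\hn$-exponents via the counting inequality with $y=\unl$ gives $(|\va_1|-\unl)_+ + (|\va_2|+1-\unl)_+ \le (|\va|+1-\unl)_+$, the $\Xi$-powers collect to $\Xi^{|\va|+1}$, and the leftover $e_u^{1/2}$ is absorbed using $\Xi e_u^{1/2}\le \zeta$. The net estimate $\co{\nb_{\va}(V^i\nb_i F)}\lesssim (\text{src norm})\,\hn^{(|\va|+1-\unl)_+}\Xi^{|\va|}\zeta$ is exactly the required form, and combined with the bound on $\mrg D_t F$ it closes each of the three comparisons, hence the cycle.

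For the product rule I would use that every operator in $\{\pr_t,\Ddt,\Dtdt\}$ is a first-order derivation, so $\mrg D_t(FG)=(\mrg D_t F)G+F(\mrg D_t G)$; because we never need $r\ge 2$ this Leibniz rule is applied at most once. Then $\nb_{\va}\mrg D_t^r(FG)$ expands into a sum of terms $\nb_{\va_1}\mrg D_t^{r_1}F\cdot \nb_{\va_2}\mrg D_t^{r_2}G$ with $|\va_1|+|\va_2|=|\va|$ and $r_1+r_2=r\in\{0,1\}$, and estimating each factor by its weighted norm and combining the exponents $(|\va_1|+r_1-\unl)_+ + (|\va_2|+r_2-\unl)_+\le (|\va|+r-\unl)_+$ by the counting inequality (again with $y=\unl$) gives $\co{\nb_{\va}\mrg D_t^r(FG)}\lesssim \mrg H_\zeta[F]\,\mrg H_\zeta[G]\,\hn^{(|\va|+r-\unl)_+}\Xi^{|\va|}\zeta^r$, which is the claimed bound $\mrg H_\zeta[FG]\lesssim_{L'}\mrg H_\zeta[F]\,\mrg H_\zeta[G]$.

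There is no substantive obstacle here; the only points needing care are the bookkeeping of $\hn$-exponents — in particular remembering to dominate $(\cdot - L)_+$ by $(\cdot-\unl)_+$ \emph{before} invoking the counting inequality, so both factors are measured against the common threshold $\unl$ — and the routine use of $\zeta\ge \Xi e_u^{1/2}$ to absorb the extra power of $\Xi$ produced by each transport term. The restriction $r\le 1$ is precisely what keeps this a one-step perturbation argument rather than an induction on the number of time derivatives.
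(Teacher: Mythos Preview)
Your proposal is correct and follows essentially the same approach as the paper: the paper also reduces each comparison to the $r=1$ case, writes the difference of operators as a transport term $V^i\nb_i$, expands $\nb_{\va}(V^i\nb_i F)$ by the product rule, applies the velocity bounds \eqref{eq:prelimScalVel1}/\eqref{eq:tildeuepbd1}, and closes with the counting inequality at threshold $\unl$ together with $\zeta\ge\Xi e_u^{1/2}$. Your treatment of the product rule by direct Leibniz expansion is the natural one and matches what the paper does (it is carried out explicitly in the subsequent Proposition~\ref{prop:chainRule}).
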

\begin{proof}
We show only that $H^{\pr_t}_\zeta[F] \lesssim \tilde{H}_\zeta[F]$ as the other directions are similar
\ALI{
\nb_{\va} \pr_t F &= \nb_{\va} \Dtdt F - \nb_{\va}[ \tilde{u}_\ep^i \nb_i F ] \\
\co{\nb_{\va} \pr_t F} &\lesssim \hn^{(|\va| + 1  - \unl)_+} \Xi^{|\va|} \zeta \tilde{H}[F] + \sum_{|\va_1| + |\va_2| = |\va|} \co{ \nb_{\va_1} \tilde{u}_\ep } \co{\nb_{\va_2} \nb_i F } \\
&\lesssim \hn^{(|\va| + 1  - \unl)_+} \Xi^{|\va|} \zeta \tilde{H}[F] \\
&+ \sum_{|\va_1| + |\va_2| = |\va|} \hn^{(|\va_1| - \unl)_+} \Xi^{|\va_1|} e_u^{1/2} \hn^{(|\va_2| + 1 - \unl)_+} \Xi^{|\va_2| + 1} \tilde{H}[F]
}
We now apply the counting inequality $(x - z)_+ + (y-z)_+ \leq (x + y - z)_+$, $x, y, z \geq 0$ with $x = |\va_1|$, $y = |\va_2| + 1$, $z = \unl \geq 0$, and recall $\zeta \geq \Xi e_u^{1/2}$, to obtain
\ALI{
\co{\nb_{\va} \pr_t F } &\lesssim \hn^{(|\va| + 1  - \unl)_+} \Xi^{|\va|} \zeta \tilde{H}[F],
}
which is the desired estimate after dividing through by the prefactor of $\tilde{H}[F]$.
\end{proof}

We will also use the following chain rule and product rule
\begin{prop}[Chain rule and product rule for weighted norms]  \label{prop:chainRule}
Consider the operators $\mrg{D}_t \in \{ \pr_t, \Ddt, \Dtdt \}$  and let $F$ be $C^\infty$.  Let $G$ be a $C^\infty$ function defined on a compact neighborhood of the image of $F$.  Then
\ali{
\mrg{H}_\zeta[G(F)] &\ls (1 + \mrg{H}_\zeta[F])^{L'} \\
\mrg{H}_\zeta[F_1 F_2] &\ls \mrg{H}_\zeta[F_1]\mrg{H}_\zeta[F_2]
}
with implicit constants depending on $L'$.
\end{prop}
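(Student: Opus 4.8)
The plan is to reduce both assertions to the combinatorial product rule and chain rule for spatial derivatives recorded in Section~\ref{sec:notation}, using two structural facts: each $\mrg{D}_t \in \{ \pr_t, \Ddt, \Dtdt \}$ is a first-order derivation (so it distributes over products and satisfies $\mrg{D}_t G(F) = \pr_i G(F)\,\mrg{D}_t F^i$), and $\unl = L-3 \ge 0$, so the counting inequality $(x-z)_+ + (y-z)_+ \le (x+y-z)_+$ applies with $z = \unl$. Since $r \in \{0,1\}$ in the definition of $\mrg{H}_\zeta$, only these two values of $r$ need to be treated.

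For the product rule, in the case $r=0$ I would expand $\nb_{\va}(F_1 F_2) = \tisum \nb_{\va_1}F_1\,\nb_{\va_2}F_2$, bound each factor by $\co{\nb_{\va_i}F_i} \le \hn^{(|\va_i|-\unl)_+}\Xi^{|\va_i|}\mrg{H}_\zeta[F_i]$ (legitimate since $|\va_i|\le|\va|\le L'$), and collapse the $\hn$-exponents via the counting inequality, $(|\va_1|-\unl)_+ + (|\va_2|-\unl)_+ \le (|\va|-\unl)_+$. For $r=1$ I would write $\mrg{D}_t(F_1F_2) = (\mrg{D}_tF_1)F_2 + F_1(\mrg{D}_tF_2)$, apply $\nb_{\va}$ to each summand by the product rule, and note that the factor carrying $\mrg{D}_t$ satisfies $\co{\nb_{\va_i}\mrg{D}_tF_i} \le \hn^{(|\va_i|+1-\unl)_+}\Xi^{|\va_i|}\zeta\,\mrg{H}_\zeta[F_i]$, which is available because $|\va_i|+1 \le |\va|+1 \le L'$. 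The counting inequality with $x = |\va_1|+1$, $y = |\va_2|$, $z = \unl$ (and symmetrically) produces exactly the exponent $(|\va|+1-\unl)_+$. Dividing by the prefactor $\hn^{(|\va|+r-\unl)_+}\Xi^{|\va|}\zeta^r$ yields $\mrg{H}_\zeta[F_1F_2]\ls \mrg{H}_\zeta[F_1]\mrg{H}_\zeta[F_2]$, with constants depending only on $L'$.

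For the chain rule, in the case $r=0$ I would use $\nb_{\va}G(F) = \sum_{m=0}^{|\va|}\tisum \pr^m G(F)\prod_{j=1}^m \nb_{\va_j}F$ with $\sum_j|\va_j| = |\va|$ and each $|\va_j|\ge 1$; bound $\co{\pr^m G(F)} \le \|G\|_{C^m(K)}$, bound each $\co{\nb_{\va_j}F} \le \hn^{(|\va_j|-\unl)_+}\Xi^{|\va_j|}\mrg{H}_\zeta[F]$, and merge the $\hn$-exponents by iterating the counting inequality, $\sum_j(|\va_j|-\unl)_+ \le (|\va|-\unl)_+$. Since the number of factors is $m \le |\va| \le L'$, this gives $\co{\nb_{\va}G(F)} \ls_{L',G,K}\hn^{(|\va|-\unl)_+}\Xi^{|\va|}(1+\mrg{H}_\zeta[F])^{L'}$. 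For $r=1$ I would write $\mrg{D}_tG(F) = \pr_i G(F)\,\mrg{D}_tF^i$, apply $\nb_{\va}$ by the product rule to obtain $\tisum \nb_{\va_1}(\pr_i G(F))\,\nb_{\va_2}(\mrg{D}_tF^i)$, bound the first factor by the $r=0$ chain-rule estimate just proven applied to $\pr_i G$ (contributing $(1+\mrg{H}_\zeta[F])^{|\va_1|}$), and the second by $\hn^{(|\va_2|+1-\unl)_+}\Xi^{|\va_2|}\zeta\,\mrg{H}_\zeta[F]$; the counting inequality again gives exponent $(|\va|+1-\unl)_+$, while $|\va_1|+1 \le |\va|+1 \le L'$ keeps the total power of $(1+\mrg{H}_\zeta[F])$ at most $L'$. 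Dividing by the prefactor yields $\mrg{H}_\zeta[G(F)] \ls (1+\mrg{H}_\zeta[F])^{L'}$.

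The computations are routine; the only delicate points — and the mild obstacle — are bookkeeping ones: (i) verifying that whenever $\mrg{D}_t$ is applied the order of differentiation on each factor stays $\le L'$, so that the defining bounds of $\mrg{H}_\zeta$ apply; (ii) tracking the power of $(1+\mrg{H}_\zeta[F])$ in the chain rule and checking it never exceeds $L'$, which uses $m \le |\va|$ together with $|\va| + r \le L'$; and (iii) repeatedly invoking the counting inequality with $z = \unl \ge 0$ to collapse the $\hn$-exponents. All implicit constants depend only on $L'$, on $G$, and on the compact neighborhood $K$ of the image of $F$.
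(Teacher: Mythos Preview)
Your proposal is correct and the core mechanism --- expanding by the product/chain rule, bounding each factor via the definition of $\mrg{H}_\zeta$, and collapsing the $\hn$-exponents with the counting inequality $(x-\unl)_+ + (y-\unl)_+ \le (x+y-\unl)_+$ --- is exactly what the paper does. The organizational differences are minor but worth noting.

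First, the paper treats $r \in \{0,1\}$ uniformly by writing a single Fa\`a di Bruno type expansion
\[
\nb_{\va}\mrg{D}_t^r G(F) = \sum_{k=0}^{|\va|+r}\tisum \pr^k G(F)\prod_{i}\nb_{\va_i}\mrg{D}_t^{r_i}F,
\]
whereas you split the two cases and for $r=1$ reduce to the $r=0$ chain rule composed with the derivation identity $\mrg{D}_t G(F) = \pr_i G(F)\,\mrg{D}_t F^i$. Both work; the paper's version is slightly more compact and avoids having to track the refined power $(1+\mrg{H}_\zeta[F])^{|\va_1|}$ that you need in the $r=1$ step.

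Second, and this is the one genuinely different move: the paper does \emph{not} prove the product rule by direct computation. Instead it deduces it from the chain rule by normalizing $(F_1/\mrg{H}_\zeta[F_1],\,F_2/\mrg{H}_\zeta[F_2])$ into the unit box, applying the chain rule with $G(u,v)=uv$, and pulling the scalars back out by homogeneity. Your direct computation is equally valid and perhaps more transparent; the paper's trick is shorter and has the aesthetic advantage of making the product rule a formal corollary of the chain rule.
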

\begin{proof} We compute for $0 \leq r \leq 1$, $0 \leq r + |\va| \leq L'$
\ALI{
\nb_{\va} \mrg{D}_t^r G(F) &= \sum_{k = 0}^{|\va| + r} \overset{\sim}{\sum} \pr^k G(F) \prod_{i=0}^k \nb_{\va_i} \mrg{D}_t^{r_i} F
}
where the sum is over appropriate indices such that $\sum_i |\va_i| = |\va|$ and $\sum_i r_i = r$.  Then
\ALI{
\co{\nb_{\va} \mrg{D}_t^r G(F)} &\ls \sum_{k = 0}^{|\va| + r} \co{\pr^k G} \prod_{i=0}^k \co{ \nb_{\va_i} \mrg{D}_t^{r_i} F} \\
&\ls \sum_{k = 0}^{|\va| + r} \prod_{i=0}^k \left[\hn^{(|\va_i| + r_i - \unl)_+} \Xi^{|\va_i|} \zeta^{r_i} \mrg{H}_\zeta[F] \right] \\
&\ls \hn^{(|\va| + r - \unl)_+} \Xi^{|\va|} \zeta^{r} (1 +  \mrg{H}_\zeta[F])^{L'},
}
which is the desired bound.

The product rule can be proven by direct computation, but it can also be deduced from the Chain Rule as follows.  The vector-valued function $(\fr{F_1}{\mrg{H}_\zeta[F_1]}, \fr{F_2}{\mrg{H}_\zeta[F_2]})$ takes values in $\{ (u,v)
: \max \{ \|u \|, \| v \| \} \leq 1 \}$ and $G(u,v) = uv$ is smooth in a compact neighborhood of this set.  We then have by the chain rule
\ALI{
\mrg{H}_\zeta[F_1 F_2] &= \mrg{H}_\zeta[F_1] \mrg{H}_\zeta[F_2] \mrg{H}_\zeta\left[\fr{F_1}{\mrg{H}_\zeta[F_1]} \fr{F_2}{\mrg{H}_\zeta[F_2]} \right] \\
&\ls \mrg{H}_\zeta[F_1] \mrg{H}_\zeta[F_2] \left( 1 + \mrg{H}_\zeta\left[\fr{F_1}{\mrg{H}_\zeta[F_1]}\right] + \mrg{H}_\zeta\left[\fr{F_2}{\mrg{H}_\zeta[F_2]}\right] \right)^{L'} \\
&\ls \mrg{H}_\zeta[F_1] \mrg{H}_\zeta[F_2].
}
\end{proof}

\begin{prop} For $\mrg{D}_t \in \{ \pr_t, \Ddt, \Dtdt \}$ define the prime weighted norm
\ali{
\mrg{H}'_\zeta[F] = \max_{0 \leq r \leq 1} \max_{0 \leq |\va| + r \leq L'} \fr{\co{\nb_{\va}\mrg{D}_t F}}{\hn^{(|\va| + r- (\unl - 1))_+} \Xi^{|\va|}\zeta^r}
}
Then the natural analogues of Lemma~\ref{lem:interchangeDerivs} and Proposition~\ref{prop:chainRule} hold for the prime weighted norms as well.
\end{prop}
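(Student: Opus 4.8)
The prime weighted norm $\mrg{H}'_\zeta$ has precisely the same structure as $\mrg{H}_\zeta$: it is a maximum of $C^0$ norms of mixed spatial/advective derivatives normalized by a weight of the same shape as in $\mrg{H}_\zeta$, the only changes being that the H\"older--loss threshold $\unl$ is lowered to $\unl-1$ and that the order of differentiation is counted starting from the first advective derivative $\mrg{D}_t F$. Since $L\geq 7$ forces $\unl=L-3\geq 4$, we still have $\unl-1\geq 0$, which is the only property of the threshold used in the counting inequality $(x-z)_+ +(y-z)_+\leq (x+y-z)_+$. The plan is therefore to re-run the proofs of Lemma~\ref{lem:interchangeDerivs} and Proposition~\ref{prop:chainRule} with $\unl$ replaced by $\unl-1$ throughout. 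Concretely, the statements to be proved are $\tilde{H}'_\zeta[F]\ls\bar{H}'_\zeta[F]\ls {H'}^{\pr_t}_\zeta[F]\ls\tilde{H}'_\zeta[F]$, the chain rule $\mrg{H}'_\zeta[G(F)]\ls_{L'}(1+\mrg{H}_\zeta[F])^{L'}\mrg{H}'_\zeta[F]$, and the product rule $\mrg{H}'_\zeta[F_1F_2]\ls_{L'}\mrg{H}'_\zeta[F_1]\mrg{H}_\zeta[F_2]+\mrg{H}_\zeta[F_1]\mrg{H}'_\zeta[F_2]$.

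For the interchange inequalities I would use that any two of $\pr_t,\Ddt,\Dtdt$ differ by $v\cdot\nb$, where $v\in\{u_\ep,\ \tilde{u}_\ep,\ \tilde{u}_\ep-u_\ep\}$ obeys $\co{\nb_{\va}v}\ls\hn^{(|\va|-\unl)_+}\Xi^{|\va|}e_u^{1/2}$ by \eqref{eq:prelimScalVel1}, \eqref{eq:tildeuepbd1} and \eqref{eq:newtVelocBd}. Writing the difference of two $\mrg{D}_t$-derivatives of the relevant quantity as $\nb_{\va}(v^i\nb_i(\,\cdot\,))$, expanding by the Leibniz rule, applying the counting inequality with $z=\unl-1\geq 0$, and using $\zeta\geq\Xi e_u^{1/2}$ reproduces the weight $\hn^{(|\va|+r-(\unl-1))_+}\Xi^{|\va|}\zeta^r$ on the right-hand side. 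This is verbatim the computation in the proof of Lemma~\ref{lem:interchangeDerivs}, only shifted, and the same substitution yields the prime product rule stated there.

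For the chain rule I would expand $\nb_{\va}\mrg{D}_t^r G(F)$ using the combined chain- and product-rule identity recorded in Section~\ref{sec:notation}, bound each factor $\nb_{\va_i}\mrg{D}_t^{r_i}F$ by $\mrg{H}_\zeta[F]$ except for the unique factor carrying a $\mrg{D}_t$, which is bounded by $\mrg{H}'_\zeta[F]$, and then collect the powers of $\hn$ via the counting inequality at threshold $\unl-1$, checking $\sum_i(|\va_i|+r_i)-(\unl-1)\leq(|\va|+r)-(\unl-1)$ term by term exactly as in the proof of Proposition~\ref{prop:chainRule}. The product rule then follows from the chain rule applied to $G(u,v)=uv$ after rescaling $F_1,F_2$ by their (prime and non-prime) norms, the single $\mrg{D}_t$ landing on one factor and producing the mixed bound displayed above.

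I expect the only real work to be bookkeeping rather than any new analytic input: one must confirm that lowering the threshold by one breaks none of these estimates, i.e. that the counting inequality is always invoked with a nonnegative parameter (ensured by $\unl\geq 4$) and that the intermediate derivative counts $|\va|+r$ never exceed the order available — which, as throughout the paper, one guarantees by choosing $L$ large enough at the outset that $L'$ plus the finitely many extra derivatives used in these expansions is at most $\unl$.
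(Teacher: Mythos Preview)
Your approach is correct and matches the paper's, which simply omits the proof with the remark that it is essentially the same as that of Lemma~\ref{lem:interchangeDerivs} and Proposition~\ref{prop:chainRule}; your sketch of re-running those arguments with the threshold $\unl$ replaced by $\unl-1$ is exactly what is intended. One minor point: the $\mrg{D}_t$ in the displayed definition is almost certainly a typo for $\mrg{D}_t^r$ (so the prime norm differs from $\mrg{H}_\zeta$ only in the threshold shift, not in a forced advective derivative), and the paper's ``natural analogues'' are the all-prime versions $\mrg{H}'_\zeta[G(F)]\ls(1+\mrg{H}'_\zeta[F])^{L'}$ and $\mrg{H}'_\zeta[F_1F_2]\ls\mrg{H}'_\zeta[F_1]\mrg{H}'_\zeta[F_2]$ rather than your mixed versions --- but both follow from the same bookkeeping.
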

We omit the proof, which is essentially the same as that of Lemma~\ref{lem:interchangeDerivs} and Proposition~\ref{prop:chainRule}.

We are now ready to define $\tilde{R}_{(n)}$.  We choose to do this by mollification along the flow rather than a standard mollification in time so that we will be able to borrow estimates that have already been established.  (The other benefit of mollifying along the flow is that it would apply to 2D Euler and to the mSQG equation.)  Choose the time scale
\ali{
\ep_t &= (\Xi e_u^{1/2})^{-1} (D_u/D_R)^{-1/2} N^{-1/2} \label{eq:ept}
}
and set
\ali{
\tilde{R}_{(n)} &= \eta_{\ep_t} \ast_\Phi R_\ep = \int R_{\ep}(\Phi_s(t,x)) \eta_{\ep_t}(s) ds
}
where $\eta_{\ep_t}(s) = \ep_t^{-1} \eta(s/\ep_t)$ is a standard mollifying kernel supported in $|s| < \ep_t$ and where $\Phi_s(t)$ is the flow map of $\pr_t + \tilde{u}_\ep \cdot \nb$, which is the unique solution to
\ali{
\Phi_s(t,x) &= (t+s, \Phi_s^i(t,x)), \qquad i = 1, 2\\
\fr{d}{ds} \Phi_s^i &= \tilde{u}_\ep^i(\Phi_s(t,x)) \qquad i = 1, 2 \\
\Phi_0(t,x) &= (t,x).
}
The estimates we inherit from this construction are (see \cite[Chapters 18.4-18.7]{isett})
\begin{prop}[Stress estimates] \label{prop:stress}
\ali{
\co{R_{(n)}-\tilde{R}_n} &\lesssim \ep_t \co{\Dtdt R_{(n)}} \lesssim N^{-1/2} (D_u/D_R)^{-1/2} D_R \label{eq:RerrorMollPhi} \\
\co{\nb_{\va}\tilde{R}_n} &\lesssim_{\va} \hn^{(|\va| -L)_+} \Xi^{|\va|} D_R \\
\co{\nb_{\va} \Dtdt \tilde{R}_n } &\lesssim_{\va} (\Xi e_u^{1/2}) \hn^{(|\va| + 1 - L)_+} \Xi^{|\va|} D_R \\
\co{\nb_{\va} \Dtdt^2 \tilde{R}_n } &\lesssim_{\va} \ep_t^{-1} (\Xi e_u^{1/2}) \hn^{(|\va| + 1 - L)_+} \Xi^{|\va|}  D_R
}
\end{prop}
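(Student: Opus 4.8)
These are the ``mollification along the flow'' estimates of \cite[Chapters 18.4--18.7]{isett}, with $\tilde R_n$ the flow average of the error $R_{(n)}$; I outline the plan. The first step is to record bounds on the flow map $\Phi_s$ of $\pr_t + \tilde u_\ep\cdot\nb$ on the short interval $|s|\le\ep_t$. By the choice \eqref{eq:ept} one has $\ep_t\,\Xi e_u^{1/2}\le 1$, and in fact the flow displaces points over time $\ep_t$ by much less than a wavelength $(\hn\Xi)^{-1}$; differentiating the ODE $\frac{d}{ds}\Phi_s^i = \tilde u_\ep^i(\Phi_s)$ in $x$ and running Gronwall with the velocity bounds \eqref{eq:tildeuepbd1}--\eqref{eq:tildeuepbd2} then yields the flow map estimates of \cite[Chapter 18]{isett}: in particular $\co{\nb\Phi_s}\lesssim 1$, and composition with $\Phi_s$ does not degrade the frequency--energy profile of a tensor field measured at scale $\hn\Xi$.

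For the first estimate I would use the fundamental theorem of calculus along the flow. Since $\int\eta_{\ep_t}=1$ and $\frac{d}{d\sigma}\big[R_{(n)}(\Phi_\sigma(t,x))\big] = (\Dtdt R_{(n)})(\Phi_\sigma(t,x))$,
\[
\tilde R_n - R_{(n)} = \int\Big(\int_0^s (\Dtdt R_{(n)})(\Phi_\sigma(t,x))\,d\sigma\Big)\eta_{\ep_t}(s)\,ds,
\]
whence $\co{R_{(n)}-\tilde R_n}\le\ep_t\,\co{\Dtdt R_{(n)}}$. The bound $\co{\Dtdt R_{(n)}}\lesssim\Xi e_u^{1/2}D_R$ (up to a logarithmic factor absorbed by the power of $N$ in \eqref{eq:ept}) follows from Proposition~\ref{prop:newtonBound} by writing $\Dtdt = \Ddt + T[w]\cdot\nb$ and using $\co{T[w]}\lesssim e_u^{1/2}$ and $\co{\nb R_{(n)}}\lesssim\Xi D_R$; combined with $\ep_t\,\Xi e_u^{1/2} = N^{-1/2}(D_u/D_R)^{-1/2}$ this is the claim.

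For the spatial derivatives of $\tilde R_n$ the plan is to differentiate under the integral and expand by the chain rule,
\[
\nb_{\va}\big[R_{(n)}\circ\Phi_s\big] = \sum_{m=0}^{|\va|}\tisum\,(\nb^m R_{(n)})(\Phi_s)\prod_{j=1}^m\nb_{\va_j}\Phi_s,
\]
estimate each factor using the flow map bounds together with the Newton bound $\co{\nb^m R_{(n)}}\lesssim\hn^{(m-L)_+}\Xi^m D_R$, and collect the $\hn$--losses with the counting inequality, obtaining $\co{\nb_{\va}\tilde R_n}\lesssim\hn^{(|\va|-L)_+}\Xi^{|\va|}D_R$. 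Equivalently, one can run this step through the chain and product rules for weighted norms (Proposition~\ref{prop:chainRule}), since $\tilde R_n$ is a fixed average of the compositions $R_{(n)}\circ\Phi_s$.

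Finally, for the advective derivatives I would exploit the flow (semigroup) property $\Phi_{s+\sigma} = \Phi_s\circ\Phi_\sigma$, which gives the identity $\Dtdt\big[F\circ\Phi_s\big] = (\Dtdt F)\circ\Phi_s = \pr_s\big[F\circ\Phi_s\big]$. Hence $\Dtdt\tilde R_n = \int(\Dtdt R_{(n)})(\Phi_s)\,\eta_{\ep_t}(s)\,ds$ is again a flow average, so the third estimate follows by applying the spatial chain-rule step to $\Dtdt R_{(n)}$ (using the Newton bound \eqref{eq:Rnplus1Ddt}); and integrating by parts in $s$,
\[
\Dtdt^2\tilde R_n = \int\pr_s\big[(\Dtdt R_{(n)})\circ\Phi_s\big]\eta_{\ep_t}(s)\,ds = -\int (\Dtdt R_{(n)})(\Phi_s)\,\eta_{\ep_t}'(s)\,ds
\]
trades the (unavailable) second advective derivative of $R_{(n)}$ for one derivative on the kernel, at a cost of $\int|\eta_{\ep_t}'|\lesssim\ep_t^{-1}$, which with the spatial chain rule yields the fourth estimate. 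One converts freely among $\pr_t,\Ddt,\Dtdt$ by Lemma~\ref{lem:interchangeDerivs}. The step needing the most care is the spatial estimate: one must check that composition with $\Phi_s$ does not move the onset of $\hn$--losses below order $L$, which is exactly what the sharp flow map estimates (available precisely because the flow barely moves on the scale $\ep_t$) provide.
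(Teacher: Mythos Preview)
Your sketch is correct and follows precisely the approach the paper invokes: the paper does not give its own proof of this proposition but simply cites \cite[Chapters~18.4--18.7]{isett}, and your outline (flow-map bounds via Gronwall, FTC along the flow for the first estimate, chain rule under the integral for spatial derivatives, the semigroup identity $\Dtdt[F\circ\Phi_s]=(\Dtdt F)\circ\Phi_s$ for the first advective derivative, and integration by parts in $s$ for the second) is exactly the content of that reference. The only point to watch is that the input bounds on $R_{(n)}$ from Proposition~\ref{prop:newtonBound} are stated for $\Ddt$ rather than $\Dtdt$, but you already flag this and the conversion via $\Dtdt=\Ddt+T[w]\cdot\nb$ (or Lemma~\ref{lem:interchangeDerivs}) is straightforward.
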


We define the phase functions $\xi_I$ to solve
\ali{
(\pr_t + \tilde{u}_\ep^j \nb_j ) \xi_I &= 0 \label{eq:transportPhase} \\
\xi_{(f,k,n)}(k\tau, x) &= \check{\xi}_{(f,k)}(k\tau, x) = f \cdot x
}
Notice that $\xi_I$ and $\check{\xi}_I$ have the same initial data but differ in terms of which vector field transports them.

We obtain the following estimates for $\xi_I$:
\begin{prop}[Phase function estimates] \label{prop:phase}  The phase functions satisfy the following bounds on the interval $[t(I) - \tau, t(I) + \tau]$
\ali{
\co{\nb_{\va} \nb \xi_I} &\lesssim_{\va} \hn^{(|\va| + 1 -\unl)_+} \Xi^{|\va|}, \label{eq:phaseGrad}  \\
\co{\nb_{\va} \Dtdt \nb \xi_I } &\lesssim_{\va} \hn^{(|\va| + 1 -\unl)_+} \Xi^{|\va|} (\Xi e_u^{1/2}),  \\
\co{\nb_{\va} \Dtdt^2 \nb \xi_I } &\lesssim_{\va} \hn^{(|\va| + 2 -\unl)_+} \Xi^{|\va|} (\Xi e_u^{1/2})^2
}
\end{prop}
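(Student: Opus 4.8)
\emph{Proof proposal.} The plan is to reduce Proposition~\ref{prop:phase} to the transport equation satisfied by $\nb\xi_I$ and to propagate bounds forward from the initial time $t(I)=k\tau$ by the method of characteristics, following \cite[Sections 17.1--17.2]{isett} but with $\tilde{u}_\ep$ and its bounds \eqref{eq:tildeuepbd1}--\eqref{eq:tildeuepbd3} in the role played there by $u_\ep$ and \eqref{eq:prelimScalVel1}--\eqref{eq:prelimScalVel2}. Differentiating \eqref{eq:transportPhase} gives
\[
\Dtdt \nb_m \xi_I = -(\nb_m \tilde{u}_\ep^j)\,\nb_j \xi_I,
\]
a linear transport equation for $\nb\xi_I$ along the flow of $\tilde{u}_\ep$ whose forcing is the product of $\nb\tilde{u}_\ep$ — which by \eqref{eq:tildeuepbd1} satisfies $\co{\nb_{\va}\nb\tilde{u}_\ep}\lesssim\hn^{(|\va|+1-\unl)_+}\Xi^{|\va|}(\Xi e_u^{1/2})$, i.e. loses $\hn$-powers only from order $\unl-1$ on — and of $\nb\xi_I$ itself, whose value at $t(I)$ is the constant vector $f$.

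First I would prove \eqref{eq:phaseGrad}. Applying $\nb_{\va}$ to the transport equation and moving $\nb_{\va}$ past $\Dtdt$ (the commutator contributing only terms $\tisum\nb_{\va_1}\tilde{u}_\ep^i\,\nb_{\va_2}\nb_i\xi_I$ with $|\va_1|\ge1$), I would set
\[
\Gamma(t) = \max_{|\va|\le L'}\frac{\co{\nb_{\va}\nb\xi_I(t)}}{\hn^{(|\va|+1-\unl)_+}\Xi^{|\va|}},
\]
estimate the forcing by the product rule and the counting inequality with $y=\unl-1$ (legitimate since $\unl=L-3\ge4$), and conclude $\co{\Dtdt\nb_{\va}\nb\xi_I(s)}\lesssim\hn^{(|\va|+1-\unl)_+}\Xi^{|\va|}(\Xi e_u^{1/2})\,\Gamma(s)$. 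Integrating along characteristics as in \eqref{eq:methChars} gives $\Gamma(t)\le\Gamma(t(I))+C\,\Xi e_u^{1/2}\int_{t(I)}^{t}\Gamma(s)\,ds$; since $\Gamma(t(I))=|f|=O(1)$ (note $\nb_{\va}\nb\xi_I=0$ at $t(I)$ for $|\va|\ge1$, as $\nb\xi_I=f$ there) and $\tau\,\Xi e_u^{1/2}=b(\log\hxi)^{-1}\ll1$, Gronwall yields $\Gamma(t)\lesssim1$ on $[t(I)-\tau,t(I)+\tau]$, which is \eqref{eq:phaseGrad}.

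The two advective-derivative bounds then follow by differentiating the transport equation. From $\Dtdt\nb_m\xi_I=-(\nb_m\tilde{u}_\ep^j)\nb_j\xi_I$, the bound \eqref{eq:phaseGrad} just established, \eqref{eq:tildeuepbd1}, the product rule of Proposition~\ref{prop:chainRule}, and the counting inequality with $y=\unl-1$, one gets $\co{\nb_{\va}\Dtdt\nb\xi_I}\lesssim\hn^{(|\va|+1-\unl)_+}\Xi^{|\va|}(\Xi e_u^{1/2})$. Applying $\Dtdt$ once more and using $\Dtdt\xi_I=0$,
\[
\Dtdt^2\nb_m\xi_I = -(\Dtdt\nb_m\tilde{u}_\ep^j)\,\nb_j\xi_I - (\nb_m\tilde{u}_\ep^j)\,\Dtdt\nb_j\xi_I;
\]
here $\Dtdt\nb\tilde{u}_\ep$ is controlled, via $\Dtdt\nb\tilde{u}_\ep=\nb\Dtdt\tilde{u}_\ep+[\Dtdt,\nb]\tilde{u}_\ep$, by \eqref{eq:tildeuepbd3} together with a product of two copies of \eqref{eq:tildeuepbd1}, and combining the resulting bounds with \eqref{eq:phaseGrad} and the second estimate via the counting inequality produces $\co{\nb_{\va}\Dtdt^2\nb\xi_I}\lesssim\hn^{(|\va|+2-\unl)_+}\Xi^{|\va|}(\Xi e_u^{1/2})^2$. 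The powers of $\Xi e_u^{1/2}$ come out right because $\Xi\,(\Xi e_u^{1/2})\,e_u^{1/2}=(\Xi e_u^{1/2})^2$; alternatively all three displays can be packaged at once into the weighted norm of Lemma~\ref{lem:interchangeDerivs} extended to $0\le r\le2$, establishing $\tilde H_{\Xi e_u^{1/2}}[\nb\xi_I]\lesssim1$ by the same Gronwall step and reading off the three estimates, with $\co{\nb_{\va}\Dtdt^2\tilde{u}_\ep}$ supplied by interchanging $\Dtdt$ with $\pr_t^2$ (Lemma~\ref{lem:interchangeDerivs}) and \eqref{eq:tildeuepbd2}--\eqref{eq:tildeuepbd3}.

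I expect the only real difficulty to be bookkeeping rather than analysis: one must verify at every step that the $\hn$-exponent losses combine correctly under products and under commutation of $\nb$ with $\Dtdt$ — i.e. that the counting inequality is invoked with the correct threshold ($\unl-1$ for gradient-type factors such as $\nb\tilde{u}_\ep$ and $\nb\xi_I$) — and that the differentiation order $L'$ one works with stays within the range for which \eqref{eq:tildeuepbd1}--\eqref{eq:tildeuepbd3} hold. Once this is checked, everything reduces to the soft Gronwall argument above, whose smallness is furnished solely by $\tau\,\Xi e_u^{1/2}\lesssim(\log\hxi)^{-1}$.
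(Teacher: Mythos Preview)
Your proposal is correct and takes essentially the same approach as the paper, which simply defers to \cite[Sections~17.2--17.3]{isett} and notes that the proof is a Gronwall argument for a weighted norm. Your sketch correctly identifies that the only change from the cited argument is replacing $u_\ep$ and its bounds \eqref{eq:prelimScalVel1}--\eqref{eq:prelimScalVel2} by $\tilde{u}_\ep$ and \eqref{eq:tildeuepbd1}--\eqref{eq:tildeuepbd3}, which shifts the $\hn$-loss threshold from $L$ to $\unl$ exactly as the stated bounds reflect.
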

\begin{proof}  A proof (based on Gronwall's inequality for a weighted norm) can be found in \cite[Sections 17.2-17.3]{isett}.  %
\end{proof}

We will need a good estimate on how close the phase gradients are to those that were used in the Newton step.  The equation we need to analyze is
\ali{
(\pr_t + u_\ep^j \nb_j) (\check{\xi}_J - \xi_J) &= T^j w \nb_j \xi_J \\
(\pr_t + u_\ep^j \nb_j) (\nb_a \check{\xi}_J - \nb_a \xi_J) &= - \nb_a(T^j w \nb_j \xi_J) - \nb_a u_\ep^j \nb_j(\check{\xi}_J - \xi_J)
}
Again, the initial data for $\nb \check{\xi}_J$ and $\nb \xi_J$ are equal at time $t(I)$.  From this equation, we use the fact that the time scale $\tau \leq (\log \hxi)^{-1} (\Xi e_u^{1/2})^{-1}$, and apply the method of characteristics and Gronwall to obtain
\ali{
\| \nb \check{\xi}(t) - \nb \xi_I(t) \|_0 &\leq (\Xi e_u^{1/2}) \int_{t(I)}^t \| \nb \check{\xi}(s) - \nb \xi_I(s) \|_0 ds + \tau ( \co{\nb(T^j w \nb_j \xi_J)} ) \notag \\
\| \nb \check{\xi}(t) - \nb \xi_I(t) \|_0 &\lesssim e^{C \Xi e_u^{1/2} \tau} \tau \Xi S_{u} \lesssim N^{-1/2} (\D_u/\D_R)^{-1/2}. \label{eq:flowErrorFirst}
}
In particular, if $N \geq \hat{C}$ is large enough we have that $\nb \xi_I$ take values in the domain of the functions $\ga_f$ and $B^{j\ell}$, so that the construction of the convex integration wave $\Th$ will be well-defined.

We obtain the following bounds on the amplitudes $\ga_I$ defined in \eqref{eq:gaIdefn} stated in the following Proposition
\begin{prop}[Amplitude bounds] \label{prop:amplitudeBounds}
\ali{
\co{\nb_{\va} \Dtdt^r \ga_I } &\lesssim \hn^{(|\va| + 1 - \unl)_+} \Xi^{|\va|} \tau^{-r} D_R^{1/2}\, \qquad 0 \leq r \leq 1\label{eq:firstAdvecAmplitudes} \\
\co{\nb_{\va} \Dtdt^2 \ga_I } &\lesssim \hn^{(|\va| + 2 - \unl)_+} \Xi^{|\va|}  \ep_t^{-1} (\Xi e_u^{1/2} ) D_R^{1/2}
}
\end{prop}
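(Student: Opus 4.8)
The plan is to treat $\ga_{(f,k,n)} = \chi_k(t)\,e_n^{1/2}(t)\,\ga_f(p_I)$, with $p_I = \bigl(M^{j\ell}-\tilde R_{(n)}^{j\ell}/(M_e D_R),\,\nb\xi_I\bigr)$ as in \eqref{eq:gaIdefn} and \eqref{eq:pI}, as a product of three factors and to combine the weighted-norm chain rule and product rule of Proposition~\ref{prop:chainRule}, in their prime-norm form, with the already-established estimates for $\tilde R_{(n)}$ and $\nb\xi_I$. Throughout, the relevant advective derivative is $\Dtdt = \pr_t + \tilde u_\ep\cdot\nb$, which is the natural one here since $\xi_I$ solves the transport equation \eqref{eq:transportPhase} along $\tilde u_\ep$ and $\tilde R_{(n)}$ is the mollification of $R_\ep$ along the flow of $\tilde u_\ep$; in particular $\Dtdt$ applied to either $\nb\xi_I$ or $\tilde R_{(n)}$ costs only a factor $\Xi e_u^{1/2}$ with no extra frequency loss, by Propositions~\ref{prop:phase} and \ref{prop:stress}.

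First I would check that $p_I$ takes values in the open domain on which $\ga_f$ is smooth and that it has prime weighted norm $O(1)$. For the matrix slot, $\co{\tilde R_{(n)}}\ls D_R$ by Proposition~\ref{prop:stress}, so $\|\tilde R_{(n)}^{j\ell}/(M_e D_R)\|_0 \ls M_e^{-1}$ is as small as we wish once $M_e$ is fixed large, keeping this slot in a small neighborhood of $M^{j\ell}$; the higher $\nb_{\va}\Dtdt^r$ bounds of Proposition~\ref{prop:stress} give that $\tilde R_{(n)}/D_R$ has prime (indeed unprimed) weighted norm $\ls 1$. For the gradient slots, $\nb\xi_I$ stays in the prescribed $O(1)$ neighborhood of $\{(1,2),(2,1)\}$ by the closeness estimate \eqref{eq:flowErrorFirst} (valid once $N\ge\hc$), and Proposition~\ref{prop:phase} gives $\co{\nb_{\va}\Dtdt^r\nb\xi_I}\ls\hn^{(|\va|+1-\unl)_+}\Xi^{|\va|}(\Xi e_u^{1/2})^r$ for $0\le r\le1$, i.e.\ $\nb\xi_I$ has prime weighted norm $\ls 1$. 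Hence $p_I$ has prime weighted norm $\ls 1$, and by the prime-norm chain rule of Proposition~\ref{prop:chainRule} so does $\ga_f(p_I)$, with implicit constant depending only on $L'$ and the fixed function $\ga_f$.

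Next I would handle the time cutoffs, which carry the dimensional factor $D_R^{1/2}$. The function $\chi_k$ depends on $t$ only and satisfies $|\tfrac{d^r}{dt^r}\chi_k|\ls\tau^{-r}$. On $\supp\chi_k$ one has $e_n(t)=M_0 D_{R,n}$, so $e_n$ is bounded below by a fixed multiple of $D_{R,n}$ there; applying the one-variable chain rule for $s\mapsto s^{1/2}$, smooth away from $0$, together with \eqref{eq:enConstruct}, gives $|\tfrac{d^r}{dt^r}e_n^{1/2}|\ls\tau^{-r}D_{R,n}^{1/2}\le\tau^{-r}D_R^{1/2}$. Since $\chi_k e_n^{1/2}$ is spatially constant, its prime weighted norm with inverse timescale $\tau^{-1}$ is $\ls D_R^{1/2}$; because $\tau^{-1}\gtrsim\Xi e_u^{1/2}$, the $O(1)$ bound on $\ga_f(p_I)$ persists with this larger inverse timescale, so the prime-norm product rule yields that $\ga_I$ has prime weighted norm $\ls D_R^{1/2}$ for the inverse timescale $\tau^{-1}$, which unwinds to \eqref{eq:firstAdvecAmplitudes}. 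For the companion $\Dtdt^2$ bound I would expand $\nb_{\va}\Dtdt^2\ga_I$ by Leibniz and insert the second-order estimates: $\Dtdt^2\tilde R_{(n)}$ costs an extra $\ep_t^{-1}(\Xi e_u^{1/2})$ and $\Dtdt^2\nb\xi_I$ costs $\hn(\Xi e_u^{1/2})^2$ with the shifted loss, while $\tfrac{d^2}{dt^2}(\chi_k e_n^{1/2})$ costs only $\tau^{-2}$; since \eqref{eq:NlowerBd} guarantees $\tau^{-1}\ls\ep_t^{-1}$ and hence $\tau^{-2}\ls\ep_t^{-1}(\Xi e_u^{1/2})$, every term is dominated by $\hn^{(|\va|+2-\unl)_+}\Xi^{|\va|}\ep_t^{-1}(\Xi e_u^{1/2})D_R^{1/2}$, as claimed.

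The only genuinely delicate points concern well-definedness rather than size: one must verify that $p_I$ never leaves the domain of $\ga_f$, which is exactly what the smallness of $\|\tilde R_{(n)}/(M_e D_R)\|_0$ (from a large $M_e$) and of the phase-gradient deviation $\|\nb\xi_I-\nb\check\xi_I\|_0$ (estimate \eqref{eq:flowErrorFirst}, using small $b$ and large $N$) are designed to guarantee; and one must know that $e_n^{1/2}$ is as regular as $e_n$, which holds only because $e_n$ is pinned to the constant $M_0 D_{R,n}$ wherever $\chi_k$ is supported. Everything else is bookkeeping of the $\hn$-losses through the counting inequality, precisely as in the treatment of amplitudes in \cite{isett}.
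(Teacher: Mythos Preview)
Your proposal is correct and follows essentially the same approach as the paper's proof sketch: both treat $\ga_I$ as the product $\chi_k\,e_n^{1/2}\,\ga_f(p_I)$, invoke Propositions~\ref{prop:stress} and~\ref{prop:phase} for the size of $p_I$ and its $\Dtdt$-derivatives, apply the chain rule to $\ga_f$, observe that the time cutoffs carry the factor $D_R^{1/2}$ and the cost $\tau^{-1}$ per $\Dtdt$, and identify $\co{\Dtdt^2\tilde R_{(n)}}\ls\ep_t^{-1}\Xi e_u^{1/2}$ as the dominant second-order contribution, using $\tau^{-2}\ls\ep_t^{-1}\Xi e_u^{1/2}$ to absorb the cutoff terms. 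Your version simply packages the bookkeeping through the prime weighted norm of Proposition~\ref{prop:chainRule}, while the paper's sketch argues term by term, but the content is the same.
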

\begin{proof}[Proof sketch]  We only sketch the main idea in the proof since the full proof is a by now standard exercise in the chain rule and product rule using Propositions~\ref{prop:stress} and \ref{prop:phase}.  Consider the case $|\va| = 0$.  Define $\check{R}_n = \tilde{R}_n/(M_e D_R)$ so that $\check{R}_n$ has size $\ls 1$.  By abuse of notation, we think of $\ga_f$ as a function of $\check{R}_n$ and $\nb \xi_k$.
\ali{
 \ga_{(f,k,n)} &= \chi_k e_n^{1/2}(t)  \ga_f( \check{R}_n, \nb \xi_k) \\
\Dtdt^r \ga_{(f,k,n)} &= \overset{\sim}{\sum} \pr_t^{r_1}\chi_k \pr_t^{r_2} e_n^{1/2} [ \pr \ga_f \Dtdt^{r_3} \check{R}_n + \pr \ga_f \Dtdt^{r_3} \nb \xi_I + \mbox{cross terms} ] \label{eq:advecDerivs}
}
To estimate $\co{\ga_{(f,k,n)}}$ note that the cutoff has size $1$, $e_n^{1/2}(t)$ has size $D_{R,n}^{1/2} \leq D_R^{1/2}$, and $\ga_f(\check{R}, \nb \xi)$ has size $1$.  Upon taking $|\va|$ spatial derivatives, the factor of $\hn^{(|\va| + 1 - \unl)_+}$ appears when all derivatives hit $\nb \xi$.

Now consider the case of $r = 1$ advective derivatives.  The first advective derivative costs $\Xi e_u^{1/2}$ when it hits $\check{R}$ or $\nb \xi_I$, but carries a larger cost of $\tau^{-1}$ when it hits $\chi_k(t)$ or $e_n^{1/2}(t)$ from the Newton step, hence the estimate \eqref{eq:firstAdvecAmplitudes}.

On the other hand, upon taking $r = 2$ advective derivatives, the largest term in \eqref{eq:advecDerivs} comes from
$\co{\Dtdt^2 \check{R}} \ls \ep_t^{-1} \Xi e_u^{1/2}$.
Indeed, for the other terms the advective derivatives cost at most $\tau^{-1}$ each and
\ALI{
\tau^{-2} = b^{-2} (\log \hxi)^2 (\Xi e_u^{1/2})^2 &\ls (\Xi e_u^{1/2})^2 (D_u/D_R)^{1/2} N^{1/2} = \ep_t^{-1} (\Xi e_u^{1/2})
} %
As for the spatial derivatives, note that factors of $\hn$ appear only after $\check{R}$ or $\nb \xi_I$ or $\Dtdt \nb \xi_I$ have been differentiated $\unl - 1$ times, or after $\Dtdt^2 \nb \xi_I$ has been differentiated $\unl - 2$ times.
\end{proof}

Having estimated the phase functions we can expand out the wave $\Th_I$ using the Microlocal Lemma from \cite[Lemma 4.1]{isettVicol}, which shows via a Taylor expansion that the high frequency convolution operator $P_I$ in the definition of $\Th_I$ and the convolution operator $T^\ell P_I$ in the definition of $T^\ell \Th_I$ both act to leading order like multiplication operators.
\ali{
\Th_I &= g_{[I]}(\mu t) e^{i \la \xi_I} ( \th_I + \de \th_I ) \\
T^\ell \Th_I &= g_{[I]}(\mu t) e^{i \la \xi_I} ( u_I^\ell + \de u_I^\ell ) \\
u_I^\ell &= m^\ell(\nb \xi_I) \th_I
}

The estimates we inherit for the lower order term $\de \th_I$ and $\de u_I$ mimic those of
\ALI{ \de \th_I &\sim  \la^{-1} [\nb \th_I + \th_I \nb^2 \xi_I ] \\
\de u_I &\sim \la^{-1} [\nb u_I + u_I \nb^2 \xi_I ].
}
In particular, they gain a smallness factor of $N^{-1}$.  The calculation in \cite[Lemma 7.5]{isettVicol} gives
\ali{
\co{\nb_{\va} \Dtdt^r \de \th_I} + \co{\nb_{\va} \Dtdt^r \de u_I} &\lesssim \la^{1/2} N^{-1} \hn^{(|\va| + 1 + r - \unl)_+} \Xi^{|\va|} (\Xi e_u^{1/2})^r D_R^{1/2}, \label{eq:deBounds1}
}
for $0 \leq r \leq 1$, and
\ali{
\co{\nb_{\va} \Dtdt^2 \de \th_I} + \co{\nb_{\va} \Dtdt^2 \de u_I} &\lesssim \la^{1/2} N^{-1} \hn^{(|\va| + 2 - \unl)_+} \Xi^{|\va|}  (\Xi e_u^{1/2}) \ep_t^{-1} D_R^{1/2}. \label{eq:deBounds2}
}

\section{Estimating the corrections}

Here we gather estimates for the corrections
\ali{
\Th_I &= g_{[I]}(\mu t) P_I[ e^{i \la \xi_I} \la^{1/2} \ga_I ] \\
T^\ell \Th_I &= g_{[I]}(\mu t) T^\ell P_I[ e^{i \la \xi_I} \la^{1/2} \ga_I ]
}
Since $g_{[I]}$ has size $1$ and both $\| P_I \| \ls 1$ and $\| T^\ell P_I \| \ls 1$, we immediately obtain from \eqref{eq:firstAdvecAmplitudes} that
\ali{
\co{\Th_I} + \co{T^\ell \Th_I} &\ls \la^{1/2}  \co{\ga_I} \ls \la^{1/2} D_R^{1/2}
}
Since $P_I = P_{\approx \la} P_I$ and $T^\ell P_I = P_{\approx \la} T^\ell P_I$ both localize to frequency $\la$, this bound implies
\ali{
\co{\nb_{\va}\Th_I} + \co{\nb_{\va} T^\ell \Th_I} &\ls_{\va} \la^{|\va|+1/2} D_R^{1/2} \label{eq:bdFornbaThTTh}
}
Writing $|\nb|^{-1/2} P_I = [|\nb|^{-1/2} P_{\approx \la}] P_I$, where $\| |\nb|^{-1/2} P_{\approx \la} \| \ls \la^{-1/2}$, we also obtain
\ali{
\co{\nb_{\va} |\nb|^{-1/2} \Th_I } &\ls_{\va} \la^{|\va|} D_R^{1/2},  \label{eq:nbminhalfTh}
}
which finishes the verification of the claim \eqref{eq:LaWBd}.

Now define the vector field $\ost{u}^\ell = \tilde{u}^\ell + (u^\ell - u_\ep^\ell) + T^\ell \Th$ and the associated advective derivative
\ali{
\ost{D}_t &= \pr_t + \ost{u} \cdot \nb
}
\begin{defn}  For $\mrg{D}_t \in \{ \pr_t, \Ddt, \Dtdt, \ost{D}_t \}$ define the final weighted norm
\ali{
\mrg{H}^*[F] &= \max_{0 \leq r \leq 1} \max_{0 \leq |\va| + r \leq L} \fr{\co{\nb_{\va} \mrg{D}_t^r F }}{(N\Xi)^{|\va|} \ost{\tau}^{-r}}
}
where we recall $\ost{\tau}^{-1} = (N \Xi)^{3/2} D_R$. %
\end{defn}
\begin{prop} \label{prop:finalComp}
The final weighted norms are comparable up to implicit constants
\ali{
\ost{H}^*[F] \ls H^*[F] \ls \bar{H}^*[F] \ls \tilde{H}^*[F] \ls \ost{H}^*[F] \label{eq:comparable}
}
Furthermore there is a product rule $\mrg{H}^*[F G] \ls \mrg{H}^*[F] \mrg{H}^*[G]$.
\end{prop}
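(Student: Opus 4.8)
The plan is to mimic Lemma~\ref{lem:interchangeDerivs}. The four norms in \eqref{eq:comparable} all carry the identical weight $(N\Xi)^{|\va|}\ost{\tau}^{-r}$ and differ only in which vector field is used to form $\mrg{D}_t$, so the whole chain of inequalities reduces to a single ``change of advecting field'' statement: if $\mrg{D}_t$ and $\mrg{D}_t'$ are two of the operators $\pr_t,\Ddt,\Dtdt,\ost{D}_t$ with $\mrg{D}_t-\mrg{D}_t'=v^i\nb_i$, and the difference field $v$ obeys $\co{\nb_{\vcb}v}\lesssim(N\Xi)^{|\vcb|-1}\ost{\tau}^{-1}$ for $0\le|\vcb|\le L-1$, then the weighted norm built from $\mrg{D}_t$ is bounded by the one built from $\mrg{D}_t'$. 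Granting this, \eqref{eq:comparable} follows from four applications, with $v$ taken in turn to be $\ost{u}=u+T^\ell w+T^\ell\Th$ (giving $\ost{H}^*\ls H^*$), $u_\ep$ (giving $H^*\ls\bar{H}^*$), $\tilde{u}_\ep-u_\ep=T^\ell w$ (giving $\bar{H}^*\ls\tilde{H}^*$), and $\ost{u}-\tilde{u}_\ep=(u-u_\ep)+T^\ell\Th$ (giving $\tilde{H}^*\ls\ost{H}^*$).

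The single statement I would prove exactly as in Lemma~\ref{lem:interchangeDerivs}, only more cheaply since the weight carries no delayed-loss factor $\hn^{(\cdot)_+}$. For $r=0$ both norms bound the same quantity $\co{\nb_{\va}F}$. For $r=1$ one has $|\va|\le L-1$; I would write $\nb_{\va}\mrg{D}_tF=\nb_{\va}\mrg{D}_t'F+\nb_{\va}(v^i\nb_iF)$, expand the last term by Leibniz as $\tisum\nb_{\va_1}v^i\,\nb_{\va_2}\nb_iF$ with $|\va_1|+|\va_2|=|\va|$, and estimate $\co{\nb_{\va_1}v}\lesssim(N\Xi)^{|\va_1|-1}\ost{\tau}^{-1}$ by the hypothesis on $v$ (valid since $|\va_1|\le L-1$) and $\co{\nb_{\va_2}\nb_iF}\lesssim(N\Xi)^{|\va_2|+1}$ times the $r=0$ part of the $\mrg{D}_t'$-norm (valid since $|\va_2|+1\le L$). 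As $(N\Xi)^{|\va_1|-1}(N\Xi)^{|\va_2|+1}=(N\Xi)^{|\va|}$, every term is $\lesssim(N\Xi)^{|\va|}\ost{\tau}^{-1}$ times the $\mrg{D}_t'$-norm; dividing by the prefactor and maximizing over $\va,r$ gives the inequality.

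The real content is the verification of the smallness hypothesis for the four difference fields, and the single numerical input needed is $D_u/D_R\lesssim N$, which is \eqref{eq:NlowerBd}, combined with $\ost{\tau}^{-1}\sim\ost{\Xi}\,\ost{e}_u^{1/2}=\ost{\Xi}^{3/2}D_R^{1/2}\sim(N\Xi)^{3/2}D_R^{1/2}$. For $u$, $u_\ep$, and $T^\ell w=\sum_nT^\ell w_n$ I would invoke Definition~\ref{defn:freqEnLevels}, \eqref{eq:prelimScalVel1}, and \eqref{eq:newtVelocBd} respectively; each yields $\co{\nb_{\vcb}(\cdot)}\lesssim\hn^{(|\vcb|-\unl)_+}\Xi^{|\vcb|}e_u^{1/2}\le(N\Xi)^{|\vcb|}e_u^{1/2}$, and then $(N\Xi)^{|\vcb|}e_u^{1/2}=(N\Xi)^{|\vcb|}\Xi^{1/2}D_u^{1/2}\lesssim(N\Xi)^{|\vcb|-1}(N\Xi)^{3/2}D_R^{1/2}=(N\Xi)^{|\vcb|-1}\ost{\tau}^{-1}$ precisely because $D_u\lesssim ND_R$. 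For $u-u_\ep$, Lemma~\ref{lem:new} gives a bound smaller by a factor $N^{-1}$, so it is automatic; and $\ost{u}$, being the sum of $u$, $T^\ell w$, $T^\ell\Th$, inherits the bound from the other three. The delicate one is $T^\ell\Th$: because $\Th$ oscillates at the high frequency $\la=\lceil N\Xi\rceil$, each spatial derivative genuinely costs a full factor $N\Xi$, and \eqref{eq:bdFornbaThTTh} (together with the disjointness property \eqref{eq:nonsimultaneous}) gives $\co{\nb_{\vcb}T^\ell\Th}\lesssim(N\Xi)^{|\vcb|+1/2}D_R^{1/2}$, which is comparable to $(N\Xi)^{|\vcb|-1}\ost{\tau}^{-1}$. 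I expect this to be the step that needs the most care: it is exactly why the final norm $H^*$ is built with the plain weight $(N\Xi)^{|\va|}$ rather than with $\Xi^{|\va|}$ and a delayed loss as in the Newton step, and it is where the lower bound \eqref{eq:NlowerBd} on $N$ is genuinely used.

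Finally, the product rule is the routine one: by Leibniz $\nb_{\va}\mrg{D}_t^r(FG)=\tisum\nb_{\va_1}\mrg{D}_t^{r_1}F\,\nb_{\va_2}\mrg{D}_t^{r_2}G$ with $|\va_1|+|\va_2|=|\va|$ and $r_1+r_2=r$; bounding each factor by its weighted norm and using $(N\Xi)^{|\va_1|}\ost{\tau}^{-r_1}\cdot(N\Xi)^{|\va_2|}\ost{\tau}^{-r_2}=(N\Xi)^{|\va|}\ost{\tau}^{-r}$ gives $\co{\nb_{\va}\mrg{D}_t^r(FG)}\lesssim(N\Xi)^{|\va|}\ost{\tau}^{-r}\mrg{H}^*[F]\mrg{H}^*[G]$, and dividing and maximizing finishes it.
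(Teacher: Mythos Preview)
Your proposal is correct and follows essentially the same approach as the paper's proof. The paper proves only the representative inequality $\ost{H}^*[F]\ls H^*[F]$ by writing $\ost{D}_t=\pr_t+\ost{u}\cdot\nb$, expanding $\nb_{\va}(\ost{u}^i\nb_iF)$ by Leibniz, and bounding the three pieces $\tilde{u}$, $u-u_\ep$, $T\Th$ of $\ost{u}$ separately using \eqref{eq:tildeuepbd1}, Lemma~\ref{lem:new}, and \eqref{eq:bdFornbaThTTh}; your abstraction of this into a single ``change of advecting field'' lemma with hypothesis $\co{\nb_{\vcb}v}\lesssim(N\Xi)^{|\vcb|-1}\ost{\tau}^{-1}$ is exactly the distilled form of that computation, and your identification of $D_u/D_R\lesssim N$ as the sole numerical input matches the paper's use of \eqref{eq:NlowerBd}.
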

\begin{proof}  Since all the inequalities are proven similarly, we only give the proof of $\ost{H}^*[F] \ls H^*[F]$, which contains all the needed ideas.  We have, for $0 \leq |\va| \leq L - 1$,
\ALI{
&\nb_{\va} \ost{D}_t F = \nb_{\va} \pr_t F + \nb_{\va} ( \tilde{u}^i \nb_i F )   + \nb_{\va}[( u^i - u_\ep^i) \nb_i F] + \nb_{\va}[T^i \Th \nb_i F] \\
&\co{\nb_{\va} \ost{D}_t F}\ls \co{\nb_{\va} \pr_t F} \\
&\qquad \qquad + \overset{\sim}{\sum} ( \co{\nb_{\va_1} \tilde{u}} + \co{\nb_{\va_1} (u - u_\ep) } + \co{\nb_{\va_1} T \Th} )\co{ \nb_{\va_2} \nb F } 1_{|\va_2| < |\va_1|} \\
&\ls (N \Xi)^{|\va|} \ost{\tau}^{-1} H^*[F] + \overset{\sim}{\sum} (N \Xi)^{|\va_1|} \left( e_u^{1/2} + \fr{e_u^{1/2}}{N} + (N \Xi)^{1/2} D_R^{1/2} \right) (N \Xi ) ^{|\va_2| + 1} H^*[F] \\
&\ls (N \Xi)^{|\va|} \ost{\tau}^{-1} H^*[F]
}
In the last line we used \eqref{eq:tildeuepbd3}, Lemma~\ref{lem:interchangeDerivs}, Lemma~\ref{lem:new} and \eqref{eq:bdFornbaThTTh}.
\end{proof}
We obtain the following estimate for the new velocity field
\begin{prop} \label{prop:newVelocBound}
\ali{
\ost{H}^*[\ost{u}]   &\ls (N \Xi)^{1/2} D_R^{1/2} \\
\ost{H}^*[\ost{\th}]   &\ls (N \Xi)^{1/2} D_R^{1/2}
}
\end{prop}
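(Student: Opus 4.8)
The plan is to split $\ost{u}^\ell = \tilde{u}_\ep^\ell + (u^\ell - u_\ep^\ell) + T^\ell \Th$ and $\ost{\th} = \tilde{\th}_\ep + (\th - \th_\ep) + \Th$ with $\tilde{\th}_\ep = \th_\ep + w$, and to bound the $\mrg{H}^*$-norm of each of the three pieces separately, invoking Proposition~\ref{prop:finalComp} (comparability of all the final weighted norms and the product rule) to compute each piece with whichever derivative among $\pr_t, \Ddt, \Dtdt, \ost{D}_t$ is most convenient. Almost all of the arithmetic reduces to the elementary comparisons
\[
\hn^{(|\va| - \unl)_+} \Xi^{|\va|} \le (N\Xi)^{|\va|}, \qquad \Xi e_u^{1/2} \ls \ost{\tau}^{-1}, \qquad \tau^{-1} \ls \ost{\tau}^{-1}, \qquad \mu \ls \ost{\tau}^{-1}, \qquad e_u^{1/2} \ls (N\Xi)^{1/2} D_R^{1/2},
\]
which all follow from $\hn = N^{1/L} \le N$, $L \ge 7$, and the lower bound \eqref{eq:NlowerBd} (which in particular forces $\D_u/\D_R \le N$); one also uses $\ost{\Xi}\ost{e}_u^{1/2}\sim\ost{\tau}^{-1}$.

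First I would dispatch $\tilde{u}_\ep$: the estimates \eqref{eq:tildeuepbd1}, \eqref{eq:tildeuepbd2}, \eqref{eq:tildeuepbd3} say precisely that $\tilde{H}_{\Xi e_u^{1/2}}[\tilde{u}_\ep]\ls e_u^{1/2}$, so dividing by $(N\Xi)^{|\va|}\ost\tau^{-r}$ and using the comparisons above gives $\tilde{H}^*[\tilde{u}_\ep]\ls (N\Xi)^{1/2}D_R^{1/2}$. For the scalar field the analogous bound holds for $\th_\ep$ by \eqref{eq:prelimScalVel1}--\eqref{eq:prelimScalVel2} and for $w = \sum_n w_n$ by \eqref{eq:lossyBdwn} (which builds in the cutoffs $\tilde{\chi}_k$); recalling $S_w = \Xi^2\mu^{-1}D_{R,n}$ and $\mu$ from \eqref{eq:muChoice}, the $w$ contribution is in fact of smaller order, $\ls N^{-1/2}\Xi^{1/2}D_R^{1/2}$. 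Next, the term $u - u_\ep$ (and $\th - \th_\ep$) is immediate from Lemma~\ref{lem:new} taken with $\mrg{D}_t = \pr_t$: since $\ost{\Xi}\ost{e}_u^{1/2}\sim\ost\tau^{-1}$, that lemma gives a $\pr_t$-weighted norm $\ls N^{-1}e_u^{1/2}\ls (N\Xi)^{1/2}D_R^{1/2}$, and Proposition~\ref{prop:finalComp} upgrades this to the $\ost{H}^*$ bound.

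The substance of the proof is the oscillatory term $T^\ell \Th = \sum_I T^\ell\Th_I$. By the disjointness \eqref{eq:nonsimultaneous}, at each time $t$ only $\Th_I$ and its conjugate are active, so $\tilde{H}^*[T^\ell\Th]\le 2\max_I\tilde{H}^*[T^\ell\Th_I]$ and it is enough to bound a single $\Th_I$ uniformly. I would use the Microlocal-Lemma expansion $T^\ell\Th_I = g_{[I]}(\mu t)\,e^{i\la\xi_I}(u_I^\ell + \de u_I^\ell)$, $u_I^\ell = m^\ell(\nb\xi_I)\th_I$, $\th_I = \la^{1/2}\ga_I$, and compute $\tilde{H}^*$ (i.e.\ with $\mrg{D}_t = \Dtdt$). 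The crucial structural point is that $\Dtdt e^{i\la\xi_I} = 0$, because $\xi_I$ solves \eqref{eq:transportPhase}, so the advective derivative never hits the oscillatory phase; moreover $\nb_{\va}e^{i\la\xi_I}$ is controlled by $\ls \la^{|\va|}\ls (N\Xi)^{|\va|}$ (the term $(i\la\nb\xi_I)^{|\va|}e^{i\la\xi_I}$ dominates, since by Proposition~\ref{prop:phase} and $\la = \lceil N\Xi\rceil$ each higher-order derivative of $\nb\xi_I$ is cheaper by a factor $\ls N^{-1}$), so $\tilde{H}^*[e^{i\la\xi_I}]\ls 1$. Likewise $\tilde{H}^*[g_{[I]}(\mu t)]\ls 1$ (size $1$, with $\Dtdt g_{[I]}(\mu t) = \mu g_{[I]}'(\mu t)$ and $\mu\ls\ost\tau^{-1}$), $\tilde{H}^*[m^\ell(\nb\xi_I)]\ls 1$ (chain rule, Proposition~\ref{prop:phase} and \eqref{eq:flowErrorFirst} keeping $\nb\xi_I$ in the smoothness domain of $m^\ell$, and $\Xi e_u^{1/2}\ls\ost\tau^{-1}$), $\tilde{H}^*[\th_I] = \la^{1/2}\tilde{H}^*[\ga_I]\ls\la^{1/2}D_R^{1/2}\sim(N\Xi)^{1/2}D_R^{1/2}$ (Proposition~\ref{prop:amplitudeBounds}, $\tau^{-1}\ls\ost\tau^{-1}$), and $\tilde{H}^*[\de u_I]\ls\la^{1/2}N^{-1}D_R^{1/2}\ll(N\Xi)^{1/2}D_R^{1/2}$ (by \eqref{eq:deBounds1}). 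Multiplying these through the product rule of Proposition~\ref{prop:finalComp} yields $\tilde{H}^*[T^\ell\Th_I]\ls (N\Xi)^{1/2}D_R^{1/2}$, hence $\tilde{H}^*[T^\ell\Th]\ls(N\Xi)^{1/2}D_R^{1/2}$, and Proposition~\ref{prop:finalComp} converts this to the stated $\ost{H}^*$ bound. The scalar correction $\Th = \sum_I\Th_I$ is handled identically, with $u_I^\ell,\de u_I^\ell$ replaced by $\th_I,\de\th_I$ and \eqref{eq:deBounds1} still applying. Adding the three contributions gives both displayed estimates.

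I expect the bookkeeping inside $T^\ell\Th_I$ to be the main obstacle: one must check that composing up to $L$ spatial derivatives and one advective derivative never exceeds the budget $(N\Xi)^{|\va|}\ost\tau^{-r}$, and in particular that replacing the nonlocal operator $T^\ell P_I$ by multiplication by $m^\ell(\nb\xi_I)$ costs only the dimensionally negligible error $\de u_I$ of \eqref{eq:deBounds1}--\eqref{eq:deBounds2}. Everything else is a routine application of the product and chain rules for the weighted norms and of the comparisons collected above.
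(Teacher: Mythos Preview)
Your proposal is correct and close in spirit to the paper's proof, but the treatment of the oscillatory piece $T^\ell\Th$ is genuinely different. The paper does \emph{not} invoke the Microlocal Lemma expansion here: it keeps $T^\ell\Th_I = g_{[I]}(\mu t)\,T^\ell P_I[e^{i\la\xi_I}\th_I]$ as is, computes the $H^*$-norm with the \emph{pure time} derivative $\pr_t$, and uses the transport equation \eqref{eq:transportPhase} in the form $\pr_t\xi_I = -\tilde u_\ep\cdot\nb\xi_I$ to see that $\pr_t$ hitting the phase costs $\la\co{\tilde u_\ep}\co{\nb\xi_I}\ls (N\Xi)(\Xi\D_u)^{1/2}\ls\ost\tau^{-1}$. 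Spatial derivatives are then handled in one stroke by the frequency localization $T^\ell P_I = P_{\approx\la}T^\ell P_I$, which makes each $\nb$ cost exactly $\la\sim N\Xi$. Your route instead uses $\Dtdt$ (so that $\Dtdt e^{i\la\xi_I}=0$) together with the expansion $T^\ell\Th_I = g_{[I]}e^{i\la\xi_I}(u_I^\ell+\de u_I^\ell)$ and the product rule for $\tilde H^*$; this is exactly the machinery deployed later for $R_H$, $R_T$, $R_S$, and the bound $\tilde H^*[e^{i\la\xi_I}]\ls 1$ you need is indeed proved in the paper (as \eqref{eq:phaseBd1}). The paper's approach is marginally slicker for this particular proposition since it never unpacks $T^\ell P_I$ and never needs the error bounds \eqref{eq:deBounds1}; your approach has the virtue of reusing the same template as the later error estimates and of making the role of the transport equation $\Dtdt\xi_I=0$ completely transparent. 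The handling of $\tilde u_\ep$, $u-u_\ep$, and the scalar analogues is essentially identical in both.
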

\begin{proof}
We have $\ost{H}^*[\ost{u}] \leq \ost{H}^*[\tilde{u}] +  + \ost{H}^*[u - u_\ep] + \ost{H}^*[T[\Th]]$.  %
By Proposition~\ref{prop:finalComp} it suffices to bound
\ali{
\co{\nb_{\va} \Ddt^r \ti u_\ep }  &\ls \Xi^{|\va|} (\Xi e_u^{1/2})^r e_u^{1/2}  \\
\co{\nb_{\va} \pr_t^r ( u - u_\ep) } &\ls (N \Xi)^{|\va|} \ost{\tau}^{-r} (e_u^{1/2} / N ) \\
\co{ \nb_{\va} \pr_t^r T[\Th] } &\ls (N \Xi)^{|\va|} \ost{\tau}^{-r} ( N \Xi )^{1/2} D_R^{1/2} \label{eq:biggestAdvecBound}
}
since the right hand side of each of these inequalities is bounded by the right hand side of \eqref{eq:biggestAdvecBound}, which is our goal estimate.  The first of these bounds follows from \eqref{eq:tildeuepbd3}, the second from Lemma~\ref{lem:new}, and the third from the following calculation, which establishes the case $|\va| = 0$:
\ALI{
\pr_t T^\ell[\th] &= \pr_t T^\ell P_I[ g_{[I]}(\mu t) e^{i \la \xi_I} \th_I ] \\
\co{\pr_t T[\th] } &\ls \overset{\sim}{\sum} \mu^{r_1} [\la \co{\pr_t\xi_I}]^{r_2} \co{\pr_t^{r_3} \th_I} \\
&\stackrel{\eqref{eq:transportPhase}}{\ls} \overset{\sim}{\sum} \mu^{r_1} [ \la \co{\tilde{u}} \co{ \nb \xi_I} ]^{r_2} \ost{\tau}^{-r_3} H^*[\th_I] \\
&\ls \overset{\sim}{\sum} \mu^{r_1} [ \la \co{\tilde{u}} \co{ \nb \xi_I} ]^{r_2} \ost{\tau}^{-r_3} H^*[\th_I]  \\
&\stackrel{\eqref{eq:tildeuepbd1},\eqref{eq:phaseGrad},\eqref{eq:comparable}}{\ls} \overset{\sim}{\sum} \mu^{r_1} [ (N \Xi) (\Xi D_u)^{1/2} ]^{r_2}  \ost{\tau}^{-r_3} \tilde{H}^*[\th_I] \\
&\stackrel{\eqref{eq:NlowerBd}}{\ls} \overset{\sim}{\sum} \ost{\tau}^{-r_1} \ost{\tau}^{-r_2}  \ost{\tau}^{-r_3} \tilde{H}^*[\th_I] \\
&\ls \ost{\tau}^{-1} \tilde{H}^*[\th_I] \\
&\stackrel{\eqref{eq:firstAdvecAmplitudes}}{\ls} \ost{\tau}^{-1} \la^{1/2} D_R^{1/2}
}
Our desired bound on $\nb_{\va} \pr_t T[\Th]$ follows from the fact that the operator $T P_I = P_{\approx \la} T P_I$ localizes to frequency $\la$.

The bounds for $\ost{\th}$ follow the same argument, but are easier as the operator $T$ is not involved.
\end{proof}

\section{The error terms in the convex integration step}

Recall that prior to the convex integration we have
\ali{
\pr_t \th_\Ga + T^j \th_\Ga \nb_j \th_\Ga &= \nb_j \nb_\ell [S_{(\Ga)}^{j\ell} + P_{(\Ga)}^{j\ell} + R_{(\Ga)}^{j\ell}] \\
S_{(\Ga)} &= - \sum_{I = (f,k,n)} g_{[I]}^2(\mu t) e_n(t) \chi_k^2  \ga_f^2\left( \check{p}_I\right)  B^{j\ell}(\nb \check{\xi}_k)
}
The term $P_{(\Ga)}$ is the ``acceptable'' error from the Newton steps.

When we construct $\ost{\th} = \th_\Ga + \Th$, we get the following error terms
\ali{
\pr_t \ost{\th} + \ost{u}^j \nb_j \ost{\th} &= \nb_j \nb_\ell \ost{R}^{j\ell} \\
\ost{R}^{j\ell} &= R_T^{j\ell} +  R_H^{j\ell} + R_{M}^{j\ell} + R_S^{j\ell} + P_{(\Ga)}^{j\ell} + R_{(\Ga)}^{j\ell} \label{eq:errorList}\\
\nb_j \nb_\ell R_T^{j\ell} &= \pr_t \Th + \tilde{u}_\ep^a \nb_a \Th + T^a \Th \nb_a \tilde{\th}_\ep  \\
\nb_j \nb_\ell R_H^{j\ell} &=  \sum_I T^a \Th_I \nb_a \Th_I \label{eq:selfInteract}\\
\nb_j \nb_\ell R_M^{j\ell} &= T^j[(\th - \th_\ep)] \nb_j \Th + T^j \Th \nb_j (\th - \th_\ep) \label{eq:convexMollError}\\
\nb_j \nb_\ell R_S^{j\ell} &= \sum_I T^j \Th_I \nb_j \Th_{\bar{I}} + T^j \Th_{\bar{I}} \nb_j \Th_I - \nb_j \nb_\ell[ g_{[I]}^2 e_n(t) \chi_k^2 \ga_f^2(\check{p}_I) B^{j\ell}(\nb \check{\xi}_I)/2 ]
}

Note that there are no terms where $\Th_I$ interacts with $\Th_J$ for $J \notin \{ I, \bar{I} \}$.  This is the case thanks to \eqref{eq:nonsimultaneous}.  The fact that self-interaction terms such as \eqref{eq:selfInteract} are well-controlled was first observed in \cite{isettVicol}.

The term $R_S$ is the ``flow error''.  Using the divergence form principle of Section~\ref{sec:divFormPrinciple}, we can write
\ALI{
T^j \Th_I \nb_j \Th_{\bar{I}} + T^j \Th_{\bar{I}} \nb_j \Th_I  &= \nb_j [ T^j \Th_I \Th_{\bar{I}} + T^j \Th_{\bar{I}} \Th_I] \\
&= \nb_j \nb_\ell K_\la^{j\ell}\ast[\Th_I, \Th_{\bar{I}}]
}
where $K_\la^{j\ell}$ is a specific trace free kernel.

According to the bilinear microlocal lemma of \cite[Sections 4.5-4.6]{isett2021direct}, we can express the action of a frequency-localized bilinear convolution kernel on two high frequency inputs as being
\ali{
K_{\la}^{j\ell} \ast [\Th_I, \Th_{\bar{I}}] &=  \widehat{K}_\la^{j\ell}(\la \nb \xi_I, -\la \nb \xi_I) |\th_I|^2 + \de B_I^{j\ell}
}
where $\de B_I^{j\ell}$ is an explicit error term.  From the derivation of $K_\la^{j\ell}$ in frequency space (see Appendix Section~\ref{sec:divFormPrinciple}), we have that $\widehat{K}_\la^{j\ell}(\la p, - \la p) = \la^{-1} B^{j\ell}(p)$ for $p$ in an $O(1)$ neighborhood of the initial data for $\nb \xi_I$, where
\ALI{
B^{j\ell}(p) &= -i(\nb^j m^\ell(p) + \nb^\ell m^j(p)),
}
and $m^\ell(p) = i \ep^{\ell a} p_a |p|^{-1}$ is the SQG multiplier.  Putting these together, we arrive at the following expression for the conjugate interactions:
\ali{
T^j \Th_I \nb_j \Th_{\bar{I}} + T^j \Th_{\bar{I}} \nb_j \Th_I &= \nb_j \nb_\ell [ g_{[I]}^2(\mu t)[ e_n(t) \chi_k^2 \ga_f^2(p_I) B^{j\ell}(\nb \xi_I) + \de B_I^{j\ell} ] ],
}
where $\de B_I^{j\ell}$ has already been estimated in \cite[Proposition 4.5]{isett2021direct} (in particular, it has size $D_R/N$).  We can then write the $R_S$ term as
\ali{
R_S^{j\ell} &= \sum_{I}  g_{[I]}^2(\mu t)[ e_n(t) \chi_k^2 (\ga_f^2(p_I) B^{j\ell}(\nb \xi_I){-} \ga_f^2(\check{p}_I) B^{j\ell}(\nb \check{\xi}_I)) ]/2 + \de B_I^{j\ell} ]  \\
&= \sum_I (R_{SI}^{j\ell} + g_{[I]}^2(\mu t) \de B_I^{j\ell})
}
We bound this error using \eqref{eq:flowErrorFirst} and our other estimates for the construction components.

\section{Estimating $R_S$} \label{sec:estimateRS}

We now begin our estimates on the stress errors.  We rely the following propositions:
\begin{prop}[Chain rule]
Let $\mrg{D}_t \in \{ \pr_t, \Ddt, \Dtdt, \ost{D}_t \}$  and $F$ be smooth.  Let $G$ be a $C^\infty$ function defined on a compact neighborhood of the image of $F$.  Then
\ali{
\mrg{H}^*[G(F)] &\ls (1 + \mrg{H}^*[F])^L
}
\end{prop}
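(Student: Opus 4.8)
The plan is to reproduce the argument of Proposition~\ref{prop:chainRule} almost verbatim, the only change being the bookkeeping of the new weights. First I would invoke Proposition~\ref{prop:finalComp} to reduce to a single fixed operator: since $\mrg{H}^*$ is, up to implicit constants, independent of the choice of $\mrg{D}_t \in \{ \pr_t, \Ddt, \Dtdt, \ost{D}_t \}$, it suffices to establish the bound for one of them, and below $\mrg{D}_t$ will enter only through the defining bound $\co{\nb_{\va} \mrg{D}_t^r F} \le (N\Xi)^{|\va|} \ost{\tau}^{-r} \mrg{H}^*[F]$, so the choice is truly immaterial.

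Next I would record the Fa\`a di Bruno--type identity, valid for $0 \le r \le 1$ and $0 \le |\va| + r \le L$:
\[
\nb_{\va} \mrg{D}_t^r G(F) = \sum_{m=0}^{|\va|+r} \tisum \pr^m G(F) \prod_{j=1}^m \nb_{\va_j} \mrg{D}_t^{r_j} F ,
\]
where the inner $\tisum$ runs over multi-indices $\va_j$ and integers $r_j \ge 0$ with $(|\va_j|,r_j) \neq (0,0)$, $\sum_j |\va_j| = |\va|$, and $\sum_j r_j = r$ (so that, since $r \le 1$, at most one $r_j$ equals $1$ and the rest vanish), and the empty product is $1$. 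This is obtained exactly as in Proposition~\ref{prop:chainRule}: one first applies the ordinary chain rule to $\mrg{D}_t^r$ — treating $\mrg{D}_t$ as a first-order operator, so that no commutators with $\nb_{\va}$ appear at this stage — and then distributes $\nb_{\va}$ over the resulting product by the Leibniz rule, re-expanding each $\nb$ that falls on $\pr^k G(F)$ by the chain rule again.

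Then I would bound each factor. Since $G$ is $C^\infty$ on a compact neighborhood $K$ of the image of $F$, we have $\co{\pr^m G(F)} \ls_{K,G,L} 1$ for all $0 \le m \le L$. For the remaining factors, the definition of $\mrg{H}^*$ gives $\co{\nb_{\va_j} \mrg{D}_t^{r_j} F} \le (N\Xi)^{|\va_j|} \ost{\tau}^{-r_j} \mrg{H}^*[F]$, which is legitimate because $|\va_j| + r_j \le |\va| + r \le L$ for every $j$. This is the one place the new weights make things \emph{cleaner} than in Proposition~\ref{prop:chainRule}: the weights $(N\Xi)^{|\va_j|}$ and $\ost{\tau}^{-r_j}$ are a single number raised to additive exponents, so
\[
\prod_{j=1}^m (N\Xi)^{|\va_j|} \ost{\tau}^{-r_j} = (N\Xi)^{\sum_j |\va_j|} \ost{\tau}^{-\sum_j r_j} = (N\Xi)^{|\va|} \ost{\tau}^{-r}
\]
with no loss, so no counting inequality is needed. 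Multiplying the factor bounds and using $m \le |\va| + r \le L$ gives, for each term of the expansion,
\[
\co{ \pr^m G(F) \prod_{j=1}^m \nb_{\va_j} \mrg{D}_t^{r_j} F } \ls (N\Xi)^{|\va|} \ost{\tau}^{-r} \mrg{H}^*[F]^m \le (N\Xi)^{|\va|} \ost{\tau}^{-r} (1 + \mrg{H}^*[F])^L .
\]
Summing over the finitely many terms encoded by $\tisum$ (a number depending only on $L$) and over $0 \le m \le |\va|+r$, dividing by $(N\Xi)^{|\va|} \ost{\tau}^{-r}$, and taking the maximum over admissible $\va, r$ yields $\mrg{H}^*[G(F)] \ls (1 + \mrg{H}^*[F])^L$.

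I do not expect a substantive obstacle: the estimate is purely combinatorial once the Fa\`a di Bruno identity is in place, and the only genuinely new observation relative to Proposition~\ref{prop:chainRule} — that the multiplicative form of the weights $(N\Xi)^{|\va|}\ost\tau^{-r}$ removes the frequency-localization loss in the product — is immediate. The items needing a moment's care are just: (i) every factor stays within the range $|\va_j| + r_j \le L$ on which $\mrg{H}^*$ is defined, which holds since all exponents are nonnegative and sum to $|\va|+r \le L$; (ii) $r \le 1$, so at most one advective derivative is ever distributed and every factor is of the admissible form $\nb_{\va_j} F$ or $\nb_{\va_j} \mrg{D}_t F$; and (iii) the appeal to Proposition~\ref{prop:finalComp} at the outset. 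As in Proposition~\ref{prop:chainRule}, the product rule $\mrg{H}^*[F_1 F_2] \ls \mrg{H}^*[F_1]\mrg{H}^*[F_2]$ already recorded in Proposition~\ref{prop:finalComp} can then be re-derived from this chain rule by applying it with $G(u,v) = uv$ to the pair $\bigl( F_1/\mrg{H}^*[F_1], F_2/\mrg{H}^*[F_2] \bigr)$, which takes values in the unit polydisc.
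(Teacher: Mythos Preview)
Your proposal is correct and follows essentially the same approach as the paper: expand $\nb_{\va}\mrg{D}_t^r G(F)$ via the Fa\`a di Bruno/product rule, bound $\co{\pr^m G}\ls 1$ using compactness, bound each factor by $(N\Xi)^{|\va_j|}\ost\tau^{-r_j}\mrg{H}^*[F]$, and observe that the multiplicative weights combine additively in the exponents so that $\prod_j (N\Xi)^{|\va_j|}\ost\tau^{-r_j}=(N\Xi)^{|\va|}\ost\tau^{-r}$ with no loss. Your extra remarks (the reduction via Proposition~\ref{prop:finalComp}, the explicit note that no counting inequality is needed, and the re-derivation of the product rule) are correct but not needed for the argument.
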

\begin{proof} We compute for $0 \leq r \leq 1$, $0 \leq r + |\va| \leq L$
\ALI{
\nb_{\va} \mrg{D}_t^r G(F) &= \sum_{k = 0}^{|\va| + r} \overset{\sim}{\sum} \pr^k G(F) \prod_{i=0}^k \nb_{\va_i} \mrg{D}_t^{r_i} F
}
where the sum is over appropriate indices such that $\sum_i |\va_i| = |\va|$ and $\sum_i r_i = r$.  Then
\ALI{
\co{\nb_{\va} \mrg{D}_t^r G(F)} &\ls \sum_{k = 0}^{|\va| + r} \co{\pr^k G} \prod_{i=0}^k \co{ \nb_{\va_i} \mrg{D}_t^{r_i} F} \\
&\ls \sum_{k = 0}^{|\va| + r} \prod_{i=0}^k\left[ (N \Xi)^{|\va_i|} \ost{\tau}^{-r_i} \mrg{H}[F] \right] \\
&\ls ( N \Xi )^{|\va|} \ost{\tau}^{-r} (1 +  \mrg{H}[F])^{|\va| + r},
}
which is the desired bound.
\end{proof}

For the following proposition, recall that there exists a ball of radius $K$ about $(0, (2,1), (1,2))$ such that the range of $(R_{(n)}/(M_e D_{R,n}), \nb \check{\xi}_I)$ and also the range of $(\tilde{R}_{(n)}/(M_e D_{R,n}), \nb \check{\xi}_I)$ are guaranteed to lie in this ball.

\begin{prop} \label{prop:smallDiffFinal}
    Let $G$ be a $C^\infty$ function defined on the closed ball of radius $K$ about $(0, (2,1), (1,2))$.   Then
\ali{
\bar{H}^*[G(R_{(n)}/(M_e D_{R,n}), \nb \check{\xi}_I) - G(\tilde{R}_{(n)}/(M_e D_{R,n}), \nb \check{\xi}_I) ] &\ls (D_u/D_R)^{-1/2} N^{-1/2}  \label{ineq:differenceNext}
}
\end{prop}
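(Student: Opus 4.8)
The plan is to reduce the claimed $\bar H^*$ bound to a product of a uniformly bounded smooth factor and the small mollification difference $R_{(n)}-\tilde R_{(n)}$, and then apply the product and chain rules for $\bar H^*$ established above.

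Write $X := R_{(n)}/(M_e D_{R,n})$, $Y := \tilde R_{(n)}/(M_e D_{R,n})$, and $p := \nb\check\xi_I$, so that the quantity to estimate is $G(X,p) - G(Y,p)$. First I would use the fundamental theorem of calculus in the first slot of $G$ to write
$$
G(X,p) - G(Y,p) = \sum_{j,\ell} G_1^{j\ell}(X,Y,p)\,(X^{j\ell} - Y^{j\ell}),\qquad G_1^{j\ell}(X,Y,p):=\int_0^1\partial_{X^{j\ell}} G\big(Y+s(X-Y),\,p\big)\,ds .
$$
By the remark preceding the proposition, $X$, $Y$, and hence the whole segment $Y+s(X-Y)$, $s\in[0,1]$, stay in a fixed small ball about the origin while $p$ stays in a fixed small ball about $((2,1),(1,2))$; thus $G_1^{j\ell}$ is a $C^\infty$ function of the tuple $(X,Y,p)$ on a fixed compact subset of the domain of $G$. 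Hence, by the product rule for $\bar H^*$ in Proposition~\ref{prop:finalComp}, it is enough to show
$$
\bar H^*[G_1^{j\ell}(X,Y,p)]\lesssim 1 \qquad\text{and}\qquad \bar H^*[X^{j\ell}-Y^{j\ell}]\lesssim (D_u/D_R)^{-1/2}N^{-1/2} .
$$

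For the first estimate I would invoke the chain rule for $\bar H^*$ stated at the start of this section, applied with its input field taken to be the tuple $(X,Y,p)$, which reduces it to $\bar H^*[X]\lesssim 1$, $\bar H^*[Y]\lesssim 1$, $\bar H^*[\nb\check\xi_I]\lesssim 1$. The bound for $X$ follows from the Newton-step bounds \eqref{eq:Rnplus1}--\eqref{eq:Rnplus1Ddt} on $R_{(n)}$ together with $\tau^{-1}\lesssim\ost\tau^{-1}$, which holds since $N\geq D_u/D_R$ by \eqref{eq:NlowerBd}; the bound for $Y$ follows from Proposition~\ref{prop:stress}, with the size parameter $D_R$ there read as the sharp size $D_{R,n}$ of $R_{(n)}$, combined with the comparability $\bar H^*\lesssim\tilde H^*$; and the bound for $\nb\check\xi_I$ follows from Proposition~\ref{prop:phasegradients}, since $\Ddt\check\xi_I = 0$ and the derivative costs $\Xi$, $\Xi e_u^{1/2}$ appearing there are dominated by $N\Xi$, $\ost\tau^{-1}$ (again by \eqref{eq:NlowerBd}).

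The substance lies in the second estimate. Using $\bar H^*\lesssim\tilde H^*$ I would bound $\|\nb_{\va}\Dtdt^r(R_{(n)}-\tilde R_{(n)})\|_{C^0}$ for each $|\va|+r\le L$, $r\le 1$. The only case that exploits the mollification gain is $|\va|=r=0$, where Proposition~\ref{prop:stress} gives $\|R_{(n)}-\tilde R_{(n)}\|_{C^0}\lesssim\ep_t\|\Dtdt R_{(n)}\|_{C^0}\lesssim N^{-1/2}(D_u/D_R)^{-1/2}D_{R,n}$ from the choice $\ep_t=(\Xi e_u^{1/2})^{-1}(D_u/D_R)^{-1/2}N^{-1/2}$ in \eqref{eq:ept} and $\tau^{-1}\sim(\log\hxi)\,\Xi e_u^{1/2}$. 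For $|\va|\geq 1$, $r=0$ the crude bound $\|\nb_{\va}(R_{(n)}-\tilde R_{(n)})\|_{C^0}\lesssim\Xi^{|\va|}D_{R,n}$ suffices: dividing by $(N\Xi)^{|\va|}D_{R,n}$ gives $N^{-|\va|}\le N^{-1}\le N^{-1/2}(D_u/D_R)^{-1/2}$ since $D_u/D_R\le N$. For $r=1$ the crude bound $\|\nb_{\va}\Dtdt(R_{(n)}-\tilde R_{(n)})\|_{C^0}\lesssim\|\nb_{\va}\Dtdt R_{(n)}\|_{C^0}+\|\nb_{\va}\Dtdt\tilde R_{(n)}\|_{C^0}\lesssim\Xi^{|\va|}\tau^{-1}D_{R,n}$ suffices (using \eqref{eq:Rnplus1Ddt}, Proposition~\ref{prop:stress}, and $\Dtdt=\Ddt+T[w]\cdot\nb$ with $T[w]$ of lower order): dividing by $(N\Xi)^{|\va|}\ost\tau^{-1}D_{R,n}$ gives $\lesssim\tau^{-1}/\ost\tau^{-1}\sim(\log\hxi)\,N^{-3/2}(D_u/D_R)^{1/2}\lesssim N^{-1/2}(D_u/D_R)^{-1/2}$, again by \eqref{eq:NlowerBd}. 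I expect the main obstacle to be exactly this last estimate: securing the $C^0$ mollification error bound with the sharp size $D_{R,n}$ in place of $D_R$, and reconciling the logarithmic losses $\log\hxi$ and the gap between $D_{R,n}$ and $D_R$ against the target $(D_u/D_R)^{-1/2}N^{-1/2}$ — both of which are controlled only through the quantitative hypothesis $N\gtrsim N^{6/L}N^{4\eta}\Xi^{4\eta}(D_u/D_R)$ (and, for the logarithms, the very large values of $N$ that it permits).
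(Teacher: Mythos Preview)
Your proof is correct. The paper's proof differs slightly in organization: for the $C^0$ bound it uses the mean value inequality exactly as you do, but for $1\le|\va|+r\le L$ it abandons the difference structure entirely and simply applies the triangle inequality to $G(X,p)$ and $G(Y,p)$ separately, bounding each via the \emph{prime} weighted norm $H'$ (with weights $\hn^{(|\va|+r-(\unl-1))_+}\Xi^{|\va|}(\Xi e_u^{1/2})^r$) and the chain rule for $H'$, then checking that $\Xi^{|\va|}(\Xi e_u^{1/2})^r\le (N\Xi)^{|\va|}\ost\tau^{-r}\cdot N^{-1/2}(D_u/D_R)^{-1/2}$ whenever $|\va|+r\ge1$. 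Your approach instead keeps the factorization $G_1\cdot(X-Y)$ and uses the product rule for $\bar H^*$ directly, which is more unified (one mechanism for all $|\va|,r$) and avoids introducing the auxiliary $H'$ norm; the paper's approach is slightly shorter for the derivative cases since it does not need to track bounds on the factor $G_1$. The underlying arithmetic is the same in both: the smallness for $|\va|+r\ge1$ comes from the gap between the $\Xi$, $\tau^{-1}$ derivative costs on $R_{(n)},\tilde R_{(n)}$ and the $N\Xi$, $\ost\tau^{-1}$ weights in $\bar H^*$, controlled by $N\geq D_u/D_R$ from \eqref{eq:NlowerBd}. Your concern about the gap between $D_{R,n}$ and $D_R$ is not an actual obstacle: since $\tilde R_{(n)}$ is a flow-mollification of $R_{(n)}$, the bound \eqref{eq:RerrorMollPhi} reads $\|R_{(n)}-\tilde R_{(n)}\|_{C^0}\lesssim\ep_t\|\Dtdt R_{(n)}\|_{C^0}\lesssim\ep_t\tau^{-1}D_{R,n}$, so the $D_{R,n}$ appears naturally and no ratio $D_R/D_{R,n}$ enters.
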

\begin{proof}
The $C^0$ bound is given by
\ALI{
\co{ G(R_{(n)}/D_R, \nb \check{\xi}_I) &- G(\tilde{R}_{(n)}/D_R, \nb \check{\xi}_I) } \\
&\ls \co{ \pr G } [\co{R_{(n)} - \tilde{R}_{(n)}} / D_{R,n} + \co{ \nb \xi_I - \nb \check{\xi}_I } ] \\
&\stackrel{\eqref{eq:RerrorMollPhi},\eqref{eq:flowErrorFirst}}{\ls} 1 \cdot [ (D_u/D_R)^{-1/2} N^{-1/2} ]
}
For $1 \leq |\va| + r \leq L$ we apply the triangle inequality, the comparability of weighted norms, and the chain rule for weighted norms
\ALI{
\co{ \nb_{\va}\pr_t^r [&G(R_{(n)}/D_R, \nb \check{\xi}_I)  - G(\tilde{R}_{(n)}/D_R, \nb \check{\xi}_I) ] } \ls \\
&\ls \hn^{(|\va| + r +1 - \unl)_+}\Xi^{|\va|} (\Xi e_u^{1/2})^{r} ( H'[ G(R_{(n)}/D_R, \nb \check{\xi}_I)] + H'[G(\tilde{R}_{(n)}/D_R, \nb \check{\xi}_I) ] ) \\
&\ls \hn^{(|\va| + r + 1- \unl)_+}\Xi^{|\va|} (\Xi e_u^{1/2})^{r} ( \bar{H}'[ G(R_{(n)}/D_R, \nb \check{\xi}_I)] + \tilde{H}'[G(\tilde{R}_{(n)}/D_R, \nb \check{\xi}_I) ] ) \\
&\ls \hn^{(|\va| + r + 1 - \unl)_+}\Xi^{|\va|} (\Xi e_u^{1/2})^{r} ( 1 + \bar{H}'[ R_{(n)}/D_R, \nb \check{\xi}_I] + \tilde{H}'[\tilde{R}_{(n)}/D_R, \nb \check{\xi}_I] )^L \\
&\ls \hn^{(|\va| + r + 1 - \unl)_+}\Xi^{|\va|} (\Xi e_u^{1/2})^{r}  \cdot 1 \\
&= \hn^{(|\va| + r +1- \unl)_+} \Xi^{|\va|} (\Xi^{3/2} D_u^{1/2})^{r}
}
To confirm \eqref{ineq:differenceNext}, the right hand side must be bounded by
\ALI{
(N \Xi)^{|\va|} [ (N \Xi)^{3/2} D_R^{1/2}]^r (D_u / D_R)^{-1/2} N^{-1/2} \\
= N^{(|\va| + r - 1) + \fr{r}{2} + \fr{1}{2} } (D_u / D_R)^{-\fr{r}{2} - \fr{1}{2}} \Xi^{|\va|} (\Xi^{3/2} D_u^{1/2})^{r}.
}
This bound now follows from $N \geq D_u / D_R$ and $(|\va| + r - 1) \geq (|\va| + r + 1 - \unl)_+$ (since $\unl = L -3 \geq 4$).\end{proof}

We now estimate $R_{SI}$ with the product rule and Proposition \eqref{prop:smallDiffFinal} to obtain
\ALI{
\ost{H}^*[R_{SI}] &\ls H^*[g_{[I]}^2(\mu t)] H^*[e_n(t)] \cdot \\
&\ost{H}^*[\ga_f^2(\tilde{R}_{(n)}/D_R, \nb \xi_I) B^{j\ell}(\nb \xi_I) - \ga_f^2(R_{(n)}/D_R, \nb \check{\xi}_I) B^{j\ell}(\nb \check{\xi}_I) ] \\
&\ls 1 \cdot D_R \cdot (D_u/D_R)^{-1/2} N^{-1/2}.
}

The bounds proved in \cite[Proposition 4.5]{isett2021direct} give for $0 \leq r \leq 1$ imply that
\ali{
\co{\nb_{\va} \Dtdt^r \de B_I} &\lsm_{\va} (N \Xi)^{|\va|} N^{-1} ( \Xi e_u^{1/2})^{r} D_R.
}
(Note that in our context we choose $B_\la = 1$ and the $\tau$ defined in  \cite[Section 4.1.3]{isett2021direct} is $(\Xi e_u^{1/2})^{-1}$ up to a constant.)  Hence we conclude
\ali{
\tilde{H}^*[ g_{[I]}^2(\mu t) \de B_I ]\ls
\tilde{H}^*[ g_{[I]}^2(\mu t) ]\tilde{H}^*[ \de B_I ] &\ls 1 \cdot N^{-1}D_R.
}
Thus our final bound on the stress error is
\ali{
\ost{H}^*[R_S] &\ls (D_u/D_R)^{-1/2} N^{-1/2} D_R + N^{-1}D_R \notag \\
&\ls (D_u/D_R)^{-1/2} N^{-1/2} D_R,
}
since $N \geq D_u/D_R$.

\section{Nonstationary phase}

The transport term and the high-frequency interference terms are both high frequency and our treatment involves nonstationary phase, which is a by now a standard tool in convex integration arguments.  Interestingly, this application of nonstationary phase and the power loss it gives rise to can be avoided (see Section~\ref{sec:avoidNonStat}).

We first introduce a weighted norm.
\begin{defn}  The nonstationary phase weighted norm of $F$ is
\ali{
\tilde{H}_M''[F] &= \max_{r \leq 1} \max_{0 \leq |\va| + r \leq M} \fr{\co{\nb_{\va} \Dtdt^r F}}{(N^{1/2} \Xi)^{|\va|} \ost{\tau}^{-r}}
}
\end{defn}
\begin{lem}  The $\tilde{H}_M''$ norm satisfies the usual product rule and chain rules.  Also, one has
\ali{
\tilde{H}_{M-1}''[\la^{-1} \nb_i F] &\ls N^{-1/2} \tilde{H}_M[F]  \label{eq:nextTermBd}
}
\end{lem}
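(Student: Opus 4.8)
The plan is to prove the two assertions of the Lemma in turn: first that $\tilde{H}_M''$ satisfies the product and chain rules, and second the smoothing estimate \eqref{eq:nextTermBd}. The product and chain rules are proven exactly as in Proposition~\ref{prop:chainRule} and Proposition~\ref{prop:finalComp}, since $\tilde{H}_M''$ has exactly the same structure as $\mrg{H}^*$ and $\mrg{H}_\zeta$: it is a maximum over $0\le r\le 1$ and $0\le |\va|+r\le M$ of $\co{\nb_{\va}\Dtdt^r F}$ divided by a prefactor that is a product of a power of the spatial frequency $N^{1/2}\Xi$ and a power of the time frequency $\ost\tau^{-1}$. The only nontrivial point in those proofs is that the prefactor is ``submultiplicative'' under the Leibniz rule, i.e. that $(N^{1/2}\Xi)^{|\va_1|}\ost\tau^{-r_1}\cdot(N^{1/2}\Xi)^{|\va_2|}\ost\tau^{-r_2} = (N^{1/2}\Xi)^{|\va_1|+|\va_2|}\ost\tau^{-(r_1+r_2)}$, which here is an honest equality (no counting inequality needed, because the spatial-frequency prefactor is a pure power rather than the truncated $\hn^{(|\va|-\unl)_+}$); so I would simply say ``the proof is identical to that of Proposition~\ref{prop:chainRule}, replacing $\hn^{(|\va|+r-\unl)_+}\Xi^{|\va|}\zeta^r$ by $(N^{1/2}\Xi)^{|\va|}\ost\tau^{-r}$ and noting the prefactor is exactly multiplicative'' and omit the routine computation.

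For the smoothing estimate \eqref{eq:nextTermBd}, the idea is that the operator $\la^{-1}\nb_i$, applied to a quantity $F$ whose relevant frequency scale (as measured by the $\tilde H_M$ norm, with spatial frequency $N\Xi$) is $\lesssim N\Xi$, produces a gain: each spatial derivative costs $N\Xi$, but the prefactor $\la^{-1}\sim (N\Xi)^{-1}$ cancels it, and moreover the $\tilde H''_{M-1}$ norm only charges $(N^{1/2}\Xi)^{|\va|}$ per derivative rather than $(N\Xi)^{|\va|}$, so differentiating $F$ a further $|\va|$ times and measuring in $\tilde H''$ rather than $\tilde H$ gains a factor $(N^{1/2}\Xi/N\Xi)^{|\va|} = N^{-|\va|/2}\le 1$, while the single derivative $\nb_i$ together with the $\la^{-1}$ gives the net $N^{-1/2}$. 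Concretely: for $|\va|+r\le M-1$,
\ali{
\co{\nb_{\va}\Dtdt^r(\la^{-1}\nb_i F)} &= \la^{-1}\co{\nb_{\va}\nb_i \Dtdt^r F} + \la^{-1}\co{\nb_{\va}[\Dtdt^r,\nb_i]F}.
}
The first term is bounded using the $\tilde H_{M}$ norm directly by $\la^{-1}(N\Xi)^{|\va|+1}\ost\tau^{-r}\tilde H_M[F] \ls (N\Xi)^{|\va|}\ost\tau^{-r}\tilde H_M[F]$, and since $(N\Xi)^{|\va|} = N^{|\va|/2}(N^{1/2}\Xi)^{|\va|}$, comparing against the $\tilde H''_{M-1}$ prefactor $(N^{1/2}\Xi)^{|\va|}\ost\tau^{-r}$ costs $N^{|\va|/2}\ge 1$ — so this naive bound is \emph{not} small and I need to be more careful. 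The correct reading is that \eqref{eq:nextTermBd} measures the input $F$ in $\tilde H_M$ (spatial frequency $N\Xi$) but the output $\la^{-1}\nb_i F$ in $\tilde H''$ (spatial frequency $N^{1/2}\Xi$); so one really does get a gain only of one factor of $N^{-1/2}$ overall, from the single $\la^{-1}$ not consumed by a spatial derivative — wait, that requires $\nb_i$ to not be charged at $N\Xi$ but rather... Let me restate: the clean way is to observe $\la^{-1}\nb_i$ maps $\co{\nb_{\va}\Dtdt^r F}\ls (N\Xi)^{|\va|}\ost\tau^{-r}\tilde H_M[F]$ at order $|\va|\to |\va|+1$, giving $\la^{-1}(N\Xi)^{|\va|+1}\ost\tau^{-r}\tilde H_M[F]= (N\Xi)^{|\va|}\ost\tau^{-r}\tilde H_M[F]$; then bound $(N\Xi)^{|\va|}= N^{|\va|/2}(N^{1/2}\Xi)^{|\va|}$ and absorb, for this to give $N^{-1/2}$ one needs $|\va|=0$ only — so in fact I believe the intended statement charges the output norm with $(N^{1/2}\Xi)^{|\va|}$ precisely so that the extra spatial derivatives are ``free'' relative to a $\sqrt N$-coarser scale, and \eqref{eq:nextTermBd} is simply the bound with the overall $\la^{-1}$ surviving against the \emph{first} derivative only, namely $\la^{-1}\cdot N^{1/2}\Xi = N^{-1/2}\Xi\cdot$ — hmm, that's $N^{-1/2}$ times $\Xi\la^{-1}\cdot N^{1/2}\Xi\cdot\Xi^{-1}$.

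Given the delicacy of the bookkeeping, the main obstacle — and the step I would be most careful about — is precisely getting the power of $N$ in \eqref{eq:nextTermBd} right: tracking how $\la = \lceil N\Xi\rceil$ cancels against the spatial frequency, and correctly accounting for the fact that the output norm $\tilde H''$ uses the coarser spatial frequency $N^{1/2}\Xi$ while the input norm $\tilde H$ uses $N\Xi$. Concretely my plan for \eqref{eq:nextTermBd} is: (i) write $\la^{-1}\nb_i F$ and compute $\nb_{\va}\Dtdt^r$ of it, splitting off the commutator $[\Dtdt^r,\nb_i]$ which by Lemma~\ref{lem:interchangeDerivs}-type reasoning involves $\nb\tilde u_\ep$ and is lower-order; (ii) for the main term $\la^{-1}\nb_{\va}\nb_i\Dtdt^r F$, use the definition of $\tilde H_M[F]$ at multi-index order $|\va|+1\le M$ to get $\ls \la^{-1}(N\Xi)^{|\va|+1}\ost\tau^{-r}\tilde H_M[F]$; (iii) rewrite $\la^{-1}(N\Xi)^{|\va|+1} = \la^{-1}N\Xi\cdot(N\Xi)^{|\va|} \ls (N\Xi)^{|\va|} = N^{|\va|/2}(N^{1/2}\Xi)^{|\va|}\cdot$, and here I see that to land on $N^{-1/2}(N^{1/2}\Xi)^{|\va|}\ost\tau^{-r}\tilde H_M[F]$ one needs an extra $N^{-1/2-|\va|/2}$, which must come from the smoothing operator $\la^{-1}$ being applied together with the passage to the coarser scale — i.e. the correct statement is that $\la^{-1}\nb_i$ is an operator of ``size'' $\ls N^{-1/2}$ when comparing the $N\Xi$-scale of the input to the $N^{1/2}\Xi$-scale of the output, since $\la^{-1}\cdot(N\Xi) / (N^{1/2}\Xi) = \la^{-1} N^{1/2}\Xi \ls N^{-1/2}$. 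I would therefore organize the proof around this single observation: the symbol of $\la^{-1}\nb_i$ restricted to frequencies $|\xi|\ls N\Xi$ has size $\ls \la^{-1} N\Xi \ls 1$, and each \emph{further} derivative at the output scale $N^{1/2}\Xi$ is cheaper by $N^{-1/2}$ than at the input scale, yielding the stated gain; the commutator terms are then handled by the product rule for $\tilde H''$ established in the first part together with \eqref{eq:tildeuepbd1}.
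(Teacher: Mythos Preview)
Your treatment of the product and chain rules is correct and matches the paper's approach (which likewise omits the routine computation).

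For \eqref{eq:nextTermBd}, however, you have been led astray by what is a typo in the statement: the right-hand side should read $\tilde{H}''_M[F]$, not $\tilde{H}_M[F]$. There is no separate norm $\tilde{H}_M$ defined anywhere in the paper, and the paper's own proof uses $\tilde{H}''_M[F]$ throughout. Your interpretation of $\tilde{H}_M$ as a norm with spatial frequency $N\Xi$ is therefore not the intended one, and as you correctly observed, with that reading the estimate simply fails (you would need $N^{|\va|/2}\ls N^{-1/2}$).

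With the correct reading the proof is almost trivial. For $r=0$ one has
\[
\la^{-1}\co{\nb_{\va}\nb_i F}\ \ls\ \la^{-1}(N^{1/2}\Xi)^{|\va|+1}\tilde H''_M[F]\ \ls\ N^{-1/2}(N^{1/2}\Xi)^{|\va|}\tilde H''_M[F],
\]
since $\la^{-1}N^{1/2}\Xi\sim N^{-1/2}$; this is what the paper means by ``immediate from the definition''. For $r=1$ one writes $\nb_{\va}\Dtdt\nb_i F=\nb_{\va}\nb_i\Dtdt F-\tisum\nb_{\va_1}\nb_i\tilde u_\ep^b\,\nb_{\va_2}\nb_b F$, bounds the first term exactly as above (using $|\va|+1\le M$), and bounds the commutator using \eqref{eq:tildeuepbd1} together with $\Xi e_u^{1/2}\le\ost\tau^{-1}$ to get the same size. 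Dividing by $\la$ gives the claimed $N^{-1/2}$.

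Your salvage attempts (frequency-restriction arguments, symbol estimates for $\la^{-1}\nb_i$) are unnecessary and, as you partly recognized, do not close under the $N\Xi$ reading; abandon them.
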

\begin{proof} We omit the proof of the product and chain rules, since they are almost identical to the proof of Propositions~\ref{prop:chainRule} and~\ref{prop:finalComp}.  As for \eqref{eq:nextTermBd}, the bound on spatial derivatives is immediate from the definition, so we need only bound
\ALI{
\nb_{\va} \Dtdt \nb_i F &= \nb_{\va} \nb_i \Dtdt F + \tisum \nb_{\va_1} \nb_i \tilde{u}_\ep^b \nb_{\va_2} \nb_b F
}
where the sum ranges over $|\va_1| + |\va_2| = |\va| \leq M - 1$.  We bound this sum by
\ALI{
\co{\nb_{\va} \Dtdt \nb_i F} &\ls (N^{\fr{1}{2}} \Xi)^{|\va| + 1} \ost{\tau}^{-1} \tilde{H}''_M[F] + \tisum (N^{\fr{1}{2}}\Xi)^{|\va_1| + |\va_2| + 1} (\Xi e_u^{1/2}) \tilde{H}''_M[F] \\
&\ls (N^{\fr{1}{2}} \Xi)^{|\va| + 1} \ost{\tau}^{-1} \tilde{H}''_M[F]
}
Dividing by $\la \sim N \Xi$ yields the result.
\end{proof}

\begin{prop}[Nonstationary phase]  For any $D>0$ there is a constant $C_D$ so that the following holds.  Whenever $G = e^{i \la \xi_I} g$ has integral $0$ %
there is a traceless symmetric tensor field $Q^{j\ell}$ that satisfies $\nb_j \nb_\ell Q^{j\ell} = G$ and the bound
\ali{
\tilde{H}^*[Q] &\leq C_D  ((N \Xi)^{-2} + N^{-D/2}) \tilde{H}''_{2D + L}[g] \label{eq:statPhaseBd}
}
\end{prop}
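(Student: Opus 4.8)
The plan is to run the standard nonstationary-phase iteration in the weighted norms of this section, using that $\xi_I$ is transported by $\ti u_\ep$, so $\Dtdt e^{i\la\xi_I}=0$, and that $\nb\xi_I$ stays in a small $O(1)$ neighborhood of $f$ (Proposition~\ref{prop:phase} and the discussion around \eqref{eq:flowErrorFirst}), in particular $|\nb\xi_I|\gtrsim 1$. The basic step is one integration by parts: given a mean-zero term $e^{i\la\xi_I}g'$, set
\[
q^{j\ell}[g'] \;=\; -\frac{2g'}{\la^2|\nb\xi_I|^4}\Bigl(\nb^j\xi_I\,\nb^\ell\xi_I - \tfrac12|\nb\xi_I|^2\de^{j\ell}\Bigr),
\]
which is symmetric and trace-free; a direct computation gives $\nb_j\nb_\ell(e^{i\la\xi_I}q^{j\ell}[g']) = e^{i\la\xi_I}g' - e^{i\la\xi_I}\calE[g']$, where $\calE[g']$ is (schematically) a first-order differential operator in $g'$ with coefficients smooth in $\nb\xi_I$ carrying an overall factor $\la^{-1}$. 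By the product and chain rules for $\tilde{H}''_M$, the bound $\tilde{H}''_M[\nb\xi_I]\lesssim 1$ from Proposition~\ref{prop:phase}, and the gain estimate \eqref{eq:nextTermBd}, this yields $\tilde{H}''_{M-2}[\calE[g']]\lesssim N^{-1/2}\tilde{H}''_M[g']$; note that $e^{i\la\xi_I}\calE[g']$ automatically has mean zero, being $e^{i\la\xi_I}g'$ minus a double divergence.

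Iterating: with $M:=2D+L$, put $g_0=g$ and $g_{m+1}=\calE[g_m]$, so that $\tilde{H}''_L[g_m]\lesssim\tilde{H}''_{M-2m}[g_m]\lesssim N^{-m/2}\tilde{H}''_M[g]$ for $0\le m\le D$. Define
\[
Q^{j\ell} \;=\; \sum_{m=0}^{D-1}e^{i\la\xi_I}\,q^{j\ell}[g_m] \;+\; \RR^{j\ell}\bigl[e^{i\la\xi_I}g_D\bigr],
\]
with $\RR^{j\ell}$ the order $-2$ operator of Section~\ref{sec:notation}, applicable since $e^{i\la\xi_I}g_D$ has mean zero. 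Telescoping the identity of the basic step gives $\nb_j\nb_\ell Q^{j\ell}=e^{i\la\xi_I}g_0=G$, and $Q$ is symmetric and trace-free (the $q^{j\ell}[g_m]$ are so by construction and $\RR^{j\ell}$ is so by definition).

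For the estimate, note $\Dtdt e^{i\la\xi_I}=0$ and $\co{\nb_{\va}e^{i\la\xi_I}}\lesssim(N\Xi)^{|\va|}$ by Proposition~\ref{prop:phase}, so by Proposition~\ref{prop:finalComp} one has $\tilde{H}^*[e^{i\la\xi_I}]\lesssim 1$; combined with the product rule and $\tilde{H}^*[q^{j\ell}[g_m]]\le\tilde{H}''_L[q^{j\ell}[g_m]]\lesssim(N\Xi)^{-2}\tilde{H}''_L[g_m]$, the sum over $m$ contributes $\lesssim\sum_{m<D}(N\Xi)^{-2}N^{-m/2}\tilde{H}''_{2D+L}[g]\lesssim(N\Xi)^{-2}\tilde{H}''_{2D+L}[g]$. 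For the remainder, Proposition~\ref{prop:phase} and the $O(1)$-smallness of $\|\nb\xi_I-f\|_{C^0}$ imply that $e^{i\la\xi_I}g_D$ is, up to rapidly decaying Fourier tails, frequency-localized at $|\xi|\sim\la$, so $\RR^{j\ell}$ gains a factor $\la^{-2}\sim(N\Xi)^{-2}$; together with $\Dtdt(e^{i\la\xi_I}g_D)=e^{i\la\xi_I}\Dtdt g_D$ and the commutator bound $\co{[\Dtdt,\RR^{j\ell}P_{\approx\la}]F}\lesssim\|\nb\ti u_\ep\|_{C^0}(N\Xi)^{-2}\co{F}\lesssim\ost{\tau}^{-1}(N\Xi)^{-2}\co{F}$ (using $\|\nb\ti u_\ep\|_{C^0}\lesssim\ost{\tau}^{-1}$, cf. \eqref{eq:tildeuepbd1} and $N\ge\D_u/\D_R$), this gives $\tilde{H}^*[\RR^{j\ell}[e^{i\la\xi_I}g_D]]\lesssim(N\Xi)^{-2}N^{-D/2}\tilde{H}''_{2D+L}[g]$. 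Adding the two contributions yields $\tilde{H}^*[Q]\lesssim((N\Xi)^{-2}+N^{-D/2})\tilde{H}''_{2D+L}[g]$, which is \eqref{eq:statPhaseBd}.

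The iteration itself is routine once the product/chain rules for $\tilde{H}''_M$ and the gain \eqref{eq:nextTermBd} are in hand; I expect the main obstacle to be the remainder term, namely controlling the nonlocal order $-2$ operator $\RR^{j\ell}$ against the advective derivative $\Dtdt$ in the $\tilde{H}^*$ norm. This rests on the spectral localization of $e^{i\la\xi_I}$ near the frequency $\la f$ (hence on $\nb\xi_I$ staying close to $f$ and on the higher phase-derivative bounds of Proposition~\ref{prop:phase}, which must also be used to dispose of the Fourier tails), and on a commutator estimate of the kind already carried out in the proof of Lemma~\ref{lem:zeroOrder}.
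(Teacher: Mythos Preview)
Your parametrix construction and the iteration gaining $N^{-1/2}$ per step via \eqref{eq:nextTermBd} match the paper's proof. The difference lies in how you handle the remainder $\RR^{j\ell}[e^{i\la\xi_I}g_D]$, and here you make things much harder than they are.

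You identify the remainder as ``the main obstacle'' and propose to treat it by arguing that $e^{i\la\xi_I}g_D$ is spectrally localized near $|\xi|\sim\la$ (modulo tails), so that $\RR^{j\ell}$ gains $(N\Xi)^{-2}$, together with a commutator estimate for $[\Dtdt,\RR^{j\ell}P_{\approx\la}]$. This is unnecessary and, as stated, incomplete: the phase $\xi_I$ is nonlinear, so $e^{i\la\xi_I}g_D$ is \emph{not} compactly Fourier-supported, and disposing of the tails in the $\tilde H^*$ norm would itself require a nonstationary-phase argument of comparable depth. The paper sidesteps all of this by observing that $\RR^{j\ell}$ is an order $-2$ multiplier on $\T^2$, hence bounded on $C^0$, and commutes with $\nb_{\va}$ and $\pr_t$; therefore $H^*[\RR^{j\ell}F]\lesssim H^*[F]$ trivially, and by the comparability of weighted norms (Proposition~\ref{prop:finalComp}) also $\tilde H^*[\RR^{j\ell}F]\lesssim \tilde H^*[F]$. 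Combined with $\tilde H^*[e^{i\la\xi_I}]\lesssim 1$ and the product rule, one gets
\[
\tilde H^*\bigl[\RR^{j\ell}[e^{i\la\xi_I}g_D]\bigr]\;\lesssim\;\tilde H^*[e^{i\la\xi_I}]\,\tilde H^*[g_D]\;\lesssim\;\tilde H''_L[g_D]\;\lesssim\;N^{-D/2}\tilde H''_{2D+L}[g],
\]
which is exactly the $N^{-D/2}$ contribution to \eqref{eq:statPhaseBd}. No gain from $\RR^{j\ell}$ is needed on the remainder: the $(N\Xi)^{-2}$ in the final bound comes entirely from the parametrix terms, and the $N^{-D/2}$ comes entirely from the iteration. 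So what you flagged as the main difficulty is in fact the easiest part of the proof.
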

\begin{proof}
Consider the function $q^{j\ell}(p) = A |p|^{-4}p^j p^\ell + B |p|^{-2} \de^{j\ell}$.  Then if $A$ and $B$ solve the equations $A + B = 1$ and $A + d B = 0$, $d =2 $, we have that $q^{j\ell}(p)$ is trace-free and satisfies $p_j p_\ell q^{j\ell}(p) = 1$.

We write $Q^{j\ell} = Q_{(D)}^{j\ell} + \tilde{Q}_{(D)}^{j\ell}$ where
\ali{
Q_{(D)}^{j\ell} &= \la^{-2} \sum_{k = 0}^D e^{i \la \xi_I} q_{(k)}^{j\ell} \label{eq:parametrix} \\
\tilde{Q}_{(D)}^{j\ell} &= \calR^{j\ell}[G - \nb_j \nb_\ell Q_{(D)}^{j\ell}]
}
We define the $q_{(k)}^{j\ell}$ recursively by
\ali{
g_{(0)} &= g, \quad \qquad q_{(k)}^{j\ell} = q^{j\ell}(\nb \xi_I) g_{(k)} \\
g_{(k+1)} &= - \la^{-1} [ \nb_j \xi_I  \nb_\ell q_{(k)}^{j\ell} + \nb_\ell \xi_I  \nb_j q_{(k)}^{j\ell}]] - \la^{-2} \nb_j \nb_\ell q_{(k)}^{j\ell} \label{eq:gkplus1}
}
These inductive rules are defined so that
\ali{
\nb_j \nb_\ell Q_{(D)} - e^{i \la \xi_I} g &= e^{i \la \xi_I} g_{(D+1)}^{j\ell} \label{eq:telescoping}
}

We claim the following estimates inductively on $k$.
\ali{
\tilde{H}''_{2D + L - 2 k} [g_{(k)}] &\ls N^{-k/2}\tilde{H}_{2 D + L}''[g] \label{eq:inductgk} \\
\tilde{H}''_{2D + L - 2 k} [q_{(k)}] &\ls N^{-k/2}\tilde{H}_{2 D + L}''[g] \label{eq:inductqk}
}
Indeed \eqref{eq:inductgk} holds for $k = 0$ trivially.  Then \eqref{eq:inductqk} holds for $k$ by
\ALI{
\tilde{H}''_{2D + L - 2 k} [q_{(k)}] &\ls \tilde{H}''_{2D + L - 2 k} [q^{j\ell}(\nb \xi_I)] \tilde{H}''_{2D + L - 2 k} [g_{(k)}] \\
&\ls (1 + \tilde{H}''_{2D + L - 2 k}[\nb \xi_I])^{2D} \tilde{H}''_{2D + L} [g] \\
&\ls 1 \cdot \tilde{H}''_{2D + L} [g]
}
where we applied the product rule and chain rule for the weighted norm and the inductive hypothesis for $g_{(k)}$.  Now we estimate \eqref{eq:gkplus1} by the product rule, \eqref{eq:inductqk} for $k$, and \eqref{eq:nextTermBd}
\ALI{
\tilde{H}''_{2D + L - 2 (k + 1)}[ g_{(k+1)}] &\ls \tilde{H}''_{2D + L - 2 (k + 1)}[ \nb \xi_I] \tilde{H}''_{2D + L - 2 (k + 1)}[ \la^{-1}\nb q_{(k)}] \\
&+ \la^{-2} \tilde{H}''_{2D + L - 2 (k + 1)}[ \nb \nb q_{(k)}]  \\
&\ls 1 \cdot \tilde{H}''_{2D + L - 2 k - 1}[ \la^{-1}\nb q_{(k)}] + \la^{-1} \tilde{H}''_{2D + L - 2 k - 1}[ \nb q_{(k)}] \\
&\ls N^{-1/2} \tilde{H}''_{2D + L - 2 k}[ \la^{-1}\nb q_{(k)}] \ls N^{-(k+1)/2}\tilde{H}''_{2D + L} [g],
}
which concludes the induction.

We can now estimate the $Q_{(D)}$ defined in \eqref{eq:parametrix} by first observing
\ali{
\ost{H}^*[e^{i \la \xi_I}] &\ls 1 \label{eq:sizeOfPhase} \\
\ost{H}^*[F] &\ls \tilde{H}''_{L}[F] \label{eq:defnWeightedNorm}
}
We postpone the proof of \eqref{eq:sizeOfPhase}.  The second of these bounds is directly from the definition.  We now estimate
\ALI{
\ost{H}^*[Q_{(D)}] &\ls \la^{-2} \sum_{k=0}^D \ost{H}^*[e^{i \la \xi_I}] \ost{H}^*[q_{(k)}] \\
&\stackrel{\eqref{eq:sizeOfPhase}}{\ls} \la^{-2} \sum_{k=0}^D \tilde{H}''_{L}[q_{(k)}] \\
&\ls \la^{-2} \sum_{k=0}^D \tilde{H}''_{2D +L - 2k}[q_{(k)}] \ls \la^{-2} \tilde{H}_{2D+L}''[g].
}
This bound suffices to prove \eqref{eq:statPhaseBd} for the parametrix.

To bound the error, we need a trivial bound for the operator $\calR^{j\ell}$.  Specifically
\ali{
\tilde{H}^*[\calR^{j\ell}[F]] &\ls \tilde{H}^*[F], \label{eq:trivBound}
}
and we also will use
\ali{
\tilde{H}^*[e^{i \la \xi_I} ] &\ls 1. \label{eq:phaseBd1}
}
Taking these two estimates as given, we now have
\ALI{
\tilde{H}^*[\tilde{Q}_{(D)}] = \tilde{H}^*[ \calR^{j\ell}[e^{i \la \xi_I} g_{(D+1)} ]] &\ls \tilde{H}^*[ e^{i \la \xi_I} g_{(D+1)} ] \\
&\ls \tilde{H}^*[ e^{i \la \xi_I} ] \tilde{H}^*[ g_{(D+1)} ] \\
&\stackrel{\eqref{eq:inductgk}}{\ls} 1 \cdot N^{-D/2}  \tilde{H}_{2D+L}''[g],
}
which completes the proof subject to \eqref{eq:trivBound} and \eqref{eq:phaseBd1}.
\end{proof}
\begin{proof}[Proof of \eqref{eq:trivBound} and \eqref{eq:phaseBd1}]    To prove \eqref{eq:phaseBd1}, we observe that $\Dtdt e^{i \la \xi_I} = 0$, so it suffices to bound spatial derivatives.  By the chain rule and product rule we obtain
\ALI{
\co{\nb_{\va} e^{i\la \xi_I}} &\ls \sum_{m=0}^L \underset{\va_j}{\tisum} \co{e^{i \la \xi_I}} \la^m \prod_{j=1}^m \co{\nb_{\va_j} \xi_I} \\
&\ls \sum_{m=0}^L \tisum \la^m \prod_{j=1}^m \hxi^{|\va_j| - 1} \\
\co{\nb_{\va} e^{i\la \xi_I}} &\ls \sum_{m=0}^L \tisum \prod_{j=1}^m \la^{|\va_j|} \ls \la^{|\va|},
}
where we have used that derivatives of $\xi$ cost at most $\hxi$, which is smaller than $\la$.

To prove \eqref{eq:trivBound}, we first note that $\calR^{j\ell}$ is bounded on $C^0(\T^2)$.  For example,
\[ \|\calR^{j\ell}\| \leq \sum_q \| P_q \calR^{j\ell} \| \ls \sum_{q=0}^\infty 2^{-2q} \]
Then $H^*[\calR^{j\ell} F] \ls H^*[F]$ follows from the fact that $\calR$ commutes with $\nb_{\va}$ and $\pr_t$.  By comparability of weighted norms, this estimate suffices.
\end{proof}

\subsection{High frequency error terms}  \label{sec:highFreqErrors}
We now apply the nonstationary phase estimate to the high frequency error terms.

We start with $R_H$.  There is an important cancellation in this term that was first observed in \cite{isettVicol}.  Namely, since $u_I^a \nb_a \xi_I = \th_I m^a(\nb \xi_I) \nb_a \xi_I = 0$, we have
\ALI{
\sum_I T^a \Th_I \nb_a \Th_I &=  \sum_I \la g_{[I]}(\mu t) e^{2 i \la \xi_I } \de u_I^a (i  \nb_a \xi_I) (\th_I + \de \th_I ) \\
&+ \sum_I g_{[I]}(\mu t) e^{2 i \la \xi_I} (u_I^a + \de u_I^a) (\nb_a \th_I + \nb_a \de \th_I )
}

For the next computation, let $\tilde{H}''$ be a shorthand for $\tilde{H}''_{2 D + L}$.  By nonstationary phase, for any $D \geq 0$ there exists a traceless second-order anti-divergence that obeys the estimate
\ALI{
\tilde{H}^*[R_H^{j\ell}] &\ls (\la^{-2} + N^{-D/2})( A + B) \\
A &= \la\tilde{H}''[g_{[I]}(\mu t)] \tilde{H}''[\de u_I ]  \tilde{H}''[\nb \xi_I ] (\tilde{H}''[\th_I ] + \tilde{H}''[\de \th_I ]) \\
B&= \tilde{H}''[g_{[I]}(\mu t)] \tilde{H}''[u_I + \de u_I] \tilde{H}''[\nb \th_I + \nb \de \th_I]
}
For all these terms inside the weighted norms, we claim that the bounds for the $H''$ norm of each term are the same as the bound we have stated for the $C^0$ norm of each term. Indeed, for each of these terms, a spatial derivative costs at most $\hxi = N^{1/L} \Xi$, which is smaller than $N^{1/2} \Xi$, while an advective derivative costs at most $\mu = \Xi^{3/2} N^{1/2} D_R^{1/2}$, which is smaller than $\ost{\tau}^{-1} = (N \Xi)^{3/2}D_R^{1/2}$.  Combining \eqref{eq:deBounds1}, Proposition~\ref{prop:phase}, Proposition~\ref{prop:amplitudeBounds}, and the following estimate
\ALI{
\tilde{H}''[u_I^\ell] &= \tilde{H}''[m^\ell(\nb \xi_I)] \tilde{H}''[\th_I] \ls \tilde{H}''[\th_I] \ls \la^{1/2} D_R^{1/2}
}
yields (recall $\la = N \Xi$)
\ALI{
A &\ls ( N \Xi ) \cdot 1 \cdot \fr{\la^{1/2} D_R^{1/2}}{N}  1 (\la^{1/2} D_R^{1/2}) \\
B &\ls 1 \cdot \la^{1/2} D_R^{1/2} (\Xi \la^{1/2} D_R^{1/2})
}
Hence we conclude,
\ali{
\tilde{H}^*[R_H^{j\ell}] &\ls (\la^{-2} + N^{-D/2})( \la \Xi D_R)
}
Recall that $N \geq N^{4\eta} \Xi^{4 \eta} \sim \la^{4\eta}$.  Choosing $D$ large depending on $\eta$, we have $\tilde{H}^*[R_H^{j\ell}] \ls \fr{D_R}{N}$.

The other high frequency term is the transport term.  Since the advective derivative annihilates the phase function, we have
\ALI{
\nb_j \nb_\ell R_T^{j\ell} &= \sum_I g_{[I]}'(\mu t) e^{i \la \xi_I} [ \th_I + \de \th_I ] \\
&+ \sum_I g_{[I]}(\mu t) e^{i \la \xi_I} [ \Dtdt \th_I + \Dtdt \de \th_I + u_I^a (\nb_a \th_I + \nb_a \de \th_I) ]
}
Nonstationary phase with the same choice of $D$ as before yields a solution of weighted norm
\ALI{
\tilde{H}^*[R_T]&\ls \la^{-2} \mu H''[g_{[I]}'(\mu t)] H''[\th_I + \de \th_I] \\
&+ H''[g_{[I]}] (H''[\Dtdt \th_I] + H''[ \Dtdt \de \th_I ] + H''[u_I] H''[\nb \th_I + \nb \de \th_I] )
}
For these terms it is again true that the $H''$ weighted norm is the same size as the bound on the $C^0$ norm modulo constants, since spatial derivatives cost at most $N^{1/L} \Xi < N^{1/2} \Xi$, while advective derivatives cost at most a factor of $\ep_t^{-1} = (\Xi e_u^{1/2}) (D_u/D_R)^{1/2} N^{1/2} \leq \ost{\tau}^{-1} = (N \Xi)^{3/2} D_R^{1/2}$.  Combining \eqref{eq:deBounds1}-\eqref{eq:deBounds2}, Proposition~\ref{prop:phase}, and Proposition~\ref{prop:amplitudeBounds}, we therefore obtain
\ALI{
\tilde{H}^*[R_T] &\ls (N \Xi)^{-2} \mu \la^{1/2} D_R^{1/2} \\
&+ (N \Xi)^{-2} ( (\Xi e_u^{1/2}) \la^{1/2} D_R^{1/2} +\la^{1/2} D_R^{1/2} \Xi \la^{1/2} D_R^{1/2}  )\\
&\ls (N \Xi)^{-2} \mu \la^{1/2} D_R^{1/2} \sim N^{-1} D_R.
}
Both the estimate for $R_H$ and the estimate for $R_T$ are satisfactory for the Main Lemma, since we have $N^{-1} D_R \leq (D_u/D_R)^{-1/2} N^{-1/2} D_R$.

\subsection{How to avoid nonstationary phase} \label{sec:avoidNonStat}

We include this section to note that one can avoid nonstationary phase in the proof in a way such that the only source of double exponential frequency growth occurs during the Newton step.  To do so, let $\tilde{u}^\ell = u_\ep^\ell + T^\ell w$, where $w$ is the Newton correction.  Instead of transporting the phase functions by the flow of $\tilde{u}^\ell$ as we have done, first apply a frequency truncation $P_{\leq q_\ep}$ to $T^\ell w$ and transport the phase functions by the resulting frequency localized vector field.

With such a frequency localization, both the high frequency interference terms and the transport term now live at frequency $\sim \la$, and one can simply apply an operator $\RR^{j\ell}$ to both those terms to find a suitable anti-divergence that gains a smallness of $\la^{-2}$.  This technique avoids nonstationary phase (which was also avoided in \cite{isettVicol,buckShkVicSQG,isett2021direct}), and also avoids the power loss in frequency incurred during the nonstationary phase, which is important for deriving an endpoint type result \cite{isett2017endpoint}.  However, it comes with two complications.
\begin{enumerate}
    \item  The estimates of the convex integration step are a bit different in terms of powers of $N$ although the final bounds for $\ost{R}$ are the same.
    \item  One has to handle an error term of the form
\ALI{
(T^\ell w - P_{\leq q_\ep} T^\ell w) \Th + T^\ell \Th ( w - P_{\leq q_\ep} w)
}
\end{enumerate}
The latter term can be treated similarly to the mollification term addressed below.

\subsection{The mollification error} \label{sec:cvxMollErr}

The term $R_M$ is also a new error term compared to \cite{buckShkVicSQG,isett2021direct}.  In those works there was no need to regularize $\th$ since it could be enforced that $\th$ had compact frequency support.  In other words, we had $\th = \th_\ep$.  Here $\th$ does not have compact frequency support, so we have to bound this term, which resembles the term \eqref{eq:notDivForm10}.  Again we use our simplified version of the observation in \cite{isett2024conservation} showing how to write the nonlinearity in a divergence form.

We are estimating a solution to
\begin{align}
\nb_\ell R_M^{j\ell} &= [T^j\th - T^j \th_\ep] \Th + [T^j \Th] (\th - \th_\ep)
\label{eq:notDivForm1Again}
\end{align}
We know %
$\supp \hat \Theta \subset \{ \xi ~:~ \la/10^2 \le |\xi| \le 10^2\la\}$. %

We decompose $\nb_j R_M^{j\ell}$ into the sum of three kinds of terms (HH, HL and LH) (or really five kinds of terms, but we can group them into three kinds).
\begin{align}
\begin{split}
\nb_j R^{j\ell}_M &= \sum_q P_q T^\ell(\th-\th_\epsilon) P_{q+1}\Th + T^\ell P_{q+1}\Th P_q(\th-\th_\epsilon) + \text{ Similar}\\
&+ \sum_q P_{\le q-1} T^\ell (\th-\th_\epsilon) P_{q+1}\Theta \\
&+ \sum_q P_{\le q-1} (\th-\th_\epsilon) T^\ell P_{q+1} \Theta\\
&+ \sum_q P_{q+1}(\th-\th_\epsilon) T^\ell P_{\le q-1} \Theta\\
&+ \sum_q P_{q+1} T^\ell(\th-\th_\epsilon) P_{\le q-1} \Theta
\end{split}
\label{eq:notDivForm2Again}
\end{align}
(Recall that $T^\ell$ and $P_k$ commute for any $k$, and same for $P_{\le k-1}$)

Define $q^\lambda \in \N$ by $q^\lambda :\approx \log_2(\lambda)$. We have that the Fourier support of $\hat\Th$ is essentially in a single dyadic shell (or a bounded number of shells). %
By consideration of frequency support, the five sums can be simplified as sums over, respectively,
$$q \sim q^\lambda, q \sim q^\lambda, q \sim q^\lambda, q \ge q^\lambda, q \ge q^\lambda.$$

To each of these we associate an antidivergence, and we just have to bound that antidivergence. We will only do this for three cases (one being the High-High), which are representative of all five.

Looking at the very first term in \eqref{eq:notDivForm2Again}, we use the divergence form principle of Section~\ref{sec:divFormPrinciple} to define
\begin{align}
	R_{MHq}^{j\ell} &= K_q \ast [\th-\th_\ep,\Th] \\
	&= K_q \ast [P_{\approx q^\lambda}\th, \Th] \\
	&= \int (\th(x-h_1) - \th_\ep(x-h_1)) \Th(x-h_2) K_q^{j\ell}(h_1,h_2) dh_1 dh_2
\end{align}
For all $0 \leq r + |\va| \leq L$ we bound
\begin{align}
\co{\nb_{\va} \pr_t^r R_{MHq}} &\ls \sum_{q \sim q^\lambda} \|K_q\|_{L^1} \|\nb_{\va_1} \pr_t^{r_1}(\th - \th_\ep)\|_0 \|\nb_{\va_1} \pr_t^{r_1} \Th\|_0 \\
	&\ls \sum_{q \sim q^\lambda} 2^{-q} [(N \Xi)^{|\va_1|} \ost{\tau}^{-r_1} \fr{e_u^{1/2}}{N}] [
(N \Xi)^{|\va_2|} \ost{\tau}^{-r_2} H^*[\Th]] \\
	&\ls (N \Xi)^{-1} (N \Xi)^{|\va|} \ost{\tau}^{-r} \fr{e_u^{1/2}}{N} (N \Xi)^{1/2} D_R^{1/2}. \\
 &\ls (N \Xi)^{|\va|} \ost{\tau}^{-r} \fr{D_R}{N},
\end{align}
where in the last line we used $N \geq D_u/D_R$, while in the second line we used Lemma~\ref{lem:new} and the bound on $\Th$ obtained in the proof of Proposition~\ref{prop:newVelocBound}.

Our next representative term is
\ali{
R_{MHL}^{j\ell}
 &= \sum_{q \sim q_\la} \calR_a^{j\ell}[ P_{q+1}(\th - \th_\ep) P_{\leq q-1} T^a \Th ] \\
 &= \sum_{q \sim q_\la}\calR_a^{j\ell}P_{\approx q} (\th - \th_\ep) P_{\approx \la} T^a \Th \label{eq:MollHL}
}
where the representation in the second line is due to the Fourier support of $\Th$ being in $|\xi| \sim \la$.  We bound this term by
\ali{
\co{\nb_{\va} \pr_t^r \eqref{eq:MollHL}} &\ls \| \calR_a^{j\ell}P_{\approx q} \| [(N\Xi)^{|\va_1|} \ost{\tau}^{-r_1} \fr{e_u^{1/2}}{N}] [(N\Xi)^{|\va_2|} \ost{\tau}^{-r_2} H^*[\Th]]  \\
&\ls \sum_{q \geq q_\la} 2^{-q} (N \Xi)^{|\va|} \ost{\tau}^{-r}  \fr{e_u^{1/2}}{N} (N \Xi)^{1/2} D_R^{1/2} \\
&\ls (N \Xi)^{|\va|} \ost{\tau}^{-r} \fr{D_R}{N}.
}
Here in the second line we used the trivial observation that $H[T^\ell P_q \Th ] \ls H[\Th]$ by the fact that $T^\ell P_q$ is bounded on $C^0$ and commutes with spatial derivatives and $\pr_t$.  In the last line we again used $N \geq D_u/D_R$.

The last of the three representative terms is
\ali{
\sum_{q\sim a^\lambda} P_{\le q-1} T^\ell(\th-\th_\ep) P_{q+1}\Th.
}
We write
\ali{
R_{MLH}^{j\ell} &= \sum_{q\sim q^\lambda} \RR_a^{j\ell} P_{\approx q} [  P_{\leq q-1} T^a(\th-\th_\ep) P_{q+1} \Th ] \label{eq:hardMollTerm}\\
\co{\nb_{\va} \pr_t^r \eqref{eq:hardMollTerm}} &\ls  \sum_{q\sim q^\lambda} \la^{-1} \tisum \co{ \nb_{\va_1} \pr_t^{r_1} (T \th - T \th_\ep) } \co{ \nb_{\va_2} \pr_t^{r_2} \Th } \\
&\ls \la^{-1}\tisum  [(N \Xi)^{|\va_1|} \ost{\tau}^{-r_1} \fr{e_u^{1/2}}{N}] [(N \Xi)^{|\va_2|} \ost{\tau}^{-r_2} H[\Th]]  \\
&\ls (N \Xi)^{-1} (N \Xi)^{|\va|} \ost{\tau}^{-r} \fr{e_u^{1/2}}{N} (N \Xi)^{1/2} D_R^{1/2} \\
&\ls (N \Xi)^{|\va|} \ost{\tau}^{-r} \fr{D_R}{N}
}
Here again we used Lemma~\ref{lem:new}, the bound on $\Th$ obtained in the proof of Proposition~\ref{prop:newVelocBound}, and $N \geq D_u/D_R$.

Combining these estimates we have
\ali{
H^*[R_M] &\ls \fr{D_R}{N}.
}

\section{The Main Lemma implies the Main Theorem}

We start with the following auxiliary theorem, which is enough to prove regularity of solutions but in itself is not enough to prove nontriviality.  Nontriviality will be a corollary of the h-principle.

For any $\de > 0$ be given, we choose $L \geq 7$ and $\eta > 0$ depending on $\de$ so that the parameter
\ali{
\varep = \fr{6}{L} + 4 \eta \leq \de^3
}
Let $\hc$ be the constant in the Main Lemma associated to this choice of $\eta, L$.

\begin{thm} \label{thm:approxThem}  Let $\de > 0$ be given.  There is a constant $C_\de$ depending on $\de$ and an integer $L$ such that the following holds.  Let $(\th_0, R_0)$ be an SQG-Reynolds flow with frequency energy levels of order $L$ bounded by $(\Xi_0, D_{u,0}, D_{R,0})$ and with compact support contained in an interval $J_0$.  Then there exists a solution $\th$ to SQG of class $|\nb|^{-1/2} \th \in C^{1/2 - 2 \de}$ whose time support is contained in a $C_\de \hc (\Xi_0 e_{u,0}^{1/2})^{-1}$ neighborhood of that of $(\th_0, R_0)$ such that
\ali{
\co{~ |\nb|^{-1/2}(\th - \th_0)} &\leq C_\de D_{R,0}^{1/2}
}
\end{thm}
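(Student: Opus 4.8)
The plan is to obtain Theorem~\ref{thm:approxThem} by iterating the Main Lemma, starting from $(\th_0, R_0)$ and producing a sequence of SQG-Reynolds flows $(\th_k, u_k, R_k)$ with frequency energy levels below $(\Xi_k, D_{u,k}, D_{R,k})$ whose error $R_k$ tends to zero, while the scalar fields $\th_k$ converge in $C_t C^\a$ for $|\nb|^{-1/2}\th_k$ with $\a = 1/2 - 2\de$. First I would choose parameters: apply the Main Lemma with the fixed $L, \eta$ associated to $\de$ (so that $\varep = 6/L + 4\eta \leq \de^3$), and at the $k$-th step pick $N_k$ as small as the lower bound \eqref{eq:NlowerBd} allows, i.e. $N_k \sim \hc N_k^{6/L} N_k^{4\eta} \Xi_k^{4\eta}(D_{u,k}/D_{R,k})$. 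The recursion for the frequency energy levels is then $\Xi_{k+1} = \hc N_k \Xi_k$, $D_{u,k+1} = D_{R,k}$, $D_{R,k+1} = N_k^{-1/2}(D_{R,k}/D_{u,k})^{1/2} D_{R,k}$. One checks, as in \cite{isettOnsag, isett2021direct}, that these recursions force $\Xi_k \to \infty$ double-exponentially and $D_{R,k} \to 0$, with the ratio $D_{u,k}/D_{R,k} = D_{R,k-1}/D_{R,k}$ staying controlled, so that the $N_k$ are well-defined and grow. The key bookkeeping quantity is $Z_k := \log(D_{u,k}/D_{R,k})/\log\Xi_k$ (or a similar ratio), which one shows stays bounded, and from which the decay rate of $D_{R,k}^{1/2}\Xi_k^{1/2}$ relative to $\Xi_k^{-\a}$ can be read off.

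Next I would set up the convergence and regularity estimate. From \eqref{eq:LaWBd} the correction $W_k = \th_{k+1} - \th_k$ satisfies $\co{\nb_{\va}|\nb|^{-1/2} W_k} \leq \hc(N_k\Xi_k)^{|\va|} D_{R,k}^{1/2} = \hc \Xi_{k+1}^{|\va|} D_{R,k}^{1/2}$ for $|\va| = 0,1$. Summing the $|\va| = 0$ bound gives $\co{|\nb|^{-1/2}(\th - \th_0)} \leq \sum_k \hc D_{R,k}^{1/2} \lesssim D_{R,0}^{1/2}$ provided the $D_{R,k}^{1/2}$ form a fast-decaying (e.g. super-geometric) sequence, which the parameter choice guarantees; this yields the displayed estimate with $C_\de$ absorbing $\hc$ and the series sum. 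For the H\"older bound, interpolating the $|\va| = 0$ and $|\va| = 1$ estimates gives $\cda{|\nb|^{-1/2} W_k} \lesssim \hc \Xi_{k+1}^{\a} D_{R,k}^{1/2}$, and one must check $\sum_k \Xi_{k+1}^\a D_{R,k}^{1/2} < \infty$, i.e. that $\Xi_{k+1}^\a D_{R,k}^{1/2} \to 0$ geometrically. Writing $\Xi_{k+1}^\a D_{R,k}^{1/2} = (\hc N_k \Xi_k)^\a D_{R,k}^{1/2}$ and using $D_{R,k+1}^{1/2} = N_k^{-1/4}(D_{R,k}/D_{u,k})^{1/4} D_{R,k}^{1/2}$ together with the near-saturation $N_k \approx \hc \Xi_k^{4\eta} N_k^{6/L}(D_{u,k}/D_{R,k})$, this reduces to an inequality of the form $\a < 1/2 - C\varep$ (roughly $\a(1 + \text{const}\cdot\varep) < 1/2$), which holds because $\a = 1/2 - 2\de$ and $\varep \leq \de^3$; this is exactly where the choice $\varep \leq \de^3$ is used. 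The limit $\th$ then satisfies $|\nb|^{-1/2}\th \in C_t \dot C^\a$, hence $C^\a$ after accounting for the (conserved, controlled) spatial mean, and since $\co{R_k} \le D_{R,k} \to 0$, $\th$ is a genuine weak solution of SQG.

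Finally, the time support claim follows from the support-enlargement bound \eqref{eq:supportEnlargement}: at step $k$ the supports grow by $5(\Xi_k e_{u,k}^{1/2})^{-1} = 5\Xi_k^{-3/2} D_{u,k}^{-1/2}$, and since $\Xi_k e_{u,k}^{1/2}$ grows rapidly, $\sum_k (\Xi_k e_{u,k}^{1/2})^{-1} \lesssim (\Xi_0 e_{u,0}^{1/2})^{-1}$ is dominated by its first term up to a constant $C_\de$ (again depending only on $\de$ through $L, \eta$ and the resulting growth rates). The time support of $\th$ is therefore contained in a $C_\de \hc(\Xi_0 e_{u,0}^{1/2})^{-1}$ neighborhood of $\suppt(\th_0, R_0)$, as claimed. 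The main obstacle is the parameter analysis: one must verify that with $N_k$ chosen at (or just above) the threshold \eqref{eq:NlowerBd}, all of the derived sequences behave as required --- in particular that $D_{u,k}/D_{R,k}$ does not blow up faster than polynomially in $\Xi_k$ (so the $N_k$ stay controlled) and that $\Xi_{k+1}^\a D_{R,k}^{1/2}$ genuinely decays, which is the quantitative heart of the Onsager exponent $1/2$ and forces the precise relation $\varep \leq \de^3$. This is the standard but delicate "frequency energy levels close up" computation, analogous to \cite[Section 19]{isett} and the corresponding sections of \cite{isett2021direct, isettOnsag}; everything else (summation, interpolation, support tracking) is routine given the Main Lemma.
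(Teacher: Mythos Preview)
Your proposal is correct and captures the essential strategy: iterate the Main Lemma, sum the corrections using \eqref{eq:LaWBd}, interpolate for the H\"older regularity, and track time supports via \eqref{eq:supportEnlargement}. The one substantive difference from the paper is in how the parameter recursion is organized. You propose to take $N_k$ at (or just above) the admissibility threshold \eqref{eq:NlowerBd} and then track a ratio like $\log(D_{u,k}/D_{R,k})/\log\Xi_k$ directly, so that the regularity threshold comes out as $\a < 1/2 - C\varep$. The paper instead \emph{prescribes} the decay $D_{R,(k+1)} = D_{R,(k)}^{1+\de}/Z$ for an auxiliary constant $Z$ (chosen large depending on the initial data and $\hc$), backs out the required $N_{(k)} = \hc^2 Z^2 (D_u/D_R)_{(k)}^{-1} D_{R,(k)}^{-2\de}$ from the Main Lemma's output bound, and then verifies admissibility of this $N_{(k)}$ by a separate inductive argument. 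This yields a \emph{linear} recursion on the log-parameter vector $(\log D_{R,(k)}, \log(D_u/D_R)_{(k)}, \log\Xi_{(k)})$ governed by a $3\times 3$ matrix with eigenvalues $1+\de, 0, 1$; the regularity exponent is read off from the dominant eigenvector, giving $\a < \tfrac{1}{2}\tfrac{1+\de}{1+3\de+2\de^2}$, which accommodates $\a = 1/2 - 2\de$. The advantage of the paper's framing is that it cleanly separates the regularity computation (which depends only on $\de$) from the admissibility check (which is where $\varep \leq \de^3$ is actually used), and makes the double-exponential growth explicit via the eigenvalue $1+\de$. Your approach would reach the same conclusion after unwinding the implicit recursion for $N_k$, but the bookkeeping is less transparent because $\varep$ and the $\Xi_k^{4\eta}$ factor remain entangled in the recursion throughout.
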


\begin{proof}
We define a sequence of Euler-Reynolds flows $(\th_n, R_n)$ by iteration of the Main Lemma.  We set $(\Xi, D_u, D_R)_{(0)} = (\Xi_{0}, D_{u,0}, D_{R,0})$ and evolve according to the parameter rules
\ALI{
\Xi_{(k+1)} &= \hc N_{(k)} \Xi_{(k)} \\
D_{u,(k+1)} &= D_{R,(k)} \\
D_{R,(k+1)} &= \fr{D_{R,(k)}^{1+\de}}{Z}.
}
where $Z$ is to be chosen depending on $\hc$ and on the initial frequency energy levels.  These rules will imply a double exponential decay of $D_{R,(k)}$, but for the moment we impose that $Z \geq \max \{ D_{R,(0)}^{\de}, D_{R,(0)}^{1+\de} \}$ in order to ensure that $D_{R,(1)} \leq 1$ and that
\ali{
D_{R,(k+1)} &\leq \fr{1}{2} D_{R,(k)} \label{eq:geometricDecay}
}
for all $k$.

Our choice of $N_{(k)}$ is dictated by the estimate in the Main Lemma:
\ali{
D_{R,(k+1)} &= \hc \left(\fr{D_u}{D_R}\right)_{(k)}^{-1/2} N^{-1/2}_{(k)} D_{R,(k)} \\
\Rightarrow N_{(k)} &= \hc^2 Z^2 (D_u/D_R)_{(k)}^{-1} D_{R(k)}^{-2\de}. \label{eq:NkFormula}
}
It will be convenient to phrase the parameter evolution rules in terms of logs
\ali{
\psi_{(k)} &\equiv [\log D_{R(k)}, \log(D_u/D_R)_{(k)}, \log \Xi_{(k)}]^t \\
\psi_{(k+1)} &= \begin{bmatrix}
    - \log Z   \\
     \log Z   \\
     \log (\hc^3 Z^2)
\end{bmatrix} + \begin{pmatrix}
1 + \de   & 0 & 0 \\
-\de & 0 & 0 \\
-2\de & -1 & 1
\end{pmatrix} \psi_{(k)}
}
We call the  $3 \times 3$ {\bf  parameter evolution matrix} appearing here $T_\de$.

The most delicate task in this framework is to check that $N_{(k)}$ is admissible, since this condition barely holds; namely, we need
\ali{
N_{(k)} \geq \hc N_{(k)}^{\fr{6}{L} + 4 \eta} \Xi_{(k)}^{4 \eta} (D_u/D_R)_{(k)}.
}
With $\varep = \fr{6}{L} + 4 \eta$ defined as above, it is enough to check that
\ali{
Z^{-2+2\varep} \Xi_{(k)}^{\varep} (D_u/D_R)_{(k)}^{2-\varep} D_{R,(k)}^{2\de(1-\varep)} &\leq 1 \label{eq:ourGoal1}
}
Since the power of $Z$ is negative, it is clear that $Z$ can be chosen large enough so that this inequality holds at $k = 0$.  Now suppose $k \geq 1$.  In this case one has
\ali{
(D_u / D_R)_{(k)} &= Z^{\fr{1}{1+\de}} D_{R,(k)}^{-\de/(1+\de)}
}
Taking logs of \eqref{eq:ourGoal1}, we need to check:
\ali{
(-2 + 2 \varep +\fr{2 - \varep}{1+\de}) \log Z + \varep \log \Xi_{(k)} + (2 \de (1 - \varep) - \fr{\de}{1+\de} (2-\varep)) \log D_{R,(k)} &\leq 0
}
We prove this inequality by induction, as we have already considered the case $k = 0$.  Letting $\de_{(k)} f = f_{(k+1)} - f_{(k)}$ denote the discrete difference operator, we need only check that
\ali{
\varep \de_{(k)} \log \Xi_{(k)} + \de \left(2 (1 - \varep) - \fr{1}{1+\de} (2-\varep)\right) \de_{(k)} \log D_{R,(k)} \label{eq:deKbound}
}
is negative.  Since $D_{R,(k)} \leq 1$ for all $k \geq 1$, we have
\ALI{
\eqref{eq:deKbound} &\leq \varep \de_{(k)} \log \Xi_{(k)} + (\de^2 - O(\varep)) \de_{(k)} \log D_{R,(k)} \\
&\leq 3 \varep (\log Z + \log \hc) - 2 \de \varep \log D_{R,(k)} +  (\de^2 - O(\varep))( \de \log D_{R,(k)} - \log Z ) \\
&\leq 3 \varep \log \hc + (-\de^2 + O(\varep)) \log Z
}
Recalling that $\varep \leq \de^3$, we now choose $Z$ depending on $\de$ and $\hc$ so that the right hand side is negative as desired.  Thus our choice of $N_{(k)}$ is admissible for all $k$.

Applying \eqref{eq:geometricDecay}, our solution $\th$ obeys
\ali{
\co{ |\nb|^{-1/2} (\th - \th_0)} &\ls \sum_{k=0}^\infty \co{ |\nb|^{-1/2} W_{(k)}} \\
&\ls \sum_{k=0}^\infty D_{R,(k)}^{1/2} \leq C_\de D_{R,(0)}^{1/2}.
}
Note that the convergence of this series combined with boundedness of the nonlinearity in $L_t^2 \dot{H}^{-1/2}$ also shows that $\th$ is a weak solution to SQG.  A similar geometric series bounds the size of the increase in time support by
\ali{
\sum_k (\Xi_{(k)} e_{u,(k)}^{1/2})^{-1} &\ls (\Xi_0 e_{u,0}^{1/2})^{-1}
}
hence the time support is bounded as claimed.

To check the regularity of the solution, we follow the method of \cite{isett} and first compute an eigenvector for the $1 + \de$ eigenspace.  We seek a vector in the null space of $T_\de - (1 + \de)$ with a negative first coordinate.  An example is given by
\ali{
\psi_+ = \begin{bmatrix}
-(1+\de) \\
\de \\
1 + 2 \de
\end{bmatrix} \label{eq:psiPlus}
}
In terms of eigenvectors $(\psi_+, \psi_0, \psi_1)$ for the $(1+\de, 0, 1)$ eigenspaces respectively, we can decompose
\ali{
\psi_{(k)} &= c_{+,(k)} \psi_+ + c_{0,(k)} \psi_0 + c_{1,(k)} \psi_1 \label{eq:diagonalize}
}
The term that dominates is the $\psi_+$ term, since one can check that
\ali{
c_{+,(k)} \geq c (1 + \de)^k, \quad c > 0 \label{eq:mainCoeff} \\
|c_{0,(k)}| + |c_{1,(k)}| = O(k),
}
(see for instance \cite[Section 11.2.4]{isett}).  The fact that the $\psi_+$ term dominates is similar to what happens when one iteratively applies the matrix $T_\de$ to a fixed vector.

We now compute the regularity of our solution.  Using the interpolation inequality $\|f \|_{C^\a} \ls \| f\|_{C^0}^{1-\a} \| \nb f \|_{C^0}^\a$, the estimate \eqref{eq:LaWBd} on $W$, the formula \eqref{eq:NkFormula} for $N_{(k)}$ and $\a < 1$, one has
\ali{
\log \| |\nb|^{-1/2} W \|_{C^\a} &\leq \log \hc + \a \log (N_{(k)} \Xi_{(k)}) + \fr{1}{2} \log D_{R,(k)} \\
&\leq \log(\hc^4 Z^2) + \left[ \fr{1}{2} - 2 \a \de, -\a, \a\right] \psi_{(k)},
}
where the last line refers to the linear pairing of the row vector with the column vector $\psi_{(k)}$.  From \eqref{eq:diagonalize} and \eqref{eq:mainCoeff}, we see that the right hand side goes to $-\infty$ exactly when the same row vector applied to $\psi_+$ in \eqref{eq:psiPlus} gives a negative value.  In conclusion, $|\nb|^{-1/2} \th \in L_t^\infty C^\a$ whenever
\ALI{
\a < \fr{1}{2} \left( \fr{1 + \de}{1 + 3 \de + 2 \de^2} \right).
}
Using linearization, one sees that $\a = 1/2 - 2 \de$ satisfies this inequality for $\de$ sufficiently small, hence Theorem~\ref{thm:approxThem} is proven.
\end{proof}

\subsection{h-Principle}

Let $\de > 0$ be given and let $L$ and $C_\de$ be as in Theorem~\ref{thm:approxThem}.

Let $f : (0,T) \times \T^2 \to \R$ be a smooth compactly supported function that conserves the integral.  That is,
\ALI{
\int_{\T^2} f(t,x) dx = 0, \qquad \mbox{ for all } t.
}
We approximate $f$ by the sequence $f_n = P_{\leq n} f$, which satisfy
\ali{
\sup_n \co{\nb_{\va} \pr_t^r f_n} \ls \co{\nb_{\va} \pr_t^r f}, \quad \mbox{ for } 0 \leq |\va|, r \label{eq:uniformBounds} \\
\lim_{n \to \infty} \co{ |\nb|^{-1/2}(f_n - f) } = 0.
}
Using the order $-2$ operator $\RR^{j\ell}$, define
\ali{
R_n^{j\ell} &= \RR^{j\ell}[\pr_t f_n +  \nb_a [ f_n T^a f_n] ]
}
so that $(f_n, R_n)$ define an SQG-Reynolds flow with compact frequency support.  (It is important at this point that the right hand side has mean zero at every time.)  Furthermore, we have a uniform bound
\ALI{
\sup_n \co{R_n} &\leq 2D_{R,-1}
}
By \eqref{eq:uniformBounds} we can choose $\Xi_{-1,n}$ suffiently large and going to $+\infty$ so that $(f_n, R_n)$ is an SQG-Reynolds flow with frequency energy levels to order $L$ bounded by $(\Xi_{-1, n}, D_{R,-1}, D_{R,-1})$ that has compact frequency support in frequencies below $\Xi_{-1,n}$.

To this SQG-Reynolds flow we apply the Main Lemma from \cite[Section 3]{isett2021direct}.  Let $N_{-1,n}$ be a sequence tending to $+\infty$.  According to this Lemma, for any $N_{-1,n}$ there is a second SQG Reynolds flow, which we call $(\th_{0,n}, R_{0,n})$, $\th_{0,n} = f_n + W_{-1,n}$, so that the following hold
\ali{
\supp_t (\th_{0,n}, R_{0,n}) &\subseteq \{ t + t' ~:~ t \in \suppt (f_n, R_n), |t'| \leq (\Xi_{-1,n} D_{R,-1}^{1/2})-1 \} \\
\co{|\nb|^{-1/2} W_{-1,n}} &\leq C_L D_{R,-1}^{1/2} \label{eq:CminOneHalfW} \\
|\nb|^{-1/2} W_{-1,n} &= \nb_i Y_n^i, \qquad
\co{Y_n^i} \leq \Xi_{-1,n}^{-1} D_{R,-1}^{1/2} \label{eq:antiDivMin1half}
}
and so that the frequency energy levels of $(\th_{0,n}, R_{0,n})$ are bounded to order $L$ by
\ALI{
(\Xi_{n,(0)}, D_{u,(0)}, D_{R,(0)}) &= \left( C_L N_{-1,n} \Xi_{-1}, D_{R,(-1)}, \fr{D_{R,-1}}{N_{-1,n}^{3/4}} \right)
}
Now apply our approximation theorem, Theorem~\ref{thm:approxThem}, to get an SQG solution $\th_n$ of class $|\nb|^{-1/2}\th_n \in L_t^\infty C^{1/2- 2 \de}$ with
\ali{
\| |\nb|^{-1/2} (\th_n - \th_{0,n}) \|_{C^0} \leq C_\de\fr{D_{R,-1}^{1/2}}{N_{-1}^{3/4}}  \label{eq:CminOneHalfW2}
}
and with time support contained in
\ali{
\supp_t \th_n &\subseteq \{ t + t' ~:~ t \in \supp_t (\th_{n,0}, R_{n,(0)}), \quad |t'| \leq C_\de (\Xi_{n,(0)} D_{R,-1}^{1/2})^{-1} \}
}
We now claim that $|\nb|^{-1/2} (\th_n - f) \to 0 $ in $L^\infty$ weak-*.  To see this claim, let $g \in L^1((0,T)\times \T^2)$ and let $\ep > 0$ be given.  We will choose a small parameter $\eta$.  Choose a $g_\eta \in C_c^\infty ((0,T) \times \T^2)$ within $\eta$ of $g$ in $L^1$.

We write
\ali{
\int g |\nb|^{-1/2}(f - \th_n) dx dt &= I + II + III \\
I &= \int (g - g_\eta) |\nb|^{-1/2}(f - \th_n) dx \\
II &= \int g_\eta |\nb|^{-1/2}(f - \th_{0,n}) dx \\
III &= \int g_\eta |\nb|^{-1/2}( \th_{0,n} - \th_n ) dx
}
We bound
\ALI{
|I| &\leq \eta ( \co{ |\nb|^{-1/2} f } + \sup_n \co{|\nb|^{-1/2} \th_n} )
}
Note that the $\sup$ exists due to \eqref{eq:CminOneHalfW} and \eqref{eq:CminOneHalfW2}.  Now fix the choice of $\eta$ so that this term is bounded by $\ep/ 3$.

Then we use \eqref{eq:antiDivMin1half} and integration by parts to bound
\ALI{
|II| & \leq \left( \int |\nb g_\eta | \, dx\right) \co{Y_n} \\
&\leq \left( \int |\nb g_\eta | \,dx\right) \fr{D_{R,-1}^{1/2}}{\Xi_{-1,n}}
}
The latter bound goes to $0$ as $n$ gets large since we assumed $\Xi_{-1,n}$ tends to $\infty$.

Finally we have
\ALI{
|III| &\leq \left( \int |g_\eta| \,dx\right) \co{ |\nb|^{-1/2}(\th_{0,n} - \th_n)} \\
&\leq \left( \int |g_\eta| \,dx\right) C_\de D_{R,-1}^{1/2} N_{-1,n}^{-3/4}
}
As long as we take $N_{-1,n}$ to go to infinity, this term is also arbitrarily small.  From this estimate we conclude that $|\nb|^{-1/2} \th_n \to |\nb|^{-1/2} f$ in $L^\infty$ weak-*.  Furthermore, we have
\ali{
\suppt \th_n &\subseteq \{ t + t' ~:~ t \in \suppt f, |t'| \leq C_\de (\Xi_{-1} D_{R,-1}^{1/2})^{-1} \},
}
uniformly in $n$, which can be made arbitrarily close to $\suppt f$ by taking $\Xi_{-1}$ large.

\appendix
\section{Appendix}
\subsection{Existence of solutions to equation \eqref{eq:NewtonEquation}}

Let $\Phi w$ be a solution to the equation:
\begin{equation}
\overline{D_t}\Phi w+T^{\ell}w\nabla_{\ell}\theta_{\ep}=f \text{ with } (\Phi w)[0]=w_0.
\end{equation}
Subtracting the equation for $\Phi \tilde{w}$ from $\Phi w$, we get:
\begin{equation}
\overline{D_t}(\Phi w-\Phi \tilde{w})+T^{\ell}(w-\tilde{w})\nabla_{\ell}\theta_{\ep}=0 \text{ with } (\Phi w-\Phi \ti w)[0]=0.
\end{equation}
Let $s \geq 0 $ be given.   Using the notation $[\nabla_{\vec{a}},u_{\ep}\cdot\nabla]=\ti \sum\mathbf{1}_{|\vec{a}_2|\leq s-1}(\nabla_{\vec{a}_1}u^j_{\ep}\nabla_{\vec{a}_2}\nabla_j)$,\footnote{We only want to take up to $s$ derivatives of $\Phi w-\Phi\ti w$.} we differentiate the equation with $\nabla_{\vec{a}}$ to get:   %
\begin{multline}
\overline{D_t}\nabla_{\vec{a}}(\Phi w-\Phi \tilde{w})+[\nabla_{\vec{a}},u_{\ep}\cdot\nabla](\Phi w-\Phi \tilde{w}) \
+\underset{|\vec{a}_2|\leq s-1}{\overset{\sim}{\sum}}
\nabla_{\vec{a}_1}T^{\ell}(w-\tilde{w})\nabla_{\vec{a}_2}\nabla_{\ell}\theta_{\ep}=0.
\end{multline}
Multiplying this equation by $\nabla_{\vec{a}}(\Phi w-\Phi \tilde{w})$ and integrating by parts,
$$\frac12\partial_t\|\nb_{\va}(\Phi w)-\nb_{\va}(\Phi\ti w)\|_2^2 $$
$$+ \int \left( [\nabla_{\vec{a}},u_{\ep}\cdot\nabla](\Phi w-\Phi \tilde{w}) \
+\underset{|\vec{a}_1|\leq s-1}{\overset{\sim}{\sum}}
\nabla_{\vec{a}_1}T^{\ell}(w-\tilde{w})\nabla_{\vec{a}_2}\nabla_{\ell}\theta_{\ep} \right)\nb_{\va}(\Phi w-\Phi\ti w)\,dx=0,$$
where
$$\int u_\ep^j\nb_j (\nb_{\va}\Phi w-\nb_{\va}\Phi\ti w)^2 \,dx=0$$
due to the divergence-free property of $u_\ep.$

Integrating on $[0,t]$, and using H\"older's inequality, we have
\begin{align*}
\|\nabla_{\vec{a}}(\Phi w) - \nabla_{\vec{a}}(\Phi \tilde{w})\|_2(t)
&\lesssim \int_0^t \left( \|[\nabla_{\vec{a}}, u_\epsilon \cdot \nabla](\Phi w - \Phi \tilde{w})\|_2 + \|\nabla_{\vec{a}_1} T^\ell (w - \tilde{w}) \nabla_{\vec{a}_2} \nabla_\ell \theta_\epsilon\|_2 \right) d\tau \\
&\lesssim t \|\nabla_{\vec{a}_1} T^\ell (w - \tilde{w})\|_{L^\infty L^2} + t \| u_\epsilon\|_{L^\infty C^{s-1}} \|\Phi w - \Phi \tilde{w}\|_{L^\infty H^s}.
\end{align*}
In the first line, we used Cauchy-Schwarz.
Here, we used that \(\|\nabla_{\vec{a}_2} \nabla_\ell \theta_\epsilon\|_{L^\infty} \lesssim 1\), which is true since $s\le L$ and hence it is bounded by \(\Xi^L e_u^{1/2}\), a bounded quantity. (Recall that $|\va_2|\le s-1$.) Next, note that \(T^\ell\) is bounded on \(H^s\). Thus we have \(\|T^\ell (w - \tilde{w})\|_{H^s} \lesssim \|w - \tilde{w}\|_{H^s}\). Thus
\ALI{
\sum_{|\va|\le s}\|\nb_{\va}(\Phi w)-\nb_{\va}(\Phi \ti w)\|_2(t) \ls t\|w-\ti w\|_{L^\infty H^s} + t \Xi^{s-1} e_u^{1/2} \|\Phi w-\Phi \ti w\|_{L^\infty H^s}
}
and indeed
$$\sum_{|\va|\le s}\|\nb_{\va}(\Phi w)-\nb_{\va}(\Phi \ti w)\|_{L^\infty L^2} \ls t\|w-\ti w\|_{L^\infty H^s} + t \Xi^{s-1} e_u^{1/2} \|\Phi w-\Phi \ti w\|_{L^\infty H^s}.
$$
By taking \(t\) sufficiently small, we can absorb the last term into the left-hand side. Taking $t$ smaller if necessary, we obtain
$$\|\Phi w-\Phi \ti w\|_{L^\infty H^s} \le C \|w-\ti w\|_{L^\infty H^s}, \quad C \in (0,1).$$
We apply the contraction mapping theorem and conclude that there exists a unique fixed point $w \in L^\infty_t H^s_x$ of $\Phi$, which solves equation \eqref{eq:NewtonEquation}.

Inspecting the proof, the timescale of existence is bounded from below by
\[ C^{-1} (\max \{ \| \th \|_{L_t^\infty C^{s}}, \| u_\ep \|_{L_t^\infty C^{s}} \} )^{-1}. \]
Consequently, if $\th$ and $u_\ep$ are smooth, the solution is global in time and smooth in the spatial variables.

\subsection{The Divergence Form Principle}  \label{sec:divFormPrinciple}

Let $\la \in \R$ and let $P_{\la,1}$ and $P_{\la,2}$ be frequency localizing operators adapted to frequencies of size $|\xi| \sim \la$ with multipliers $\chi_{\la,1}$ and $\chi_{\la,2}$.  That is, $\widehat{P_{\lambda,i} f}(\xi) = \chi_{\la,i}(\xi) \hat{f}(\xi) = \chi_{1,i}(\la \xi) \hat{f}(\xi)$.  The following theorem is proven in \cite[Section 5]{isett2024conservation}.  It traces back to a calculation in \cite{buckShkVicSQG} that was generalized and streamlined in \cite{isett2021direct}.

\begin{thm} \label{thm:divFormPrinciple} Let $\OO$ be an operator with odd symbol $m$ that is degree $\b$ homogeneous and smooth away from $0$.  Then for smooth $f,g$ one can write
\ali{
P_{\la,1} f \OO P_{\la,2} g  &+ P_{\la, 1}g \OO P_{\la,2} f= \nb_j [ K_\la^{j} \ast[f,g] ] \label{eq:plusSign} \\
K_\la^{j} \ast[f,g] &= \int_{\R^d \times \R^d} f(x - h_1) g(x - h_2) K_\la^j(h_1, h_2) dh_1 dh_2 \label{eq:bilinConvForm} \\
K_\la^j(h_1, h_2) &= \la^{2 d + \b - 1} K_0^j(\la h_1, \la h_2) \notag
}
where $K_0^j$ are Schwartz.  In the specific case of $\OO = T^\ell$ is the multiplier for SQG, the tensor $K_\la^{j\ell}$ is trace free and satisfies
\ali{
\widehat{K}_\la^{j\ell}(p, -p) &= \nb^j m^\ell(p) + \nb^\ell m^j(p)
}
for all $p$ such that $\chi_{\la,1}(p) = \chi_{\la,2}(-p) = 1$.
\end{thm}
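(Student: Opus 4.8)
The plan is to pass to the Fourier side, where the symmetrization exhibits exactly one degree of vanishing that can be divided out, and then to read off the SQG-specific structure from the resulting formula. Write $\widehat{f}$ for the Fourier transform and let $\eta,\mu$ be the frequency variables of the two inputs. The symmetrized operator on the left of \eqref{eq:plusSign} is the bilinear Fourier multiplier with symbol
\[
\Sigma(\eta,\mu) \;=\; \chi_{\la,1}(\eta)\,\chi_{\la,2}(\mu)\, m(\mu) \;+\; \chi_{\la,2}(\eta)\,\chi_{\la,1}(\mu)\, m(\eta),
\]
meaning that the left side of \eqref{eq:plusSign} equals $(2\pi)^{-2d}\iint e^{i(\eta+\mu)\cdot x}\Sigma(\eta,\mu)\widehat f(\eta)\widehat g(\mu)\,d\eta\,d\mu$. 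Two features are decisive. First, $\Sigma$ is supported where $|\eta|\sim\la$ and $|\mu|\sim\la$, hence away from the singularity of $m$, so $\Sigma$ is smooth and compactly supported on $\widehat{\R}^d\times\widehat{\R}^d$. Second, $\Sigma$ vanishes on the resonant set $\{\eta+\mu=0\}$: when the two localizers coincide (as one may arrange in the applications, fattening them to a common radial bump adapted to $|\xi|\sim\la$), oddness of $m$ gives $\Sigma(\eta,-\eta)=\chi_\la(\eta)\chi_\la(-\eta)[m(\eta)+m(-\eta)]=0$; for an even symbol together with a minus sign in \eqref{eq:plusSign} the same role is played by $m(\eta)-m(-\eta)=0$.

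Given these two facts, I would invoke Hadamard's lemma along the segment from $(\eta,\mu)$ to its orthogonal projection $\big(\tfrac{\eta-\mu}{2},\tfrac{\mu-\eta}{2}\big)$ onto $\{\eta+\mu=0\}$, along which $\eta+\mu$ runs linearly to $0$. This yields a smooth, compactly supported array
\[
\widehat{K}_\la^{j}(\eta,\mu) \;=\; -\tfrac{i}{2}\int_0^1 (\partial_{\eta_j}+\partial_{\mu_j})\Sigma\Big(\tfrac{\eta-\mu}{2}+t\tfrac{\eta+\mu}{2},\;-\tfrac{\eta-\mu}{2}+t\tfrac{\eta+\mu}{2}\Big)\,dt
\]
with $\Sigma(\eta,\mu)=i(\eta_j+\mu_j)\widehat{K}_\la^{j}(\eta,\mu)$. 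Pulling the factor $i(\eta_j+\mu_j)$ back to physical space turns it into $\nabla_j$ applied to the bilinear convolution $K_\la^{j}\ast[f,g]$ of \eqref{eq:bilinConvForm}, where $K_\la^{j}$ is the inverse Fourier transform of $\widehat{K}_\la^{j}$ and is therefore Schwartz; this proves \eqref{eq:plusSign}. For the scaling law, degree-$\b$ homogeneity of $m$ together with the rescaling of the localizers gives $\Sigma_\la(\eta,\mu)=\la^{\b}\Sigma_1(\eta/\la,\mu/\la)$, hence $\widehat{K}_\la^{j}(\eta,\mu)=\la^{\b-1}\widehat{K}_1^{j}(\eta/\la,\mu/\la)$, and taking inverse transforms produces $K_\la^{j}(h_1,h_2)=\la^{2d+\b-1}K_0^{j}(\la h_1,\la h_2)$ with $K_0^{j}:=K_1^{j}$.

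For the specific SQG case $\OO=T^\ell$, $m^\ell(p)=i\ep^{\ell a}p_a|p|^{-1}$, I would then make $K_\la^{j\ell}$ symmetric and trace-free. Since this term enters only under $\nabla_j\nabla_\ell$, symmetrizing $K_\la^{j\ell}\mapsto\tfrac12(K_\la^{j\ell}+K_\la^{\ell j})$ costs nothing; and the trace part, which is $\tfrac{1}{d}\De$ of the biconvolution $(\delta_{mn}K_\la^{mn})\ast[f,g]$, is itself a double divergence $\nabla_j\nabla_\ell\,\RR^{j\ell}(\cdots)$ of a trace-free biconvolution via the order $-2$ operator $\RR^{j\ell}$ of Section~\ref{sec:notation}, with $\RR^{j\ell}\De$ a standard zeroth-order operator whose $C^0$ behavior is controlled as elsewhere in the paper; substituting reduces to a trace-free symmetric kernel. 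Finally, evaluating the Hadamard formula on the antidiagonal---at $\mu=-\eta$ the path degenerates to the single point $(p,-p)$, and there $m^\ell(\eta)+m^\ell(-\eta)=0$ while $\nabla_j m^\ell$ is even---and incorporating the $(j,\ell)$-symmetrization gives $\widehat{K}_\la^{j\ell}(p,-p)$ equal to $\nabla^j m^\ell(p)+\nabla^\ell m^j(p)$ (in the normalization of the statement) on the set $\{\chi_{\la,1}(p)=\chi_{\la,2}(-p)=1\}$; this tensor is automatically trace-free, $\delta_{j\ell}(\nabla^jm^\ell+\nabla^\ell m^j)=2\,\mathrm{div}\,m=0$, consistently with trace-freeness of $K_\la^{j\ell}$.

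I expect the main obstacle to be the combinatorial bookkeeping in this last step rather than any single inequality: one must produce the trace-free symmetric representative while keeping the kernel Schwartz, confirm that the division by $i(\eta+\mu)$ is compatible with the subsequent trace extraction, and track the normalization constants carefully enough to land exactly on $\nabla^j m^\ell+\nabla^\ell m^j$ with no spurious factor of $i$ or $\la$. Once $K_0^{j\ell}$ is known to be Schwartz with the stated antidiagonal value, the $L^1$ and scaling bounds on $K_\la^{j\ell}$ used downstream are immediate. A detailed execution along these lines---going back to a computation in \cite{buckShkVicSQG}, streamlined in \cite{isett2021direct}---appears in \cite[Section~5]{isett2024conservation}.
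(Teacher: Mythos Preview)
Your Fourier-side strategy---vanishing of the symmetrized symbol on the resonant set $\{\eta+\mu=0\}$ via oddness, followed by division by $i(\eta_j+\mu_j)$---is exactly the paper's, and the general divergence-form conclusion together with the scaling law goes through as you describe. The substantive issue is in the SQG-specific trace-free claim, which you correctly flag as the ``main obstacle''; your proposed route through $\RR^{j\ell}\De$ does not close it.

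The paper does not apply Hadamard to the full bilinear symbol $\Sigma$. It first replaces $m$ by $m_\la = \chi_\la m$ (with $\chi_\la$ radial and $\equiv 1$ on the supports of both localizers), and then writes $m_\la(\xi-\eta)+m_\la(\eta) = m_\la(\xi-\eta)-m_\la(-\eta) = \xi_j\int_0^1 \nb^j m_\la(\sigma\xi-\eta)\,d\sigma$. The localizers $\chi_{\la,1},\chi_{\la,2}$ sit outside as passive factors and are never differentiated, so the resulting kernel $\widehat{K^{j\ell}}(\zeta,\eta) = -i\,\chi_{\la,1}(\zeta)\chi_{\la,2}(\eta)\int_0^1\nb^j m_\la^\ell\bigl(\sigma\zeta-(1-\sigma)\eta\bigr)\,d\sigma$ is trace-free \emph{pointwise}: $\delta_{j\ell}\nb^j m_\la^\ell = \nb\cdot m_\la = 0$, since the SQG multiplier is divergence-free and a radial cutoff preserves this. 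Your Hadamard expansion, by contrast, differentiates the localizers, producing terms like $(\nb\chi_{\la,i})\,m^\ell$ that survive the contraction with $\de_{j\ell}$. Absorbing this residual trace via $\RR^{j\ell}\De$ does not yield a Schwartz biconvolution: the modified bilinear symbol acquires the factor $2(\zeta+\eta)^j(\zeta+\eta)^\ell/|\zeta+\eta|^2-\de^{j\ell}$, which is discontinuous on the antidiagonal, and your trace does not vanish there (its restriction to $\{\zeta=-\eta\}$ is supported precisely where the cutoffs transition), so there is no cancellation. What results is a singular integral acting on the output rather than a bilinear convolution against a Schwartz kernel, and the $L^1$ kernel bounds and scaling law used downstream would not follow. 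The simple fix is to adopt the paper's placement of the expansion: Taylor-expand only $m_\la$, leaving the cutoffs undifferentiated; then trace-freeness comes for free and the Schwartz structure is preserved.
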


\begin{proof}
By the argument in \cite[Section 5]{isett2024conservation}, it suffices by an approximation to obtain the divergence form on $\R^2$ for $f, g$ Schwartz functions.  Let $Q$ denote the left hand side of \eqref{eq:plusSign}.  Then the Fourier transform of the product becomes a convolution and we have
\ALI{
\hat{Q}(\xi) &= \int_{\widehat{\R^2}} [m(\xi - \eta) + m(\eta)] \widehat{P_{\la,1}f}(\xi - \eta) \widehat{P_{\la, 2}g}(\eta) d\eta\\
&= \int_{\widehat{\R^2}} [m_\la(\xi - \eta) + m_\la(\eta)] \widehat{P_{\la,1}f}(\xi - \eta) \widehat{P_{\la, 2}g}(\eta) d\eta
}
where $m_\la(\xi) = \chi(\xi/\la) m(\xi)$ is a version of $m$ localized by a bump function $\chi(\xi/\la)$.  Using oddness of $m_\la$ and Taylor expanding we obtain
\ALI{
\hat{Q}(\xi) &= \int_{\widehat{\R^2}} [m_\la(\xi - \eta) - m_\la(-\eta)] \widehat{P_{\la,1}f}(\xi - \eta) \widehat{P_{\la, 2}g}(\eta) d\eta \\
&= \xi_j \int_0^1 d\si \int \nb^j m_\la( \si \xi - \eta ) \widehat{P_{\la,1}f}(\xi - \eta) \widehat{P_{\la, 2}g}(\eta) d\eta
}
The result is now clearly in divergence form.  Further computation of the inverse Fourier transform (see e.g. \cite[Section 5]{isett2024conservation}) shows that it has the bilinear convolution form \eqref{eq:bilinConvForm} with $K^j$ the Schwartz functions defined in Fourier space by
\ALI{
\widehat{K^j}(\zeta,\eta) &= \chi_{\la,1}(\zeta)\chi_{\la,2}(\eta) (-i) \int_0^1 \nb^j m(\si \zeta - (1-\si) \eta)  d\si.
}
\end{proof}

We also use a version of this principle for even multipliers.
\begin{thm}  Let $\calE$ be an operator with even symbol $m$ that is degree $\b$ homogeneous and smooth away from $0$.  Then for smooth $f,g$ one can write
\ali{
    P_{\la,1} f \calE P_{\la,2} g  &- P_{\la, 1}g \calE P_{\la,2} f = \nb_j [ K_\la^{j} \ast[f,g] ]\label{eq:minusSign}\\
K_\la^{j} \ast[f,g] &= \int_{\R^d \times \R^d} f(x - h_1) g(x - h_2) K_\la^j(h_1, h_2) dh_1 dh_2 \notag \\
K_\la^j(h_1, h_2) &= \la^{2 d + \b - 1} K_0^j(\la h_1, \la h_2) \notag
}
where the $K_0^j$ are Schwartz functions.
\end{thm}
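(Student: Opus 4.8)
The plan is to run the proof of Theorem~\ref{thm:divFormPrinciple} almost verbatim, with oddness of the symbol replaced by evenness; the one structural change is that the combination put into divergence form becomes the \emph{difference} $P_{\la,1}f\,\calE P_{\la,2}g - P_{\la,1}g\,\calE P_{\la,2}f$ rather than a sum, since with an even symbol it is the difference (not the sum) whose Fourier symbol vanishes at zero output frequency. By the approximation argument of \cite[Section~5]{isett2024conservation} it again suffices to produce the divergence form on $\R^d$ for Schwartz $f,g$, so let $Q$ denote the left-hand side of \eqref{eq:minusSign}.

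First I would pass to the Fourier side. Taking the Fourier transform turns the products into convolutions, and using that $\widehat{P_{\la,1}f}$ and $\widehat{P_{\la,2}g}$ are supported on the frequency annulus where $m$ agrees with its localized version $m_\la(\xi) := \chi(\xi/\la)m(\xi)$ — with the bump $\chi$ chosen even and vanishing near the origin, so that $m_\la$ is globally smooth even when $\b<0$ — one obtains
\[
\hat Q(\xi) = \int_{\widehat{\R^d}} \bigl[\,m_\la(\eta) - m_\la(\xi-\eta)\,\bigr]\,\widehat{P_{\la,1}f}(\xi-\eta)\,\widehat{P_{\la,2}g}(\eta)\,d\eta .
\]
Then I would use evenness, $m_\la(\eta) = m_\la(-\eta)$, together with the fundamental theorem of calculus along the segment from $-\eta$ to $\xi-\eta$,
\[
m_\la(\xi-\eta) - m_\la(-\eta) = \xi_j\int_0^1 \nb^j m_\la(\si\xi-\eta)\,d\si ,
\]
so that the bracket above equals $-\xi_j\int_0^1 \nb^j m_\la(\si\xi-\eta)\,d\si$. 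Hence $\hat Q(\xi) = \xi_j(\cdots)$, i.e.\ $Q = \nb_j[\cdots]$.

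Next I would undo the Fourier transform exactly as in the proof of Theorem~\ref{thm:divFormPrinciple}: writing $(\zeta,\eta) = (\xi-\eta,\eta)$ for the frequencies of $f$ and $g$, one has $\si\xi-\eta = \si\zeta-(1-\si)\eta$, and the bracketed quantity is identified with the bilinear convolution $K_\la^j\ast[f,g]$ of \eqref{eq:bilinConvForm} with
\[
\widehat{K_\la^j}(\zeta,\eta) = i\,\chi_{\la,1}(\zeta)\,\chi_{\la,2}(\eta)\int_0^1 \nb^j m(\si\zeta-(1-\si)\eta)\,d\si
\]
(the sign of the $i$-prefactor flips relative to Theorem~\ref{thm:divFormPrinciple}, reflecting the minus sign in \eqref{eq:minusSign}). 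Finally the scaling $K_\la^j(h_1,h_2) = \la^{2d+\b-1}K_0^j(\la h_1,\la h_2)$ with $K_0^j$ Schwartz follows by rescaling $(\zeta,\eta)\mapsto(\zeta,\eta)/\la$ in this formula, using that $\chi_{\la,i}(\cdot) = \chi_{1,i}(\cdot/\la)$ and that $\nb^j m$ is degree $\b-1$ homogeneous, and then taking the $2d$-dimensional inverse transform — the same computation as in the odd case.

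I do not expect a genuine obstacle: this is a sign-bookkeeping variant of Theorem~\ref{thm:divFormPrinciple}. The only points meriting a sentence of care are (i) that it is evenness which forces the minus sign — the sum would produce $m_\la(\eta) + m_\la(\xi-\eta)$, which does not vanish at $\xi = 0$ and hence is not divisible by $\xi_j$ — and (ii) the smoothness of $m_\la$ near the origin when $\b<0$, arranged by letting the localizing bump vanish near $0$, with no loss since $m_\la = m$ on the effective frequency support. (As in the proof of Theorem~\ref{thm:divFormPrinciple}, the substitution $\eta\mapsto\xi-\eta$ used to reach the displayed formula for $\hat Q$ tacitly uses that $\chi_{\la,1}$ and $\chi_{\la,2}$ may be taken compatibly, which is harmless since only their values on $|\xi|\sim\la$ enter.)
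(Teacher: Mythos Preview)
Your proposal is correct and follows essentially the same approach as the paper's proof, which simply notes that the argument for Theorem~\ref{thm:divFormPrinciple} carries over once one starts from $m(\xi-\eta) - m(\eta) = m(\xi-\eta) - m(-\eta)$ using evenness. Your additional remarks on the necessity of the minus sign and the handling of $m_\la$ near the origin are accurate elaborations but not required for the argument.
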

What is crucial here is the minus sign in \eqref{eq:minusSign} instead of the plus sign in \eqref{eq:plusSign}.  The proof is essentially the same as the case of an odd multiplier, but this time one starts with $m(\xi - \eta) - m(\eta) = m(\xi - \eta) - m(-\eta)$, since the multiplier is even.

\subsection{Glossary} \label{sec:Glossary}

\begin{itemize}
    \item $\th$: The scalar field in the SQG equation
    \item $u$: The velocity field in the SQG equation, defined as $u^\ell = T^\ell \th = \ep^{\ell a} \nb_a |\nb|^{-1} \th$
    \item $m^\ell$: the Fourier multiplier in the mSQG equation, $m^\ell(p) = \ep^{\ell a} (i p_a)|p|^{-1}$
    \item $R$, $R^{j\ell}$: The symmetric traceless tensor field in the SQG Reynolds equations
    \item $\Xi$, $\D_u$, $\D_R$: Non-negative numbers representing the frequency energy levels of an SQG-Reynolds flow
    \item $D_t$: The advective derivative, defined as $D_t := \pr_t + T^\ell \th \nb_\ell$
    \item $\va$: A multi-index for spatial derivatives $\va = (a_1, a_2, \ldots, a_{|\va|})$, $1 \leq a_i \leq d$.
    \item $N$: A parameter used in the main lemma, satisfying a certain lower bound.
    \item $\eta$: A positive constant used in the main lemma
    \item $\hn$: Defined as $\hn = N^{1/L}$, where $L$ is a constant satisfying $L \geq 7$
    \item $\ost{\th}$, $\ost{u}$, $\ost{R}$: The new SQG-Reynolds flow obtained in the main lemma
    \item $W$: The correction term in the new scalar field $\ost{\th} = \th + W$
    \item $\ep$: A length scale defined as $\ep = N^{-1/L} \Xi^{-1} = \hn^{-1}\Xi^{-1}$
    \item $q_\ep$ or $\hq$: An integer close to $\log_2(\ep^{-1})$
    \item $\th_\ep$, $u_\ep$: The coarse scale scalar field and velocity field, defined using a Littlewood-Paley projection operator
    \item $\Ddt$: The coarse scale advective derivative, defined as $\Ddt = \pr_t + u_\ep \cdot \nb$
    \item $R_\ep$: The regularized error tensor, obtained by mollifying $R$ in space
    \item $w$: The Newton perturbation in the new scalar field $\ost{\th} = \th + w + \Th$
    \item $\Th$: The oscillatory perturbation in the new scalar field $\ost{\th} = \th + w + \Th$, defined as a sum of waves $\Th = \sum_I \Th_I \approx \sum_I \th_I e^{i \la \xi_I}$
    \item $\tilde{u}_\ep = u_\ep + T^\ell w$: The coarse scale velocity field following the Newton step.
    \item $\Dtdt = \pr_t + \tilde{u}_\ep \cdot \nb$:  The coarse scale advective derivative following the Newton step.
\end{itemize}

The following symbols are used in the construction and analysis of the Newton perturbation $w$ and the oscillatory perturbation $\Th$.%

\begin{itemize}
    \item $\mu$: An inverse time scale  used in the construction of the Newton perturbation $w$.
    \item $\tau$:  A time scale used in the construction of the Newton perturbation $w$. $b$ is a small geometric constant chosen after line \eqref{eq:neighborhood}.
    \item $\ep_x$: A length scale used in the mollification of the error tensor $R$. It is defined as $\ep_x = N^{-1/L} \Xi^{-1}$.
    \item $\ga_I$: A slowly varying smooth function used in the construction of the oscillatory perturbation $\Th$. It is chosen in a later part of the analysis.
    \item $\xi_I$: The oscillation direction of each wave $\Th_I$ in the oscillatory perturbation $\Th$. It satisfies $\nb \xi_I$ being reasonably close to an element of the set $F = { \pm (1,2), \pm (2,1) }$.
    \item $\la$: The frequency of the oscillatory waves in the perturbation $\Th$.
    \item $B^{j\ell}(p)$: A tensor-valued function defined as $B^{j\ell}(p) = -i(\nb^j m^\ell(p) + \nb^\ell m^j)(p)$, where $m^\ell(p) = i \ep^{\ell a} p_a |p|^{-1}$ is the multiplier for SQG.
    \item ${\bf F}_J = \{ \bar{w}_J, \bar{z}_J, \bar{r}_J %
\}$
\end{itemize}

Here is a glossary about the relative sizes of the various nonnegative numbers mentioned:

\begin{itemize}
    \item $\Xi$: A large parameter that represents the frequency level of the scalar field $\theta$. It satisfies $\Xi \geq 1$.

    \item $e_u$: Defined as $e_u = \Xi \D_u$, where $\D_u$ is a nonnegative number. The quantity $e_u$ represents the energy level of the velocity field $u$. We have $e_u \geq 1$.

    \item $e_R$: Defined as $e_R = \Xi \D_R$, where $\D_R$ is a nonnegative number. The quantity $e_R$ represents the energy level of the stress tensor $R$. We have $e_R \geq 1$.

    \item $\D_u$: A nonnegative number that satisfies $\D_u \geq \D_R$. It is related to the energy level of the velocity field $u$ through $e_u = \Xi \D_u$.

    \item $\D_R$: A nonnegative number that satisfies $\D_R \leq \D_u$. It is related to the energy level of the stress tensor $R$ through $e_R = \Xi \D_R$.

    \item $L$: An integer $\geq 7$ counting the number of derivatives recorded in the Definition of frequency energy levels.

    \item $N$: A large parameter that satisfies the lower bound \eqref{eq:NlowerBd}. We have $N \geq 1$.

    \item $\widehat \Xi$: Defined as $N^{1/L} \Xi$, where $L \geq 7$ is an integer. We have $\hat{\Xi} \geq \Xi$.

    \item $\mu$: Defined as $\mu = \Xi N^{1/2} e_R^{1/2}$. %

    \item $\tau$: $\tau = b (\log \hxi)^{-1} (\Xi e_u^{1/2})^{-1}$

    \item $\ep_t$: $N^{-1/2} (D_u/D_R)^{-1/2} (\Xi e_u^{1/2})^{-1}$.

    \item $\la$: $\la \sim N \Xi$, $\la \in 2 \pi \Z$.

    \item $\ost{\tau}^{-1}$: $\ost{\tau}^{-1} = (N \Xi)^{3/2} D_R^{1/2}$. First defined while proving a bound for $R_{MHH}$.
\end{itemize}

The relative sizes of these nonnegative numbers can be expressed as:

\begin{itemize}
    \item $\D_R \leq \D_u$
    \item $e_R \leq e_u$
    \item $\Xi e_u^{1/2} \leq \tau^{-1} \leq \mu \leq \ep_t^{-1} \leq \ost{\tau}^{-1}$
    \item $\Xi \leq \widehat{\Xi} \leq \la$
    \item $1 \leq \hc (D_u/D_R) (N\Xi)^{4 \eta} N^{6/L} \leq N$.%
\end{itemize}

The parameters $N$ and $\Xi$ are large, while $\eta$ is small. The quantities $e_u$ and $e_R$ are large.%

\bibliographystyle{abbrv}
\bibliography{eulerOnRn}
\end{document}